\DeclareMathAlphabet{\mymathbb}{U}{BOONDOX-ds}{m}{n}
\def\Mbar{\overline{\mathcal{M}}}
\def\M{\mathfrak{M}}
\def\CH{\mathrm{CH}}
\def\CHOP{\mathrm{CH}_{\mathrm{OP}}}
\def\RCH{\overline{\mathrm{CH}}}
\def\Ms{\mathsf{M}}
\def\R{\mathrm{R}}
\def\st{\mathrm{st}}
\def\aut{\mathrm{Aut}}
\def\zero{\mymathbb{0}}
\def\one{\mymathbb{1}}
\def\CO{\mathcal{O}}
\def\A{\mathcal{A}}
\def\AA{\mathbb{A}}
\def\GG{\mathbb{G}}
\def\PP{\mathbb{P}}
\def\QQ{\mathbb{Q}}
\def\UU{\mathbb{U}}
\def\ZZ{\mathbb{Z}}
\def\TT{\mathcal{T}}
\def\StratAlg{\mathcal{S}}
\def\Rels{\mathcal{R}}
\def\WDVV{\mathrm{WDVV}}
\def\DM{\mathrm{DM}}
\def\Hom{\mathrm{Hom}}
\def\PGL{\mathrm{PGL}}
\theoremstyle{definition}
\newtheorem{definition}{Definition}[section]
\newtheorem{theorem}[definition]{Theorem}
\newtheorem{example}[definition]{Example}
\newtheorem{proposition}[definition]{Proposition}
\newtheorem{corollary}[definition]{Corollary}
\newtheorem{lemma}[definition]{Lemma}
\newtheorem{conjecture}[definition]{Conjecture}
\newtheorem{question}[definition]{Question}
\newtheorem{remark}[definition]{Remark}
\newtheorem*{conjecture*}{Conjecture}
\newtheorem*{question*}{Question}
\title{Chow rings of stacks of prestable curves II}
\author{Younghan Bae\footnote{Department of Mathematics, ETH Zurich} , Johannes Schmitt\footnote{Mathematical Institute, University of Bonn}}
\date{July 2021}
\begin{document}
\maketitle

\begin{abstract}
    We continue the study of the Chow ring of the moduli stack $\M_{g,n}$ of prestable curves begun in \cite{BS1}. In genus $0$, we show that the Chow ring of $\M_{0,n}$ coincides with the tautological ring and give a complete description in terms of (additive) generators and relations. This generalizes earlier results by Keel and Kontsevich-Manin for the spaces of stable curves.
    Our argument uses the boundary stratification of the moduli stack together with the study of the first higher Chow groups of the strata, in particular providing a new proof of the results of Kontsevich and Manin. 
    \\~\\
    \noindent 2020 Mathematics Subject Classification:\\
    \noindent 14H10, 14C15 (Primary) 14C17, 14F42 (Secondary)
\end{abstract}

\newpage

\tableofcontents

\section{Introduction}
% \jocomment{Let's Put the counter so that the Introduction is Section 1, but only at the end since otherwise we will be too confused.}
% Let $k$ be the ground field. \jocomment{Do we actually do things over $k$? Do we assume something about $k$?}\Ycomment{For Chow groups, we consider algebraic stacks over a field. I don't think we assume any condition on $k$. }
\subsection*{The tautological ring of the moduli stack of prestable curves}
Let $\M_{g,n}$ be the moduli stack of prestable curves of genus $g$ with $n$ markings. It is a natural extention of the Deligne-Mumford space $\Mbar_{g,n}$ of stable curves. 
In the paper \cite{BS1}, we studied the rational Chow ring\footnote{We assume $(g,n)$ is different from $(1,0)$.} $\CH^*(\M_{g,n})$ and its subring
\[\R^*(\M_{g,n})\subseteq \CH^*(\M_{g,n})\]
of \emph{tautological classes}, naturally extending the corresponding notion on $\Mbar_{g,n}$. 

To describe the elements of $\R^*(\M_{g,n})$, let 
\[\pi\colon \mathfrak{C}_{g,n}\to \M_{g,n}\] be the universal curve and let $\omega_\pi$ be the relative dualizing sheaf. Let \[\sigma_i\colon\M_{g,n}\to \mathfrak{C}_{g,n}\] be the $i$-th universal section and $\mathfrak{S}_i\subset \mathfrak{C}_{g,n}$ be the corresponding divisor.  We define $\psi$ and $\kappa$-classes: given $1 \leq i \leq n$ we set
\begin{equation}\label{eqn:defpsi}
    \psi_i = c_1(\sigma_i^* \omega_\pi) \in \CH^1(\M_{g,n})\,,    
\end{equation}
and for given $m\geq 0$ we set
\begin{equation}\label{eqn:defkappa}
    \kappa_m = \pi_*\left(c_1\left(\omega_\pi\left(\sum_{i=1}^n\mathfrak{S}_i\right)\right)^{m+1} \right) \in \CH^m(\M_{g,n})\,.
\end{equation}
Let $\Gamma$ be a prestable\footnote{A prestable graph is given by the same data as a stable graph, except that one removes the condition that every vertex $v$ should be stable, i.e. satisfy $2g(v)-2+n(v)>0$.} graph in genus $g$ with $n$ markings. Each prestable graph defines a gluing map
\[\xi_\Gamma : \M_\Gamma = \prod_{v \in V(\Gamma)} \M_{g(v),n(v)} \to \M_{g,n}\]
Given any prestable graph $\Gamma$, consider the products
\begin{equation}\label{eqn:alphadecoration}
\alpha = \prod_{v\in V} \left( \prod_{i \in H(v)} \psi_{v,i}^{a_i} \prod_{a=1}^{m_v} \kappa_{v,a}^{b_{v,a}} \right)\in \CH^*(\M_\Gamma).\end{equation}
of $\psi$ and $\kappa$-classes on the space $\M_\Gamma$ above. Then we define the \emph{decorated stratum class} $[\Gamma,\alpha]$ as the pushforward
\[[\Gamma,\alpha] = (\xi_\Gamma)_* \alpha \in \R^*(\M_{g,n}).\]
\begin{definition}
The tautological ring $\R^*(\M_{g,n})$ is the $\QQ$-subspace of $\CH^*(\M_{g,n})$ additively generated by decorated strata classes.\footnote{In \cite[Definition 1.3]{BS1} the tautological ring of $\M_{g,n}$ is defined in a much more conceptual way, but we show that it is equivalent to the above presentation (\cite[Theorem 1.4]{BS1}).}
\end{definition}
The paper \cite{BS1} then develops a calculus of decorated stratum classes. Below, such results from \cite{BS1} are frequently referred.

In full generality, a description of the tautological ring $\R^*(\M_{g,n})$ is hard to approach. In this paper we specialize our attention to the moduli space of genus zero prestable curves.

\subsection*{The tautological ring in genus zero}
In Section \ref{sec:genus_zero}, we give a complete description of the Chow groups of $\M_{0,n}$ in terms of explicit generators and relations. 

For the moduli spaces  $\Mbar_{0,n}$ of stable curves, Keel \cite{keel} proved 
that the tautological ring of $\Mbar_{0,n}$ coincides with the Chow ring.
Moreover, he showed that this ring is generated as an \emph{algebra} by the boundary divisors of $\Mbar_{0,n}$ and that the \emph{ideal} of relations is generated by  the \emph{WDVV relations}, the pullbacks of the relations
\begin{equation} \label{eqn:wdvveq} 
    \begin{tikzpicture}[baseline=-3pt,label distance=0.5cm,thick,
    virtnode/.style={circle,draw,scale=0.5}, 
    nonvirt node/.style={circle,draw,fill=black,scale=0.5} ]
    \node [nonvirt node] (A) {};
    \node at (1,0) [nonvirt node] (B) {};
    \draw [-] (A) to (B);
    \node at (-.7,.5) (n1) {$1$};
    \node at (-.7,-.5) (n2) {$2$};
    \draw [-] (A) to (n1);
    \draw [-] (A) to (n2);
    
    \node at (1.7,.5) (m1) {$3$};
    \node at (1.7,-.5) (m2) {$4$};
    \draw [-] (B) to (m1);
    \draw [-] (B) to (m2);    
    \end{tikzpicture}
 = 
    \begin{tikzpicture}[baseline=-3pt,label distance=0.5cm,thick,
    virtnode/.style={circle,draw,scale=0.5}, 
    nonvirt node/.style={circle,draw,fill=black,scale=0.5} ]
    \node at (0,0) [nonvirt node] (A) {};
    \node at (1,0) [nonvirt node] (B) {};
    \draw [-] (A) to (B);
    \node at (-.7,.5) (n1) {$1$};
    \node at (-.7,-.5) (n2) {$3$};
    \draw [-] (A) to (n1);
    \draw [-] (A) to (n2);
    
    \node at (1.7,.5) (m1) {$2$};
    \node at (1.7,-.5) (m2) {$4$};
    \draw [-] (B) to (m1);
    \draw [-] (B) to (m2);    
    \end{tikzpicture}
=
    \begin{tikzpicture}[baseline=-3pt,label distance=0.5cm,thick,
    virtnode/.style={circle,draw,scale=0.5}, 
    nonvirt node/.style={circle,draw,fill=black,scale=0.5} ]
    \node at (0,0) [nonvirt node] (A) {};
    \node at (1,0) [nonvirt node] (B) {};
    \draw [-] (A) to (B);
    \node at (-.7,.5) (n1) {$1$};
    \node at (-.7,-.5) (n2) {$4$};
    \draw [-] (A) to (n1);
    \draw [-] (A) to (n2);
    
    \node at (1.7,.5) (m1) {$2$};
    \node at (1.7,-.5) (m2) {$3$};
    \draw [-] (B) to (m1);
    \draw [-] (B) to (m2);    
    \end{tikzpicture}\,
\end{equation}
in $\CH^1(\Mbar_{0,4})$ under the forgetful maps $\Mbar_{0,n} \to \Mbar_{0,4}$, together with the relations $D_1 \cdot D_2 = 0$ for $D_1, D_2$ disjoint boundary divisors.

Later, Kontsevich and Manin \cite{KontsevichManinGW, KMproduct} showed that the Chow groups of $\Mbar_{0,n}$ are generated as a \emph{$\mathbb{Q}$-vector space} by the classes of the closures of boundary strata of $\Mbar_{0,n}$. Moreover, the set of \emph{linear} relations between such strata classes are generated by the pushforwards of WDVV relations under boundary gluing maps. Our treatment of the Chow groups of $\M_{0,n}$ will be closer in spirit to the one by Kontsevich and Manin, since we provide \emph{additive} generators and relations.

\subsubsection*{Generators}
A first new phenomenon we see for $\M_{0,n}$ is that its Chow group is no longer generated by boundary strata. This comes from the fact that for $n=0,1,2$, the loci $\M_{0,n}^\textup{sm} \subset \M_{0,n}$ of smooth curves already have non-trivial Chow groups. They are given by polynomial algebras
\begin{equation} \label{eqn:nontrivChowsmooth}
    \CH^*(\M_{0,0}^\textup{sm}) = \mathbb{Q}[\kappa_2],\ \ \CH^*(\M_{0,1}^\textup{sm}) = \mathbb{Q}[\psi_1],\ \  \CH^*(\M_{0,2}^\textup{sm}) = \mathbb{Q}[\psi_1].
\end{equation}
generated by the class $\kappa_2$ on $\M_{0,0}$ and the classes $\psi_1$ on $\M_{0,1}$ and $\M_{0,2}$.\footnote{These $\kappa$ and $\psi$-classes are defined similarly to the corresponding classes on the moduli space of stable curves, see \cite[Definition 3.2]{BS1}.}
So we see that the Chow group can no longer be generated by boundary strata because all strata contained in the boundary restrict to zero on the locus $\M_{0,n}^\textup{sm}$ of smooth curves.

Instead, we prove that $\CH^*(\M_{0,n})$ is generated by strata of $\M_{0,n}$ decorated by $\kappa$ and $\psi$-classes. More precisely, the generators are indexed by the data $[\Gamma, \alpha]$, where $\Gamma$ is a prestable graph (describing the shape of the generic curve inside the boundary stratum) and $\alpha$ is a product of $\psi$-classes at vertices of $\Gamma$ with $1$ or $2$ outgoing half-edges (or $\Gamma$ is the trivial graph for $\M_{0,0}$ and $\alpha = \kappa_2^a$). We call such a class $[\Gamma, \alpha]$ a \emph{decorated stratum class in normal form}.
The allowed decorations $\alpha$ precisely reflect the non-trivial Chow groups \eqref{eqn:nontrivChowsmooth} above. We illustrate some of the generators that appear in Figure \ref{fig:tautgeneratorsg0}, for the precise construction of the corresponding classes $[\Gamma, \alpha] \in \CH^*(\M_{0,n})$ see \cite[Definition 3.3]{BS1}.

\begin{figure}[htb]
    \centering
    \[
    \begin{tikzpicture}[baseline=-3pt,label distance=0.5cm,thick,
    virtnode/.style={circle,draw,scale=0.5}, 
    nonvirt node/.style={circle,draw,fill=black,scale=0.5} ]
    \node [nonvirt node] (A) {};
    \node at (2,0) [nonvirt node] (B) {};
    \node at (4,0) [nonvirt node] (C) {};
    \draw [-] (A) to (B);
    \draw [-] (B) to (C);
    \node at (0.5, 0.4) {$\psi^3$};
    \node at (1.5, 0.4) {$\psi$};
    \node at (2.5, 0.4) {$\psi^4$};
    \end{tikzpicture}
    \quad \quad 
    \begin{tikzpicture}[baseline=-3pt,label distance=0.5cm,thick,
    virtnode/.style={circle,draw,scale=0.5}, 
    nonvirt node/.style={circle,draw,fill=black,scale=0.5} ]
    \node [nonvirt node] (A) {};
    \node at (0, 0.4) {$\kappa_2^5$};
    \end{tikzpicture}
    \]
    \[
\begin{tikzpicture}[scale=0.7, baseline=-3pt,label distance=0.3cm,thick,
    virtnode/.style={circle,draw,scale=0.5}, 
    nonvirt node/.style={circle,draw,fill=black,scale=0.5} ]
    % \node [virtnode] (A) {} node[below]{$A_1$}; works
    \node at (0,0) [nonvirt node] (A) {};
    \node at (2,0) [nonvirt node] (B) {};
    \node at (-1,{sqrt(3)}) [nonvirt node] (C) {};
    \node at (-1,{-sqrt(3)}) [nonvirt node] (D) {};
    \draw [-] (A) to (B);
    \draw [-] (A) to (C);
    \draw [-] (A) to (D);
    \node at (1.5,0.4) {$\psi^2$};
    \node at (-0.3,1.4) {$\psi^5$};
    \end{tikzpicture}
    \]
    \caption{Some decorated strata classes $[\Gamma, \alpha]$ in normal form, giving generators  of $\CH^{10}(\M_{0,0})$}
    \label{fig:tautgeneratorsg0}
\end{figure}
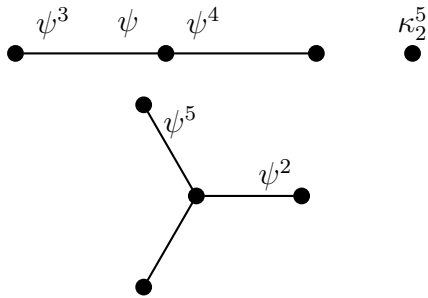

In particular, since all such classes are contained in the tautological ring, we generalize Keel's result that all Chow classes on $\Mbar_{0,n}$ are tautological.
% Nevertheless, we prove that Keel's description of $\CH^*(\Mbar_{0,n})$ can be generalized to $\CH^*(\M_{0,n})$. First, the Chow ring of $\M_{0,n}$ and the tautological ring coincides.
\begin{theorem} \label{Thm:M0ngenerators}
For $n \geq 0$ we have the equality $\CH^*(\M_{0,n}) = \R^*(\M_{0,n})$.
%and a set of generators of $\CH^*(\M_{0,n})$ as a $\mathbb{Q}$-vector space is given by the decorated strata classes $[\Gamma, \alpha]$ in normal form.
\end{theorem}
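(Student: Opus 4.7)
The plan is to use the natural stratification of $\M_{0,n}$ by topological type of the curve together with the right-exact excision sequence for Chow groups, reducing the problem to the explicit computation \eqref{eqn:nontrivChowsmooth} of the Chow rings of the open strata. The decomposition $\M_{0,n} = \bigsqcup_\Gamma S_\Gamma^\circ$ into locally closed strata is indexed by genus-zero prestable graphs $\Gamma$, and each stratum has the form $S_\Gamma^\circ \cong \bigl(\prod_{v \in V(\Gamma)} \M_{0,n(v)}^{\mathrm{sm}}\bigr)/\Aut(\Gamma)$. By K\"unneth together with \eqref{eqn:nontrivChowsmooth}, $\CH^*(S_\Gamma^\circ)$ is the $\Aut(\Gamma)$-invariant part of a polynomial ring in $\kappa_2$-classes at unmarked vertices and $\psi$-classes at $1$- or $2$-marked vertices. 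The key observation is that for any such monomial generator $\alpha = \prod_v \alpha_v$, the decorated stratum class $[\Gamma, \alpha] = (\xi_\Gamma)_*\alpha \in \R^*(\M_{0,n})$ restricts to a nonzero rational multiple of $\alpha$ on $S_\Gamma^\circ$, via the pullback-pushforward formalism of \cite{BS1}.

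Next I would filter $\M_{0,n}$ by the closed substacks $F_k = \bigcup_{|E(\Gamma)| \geq k}\overline{S_\Gamma}$, so $F_0 = \M_{0,n}$ and $F_k = \emptyset$ for $k$ large enough. The excision sequence
\[\CH^*(F_{k+1}) \to \CH^*(F_k) \to \CH^*(F_k \setminus F_{k+1}) \to 0\]
is right exact, with $F_k \setminus F_{k+1} = \bigsqcup_{|E(\Gamma)| = k} S_\Gamma^\circ$. By descending induction on $k$, I would show that the image of $\CH^*(F_k) \to \CH^*(\M_{0,n})$ lies in $\R^*(\M_{0,n})$: the base case $F_k = \emptyset$ is trivial, and in the inductive step any class in $\CH^*(F_k \setminus F_{k+1})$ is a rational linear combination of the polynomial generators above, each of which lifts to a tautological decorated stratum class on $\M_{0,n}$; the difference between our class and such a tautological lift vanishes in $\CH^*(F_k \setminus F_{k+1})$ and therefore comes from $\CH^*(F_{k+1})$, which is already tautological by the inductive hypothesis. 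Taking $k = 0$ gives $\CH^*(\M_{0,n}) \subseteq \R^*(\M_{0,n})$, and the reverse inclusion holds by definition.

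The hard part is primarily technical: rigorously establishing the K\"unneth and excision isomorphisms for the Artin stacks in question, and verifying that the restriction of each decorated stratum class $[\Gamma, \alpha]$ to $S_\Gamma^\circ$ produces, up to the nonzero scalar $|\Aut(\Gamma)|$ coming from the étale cover $\prod_v \M_{0,n(v)}^{\mathrm{sm}} \to S_\Gamma^\circ$, the prescribed polynomial generator. While the scheme of descending induction on codimension of strata is conceptually straightforward, most of the actual work consists in carrying out these compatibilities inside the tautological calculus developed in \cite{BS1}; if the paper's abstract remark about the ``first higher Chow groups of the strata'' is needed here (as opposed to only for describing the relations between decorated stratum classes), it would presumably enter to control the kernel of $\CH^*(F_{k+1}) \to \CH^*(\M_{0,n})$ and thereby avoid choosing ad hoc tautological lifts at each step.
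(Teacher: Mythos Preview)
Your overall strategy is sound and close in spirit to the paper's, but there is one genuine error and one organizational difference worth noting.

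\textbf{The error.} You write ``$F_k = \emptyset$ for $k$ large enough''. This is false: $\M_{0,n}$ is not of finite type, and for every $k$ there exist genus-zero prestable curves with at least $k$ nodes (e.g.\ chains of $\mathbb{P}^1$'s). The fix is easy but should be made explicit: work in a fixed codimension $d$; since each $F_k$ has codimension $k$ in $\M_{0,n}$, the pushforward $\CH_*(F_k) \to \CH_*(\M_{0,n})$ contributes nothing in codimension $d$ once $k>d$, and the descending induction terminates. Alternatively, run an ascending induction on $d$ as the paper does.

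\textbf{Comparison with the paper.} The paper organizes the argument slightly differently: it inducts on codimension $d$ and at each step applies a single excision (smooth locus versus boundary $\partial \M_{0,n}$), then covers $\partial \M_{0,n}$ by the one-edge gluing maps $\M_{0,n_1}\times\M_{0,n_2}\to \partial\M_{0,n}$, and invokes a separately proven Chow--K\"unneth \emph{generation} property for the full stacks $\M_{0,n_i}$ together with surjectivity of pushforward along proper representable DM-type morphisms. Your full-stratification approach is in a sense more direct: it only needs the Chow--K\"unneth property for the \emph{smooth} loci $\M_{0,n_i}^{\mathrm{sm}}$ (plus the identification of Chow groups of finite group quotients with invariants), which in the paper appears as the base input to the inductive proof of the stronger CKgP statement. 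Both routes bottom out in the same technical work---proving K\"unneth for $B\mathrm{PGL}_2$, $B\mathbb{U}$, $B\mathbb{G}_m$, and hyperplane complements---and you correctly flag this as the substantive part.

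Finally, you are right that the higher Chow groups play no role in this generation theorem; the paper uses them only later, to pin down the \emph{relations} among the tautological generators.
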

The idea of proof for this first theorem is easy to describe: consider the excision sequence of Chow groups for the open substack $\M_{0,n}^\textup{sm} \subset \M_{0,n}$ with complement $\partial\, \M_{0,n}$:
\begin{equation} \label{eqn:excisionintro1}
    \CH^{*-1}(\partial\, \M_{0,n}) \to \CH^*(\M_{0,n}) \to \CH^*(\M_{0,n}^\textup{sm}) \to 0.
\end{equation}
From \eqref{eqn:nontrivChowsmooth} for $n=0,1,2$ and the classical statement
\begin{equation} \label{eqn:trivialChowsmooth}
\CH^*(\M_{0,n}^\textup{sm}) = \CH^*(\mathcal{M}_{0,n}) = \mathbb{Q} \cdot [\mathcal{M}_{0,n}] \text{ for }n \geq 3\end{equation}
we see that all classes in $\CH^*(\M_{0,n}^\textup{sm})$ have tautological representatives. It follows that it suffices to prove that all classes supported on $\partial\, \M_{0,n}$ are tautological. But $\partial\, \M_{0,n}$ is parameterized (via the union of finitely many gluing morphisms) by products of spaces $\M_{0,n_i}$. This allows us to set up a recursive proof.

One thing to verify in this last part of the argument is that the Chow group of a product of spaces $\M_{0,n_i}$ is generated by cycles coming from the factors $\M_{0,n_i}$. In fact, we can show more, namely that the stacks of prestable curves in genus $0$ satisfy a certain \emph{Chow-K\"unneth property}. To formulate it, we need to introduce two technical properties of locally finite type stacks $Y$: we say that $Y$ has a \emph{good filtration by finite type stacks} if $Y$ is the union of an increasing sequence $(\mathcal{U}_k)_k$ of finite type open substacks such that the codimension of the complement of $\mathcal{U}_k$ becomes arbitrarily large as $k$ increases. We say that $Y$ has a \emph{stratification by quotient stacks} if there exists a stratification of $Y$ by locally closed substacks which are each isomorphic to a global quotient of an algebraic space by a linear algebraic group. All stacks $\M_{g,n}$ for $(g,n) \neq (1,0)$ satisfy both of these properties.
\begin{proposition}[Proposition \ref{pro:M0nCKgP}, Corollary \ref{Cor:CKPM0n}] \label{Prop:M0nCKsummary}
Consider the stack $\M_{0,n}$ (for $n \geq 0$) and let $Y$ be a locally finite type stack. Then the map\footnote{Below, the notation $\alpha \times \beta$ denotes the exterior product of cycles constructed in \cite[Proposition 3.2.1]{kreschartin}.}
\[
\CH_*(\M_{0,n}) \otimes_{\mathbb{Q}} \CH_*(Y) \to \CH_*(\M_{0,n} \times Y), \alpha \otimes \beta \mapsto \alpha \times \beta
\]
is surjective if $Y$ has a good filtration by finite type stacks and a stratification by quotient stacks. %(and additionally a stratification by quotient stacks in case $n=0$)%
The map is an isomorphism if $Y$ is a quotient stack.
\end{proposition}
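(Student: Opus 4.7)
The plan is to reduce first to $Y$ a quotient stack, then to run an excision argument along the open-closed stratification of $\M_{0,n}$.

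For the reduction, if $Y$ has the good filtration $(\mathcal{U}_k)_k$, every class in $\CH_*(\M_{0,n} \times Y)$ is supported on $\M_{0,n} \times \mathcal{U}_k$ for some $k$ (by the codimension growth in the filtration). Stratifying each $\mathcal{U}_k$ by locally closed quotient substacks and inducting on the stratification via excision, surjectivity in the general case follows from the isomorphism statement for quotient stacks.

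For $Y$ a quotient stack, the core step is the excision sequence for $\M_{0,n}^{\textup{sm}} \subset \M_{0,n} \supset \partial\,\M_{0,n}$, crossed with $Y$ and compared with the same sequence tensored with $\CH_*(Y)$ via exterior products. A five-lemma argument reduces the iso statement to: (i) the Künneth iso for $\M_{0,n}^{\textup{sm}} \times Y$, (ii) the Künneth iso for $\partial\,\M_{0,n} \times Y$, and (iii) control of the boundary map from the first higher Chow groups of $\M_{0,n}^{\textup{sm}} \times Y$. For (i): if $n \geq 3$, $\M_{0,n}^{\textup{sm}} = \mathcal{M}_{0,n}$ is open in $\AA^{n-3}$ and Künneth follows from homotopy invariance and excision; if $n \in \{0,1,2\}$, $\M_{0,n}^{\textup{sm}} = BG$ for $G$ one of $\PGL_2$, $\AA^1 \rtimes \GG_m$, $\GG_m$, and Künneth follows from Totaro-type approximations of $BG$ by open loci in linear representations combined with the projective bundle formula. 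For (ii), cover $\partial\,\M_{0,n}$ by images of gluing maps $\xi_\Gamma : \M_\Gamma = \prod_v \M_{0,n(v)} \to \M_{0,n}$ for prestable trees $\Gamma$ with at least one edge; since there are infinitely many such $\Gamma$, I would first prove the statement for each finite type open substack of $\M_{0,n}$ parameterizing curves with at most $k$ nodes (within which only finitely many graph strata appear), inducting on the number of edges and pushing forward an exterior product class from $\prod_v \M_{0,n(v)}^{\textup{sm}} \times Y$ (handled by iterated application of (i) in each factor) along $\xi_\Gamma$. Passing to the colimit as $k \to \infty$ using the good filtration of $\M_{0,n}$ then yields the statement for $\partial\,\M_{0,n} \times Y$.

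The main obstacle is (i) for $G = \PGL_2$, which is not a special group, so Künneth for $B\PGL_2 \times Y$ is not automatic. I would handle it via the extension $1 \to \GG_m \to GL_2 \to \PGL_2 \to 1$, which presents $B\PGL_2$ as a $B\GG_m$-gerbe over $BGL_2$, and transfer the Künneth iso for $BGL_2 \times Y$ (standard since $GL_2$ is special) to $B\PGL_2 \times Y$ via the projective bundle formula for the associated $\PP^1$-bundle. A secondary obstacle is (iii), since exactness of the excision sequence on the left requires input from first higher Chow groups of $\M_{0,n}^{\textup{sm}} \times Y$; I would feed in the higher-Chow-group computations on strata developed in \cite{BS1} and emphasized in the abstract.
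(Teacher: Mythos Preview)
Your overall architecture matches the paper's: excision along $\M_{0,n}^{\textup{sm}} \subset \M_{0,n}$, CKP for the smooth locus, an inductive treatment of the boundary via the gluing stratification, and first higher Chow groups of the strata to get injectivity in the five-lemma. The paper even separates the two halves exactly as you do, proving surjectivity (CKgP) by a codimension induction using only the four-lemma, and upgrading to an isomorphism (CKP) via the higher Chow input (developed in \emph{this} paper, Section~\ref{Sect:HigherChowK}, not in \cite{BS1}).

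There is, however, a genuine gap in your handling of $B\PGL_2$. The extension $1\to\GG_m\to GL_2\to\PGL_2\to 1$ makes $BGL_2$ a $B\GG_m$-gerbe over $B\PGL_2$, not the reverse; and there is no rank-$2$ vector bundle on $B\PGL_2$ to projectivize (that is exactly the Brauer obstruction), so ``the projective bundle formula for the associated $\PP^1$-bundle'' has no obvious referent. A $B\GG_m$-gerbe by itself does not transfer the K\"unneth isomorphism in the direction you need. The paper's fix is to use the universal curve
\[
\pi:\ \M_{0,1}^{\textup{sm}}=B\UU=[\PP^1/\PGL_2]\ \longrightarrow\ B\PGL_2,
\]
which is projective with $\pi$-relatively ample line bundle descended from $\mathcal{O}_{\PP^1}(2)$. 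One first reduces $B\UU$ to $B\GG_m$ via the affine bundle $B\GG_m\to B\UU$ (so CKP for $B\UU$ follows from CKP for $B\GG_m$), and then transfers to $B\PGL_2$ by showing that $(\mathrm{id}\times\pi)_*$ is surjective (via $(\mathrm{id}\times\pi)_*\big(\tfrac12 c_1(\mathcal{O}(2))\cap(\mathrm{id}\times\pi)^*\alpha\big)=\alpha$) and that $\pi^*$ is injective on Chow. This is the step you should replace your $GL_2$-gerbe argument with.
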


% Note that even for schemes $X,Y$, it is in general not true that the Chow group of the product $X \times Y$ satisfies $\CH_*(X \times Y) = \CH_*(X) \otimes_{\mathbb{Q}} \CH_*(Y)$, e.g. for $X=Y=E$ an elliptic curve. 
% %\jocomment{Can we cite a counter example?}\Ycomment{Diagonal class of $E\times E$ where $E$ is an elliptic curve is certainly possible without much effort.}. 
% On the other hand, this \emph{is} true for \emph{linear schemes}, a class of schemes including examples like projective spaces or Grassmannians which have stratifications by affine spaces (see \cite[Proposition 1]{Totaro2014} for a general statement). 

In the proposition above, the technical conditions (like $Y$ being a quotient stack or having a stratification by quotient stacks) are currently needed since some of the results we cite in our proof have them as assumptions. We expect that these conditions can be relaxed, but do not pursue this since Proposition \ref{Prop:M0nCKsummary} is sufficient for the purpose of our paper.

% necessary in the proof since 
% can be relaxed (e.g. to $Y$ being finite type and stratified by quotient stacks), but some of the technical results going into the proof are currently only proved for quotient stacks.

%\jocomment{Maybe mention Corollary \ref{Cor:CKPM0n}? If so, it would be nice if we could get rid of the assumption that the product is taken with a quotient stack. Of course it would also be neat to cover the case $n=0$, but this would require us to prove the CKP and hCKgP for $\M_{0}^\textup{sm} = B \mathrm{PGL}_2$, which might be too much of a headache ...}

%New phenomena arise in $\R^*(\M_{0,n})$ due to the fact that the Chow group of the locus where curves are smooth is nontrivial. As a result the tautological ring of $\M_{0,n}$ is not generated by boundary strata and there exist relations among $\psi$-classes and boundary strata. 
\subsubsection*{Relations}
Returning to the stacks $\M_{0,n}$ themselves, we also give a full description of the set of linear relations between the generators $[\Gamma, \alpha]$ above. 
An important example is the degree one relation  
\begin{equation} \label{eqn:psi12}
   \psi_{1} + \psi_{2} = \begin{tikzpicture}[scale=0.7, baseline=-3pt,label distance=0.3cm,thick,
    virtnode/.style={circle,draw,scale=0.5}, 
    nonvirt node/.style={circle,draw,fill=black,scale=0.5} ]
    % \node [virtnode] (A) {} node[below]{$A_1$}; works
    \node at (0,-.2) [nonvirt node] (A) {};
    \node at (2,-.2) [nonvirt node] (B) {};
    \draw [-] (A) to (B);
    \node at (-.7,.5) (n1) {$1$};
    % \node at (-.86,0) (n0) {$I_1$};
    % \node at (-.7,-.5) (n2) {$n+1$};
    \draw [-] (A) to (n1);
    % \draw [-] (A) to (n2);
    \node at (2.7,.5) (m1) {$2$};
    % \node at (3.86,0) (m0) {$\{1, \ldots, n\} \setminus \{i\}$};
    \draw [-] (B) to (m1);
    \end{tikzpicture} \in \CH^1(\M_{0,2})
\end{equation}
on $\M_{0,2}$. 
What we can show is that \emph{all} tautological relations in genus $0$ are implied by the relation \eqref{eqn:psi12} together with the natural extension of the relation \eqref{eqn:wdvveq} to $\CH^1(\M_{0,4})$. % generate \emph{all} relations between the generators $[\Gamma, \alpha]$ described above. 
% are generated (in a sense made precise in Definition \ref{Def:relsgeneratedby}) by 
% All relations among tautological classes are generated by a degree one relation in $\CH^{1}(\M_{0,2})$ and the WDVV-relation in $\CH^{1}(\M_{0,4})$. 
\begin{theorem}[informal version, see Theorems \ref{Thm:fullgenus0rels} and \ref{Thm:WDVVnfrels}]\label{Thm:wdvv}
 For $n \geq 0$, the system of all linear relations in $\CH^*(\M_{0,n})$ between the decorated strata classes $[\Gamma, \alpha]$ of normal form  is generated by the WDVV relation (\ref{eqn:wdvveq}) on $\M_{0,4}$ and the relation (\ref{eqn:psi12}) on $\M_{0,2}$.
\end{theorem}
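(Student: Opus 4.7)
The plan is to mirror the inductive strategy used in the proof of Theorem \ref{Thm:M0ngenerators}, but to track relations rather than generators. Working by induction on $n$ (with small base cases checked directly, building on Keel's presentation for $\Mbar_{0,n}$), the key tool is the localization long exact sequence for Bloch's higher Chow groups associated to the open-closed decomposition $\M_{0,n}^{\textup{sm}} \subset \M_{0,n} \supset \partial\,\M_{0,n}$:
\[
\CH^{p-1}(\M_{0,n}^{\textup{sm}},1) \xrightarrow{\delta} \CH^{p-1}(\partial\,\M_{0,n}) \xrightarrow{\iota_*} \CH^p(\M_{0,n}) \xrightarrow{j^*} \CH^p(\M_{0,n}^{\textup{sm}}) \to 0.
\]
Any linear relation among the normal form generators of $\CH^p(\M_{0,n})$ must project to a relation in $\CH^p(\M_{0,n}^{\textup{sm}})$. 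By \eqref{eqn:nontrivChowsmooth} and \eqref{eqn:trivialChowsmooth} this latter ring is either a polynomial ring in $\kappa_2$ or $\psi_1$, or simply $\QQ$, and the normal form generators whose underlying graph is trivial restrict to a basis. Hence, up to subtracting a class supported on the boundary, every relation is supported on $\partial\,\M_{0,n}$, and the problem reduces to controlling (i) relations among cycles in $\CH^{p-1}(\partial\,\M_{0,n})$ and (ii) the image of $\delta$.

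For (i), note that $\partial\,\M_{0,n}$ is the union of the images of the gluing maps $\xi_\Gamma : \M_\Gamma \to \M_{0,n}$ with $\M_\Gamma = \prod_v \M_{0,n(v)}$. The Chow-K\"unneth statement of Proposition \ref{Prop:M0nCKsummary}, combined with the inductive hypothesis applied factor by factor, shows that every relation on a single $\M_\Gamma$ is generated by WDVV and \eqref{eqn:psi12}. To pass from the disjoint union $\bigsqcup_\Gamma \M_\Gamma$ to its actual image in $\partial\,\M_{0,n}$, one handles the overlaps by the stratum-algebra formalism of \cite{BS1}: identifications between decorated strata classes supported on different $\xi_\Gamma$ correspond to edge-contraction rules, and checking that the required equalities are again consequences of WDVV and \eqref{eqn:psi12} is a finite combinatorial verification.

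The main obstacle is (ii), identifying the image of $\delta$ with relations generated by WDVV and \eqref{eqn:psi12}. For $n \geq 3$, $\M_{0,n}^{\textup{sm}} = \mathcal{M}_{0,n}$ is a hyperplane-arrangement complement, and one computes $\CH^{*}(\mathcal{M}_{0,n},1)$ by an inductive peeling argument using the forgetful morphism $\M_{0,n} \to \M_{0,n-1}$; the image of $\delta$ is then matched, via pullback compatibility and the projection formula, with pullbacks of the WDVV relation from $\M_{0,4}$ under forgetful maps, recovering Keel's relations at the level of the boundary. For $n = 0,1,2$ the smooth locus is the classifying stack of an extension of $\GG_m$ (respectively $\PGL_2$ for $n=0$) by a unipotent group, and one must compute $\CH^*(BG,1)$ directly for these $G$, for instance by reduction to a maximal torus or via an affine bundle argument. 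The expected output is that the only new boundary class produced in the $n=2$ case is precisely the one implementing the relation \eqref{eqn:psi12}, while for $n = 0,1$ the image of $\delta$ consists of classes already pushed forward from $\M_{0,2}$ along gluings, hence is again accounted for by \eqref{eqn:psi12}. Once this explicit dictionary between the image of $\delta$ and the two generating relations is in place at every $n$, the induction closes.
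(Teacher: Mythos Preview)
Your overall shape is right --- use the localization sequence with higher Chow groups to identify the relations with the image of the connecting homomorphism --- but there are two genuine problems with the execution.

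\textbf{The induction variable is wrong.} You propose induction on $n$, reducing relations on $\M_{0,n}$ to relations on the factors $\M_{0,n(v)}$ of the boundary components $\M_\Gamma$. But the boundary divisors of $\M_{0,n}$ are parametrized by $\M_{0,a} \times \M_{0,b}$ with $a+b=n+2$ and $a,b \geq 1$, so one factor can carry $n+1$ markings. The inductive hypothesis is therefore unavailable. The paper avoids this by stratifying $\M_{0,n}$ not by a single open/closed pair but by the number of edges $p$ of the dual graph, setting up the excision sequence
\[
\CH^*(\M_{0,n}^{=p},1) \xrightarrow{\partial} \CH^{*-1}(\M_{0,n}^{\geq p+1}) \to \CH^*(\M_{0,n}^{\geq p}) \to \CH^*(\M_{0,n}^{=p}) \to 0
\]
and iterating over $p$. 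A key lemma (Lemma \ref{Lem:normalform_isom}) is that the normal-form generators with exactly $p$ edges give a \emph{basis} of $\CH^*(\M_{0,n}^{=p})$; this splits the sequence and sidesteps the overlap bookkeeping you allude to in step (i).

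\textbf{The prediction for $\delta$ at small $n$ is incorrect.} You expect the connecting homomorphism for $n=2$ to produce the relation \eqref{eqn:psi12}. In fact, for $n \leq 3$ the indecomposable part $\RCH^*(\M_{0,n}^{\textup{sm}},1)$ vanishes (Proposition \ref{Pro:M0nsmCKP_higherChow}), and since $\partial$ factors through the indecomposable part (Remark \ref{Rmk:partialvanishonCHk1}), the map $\delta$ is \emph{zero} for these $n$. The relation \eqref{eqn:psi12} is not produced by any connecting homomorphism; its role is rather to reduce arbitrary decorated strata to normal form (Proposition \ref{Pro:kappapsirels}), after which the only remaining relations among normal-form classes come from WDVV alone (Theorem \ref{Thm:WDVVnfrels}). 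The WDVV relations themselves arise from the connecting homomorphism only at vertices with $n(v) \geq 4$: one first computes $\partial$ explicitly for $\mathcal{M}_{0,4}$ and then transports this via forgetful pullback and the product formula for $\partial$ (Lemma \ref{Lem:HigherChowProductcompat}) to all deeper strata.
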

We give a precise description what we mean by the system of relations ``generated'' by (\ref{eqn:wdvveq}) and (\ref{eqn:psi12}) in Definition \ref{Def:relsgeneratedby}, but roughly the allowed operations are as follows:
\begin{itemize}
    \item For $n \geq 4$ we can pull back the WDVV relation \eqref{eqn:wdvveq} under the morphism $\M_{0,n} \to \M_{0,4}$ forgetting $n-4$ of the marked points.%\Ycomment{For me, it is a bit strange that we exclude $n=4$. For example after restricting to $\Mbar_{0,4}$, I think we cannot see the WDVV relation.}
    \item We can multiply the relation \eqref{eqn:psi12} by an arbitrary polynomial in $\psi_1, \psi_2$.
    \item Given a decorated stratum $[\Gamma_0, \alpha_0]$ in normal form, a vertex $v \in V(\Gamma_0)$ at which $\alpha_0$ is the trivial decoration and a known relation $R_0$ in the Chow group $\CH^*(\M_{0,n(v)})$ associated to the vertex $v$, we can create a new relation by gluing $R_0$ into the vertex $v$ of $[\Gamma, \alpha]$. See Example \ref{exa:gluingrelationinvertex} for an illustration.
\end{itemize}
Again, our proof strategy for Theorem \ref{Thm:wdvv} begins by looking at the excision sequence \eqref{eqn:excisionintro1}, but now extended on the left using the first higher Chow group of $\M_{0,n}^\textup{sm}$, again defined by the work of \cite{kreschartin}
\begin{equation} \label{eqn:excisionintro2}
    \CH^*(\M_{0,n}^\textup{sm}, 1) \xrightarrow{\partial} \CH^{*-1}(\partial\, \M_{0,n}) \to \CH^*(\M_{0,n}) \to \CH^*(\M_{0,n}^\textup{sm}) \to 0\,.
\end{equation}
To illustrate how we can compute tautological relations using this sequence, consider the set of prestable graphs $\Gamma_i$ with exactly one edge. The associated decorated strata $[\Gamma_i]$ are supported on $\partial \M_{0,n}$ and in fact form a basis of $\CH^0(\partial \M_{0,n})$. Then from \eqref{eqn:excisionintro2} we see that the linear relations between the classes $[\Gamma_i] \in \CH^1(\M_{0,n})$ are exactly determined by the image of $\partial$. 

For this purpose, we compute the first higher Chow groups of
% \jocomment{can we upgrade to just "arbitrary products of spaces $\M_{0,n_i}^\textup{sm}$ ($n_i \geq 0$)"?}\Ycomment{Maybe this is possible. So do you want to upgrade CKP for finite products of $\M_{0,n}$?}
the space $\M_{0,0}^\textup{sm}$ and finite products of spaces $\M_{0,n_i}^\textup{sm}$ ($n_i \geq 1$), which parametrize strata in the boundary of $\M_{0,n}$. The corresponding results are given in Propositions \ref{Pro:M0nsmCKP_higherChow} and \ref{Pro:M0nsmCKP_stable_higherChow}. The proof of Theorem \ref{Thm:wdvv} then proceeds by an inductive argument using, again, the stratification of $\M_{0,n}$ according to dual graphs.

Restricting our argument to the moduli spaces $\Mbar_{0,n}$ of stable curves, our approach to tautological relations via higher Chow groups gives a new proof that the relations between classes of strata are additively generated by boundary pushforwards of WDVV relations. As mentioned before, this result was originally stated by Kontsevich and Manin in \cite[Theorem 7.3]{KontsevichManinGW} together with a sketch of proof which was expanded in \cite{KMproduct}. 

The proof relied on Keel's result \cite{keel} that the WDVV relations generate the ideal of relations multiplicatively and thus required an explicit combinatorial analysis of the product structure of $\CH^*(\Mbar_{0,n})$. In turn, the original proof by Keel proceeded by constructing $\Mbar_{0,n}$ as an iterated blowup of $(\PP^1)^{n-3}$, carefully keeping track how the Chow group changes in each step. 

In comparison, our proof is more conceptual, since we can trace each WDVV-relation on $\Mbar_{0,n}$ to a generator of a higher Chow group $\CH^*(\mathcal{M}^\Gamma,1)$ of some stratum $\mathcal{M}^\Gamma \subseteq \Mbar_{0,n}$ of the moduli space. 
A very similar approach appears in \cite{Petersen1}, where Petersen used the mixed Hodge structure of $\mathcal{M}_{0,n}$ and the spectral sequence associated to stratification of $\Mbar_{0,n}$ to reproduce \cite{keel, KontsevichManinGW, KMproduct}.

% \jocomment{After the discussion with Rahul yesterday, I think that maybe the Kontsevich-Manin paper he has in mind is not \cite{KontsevichManinGW} but instead \cite{KMproduct}? Because he mentioned that he is thinking of an Inventiones paper specifically and \cite{KontsevichManinGW} is not in Inventiones.
% We'll have to sit down carefully and see which one is actually relevant.
% }\Ycomment{Yes you are right. But for me \cite{KontsevichManinGW} looks more relevant...}
% \jocomment{Note to self: rework this paragraph}
% I looked now more carefully at the papers and it seems that while the theorem was stated first in the 94 paper, this paper only provided some two-page sketch, which was then expanded in 15 of the 27 pages of the 96 Inventiones paper

%\Ycomment{I think additive generators of relations first appeared in \cite[Theorem 7.3]{KontsevichManinGW}.}
%\jocomment{That's true, and we should cite this. In fact, in a lot of ways the result we show is much closer to Kontsevich-Manin than Keel, since Keel really shows that the boundary divisors generate as an algebra and that WDVV generates the relations as an ideal in that algebra.But I don't see how Kontsevich-Manin could be used to prove the Corollary below - they don't say anything about relations inside the boundary, or do they?}

A non-trivial consequence of our proof is the following result, stating that in codimension at least two the Chow groups of $\Mbar_{0,n}$ agree with the Chow groups of its boundary $\partial \Mbar_{0,n}$ (up to a degree shift).
\begin{corollary}[see Corollary \ref{Cor:WDVV_connecting_homomorphism}]
Let $n \geq 4$, then the inclusion $$\iota : \partial \Mbar_{0,n} \to \Mbar_{0,n}$$ of the boundary of $\Mbar_{0,n}$ induces an isomorphism
\[
\iota_* : \CH^\ell(\partial \Mbar_{0,n}) \to \CH^{\ell+1}(\Mbar_{0,n})
\]
for $\ell>0$.
\end{corollary}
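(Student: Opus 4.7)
The plan is to extract the corollary from Bloch's localization exact sequence in the form for algebraic stacks developed in \cite{kreschartin}, applied to the open-closed decomposition $\mathcal{M}_{0,n} \hookrightarrow \Mbar_{0,n} \hookleftarrow \partial \Mbar_{0,n}$. Writing $j$ for the open immersion and $\iota$ for the closed one, the relevant segment in codimension $\ell+1$ reads
\[
\CH^{\ell+1}(\mathcal{M}_{0,n}, 1) \xrightarrow{\partial_{\mathrm{loc}}} \CH^{\ell}(\partial \Mbar_{0,n}) \xrightarrow{\iota_*} \CH^{\ell+1}(\Mbar_{0,n}) \xrightarrow{j^*} \CH^{\ell+1}(\mathcal{M}_{0,n}) \to 0,
\]
and I want to deduce that for $\ell \geq 1$ the middle map $\iota_*$ is an isomorphism.

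Surjectivity is immediate: for $n \geq 3$ the classical vanishing recalled in \eqref{eqn:trivialChowsmooth} gives $\CH^{\ell+1}(\mathcal{M}_{0,n}) = 0$ for every $\ell \geq 0$, so $j^{*} = 0$ and $\iota_*$ is surjective. Notice that this part of the argument is insensitive to the hypothesis $\ell \geq 1$; the restriction matters only for injectivity.

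For injectivity, I would show that the connecting homomorphism $\partial_{\mathrm{loc}}$ vanishes once $\ell \geq 1$. This is the substantive step, and here I would invoke Proposition~\ref{Pro:M0nsmCKP_stable_higherChow}, which computes the first higher Chow groups of $\mathcal{M}_{0,n}$ (and of the products of $\M_{0,n_i}^\textup{sm}$ that parametrize boundary strata) and identifies the image of $\partial_{\mathrm{loc}}$ explicitly. What emerges from that computation is that the image of $\partial_{\mathrm{loc}}$ is generated by boundary cycles of unit classes $\mathcal{O}^*(\mathcal{M}_{0,n})_{\mathbb{Q}} \subset \CH^{1}(\mathcal{M}_{0,n}, 1)$ arising from cross-ratios of marked points; these produce precisely the WDVV relations among boundary divisors. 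Such relations live in codimension $0$ of $\partial \Mbar_{0,n}$, i.e.\ only in the $\ell = 0$ slot, so for $\ell \geq 1$ the image of $\partial_{\mathrm{loc}}$ is zero and $\iota_*$ is injective.

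The main obstacle is the higher Chow computation invoked for injectivity, but this is precisely the work carried out in Section~\ref{sec:genus_zero} en route to Theorem~\ref{Thm:wdvv}, so with that proposition in hand the corollary drops out of the exact sequence above by a one-line diagram chase. The hypothesis $\ell \geq 1$ is sharp: for $\ell = 0$ the kernel of $\iota_*$ is exactly the space of WDVV relations, and the corollary formalises the observation that in the stable genus-$0$ setting these are the \emph{only} linear dependencies among boundary pushforwards.
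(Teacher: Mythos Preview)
Your argument is correct and matches the paper's own proof: both use the localization sequence for $\mathcal{M}_{0,n} \subset \Mbar_{0,n}$, get surjectivity of $\iota_*$ from $\CH^{\ell+1}(\mathcal{M}_{0,n})=0$, and get injectivity for $\ell>0$ from the computation of $\CH^*(\mathcal{M}_{0,n},1)$ for the hyperplane complement $\mathcal{M}_{0,n}$. The only difference is phrasing: the paper states directly that $\CH^{\ell+1}(\mathcal{M}_{0,n},1)=0$ for $\ell>0$ (via \eqref{eq:firstHigherChow}), so the source of $\partial_{\mathrm{loc}}$ already vanishes, whereas you argue slightly more indirectly that the image of $\partial_{\mathrm{loc}}$ lands only in $\CH^0(\partial\Mbar_{0,n})$; these are equivalent observations.
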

This result follows easily using higher Chow groups: we have the exact sequence
\begin{equation*}
  \CH^{\ell+1}(\mathcal{M}_{0,n},1) \xrightarrow{\partial} \CH^\ell(\partial \Mbar_{0,n}) \xrightarrow{\iota_*} \CH^{\ell+1}(\Mbar_{0,n}) \to 0 \,.
\end{equation*}
Using that $\mathcal{M}_{0,n}$ can be seen as a hyperplane complement in $\AA^{n-3}$, it is easy to show that the group $\CH^{\ell+1}(\mathcal{M}_{0,n},1)$ vanishes for $\ell>0$. Thus for $\ell > 0$ the map $\iota_*$ is an isomorphism by the exact sequence. In Remark \ref{Rmk:ChowisomboundaryMbar0} we explain how, alternatively, the corollary follows from the results \cite{KontsevichManinGW, KMproduct} of Kontsevich and Manin.

%\jocomment{Still to include whether this generalizes Kontsevich-Manin.}\Ycomment{A nice argument from Kontsevich-Manin but let's change the word `surprising' to `nontrivial'...}

%By restricting Theorem \ref{Thm:M0ngenerators} and \ref{Thm:wdvv} to $\Mbar_{0,n}$ we give a new proof of Keel's result in \cite{keel}. 
% The key ingredient of our proof is the localization sequence extended to higher Chow groups. New combinatorial difficulties arise because we should consider boundary strata and $\psi$-classes simultaneously. Careful study of this combinatorical problem is addressed in Section \ref{sec:tautrelations}. 

\subsection*{Relation to other work}
\subsubsection*{Gromov-Witten theory}
Gromov-Witten theory studies intersection numbers on the moduli spaces $\Mbar_{g,n}(X,\beta)$ of stable maps to a nonsingular projective variety $X$. Since the spaces of stable maps admit forgetful morphisms
\begin{equation} \label{eqn:GWforgetfulmorphism}
\Mbar_{g,n}(X,\beta) \to \M_{g,n},\ (f: (C, p_1, \ldots, p_n) \to X) \mapsto (C, p_1, \ldots, p_n),
\end{equation}
results about the Chow groups of $\M_{g,n}$ can often be translated to results about Gromov-Witten invariants of \emph{arbitrary} target varieties $X$.\footnote{Note that a priori it is not possible to directly pull back classes in $\CH^*(\M_{g,n})$ under the map $\Mbar_{g,n}(X,\beta) \to \M_{g,n}$, since this map is in general neither flat nor lci. However, there exists an isomorphism $$\CH^*(\M_{g,n}) \to \CHOP^*(\M_{g,n})$$from the Chow group of $\M_{g,n}$ to its operational Chow group, and operational Chow classes are functorial under arbitrary morphisms. Then, any operational Chow class acts on the Chow group of $\Mbar_{g,n}(X,\beta)$, see Section \cite[Appendix C.]{BS1}.}

As an example, in \cite{gathmann} Gathmann used the pullback formula of $\psi$-classes along the stabilization morphism $\st\colon \M_{g,1}\to \Mbar_{g,1}$ to prove certain properties of the Gromov-Witten potential. %In Section \ref{calculus} we compute arbitrary pullbacks of tautological classes under the stabilization map, in particular recovering Gathmann's result.
Similarly, the paper \cite{LeePandharipande04} proved degree one relations on the  moduli space $\Mbar_{0,n}(\PP^N,d)$ of stable maps to a projective space and used them to reduce two pointed genus $0$ potentials to one pointed genus $0$ potentials. 
As we explain in Example \ref{Exa:LeePandharipande}, the relations used in \cite{LeePandharipande04} are the pullback of the tautological relation \eqref{eqn:psi12} on $\M_{0,2}$ and a similar relation on $\M_{0,3}$ under forgetful morphisms \eqref{eqn:GWforgetfulmorphism}.
%\Ycomment{A nontrivial consequence for GW theory is that we don't get further relations among $g=0$ invariants.  coming from $\M_{0,n}$.}\jocomment{I agree with you in principle, though of course the relations in \cite{LeePandharipande04} are a "creative" way of using the relations and it's not clear to me that there could not be some clever ideas that could get you further useful results. But if you want to propose a formulation for a statement, please go ahead.}\Ycomment{Ok, let me not try to write this implication in our paper. It is somehow not so clear to formulate this in a sentence.}

\subsubsection*{Chow rings of open substacks of $\M_{0,n}$}
Several people have studied Chow rings with rational coefficients of open substacks of $\M_{0,n}$, and we explain how their results relate to ours.

In \cite{oesinghaus}, Oesinghaus computed the Chow rings of the loci $\M_{0,2}^\textup{ss}$ and $\M_{0,3}^\textup{ss}$ of semistable curves in $\M_{0,2}$ and $\M_{0,3}$. His proof identified the rings in terms of the known algebra of quasi-symmetric functions $\mathrm{QSym}$ (see \cite{quasisymmetric} for an overview). However, for many generators of $\mathrm{QSym}$ it remained unclear which (geometric) cycle classes on $\M_{0,2}^\textup{ss}$ and $\M_{0,3}^\textup{ss}$ they corresponded to. 
In \cite{BS1} we answered this question, identifying an additive basis of $\mathrm{QSym}$ with explicit decorated strata classes in the tautological rings of $\M_{0,2}^\textup{ss}$ and $\M_{0,3}^\textup{ss}$. 
In Example \ref{Exa:Oesinghaus} below we continue this argument by showing how Theorem \ref{Thm:M0ngenerators} and \ref{Thm:wdvv} can be used to give a new proof of Oesinghaus' results, showing that the decorated strata classes above are indeed linearly independent generators of the Chow group.

On the other hand, in \cite{fulghesurational, fulghesutaut, fulghesu3nodes} Fulghesu gave a computation of the Chow ring $\CH^*(\M_{0,3}^{\leq 3})$ of the locus $\M_{0,0}^{\leq 3}$ of curves with at most three nodes inside $\M_{0,0}$. Using a computer program, we compare his results to ours and find that our results \emph{almost} agree, except for the fact that in \cite{fulghesu3nodes} there is a missing tautological relation in the final step of the proof. This is explained in detail in Example \ref{Exa:fulghesu}.

\subsection*{Outlook and open questions}
We want to finish the introduction with a discussion of some conjectures and questions about the Chow groups of $\M_{g,n}$. 

The first concerns the relation to the Chow groups of the moduli spaces $\Mbar_{g,n}$ of stable curves. Since $\Mbar_{g,n}$ is an open substack of $\M_{g,n}$, the Chow groups of $\M_{g,n}$ determine those of $\Mbar_{g,n}$. The following conjecture would imply that the converse holds as well.
\begin{conjecture*}[Conjecture \ref{conj:pullbackinjective}] 
Let $(g,n) \neq (1,0)$, then for a fixed $d \geq 0$ there exists $m_0 \geq 0$ such that for any $m \geq m_0$, the forgetful morphism\footnote{Note that, importantly, the morphism $F_m$ does not stabilize the curve $C$, it simply forgets the last $m$ markings and returns the corresponding prestable curve.}
\[F_m : \Mbar_{g,n+m} \to \M_{g,n}, (C,p_1, \ldots, p_n, p_{n+1}, \ldots, p_{n+m}) \mapsto (C,p_1, \ldots, p_n)\]
satisfies that the pullback
\[F_m^* : \CH^d(\M_{g,n}) \to \CH^d(\Mbar_{g,n+m})\]
is injective.
\end{conjecture*}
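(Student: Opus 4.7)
The plan is to factor $F_m$ as $F_m = j_m \circ \pi_m$, where $\pi_m : \M_{g,n+m} \to \M_{g,n}$ is the forgetful morphism (without stabilization) and $j_m : \Mbar_{g,n+m} \hookrightarrow \M_{g,n+m}$ is the open inclusion of the stable locus. The problem then splits into showing (a) $\pi_m^*$ is injective on Chow groups, and (b) the restriction $j_m^*|_{\mathrm{im}\,\pi_m^*}$ is injective in degree $d$ for $m \geq m_0(d)$.

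For part (a), even though $\pi_m$ is not proper (its generic fibers are smooth loci of curves with $n$ punctures removed, hence open), one expects $\pi_m^*$ to be injective in all degrees. A natural strategy is to invoke a Chow--K\"unneth-type decomposition analogous to Proposition \ref{Prop:M0nCKsummary} in genus $0$, writing $\CH^*(\M_{g,n+m})$ as a module over $\CH^*(\M_{g,n})$ generated by the ``vertical'' tautological classes (products of $\psi$-classes at the extra markings and $\kappa$-corrections adapted to them), so that $\pi_m^*$ exhibits $\CH^*(\M_{g,n})$ as a direct summand. Alternatively, one could exploit the existence of many local sections of $\pi_m$ (obtained by letting a new marking approach an existing marking, producing a boundary divisor that maps isomorphically to $\M_{g,n}$) together with the corresponding excess-intersection formulas.

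For part (b), the excision sequence
\begin{equation*}
\CH^{d-1}(U_m) \to \CH^d(\M_{g,n+m}) \xrightarrow{j_m^*} \CH^d(\Mbar_{g,n+m}) \to 0,
\end{equation*}
with $U_m = \M_{g,n+m}\setminus\Mbar_{g,n+m}$ the closed unstable locus, reduces the question to showing that no nonzero class of the form $\pi_m^*\alpha$ is supported on $U_m$ when $m \geq m_0(d)$. Representing $\alpha$ as a $\QQ$-linear combination of normal-form decorated strata $[\Gamma,\alpha_\Gamma]$ (Theorem \ref{Thm:M0ngenerators} in genus $0$; conjectural in general), the pullback $\pi_m^*[\Gamma,\alpha_\Gamma]$ decomposes as a sum of decorated strata on $\M_{g,n+m}$ indexed by distributions of the $m$ extra markings among the vertices of $\Gamma$. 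Distributions leaving some vertex unstable contribute to $\iota_*\CH^{d-1}(U_m)$, while the remaining ``stable distributions'' assemble into $F_m^*[\Gamma,\alpha_\Gamma] \in \CH^d(\Mbar_{g,n+m})$. So the conjecture reduces to linear independence of these symmetrized stable-distribution classes as $(\Gamma,\alpha_\Gamma)$ runs over a basis of degree-$d$ normal-form classes.

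The principal obstacle is this linear independence, and it is precisely where the hypothesis $m \geq m_0(d)$ becomes crucial. Two ingredients would be brought to bear: the $S_m$-symmetry permuting the new markings shows that each $F_m^*[\Gamma,\alpha_\Gamma]$ is $S_m$-invariant and that distinct normal-form classes yield sums over disjoint $S_m$-orbits of stable graphs; and the additive description of relations in $\CH^*(\Mbar_{g,n+m})$ as boundary pushforwards of WDVV and the relation \eqref{eqn:psi12} (Theorem \ref{Thm:WDVVnfrels} in genus $0$) shows that, for $m$ much larger than $d$, such relations can only couple strata differing in a bounded number of edges and markings, hence cannot produce cancellations between contributions originating from distinct underlying $(\Gamma_i,\alpha_i)$. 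Formalizing this into a clean induction on the number of edges of $\Gamma$, while tracking the $\psi$- and $\kappa$-correction terms arising from the standard pullback formulas, is where the main technical work will lie. In higher genus, where a tautological description of $\CH^*(\M_{g,n})$ is still conjectural and the analogue of Theorem \ref{Thm:WDVVnfrels} is unavailable, genuinely new input will be required, which is what makes the conjecture genuinely open outside of genus $0$.
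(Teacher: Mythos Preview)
The statement you are attempting to prove is stated in the paper as a \emph{conjecture}, not a theorem: the paper does not claim to have a proof. What the paper does establish is the partial result Proposition~\ref{pro:pullbackkerneldecreasing}, showing that the kernels $\ker F_m^*$ form a non-increasing sequence in $m$ (so that injectivity for one $m_0$ implies it for all $m \geq m_0$), together with numerical evidence in genus $0$ for small $(n,d)$ (Figure~\ref{fig:Chowpullbackranks}). The paper explicitly notes the obstacle that a smooth surjective morphism with non-proper fibres need not have injective Chow pullback, and leaves the full statement open.

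Your proposal is therefore not comparable to a proof in the paper, since none exists. As a strategy, your outline is reasonable in spirit but has genuine gaps that you yourself flag. A few concrete points:
\begin{itemize}
\item Your factorization should read $F_m = \pi_m \circ j_m$ (first include $\Mbar_{g,n+m}$ into $\M_{g,n+m}$, then forget), so that $F_m^* = j_m^* \circ \pi_m^*$; the order you wrote is reversed.
\item For part (a), injectivity of $\pi_m^*$ is not obvious even in genus $0$: the Chow--K\"unneth property of Proposition~\ref{Prop:M0nCKsummary} gives a surjection from the tensor product, not a direct-summand splitting identifying $\CH^*(\M_{g,n})$ inside $\CH^*(\M_{g,n+m})$. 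Your alternative via local sections would need the section to land in a locus where the pullback class survives, which is delicate precisely because the fibres are not proper.
\item For part (b), the claim that for $m \gg d$ the WDVV-type relations ``cannot produce cancellations between contributions originating from distinct underlying $(\Gamma_i,\alpha_i)$'' is exactly the heart of the matter and is not justified; relations on $\Mbar_{g,n+m}$ do mix strata with different underlying graphs after forgetting markings, and controlling this is where the conjecture is hard. Your own final paragraph concedes this.
\end{itemize}
In short: there is no proof in the paper to compare against, and your sketch, while a sensible roadmap, does not close the gap either --- which is consistent with the statement's status as an open conjecture.
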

It is easy to see that the system of morphisms $(F_m)_{m \geq 0}$ forms an atlas of $\M_{g,n}$ and that the complement of the image of $F_m$ has arbitrarily large codimension  as $m$ increases. Thus for a fixed degree $d$, the Chow groups $\CH^d(F_m(\Mbar_{g,n+m}))$ converge to $\CH^d(\M_{g,n})$, but it remains to verify that the pullback by $F_m$ indeed becomes injective. In Section \ref{Sec:forgetfulpullback} we provide some additional motivation and a number of cases $(n,d)$ in genus zero where the conjecture holds. 

Since the map $F_m^*$ sends tautological classes on $\M_{g,n}$ to tautological classes in $\Mbar_{g,n+m}$, the conjecture would also imply that knowing all tautological rings of moduli spaces of stable curves would uniquely determine the tautological rings of the stacks of prestable curves. In \cite{Pixtonconj}, Pixton proposed a set of relations between tautological classes on the moduli spaces of stable curves, proven to hold in cohomology \cite{ppz} and in Chow \cite{jandaEquivP1}, and he conjectured that these are \emph{all} tautological relations. Combined with the conjecture above, this would then determine all tautological rings of the stacks $\M_{g,n}$. It is an interesting question if Pixton's set of relations can also be generalized directly to the stacks of prestable curves to give a conjecturally complete set of relations.

Finally, recall that Theorems \ref{Thm:M0ngenerators} and \ref{Thm:wdvv} completely determine the Chow rings of $\M_{0,n}$. Given an open substack $U \subseteq \M_{0,n}$ which is a union of strata, it is easy to see that $\CH^*(U)$ is the quotient of $\CH^*(\M_{0,n})$ by the span of all tautological classes supported on the complement of $U$, so the Chow rings of such $U$ are likewise determined.

For such open substacks $U$ we can ask some more refined questions. The first concerns the structure of $\CH^*(U)$ as an algebra.

\vspace{0.4 cm}
\noindent \textbf{Question 1} (see Question \ref{quest:finitelygeneratedAlgebra})\label{quest:1}
%\begin{question*}[Question \ref{quest:finitelygeneratedAlgebra}]
Is it true that for $U \subset \M_{0,n}$ an open substack of finite type which is a union of strata, the Chow ring $\CH^*(U)$ is a finitely generated $\QQ$-algebra?
%\end{question*}
\vspace{0.4 cm}

Supporting evidence for this question is that it has an affirmative answer for all stacks $\M_{0,n}^\textup{sm}$ by \eqref{eqn:nontrivChowsmooth} and \eqref{eqn:trivialChowsmooth}, and 
by the computations in \cite{fulghesu3nodes}  also for the substacks $U = \M_{0,0}^{\leq e}$, $e=0,1,2,3$, of unmarked rational curves with at most $e$ nodes. Similar to the proof technique in \cite{fulghesu3nodes}, a possible approach to Question \ref{quest:1} for arbitary $U$ is to gradually enlarge $U$, adding one stratum of the moduli stack $\M_{0,n}$ at a time and showing in each step that only finitely many additional generators are necessary. 

Note that for $U$ not of finite type, Question \ref{quest:1} will have a negative answer in general: from \cite{oesinghaus} it is easy to see that the Chow ring $\CH^*(\M_{0,2}^\textup{ss})$ of the semistable locus in $\M_{0,2}$ is not finitely generated as an algebra.

Our second question concerns the Hilbert series
\[H_U = \sum_{d \geq 0} \dim_{\mathbb{Q}} \CH^d(U) t^d\]
of the Chow ring of $U$.

\vspace{0.4 cm}
\noindent \textbf{Question 2} (see Question \ref{quest:rationalHilbertSeries})\label{quest:2}
%\begin{question*}[Question \ref{quest:rationalHilbertSeries}]
Is it true that for $U \subset \M_{0,n}$ any open substack which is a union of strata, the Hilbert series $H_U$ is the expansion of a rational function at $t=0$?
%\end{question*}
\vspace{0.4 cm}

First note that a positive answer to Question \ref{quest:1} would imply Question \ref{quest:2} for all finite type substacks $U \subset \M_{0,n}$, since the Hilbert series of a finitely generated graded algebra is a rational function, all of whose poles are at roots of unity (\cite[Theorem 13.2]{matsumura}). 
However, Question \ref{quest:2} \emph{also} has a positive answer for the non-finite type stacks $U = \M_{0,2}^\textup{ss}$ and $\M_{0,3}^\textup{ss}$ studied in \cite{oesinghaus}. 
In Figure \ref{fig:hilbertseriesoverview} we collect some examples of Hilbert series for different $U$, computed in Example \ref{Exa:fulghesu} and Section \ref{Sect:opensubstack}. Note how for $U = \M_{0,2}^\textup{ss}$ or $\M_{0,3}^\textup{ss}$ the rational function $H_U$ has poles at $1/2$, which is not a root of unity (thus giving one way to see that the Chow rings are not finitely generated).
\begin{figure}[htb]
    \begin{center}
    {\renewcommand{\arraystretch}{2}%
    \begin{tabular}{cc}
      $U$  & $H_U$ \\
      \hline$\M_0^{\leq 0}$  & $\displaystyle \frac{1}{1-t^2}$\\
      $\M_0^{\leq 1}$  & $\displaystyle \frac{1}{(1-t^2)(1-t)}$ \\
      $\M_0^{\leq 2}$  & $\displaystyle \frac{t^4+1}{(1-t^2)^2(1-t)}$ \\
      $\M_0^{\leq 3}$ & $\displaystyle \frac{t^6 + t^5 + 2 t^4 + t^3 +1}{(1-t^2)^2(1-t)(1-t^3)}$\\
      $\M_{0,2}^\textup{ss}$ & $\displaystyle \frac{1}{1-2t}$\\
      $\M_{0,3}^\textup{ss}$ & $\displaystyle \frac{(1-t)^3}{(1-2t)^3}$
    \end{tabular} }
% \end{align*}
% \begin{align*}
% U & \M_0^{\leq 0} & \M_0^{\leq 1}\\
% H_U & \frac{1}{1-t^2} &\frac{1}{(1-t^2)(1-t)}
% \end{align*}
    \end{center}
    \caption{The Hilbert series of the Chow rings of open substacks $U$ of $\M_{0,n}$}
    \label{fig:hilbertseriesoverview}
\end{figure}    
%\Ycomment{The position of this figure is a bit funny. Do you know how to locate it in the correct position?}
%\jocomment{I think at the moment it centers the table as a whole block. Would it look more natural to you if the (previously visible) vertical line was centered? Is it ok now?}\Ycomment{It looks good to me, thanks!}

%In Section \ref{sec:comparison} we raise a question comparing the tautological ring of  $\M_{g,n}$ and the tautological ring of $\Mbar_{g,n}$. As a simple corollary, we compute the degree one Chow group of $\M_{g,n}$. This question would provide a way to understand the tautological ring of $\M_{g,n}$ for arbitrary $g$ and $n$.

\subsection*{Structure of the paper}
In Section \ref{sec:genus_zero} we treat the Chow groups of the stacks $\M_{0,n}$ of prestable curves of genus zero. We start in Section \ref{Sect:psiandkappa} by computing the Chow groups of the loci $\M_{0,n}^\textup{sm}$ of smooth curves and explaining how (most) $\kappa$ and $\psi$-classes on $\M_{0,n}$ can be expressed in terms of cycles supported on the boundary. In Section \ref{Sect:genuszerogenerators} we show that every class in the Chow ring of $\M_{0,n}$ is tautological.  In Section \ref{Sect:HigherChowK} we compute the first higher Chow groups of the strata of $\M_{0,n}$ and use this in Section \ref{sec:tautrelations} to classify the tautological relations on $\M_{0,n}$. We finish this part of the paper by discussing the relation to earlier work in Section \ref{Sect:Relpreviouswork} and including some observations and questions about Chow groups of open substacks of $\M_{0,n}$ in Section \ref{Sect:opensubstack}.

In Section \ref{sec:comparison} we compare the Chow rings of the stacks $\M_{g,n}$ of prestable curves and the stacks $\Mbar_{g,n}$ of stable curves. We present a conjectural relation between these in Section \ref{Sec:forgetfulpullback}. We extend the known results about divisor classes on $\Mbar_{g,n}$ to $\M_{g,n}$ in Section \ref{Sect:divisors} and discuss how the study of zero cycles extends in Section \ref{Sect:zerocycles}.

Finally, Appendix \ref{Sect:Gysinpullbackhigher} summarizes a construction of a Gysin pullback for higher Chow groups following \cite{DegliseJinKhan2018, khan2019virtual}.

\subsection*{Notation}
For the convenience of the reader, we provide an overview of notations used in the paper in Table \ref{Tab:Notations}.
{\renewcommand{\arraystretch}{1.5}
\begin{table}[htb]
\begin{center}
\begin{tabular}{|c|l|}
\hline
$\M_{g,n}$ & moduli space of prestable curves   \\ 
$\M_{g,n,a}$ & moduli space of prestable curves with values in a semigroup \\
$\M_\Gamma$ & $\prod_{v\in V(\Gamma)}\M_{g(v),n(v)}$, where $\Gamma$ is a prestable graph    \\
$\M^\Gamma$ & moduli space of curves with dual graph precisely $\Gamma$ \\
$\Rels_\WDVV$ & set of WDVV relations\\
$\Rels_{\kappa, \psi}$ & set of $\psi$ and $\kappa$ relations\\
\hline
\end{tabular}
\end{center}
\caption{Notations}
\label{Tab:Notations}
\end{table}
}
%\Ycomment{Want to add: (a) Relations to the Gromov-Witten theory (no further universal relations from $\M_{0,n}$). (b) I think it is helpful to point out the following confusing point (at least to me): How to formulate the question `Do relations in $\M_{g,n}$ can be written as combination of relations in $\Mbar_{g,n}$ and $\M_{0,2}$?' We have a split exact sequence
%\[0 \to ker \to \R^*(\M_{g,n}) \to \R^*(\Mbar_{g,n}) \to 0,\]
%so maybe I want to describe $ker$ in terms of relations on $\Mbar_{g,n}$ and $\M_{0,2}$. Do we have evidences for this statement? Or can we describe $\partial \CH^1(\Mbar_{g,n},1)$ restricted to tautological ring? (c) What are differences between $\R^*(\M_{g,n})$ and $\R^*(\Mbar_{g,n})$? e.g. $\R_0(\M_{g,n}) \neq \mathbb{Q}$, we cannot integrate cycles on $\M_{g,n}$.}

%\Ycomment{Since we introduced argument of higher Chow groups we might ask the following questions: (a) What is the definition tautological ring of $\CH^*(\Mbar,1)$? (b) If we define it as the smallest $\CH_*(\Mbar)$-module closed under the pushforward of tautological morphism, etc, can we describe additive basis? (c) I think this is related to the higher Chow K\"unneth decomposition for higher Chow groups. If additive basis has the form $[\Gamma,\gamma]$ where $\Gamma$: prestable graph, $\gamma\in \prod_v\CH^*(\mathcal{M}_{g(v),n(v)},*)$ for open locus, then we know what happens in the unstable vertices. So we can split the contribution of $\Mbar_{g,n}$ and unstable locus. (d) Can we compute $\partial$ on this tautological group of higher Chow?}

\section*{Acknowledgements}
We are also grateful to Andrew Kresch, Jakob Oesinghaus and Rahul Pandharipande for many interesting conversations.
We thank Lorenzo Mantovani, Alberto Merici, Hyeonjun Park and Maria Yakerson for helpful explanations concerning higher Chow groups. The first author would like to thank Adeel Khan who helped to understand the construction of Gysin pullback for higher Chow groups.

Y.~B. was supported by ERC Grant ERC-2017-AdG-786580-MACI and Korea Foundation for Advanced Studies (KFAS).
J.~S. was supported by the SNF Early Postdoc.Mobility grant 184245 and thanks
the Max Planck Institute for Mathematics in Bonn for its hospitality.

The project has received funding from the European Research Council (ERC) under the European Union Horizon 2020 research and innovation program (grant agreement No. 786580).

\section{The Chow ring in genus \texorpdfstring{$0$}{0}}\label{sec:genus_zero}
In this section we prove Theorem \ref{Thm:M0ngenerators} and Theorem \ref{Thm:wdvv}. These results completely describe the rational Chow group of $\M_{0,n}$.
\subsection{\texorpdfstring{$\psi$}{psi} and \texorpdfstring{$\kappa$}{kappa} classes in genus \texorpdfstring{$0$}{0}} \label{Sect:psiandkappa}
In \cite{KontsevichManinGW, KMproduct}, Kontsevich and Manin described the Chow groups of $\Mbar_{0,n}$ via generators given by boundary strata and additive relations, called the {\em WDVV relations}. Their approach relies on the fact that every class on $\Mbar_{0,n}$ can be represented by boundary classes without $\psi$ or $\kappa$ classes. This is because the locus of smooth $n$ pointed rational curves $\mathcal{M}_{0,n}$ has a trivial Chow group for $n\geq 3$. 

However the Chow group of the locus of smooth curves $\M_{0,n}^\textup{sm}$ is no longer trivial when $n=0,1,2$ and hence not all tautological classes on $\M_{0,n}$ can be represented by boundary classes. We first summarize what is known about the Chow groups of $\M_{0,n}^\textup{sm}$.
\begin{lemma} \label{Lem:Chowsmooth}
For the moduli spaces of prestable curves in genus $0$ we have
\begin{enumerate}[label=(\alph*)]
\item $\mathfrak{M}_{0,0}^{\textup{sm}} = B \mathrm{PGL}_{2} $ and $\CH^*(\mathfrak{M}_{0,0}^{\textup{sm}}) = \mathbb{Q}[\kappa_{2}]$,

\item $\mathfrak{M}_{0,1}^{\textup{sm}} = B \UU$ for 
\[\UU = \left \{ \begin{bmatrix} a & b \\ 0 & d \end{bmatrix} \in \mathrm{PGL}_{2} \right\} \cong \mathbb{G}_a \rtimes \mathbb{G}_m\]
and $\CH^*(\mathfrak{M}_{0,1}^{\textup{sm}}) = \mathbb{Q}[\psi_{1}]$,

\item $\mathfrak{M}_{0,2}^{\textup{sm}} \cong B \mathbb{G}_m$ and  $\CH^*(\mathfrak{M}_{0,2}^{\textup{sm}}) = \mathbb{Q}[\psi_{1}]$,

\item $\mathfrak{M}_{0,n}^{\textup{sm}} = \mathcal{M}_{0,n}$ and $\CH^*(\mathfrak{M}_{0,n}^{\textup{sm}}) = \mathbb{Q} \cdot [\mathfrak{M}_{0,n}^{\textup{sm}}]$ for $n \geq 3$.
\end{enumerate}
\end{lemma}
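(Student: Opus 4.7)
The plan is to identify each stack $\M_{0,n}^{\textup{sm}}$ explicitly as a classifying stack or coarse moduli space, and then read off the Chow ring from standard computations. Since every smooth genus zero curve is isomorphic to $\PP^1$, descent gives $\M_{0,n}^{\textup{sm}} = [\mathrm{Conf}_n(\PP^1)/\PGL_2]$, where $\mathrm{Conf}_n(\PP^1) = (\PP^1)^n \setminus \Delta$ denotes the configuration space of $n$ ordered distinct points. For $n = 0, 1, 2$ the $\PGL_2$-action is transitive, with stabilizer at the empty tuple, at $[1:0]$, and at $([1:0],[0:1])$ equal to $\PGL_2$, $\UU$, and the diagonal torus $\GG_m$ respectively, establishing the stacky identifications in (a), (b), and (c). For $n \geq 3$ the action is free (as $\PGL_2$ is sharply $3$-transitive on $\PP^1$), and fixing the first three points at $0, 1, \infty$ identifies $\M_{0,n}^{\textup{sm}}$ with the usual coarse moduli space $\mathcal{M}_{0,n}$, which is an open subscheme of $\AA^{n-3}$.

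For the Chow ring computations: part (d) follows by excision from $\CH^*(\AA^{n-3}) = \QQ$. Part (c) is the classical result $\CH^*(B\GG_m; \QQ) = \QQ[t]$. Part (b) reduces to (c) via the extension $1 \to \GG_a \to \UU \to \GG_m \to 1$: the induced morphism $B\UU \to B\GG_m$ has fibers that are torsors for the special unipotent group $\GG_a$ and hence $\AA^1$-contractible, so homotopy invariance yields $\CH^*(B\UU) \cong \CH^*(B\GG_m)$. For part (a), I would invoke the $\mu_2$-gerbe $B\mathrm{SL}_2 \to B\PGL_2$, use that rational pullback along this gerbe is an isomorphism (as $\mu_2$ is finite \'etale and we have rational coefficients), and apply the classical computation $\CH^*(B\mathrm{SL}_2; \QQ) = \QQ[c_2]$.

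It remains to identify these generators with the tautological classes. For (b) and (c), the tangent line $T_{[1:0]}\PP^1$ is acted on by the stabilizer through the character sending a diagonal matrix with entries $a, d$ to $a/d$, which is the identity character of $\GG_m$; hence $\psi_1 = c_1(T_{[1:0]}\PP^1)$ equals the generator. For (a), I would pull $\kappa_2 = \pi_*(c_1(\omega_\pi)^3)$ back to $B\mathrm{SL}_2$: the universal curve becomes $\PP(E)$ for the standard rank-$2$ representation $E$ of $\mathrm{SL}_2$, and $\omega_\pi = \mathcal{O}_{\PP(E)}(-2)$ since $\det E$ is trivial. Using the projective bundle relation $h^2 = -c_2(E)$, a direct push-pull yields $\kappa_2 = -8\,\pi_*(h^3) = 8\, c_2(E)$, a nonzero multiple of the generator. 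The main obstacle is precisely this last identification, and more broadly justifying the rational Chow-theoretic equivalence between $B\PGL_2$ and $B\mathrm{SL}_2$ cleanly; the remaining pieces are standard facts about classifying stacks of tori, unipotent extensions, and open subschemes of affine space.
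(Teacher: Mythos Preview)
Your argument is correct and considerably more explicit than the paper's own proof, which simply cites Fulghesu for parts (a)--(c) and observes for (d) that $\mathcal{M}_{0,n}$ is open in $\AA^{n-3}$. Two small points are worth flagging. First, $\psi_1$ is by definition $c_1$ of the \emph{co}tangent line, so you have a harmless sign slip in the identification for (b) and (c). Second, and more substantively, the fiber of $B\UU \to B\GG_m$ induced by the quotient $\UU \twoheadrightarrow \GG_m$ is the gerbe $B\GG_a$, not a $\GG_a$-torsor; your conclusion still holds because $\GG_a$ is special, but the cleaner route (used later in the paper when proving the Chow--K\"unneth property) is to go the other way: the inclusion $\GG_m \hookrightarrow \UU$ induces $B\GG_m \to B\UU$ with fiber $\UU/\GG_m \cong \AA^1$, an honest affine bundle to which homotopy invariance applies directly. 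Your handling of (a) via the $\mu_2$-gerbe $B\mathrm{SL}_2 \to B\PGL_2$ and the identification $\kappa_2 = 8\,c_2(E)$ is a legitimate and efficient shortcut in rational coefficients; the paper itself records the integral computation $\CH^*(B\PGL_2)_{\ZZ} = \ZZ[c_1,c_2,c_3]/(c_1,2c_3)$ from the literature, from which the rational statement is immediate.
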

\begin{proof}
The first three statements are proved in \cite{fulghesutaut}. The last statement comes from the fact that $\mathcal{M}_{0,n}$ is an open subscheme of $\mathbb{A}^{n-3}$.
\end{proof}
Note that for part (a) of the Lemma above, it is important that we work with $\mathbb{Q}$-coefficients. Indeed, the Chow groups with integral coefficients of $B \PGL_2 \cong B \mathrm{SO}(3)$ have been computed in \cite{PandharipandeEquivariant} as
\[\CH^*(B \mathrm{PGL}_{2})_{\mathbb{Z}} = \mathbb{Z}[c_1, c_2, c_3]/(c_1, 2 c_3)\,,\]
so we see that there exists a non-trivial $2$-torsion element in codimension $3$.

By Lemma \ref{Lem:Chowsmooth} we know that any monomial in $\kappa$ and $\psi$-classes on $\M_{0,n}$ can be written as a multiple of our preferred generators above (a power of $\kappa_2$ for $n=0$ or a power of $\psi_1$ for $n=1,2$) plus a contribution from the boundary.
Next we give explicit formulas how to do this. 

We start with the $\psi$-classes. For $n=0$ there is no marking and for $n=1$ the class $\psi_1$ is our preferred generator. For $n=2$ we have the following useful tautological relation.
\begin{lemma} There is a codimension one relation
\begin{equation*} \label{Lem:psi12}
   \psi_{1} + \psi_{2} =      \begin{tikzpicture}[scale=0.7, baseline=-3pt,label distance=0.3cm,thick,
    virtnode/.style={circle,draw,scale=0.5}, 
    nonvirt node/.style={circle,draw,fill=black,scale=0.5} ]
    % \node [virtnode] (A) {} node[below]{$A_1$}; works
    \node at (0,-.2) [nonvirt node] (A) {};
    \node at (2,-.2) [nonvirt node] (B) {};
    \draw [-] (A) to (B);
    \node at (-.7,.7) (n1) {$1$};
    % \node at (-.86,0) (n0) {$I_1$};
    % \node at (-.7,-.5) (n2) {$n+1$};
    \draw [-] (A) to (n1);
    % \draw [-] (A) to (n2);
    
    \node at (2.7,.7) (m1) {$2$};
    % \node at (3.86,0) (m0) {$\{1, \ldots, n\} \setminus \{i\}$};
    \draw [-] (B) to (m1);
    \end{tikzpicture}
\end{equation*}
in $\CH^{1}(\mathfrak{M}_{0,2})$.
\end{lemma}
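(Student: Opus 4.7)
My plan is to combine excision with pullback computations along the two gluing maps whose images are $\delta_0$ and $\delta_1$.

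By Lemma~\ref{Lem:Chowsmooth}(c), $\mathfrak{M}_{0,2}^{\mathrm{sm}} = B\mathbb{G}_m$, with universal curve $[\mathbb{P}^1/\mathbb{G}_m]$ (where $\mathbb{G}_m$ acts by scalars) and the markings sitting at the fixed points $0, \infty$. Since the tangent lines at these two points are $\mathbb{G}_m$-characters of opposite weight, the class $\psi_1 + \psi_2$ vanishes upon restriction to $\mathfrak{M}_{0,2}^{\mathrm{sm}}$. The excision sequence
\[
\mathrm{CH}^0(\partial\mathfrak{M}_{0,2}) \to \mathrm{CH}^1(\mathfrak{M}_{0,2}) \to \mathrm{CH}^1(\mathfrak{M}_{0,2}^{\mathrm{sm}}) \to 0,
\]
together with the fact that $\partial\mathfrak{M}_{0,2}$ has two irreducible components $\delta_0$ and $\delta_1$ (indexed by the two prestable graphs with one edge), then lets us write $\psi_1 + \psi_2 = a\delta_0 + b\delta_1$ for some $a, b \in \mathbb{Q}$.

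To pin down $a$, I would pull back along the gluing map $\xi_0 \colon \mathfrak{M}_{0,2} \times \mathfrak{M}_{0,2} \to \mathfrak{M}_{0,2}$ whose image is $\delta_0$, further restricted to the smooth locus of each factor. This restriction is an isomorphism $(B\mathbb{G}_m)^2 \xrightarrow{\sim} \delta_0^\circ$, on which $\xi_0^*\delta_1 = 0$, while the self-intersection formula yields $\xi_0^*\delta_0 = -\psi_*^{(1)} - \psi_*^{(2)}$, with $\psi_*^{(i)}$ denoting the $\psi$-class of the half-edge in the $i$-th factor. Meanwhile $\xi_0^*\psi_i$ is the $\psi$-class of the real marking in factor $i$, which by the first-paragraph vanishing applied factorwise equals $-\psi_*^{(i)}$. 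Comparing both sides forces $a = 1$.

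For $b$, I would argue similarly via the gluing map $\xi_1 \colon \mathfrak{M}_{0,3} \times \mathfrak{M}_{0,1} \to \mathfrak{M}_{0,2}$ with image $\delta_1$, restricted to $\mathrm{pt} \times B\UU$. Both $\xi_1^*\psi_1$ and $\xi_1^*\psi_2$ pull back from $\mathfrak{M}_{0,3}$ and restrict to zero on the point, and $\xi_1^*\delta_0 = 0$ as $\delta_0 \cap \delta_1^\circ = \emptyset$. The self-intersection formula identifies $\xi_1^*\delta_1$ with a nonzero multiple of the generator of $\mathrm{CH}^1(B\UU)$, forcing $b = 0$.

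The main technical point is the validity of the self-intersection identity $\xi_e^*[\delta_e] = -\psi' - \psi''$ for a gluing divisor in this stacky setting; I would invoke this as part of the calculus of decorated stratum classes developed in \cite{BS1}.
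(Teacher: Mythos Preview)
Your proof is correct and takes a genuinely different route from the paper for the coefficient determination. Both arguments share the first two steps: the vanishing of $\psi_1+\psi_2$ on $\M_{0,2}^{\mathrm{sm}}\cong B\mathbb{G}_m$ via the opposite-weight observation, and the excision sequence reducing the question to determining $a,b$ in $\psi_1+\psi_2=a\delta_0+b\delta_1$. The paper then fixes $a,b$ by pulling back along the forgetful chart $F_3\colon\Mbar_{0,5}\to\M_{0,2}$ and reading off the answer in the classical Chow ring of $\Mbar_{0,5}$. You instead restrict along the two gluing maps to the open strata $\delta_0^\circ\cong(B\mathbb{G}_m)^2$ and $\delta_1^\circ\cong B\UU$ and invoke the self-intersection identity $\xi_e^*[\delta_e]=-\psi'-\psi''$ from the decorated-stratum calculus of \cite{BS1}. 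Your argument is more intrinsic to the stratification of $\M_{0,2}$ and never leaves the prestable world; the paper's approach has the advantage of reducing to a concrete computation on a smooth projective scheme, and it also aligns with the method used immediately afterward in Lemma~\ref{lem:removePsi}, which determines coefficients the same way via pullback to $\Mbar_{0,5}$.
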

\begin{proof}
For the identification $\M_{0,2}^\textup{sm} \cong B \mathbb{G}_m = [\mathrm{Spec}\, k/\mathbb{G}_m]$, the universal family over $\M_{0,2}^\textup{sm}$ is given by
\begin{equation*}
\begin{tikzcd}
 {[\mathbb{P}^1 / \mathbb{G}_m]} \arrow[d] \\
 {[\mathrm{Spec}\, k/\mathbb{G}_m]} \arrow[u, bend right, "p_2 = \infty", swap] \arrow[u, bend left, "p_1 = 0"]\,.
\end{tikzcd}    
\end{equation*}
We have that $- \psi_1, -\psi_2$ are the first Chern classes of the normal bundles of $p_1, p_2$. We have $\psi_{1} + \psi_{2} = 0$ in $\CH^1(\M_{0,2}^\textup{sm})$ because the $\mathbb{G}_m$-action on $\mathbb{P}^1$ has opposite weights at $0, \infty$. Thus, from the 
excision sequence
\[\CH^0(\partial\, \M_{0,2}) \to \CH^1(\M_{0,2}) \to \CH^1(\M_{0,2}^\textup{sm}) \to 0\,,\]
it follows that $\psi_{1} + \psi_{2}$ can be written as a linear combination of fundamental class of two boundary strata
\[\psi_1 + \psi_2 = a\, \begin{tikzpicture}[scale=0.7, baseline=-3pt,label distance=0.3cm,thick,
    virtnode/.style={circle,draw,scale=0.5}, 
    nonvirt node/.style={circle,draw,fill=black,scale=0.5} ]
    % \node [virtnode] (A) {} node[below]{$A_1$}; works
    \node at (0,-.2) [nonvirt node] (A) {};
    \node at (2,-.2) [nonvirt node] (B) {};
    \draw [-] (A) to (B);
    \node at (-.5,.7) (n1) {$1$};
    % \node at (-.86,0) (n0) {$I_1$};
    % \node at (-.7,-.5) (n2) {$n+1$};
    \draw [-] (A) to (n1);
    % \draw [-] (A) to (n2);
    
    \node at (2.5,.7) (m1) {$2$};
    % \node at (3.86,0) (m0) {$\{1, \ldots, n\} \setminus \{i\}$};
    \draw [-] (B) to (m1);
    \end{tikzpicture}
+ b  \,\, \begin{tikzpicture}[scale=0.7, baseline=-3pt,label distance=0.3cm,thick,
    virtnode/.style={circle,draw,scale=0.5}, 
    nonvirt node/.style={circle,draw,fill=black,scale=0.5} ]
    % \node [virtnode] (A) {} node[below]{$A_1$}; works
    \node at (0,-.2) [nonvirt node] (A) {};
    \node at (2,-.2) [nonvirt node] (B) {};
    \draw [-] (A) to (B);
    \node at (1.5,.7) (n1) {$1$};
    % \node at (-.86,0) (n0) {$I_1$};
    % \node at (-.7,-.5) (n2) {$n+1$};
    \draw [-] (B) to (n1);
    \node at (2.5,.7) (m1) {$2$};
    % \node at (3.86,0) (m0) {$\{1, \ldots, n\} \setminus \{i\}$};
    \draw [-] (B) to (m1);
    \end{tikzpicture}\,.\] 
The coefficients can be determined by pulling back under the forgetful morphism $F_3 : \Mbar_{0,5} \to \M_{0,2}$. 
\end{proof}
So for $n=2$ we can express $\psi_2$ as the multiple $- \psi_1$ of our preferred generator plus a term supported in the boundary.
%By above lemma, we have following relations on $\M_{0,2}$
%\begin{equation*}
%\psi_1^{a+1}\psi_2^{b} = -\psi_1^a\psi_2^{b+1} + \,\,   \begin{tikzpicture}[scale=0.7, baseline=-3pt,label distance=0.3cm,thick,
%    virtnode/.style={circle,draw,scale=0.5}, 
%    nonvirt node/.style={circle,draw,fill=black,scale=0.5} ]
%    % \node [virtnode] (A) {} node[below]{$A_1$}; works
%    \node at (0,-.5) [nonvirt node] (A) {};
%    \node at (2,-.5) [nonvirt node] (B) {};
%    \draw [-] (A) to (B);
%    \node at (-.7,.5) (n1) {$\psi_1^a$};
    % \node at (-.86,0) (n0) {$I_1$};
    % \node at (-.7,-.5) (n2) {$n+1$};
%    \draw [-] (A) to (n1);
    % \draw [-] (A) to (n2);
%    \node at (2.7,.5) (m1) {$\psi_2^b$};
    % \node at (3.86,0) (m0) {$\{1, \ldots, n\} \setminus \{i\}$};
%    \draw [-] (B) to (m1);
%    \end{tikzpicture}
%\end{equation*}
%and similarly for $\psi_1^{a+m}\psi_2^{b}$ in terms of $\psi_1^{a}\psi_2^{b+m}$ and boundary strata. We denote these relations as $(\star)$.

Now let $n\geq 3$. For $\{1,\ldots,n\}=I_1\sqcup I_2$, we denote by
\begin{equation*}
    D(I_1\mid I_2)=
\begin{tikzpicture}[baseline=-3pt,label distance=0.5cm,thick,
    virtnode/.style={circle,draw,scale=0.5}, 
    nonvirt node/.style={circle,draw,fill=black,scale=0.5} ]
    % \node [virtnode] (A) {} node[below]{$A_1$}; works
    \node [nonvirt node] (A) {};
    \node at (2,0) [nonvirt node] (B) {};
    \draw [-] (A) to (B);
    \node at (-.8,0) (n0) {$I_1$};
    \node at (-.7,.5) (n1) {};
    \node at (-.7,-.5) (n2) {};
    \draw [-] (A) to (n1);
    \draw [-] (A) to (n2);
    
    \node at (2.7,.5) (m1) {};
    \node at (2.8,0) (m0) {$I_2$};
    \node at (2.7,-.5) (m2) {};
    \draw [-] (B) to (m1);
    \draw [-] (B) to (m2);    
    \end{tikzpicture} 
    \hspace{5mm} 
\end{equation*}
the class of the boundary divisor in $\CH^1(\M_{0,n})$ associated to the splitting $I_1\sqcup I_2$ of the marked points.
The following lemma shows how to write $\psi$-classes on $\M_{0,n}$ via boundary strata.
\begin{lemma}\label{lem:removePsi}
For $n\geq 3$ and $1 \leq i \leq n$, we have 
\begin{equation*}
    \psi_i= \sum_{\substack{I_1\sqcup I_2=\{1,\ldots,n\}\\i\in I_1; j,\ell\in I_2}} D(I_1\mid I_2)
\end{equation*}
in $\CH^1(\M_{0,n})$ for any choice of $1\leq j,\ell \neq i\leq n$.
\end{lemma}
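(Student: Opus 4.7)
The plan is to proceed by induction on $n \geq 3$, with the inductive step driven by the forgetful morphism $\pi \colon \M_{0,n+1} \to \M_{0,n}$. The crucial observation is that on prestable curves this map is literally the universal curve over $\M_{0,n}$, and in particular it does \emph{not} involve any stabilization: forgetting the marking $n+1$ leaves the underlying curve $C$ completely unchanged, so the cotangent line at any remaining marking $p_i$ ($i \in [n]$) is computed on the same $C$ before and after. Invoking functoriality of the $\psi$-class construction via the relative dualizing sheaf, one expects the clean pullback identity
\[
\pi^* \psi_i^{\M_{0,n}} = \psi_i^{\M_{0,n+1}} \qquad \text{for } i \in [n],
\]
\emph{without} the correction term familiar from the stable setting. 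Together with the standard boundary pullback
\[
\pi^* D(I_1 | I_2) = D(I_1 \sqcup \{n{+}1\} | I_2) + D(I_1 | I_2 \sqcup \{n{+}1\}),
\]
this will transport the inductive formula from $\M_{0,n}$ to $\M_{0,n+1}$.

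For the base case $n = 3$, I would pull back the relation $\psi_a + \psi_b = D(\{a\}|\{b\})$ from Lemma~\ref{Lem:psi12} along each of the three forgetful maps $\M_{0,3} \to \M_{0,2}$. Using the two pullback identities above, this produces the linear system
\begin{align*}
\psi_1 + \psi_2 &= D(\{1\}|\{2,3\}) + D(\{2\}|\{1,3\}),\\
\psi_2 + \psi_3 &= D(\{2\}|\{1,3\}) + D(\{3\}|\{1,2\}),\\
\psi_1 + \psi_3 &= D(\{1\}|\{2,3\}) + D(\{3\}|\{1,2\})
\end{align*}
in $\CH^1(\M_{0,3})$. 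Adding any two equations and subtracting the third isolates a single $\psi_i$ and gives $\psi_i = D(\{i\} | [3] \setminus \{i\})$, matching the claimed formula in the unique admissible case $\{j,\ell\} = [3] \setminus \{i\}$.

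For the inductive step from $n$ to $n+1$, fix distinct $i, j, \ell \in [n+1]$. Using the $S_{n+1}$-action on $\M_{0,n+1}$ that permutes markings, I may assume WLOG that $i, j, \ell \in [n]$ (possible since $n \geq 3$). Applying $\pi^*$ to the inductive identity $\psi_i^{(n)} = \sum_{i \in I_1,\, j, \ell \in I_2} D^{(n)}(I_1|I_2)$ on $\M_{0,n}$ and expanding each term via the two pullback formulas, the right-hand side becomes a sum over admissible splittings of $[n+1]$ (indexed by whether the new marking $n+1$ lands in $I_1'$ or $I_2'$), which is exactly the claimed right-hand side on $\M_{0,n+1}$. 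The left-hand side is $\psi_i^{(n+1)}$ by the no-correction formula, completing the step for the chosen $(i,j,\ell)$ and hence, by symmetry, for all such triples.

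The main obstacle is the rigorous justification of the no-correction pullback formula $\pi^* \psi_i^{\M_{0,n}} = \psi_i^{\M_{0,n+1}}$. The fiberwise picture is transparent -- both line bundles have fiber $T^*_{p_i} C$ at the point $(C, p_1, \ldots, p_{n+1}) \in \M_{0,n+1}$ -- and invoking the functorial characterization (compatibility of $s_i^* \omega_f$ with pullback of families) reduces matters to the observation that forgetting a marking on the prestable side does not alter the underlying curve, in sharp contrast to the stable case where stabilization contracts bubbles and introduces the usual boundary correction. Still, this step needs careful bookkeeping to match the two constructions of the $\psi$-class through the respective universal curves $\mathfrak{C}_{0,n}$ and $\mathfrak{C}_{0,n+1}$; it is the only non-routine piece of the argument.
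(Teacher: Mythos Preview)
Your argument is correct and follows the same pullback-along-forgetful-maps strategy as the paper, with two minor organisational differences worth noting. For $n=3$ the paper instead argues by excision: since $\CH^1(\M_{0,3}^\textup{sm})=0$, the class $\psi_i$ must be a combination of the four boundary divisors, and the coefficients are then read off by pulling back along the smooth atlas $F_2:\Mbar_{0,5}\to\M_{0,3}$. Your route, pulling back the relation of Lemma~\ref{Lem:psi12} along the three maps $\M_{0,3}\to\M_{0,2}$ and solving the resulting $3\times 3$ linear system, is more self-contained and avoids the auxiliary space $\Mbar_{0,5}$. For $n\geq 4$ the paper pulls back from $\M_{0,3}$ in one go via the map forgetting all markings except $\{i,j,\ell\}$, whereas you run the same argument one marking at a time; these are equivalent. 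The pullback identities $\pi^*\psi_i=\psi_i$ and $\pi^*D(I_1\mid I_2)=D(I_1\cup\{n{+}1\}\mid I_2)+D(I_1\mid I_2\cup\{n{+}1\})$ that both arguments rest on are exactly what the paper cites from \cite[Corollary~3.9]{BS1}; your heuristic justification (the forgetful map pulls back the universal curve to the universal curve, hence the cotangent line bundles agree) is the right one and can be made precise via the cartesian diagram $\mathfrak{C}_{0,n+1}\cong \M_{0,n+1}\times_{\M_{0,n}}\mathfrak{C}_{0,n}$.
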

\begin{proof}
When $n=3$, we have $\CH^1(\M_{0,3}^\textup{sm})=0$, so $\psi_i$ can be written as a linear combination of the four boundary divisors of $\M_{0,3}$. Again, the coefficients can be determined via the pullback under $F_2 : \Mbar_{0,5} \to \M_{0,3}$.
%exactly same argument in Lemma \ref{Lem:psi12} proves the result. 
The relation for $n\geq 4$ follows by pulling back the relation on $\M_{0,3}$ via the morphism $F \colon \M_{0,n}\to \M_{0,3}$ forgetting all markings except $\{i,j,\ell\}$. This pullback can be computed via \cite[Corollary 3.9]{BS1} .
\end{proof}

For the $\kappa$-classes on $\M_{0,n}$ we have the following boundary expressions.
\begin{lemma}\label{Lem:remove_kappa}
Let $a$ be a nonnegative integer and consider $\kappa_a \in \CH^a(\M_{0,n})$.
\begin{enumerate}[label=(\alph*)] 
    \item When $n \geq 1$, the class $\kappa_a$ can be written as a linear combination of  monomials in $\psi$-classes  and boundary classes $[\Gamma_i, \alpha_i]$ for nontrivial prestable graphs $\Gamma_i$.
    \item When $n=0$, the class $\kappa_a$ can be written as a linear combination of  monomials in $\kappa_2$ and $\psi$-classes and boundary classes $[\Gamma_i, \alpha_i]$ for nontrivial prestable graphs $\Gamma_i$.
\end{enumerate}
\end{lemma}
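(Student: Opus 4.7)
The plan is to combine the excision sequence
\[
\CH^{a-1}(\partial \M_{0,n}) \xrightarrow{\iota_*} \CH^a(\M_{0,n}) \xrightarrow{j^*} \CH^a(\M_{0,n}^{\textup{sm}}) \to 0
\]
with the explicit description of $\CH^*(\M_{0,n}^{\textup{sm}})$ from Lemma~\ref{Lem:Chowsmooth}. By that lemma, the group $\CH^a(\M_{0,n}^{\textup{sm}})$ is spanned by a single monomial in our preferred generators: $\psi_1^a$ when $n \in \{1,2\}$, $\kappa_2^{a/2}$ when $n = 0$ and $a$ is even, and zero in the remaining nontrivial cases. Hence $j^*(\kappa_a) = c \cdot M$ for some scalar $c \in \QQ$ and an appropriate monomial $M$ (viewed as a class on all of $\M_{0,n}$). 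The difference $\kappa_a - c \cdot M$ restricts to zero on the smooth locus and by exactness lies in the image of $\iota_*$, so there is a class $\beta \in \CH^{a-1}(\partial \M_{0,n})$ with $\kappa_a = c \cdot M + \iota_*(\beta)$.

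It remains to rewrite $\iota_*(\beta)$ as a linear combination of decorated stratum classes $[\Gamma_i, \alpha_i]$ with nontrivial $\Gamma_i$. Since $\partial \M_{0,n}$ is the union of the images of the gluing morphisms $\xi_\Gamma \colon \M_\Gamma \to \M_{0,n}$ for graphs $\Gamma$ with at least one edge, iterated excision on the stratification of $\partial \M_{0,n}$ by number of edges expresses $\beta$ as a sum of pushforwards of classes from the various $\M_\Gamma = \prod_v \M_{0,n(v)}$. By the Chow--K\"unneth surjectivity statement of Proposition~\ref{Prop:M0nCKsummary}, every such class on $\M_\Gamma$ is a sum of exterior products of classes from the factors, and a recursive application of the present lemma at each factor $\M_{0,n(v)}$ replaces those factor classes by monomials in $\psi$- and $\kappa$-classes plus deeper boundary terms. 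Assembling everything under the $\xi_\Gamma$ pushforwards yields the desired expression as a sum of decorated stratum classes.

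The main obstacle is controlling the recursion. Since we work in fixed codimension $a$, only graphs with at most $a$ edges can contribute and only monomials of total degree at most $a$ appear at each vertex, so the procedure terminates in finitely many steps. A subtle point in case (b) is that vertices with $n(v) = 0$ can only carry even powers of $\kappa_2$ as their decoration, which is consistent with the parity constraint arising from $\CH^*(\M_{0,0}^{\textup{sm}}) = \QQ[\kappa_2]$.
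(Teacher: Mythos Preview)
Your approach differs substantially from the paper's. The paper argues constructively: for (a) it writes $\kappa_a = \pi_*(\psi_{n+1}^{a+1})$ on the universal curve (in its $\mathcal{A}$-valued variant), replaces one factor $\psi_{n+1}$ by a boundary expression via Lemmas~\ref{Lem:psi12} and~\ref{lem:removePsi}, pushes forward, and concludes by induction on $a$; for (b) with $a$ odd it invokes Grothendieck--Riemann--Roch for $\pi_*\omega_\pi$, and for $a$ even it computes $\pi_{2*}(\psi_1^{3}\psi_2^{2a+1})$ in two ways. These explicit identities are what later \emph{define} the relation set $\Rels_{\kappa,\psi}$, so their concrete shape has downstream significance that a pure existence argument does not supply.

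Your excision route is a legitimate alternative in principle, and there is no circularity: neither Proposition~\ref{Prop:M0nCKsummary} nor Theorem~\ref{Thm:M0ngenerators} depends on the present lemma. However, your recursive step has a real gap. You invoke ``a recursive application of the present lemma at each factor $\M_{0,n(v)}$'', but the present lemma is a statement only about the class $\kappa_a$, whereas the classes produced on the factors by the Chow--K\"unneth decomposition are \emph{arbitrary} elements of $\CH^*(\M_{0,n(v)})$. What you actually need there is that every such class is a combination of decorated strata --- this is Theorem~\ref{Thm:M0ngenerators}, and the recursion you describe is reproving that theorem, not the present lemma. To repair the argument, either invoke Theorem~\ref{Thm:M0ngenerators} directly (noting it comes later in the paper but is logically independent), or phrase the recursion honestly as the excision-plus-Lemma~\ref{Lem:Chowsmooth} mechanism applied to arbitrary Chow classes, not just to $\kappa_a$.
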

\begin{proof}
It is enough to prove the corresponding statement on $\M_{0,n,\one}$ because the restriction to the open substack $\M_{0,n} \subset \M_{0,n,\one}$ does not create additional $\kappa$ classes.

(a) In the calculation below, we use the notion of tautological classes on the moduli stack of $\A$-valued prestable curves when $\A$ is a semigroup with two elements $\{\zero,\one\}$. This stack parametrizes prestable curves with additional decoration of $a_v\in \A$ at each component $v$. This technique is useful for computing tautological classes on $\M_{g,n}$ involving $\kappa$-classes. We refer \cite[Section 2.2]{BS1} for details. 

Consider the universal curve
\begin{equation*}
    \pi\colon \M_{0,n+1,\one}\to \M_{0,n,\one} 
\end{equation*}
so that $\kappa_a = \pi_* (\psi_{n+1}^{a+1})$. We prove the claim by induction on $a$, where the induction start $a=0$ is trivial since $\kappa_0=n-2$. 

If $n \geq 2$ and $a \geq 1$, we claim that $\psi_{n+1} \in \CH^1(\M_{0,n+1,\one})$ can be written as a sum of boundary divisors. Indeed, by Lemma \ref{lem:removePsi} this is true for $\psi_{n+1} \in \CH^1(\Mbar_{0,n})$ and so the statement on $\Mbar_{0,n+1,\one}$ follows by pulling back under the forgetful map $F_\A : \Mbar_{0,n+1,\one} \to \Mbar_{0,n+1}$ of $\A$-values using \cite[Proposition 3.12]{BS1}. Thus replacing one of the factors $\psi_{n+1}$ in $\psi_{n+1}^{a+1}$ with this boundary expression, we get a sum of boundary divisors in $\Mbar_{0,n+1,\one}$ decorated with $\psi_{n+1}^{a}$. 
%so $\psi_{n+1}^{a+1}$ can be written as boundary classes and $\psi$ classes where the degree of $\psi$ class is less than equal to $a$. 
After pushing forward to $\M_{0,n,1}$, this class can be written as a tautological class without $\kappa$ class by the induction hypothesis.

When $n=1$, we also conclude by induction on $a$. By Lemma \ref{Lem:psi12}, we have
\begin{equation*}
    \kappa_{a} = \pi_*(\psi_{2}^{a+1}) = \pi_*\Big(-\psi_2^{a}\psi_1 +
    \begin{tikzpicture}[scale=0.7, baseline=-3pt,label distance=0.3cm,thick,
    virtnode/.style={circle,draw,scale=0.5}, 
    nonvirt node/.style={circle,draw,fill=black,scale=0.5} ]
    % \node [virtnode] (A) {} node[below]{$A_1$}; works
    \node at (0,-.5) [nonvirt node] (A) {};
    \node at (2,-.5) [nonvirt node] (B) {};
    \draw [-] (A) to (B);
    \node at (-.7,.5) (n1) {$1$};
    % \node at (-.86,0) (n0) {$I_1$};
    % \node at (-.7,-.5) (n2) {$n+1$};
    \draw [-] (A) to (n1);
    % \draw [-] (A) to (n2);
    \node at (2.7,.5) (m1) {$\psi_2^{a}$};
    % \node at (3.86,0) (m0) {$\{1, \ldots, n\} \setminus \{i\}$};
    \draw [-] (B) to (m1);
    \end{tikzpicture}
    \Big)\,,
\end{equation*}
where implicitly we sum over all $\mathcal{A}$-valued graph where the sum of degrees is equal to $\one$. By \cite[Proposition 3.10]{BS1} we have
\[\psi_1 = \pi^*\psi_1 + 
 \begin{tikzpicture}[scale=0.7, baseline=-3pt,label distance=0.3cm,thick,
    virtnode/.style={circle,draw,scale=0.5}, 
    nonvirt node/.style={circle,draw,fill=black,scale=0.5} ]
    % \node [virtnode] (A) {} node[below]{$A_1$}; works
    \node [nonvirt node,label=below:$(0{,}\one)$] (A) {};
    \node at (2,0) [virtnode,label=below:$(0{,}\zero)$] (B) {};
    \draw [-] (A) to (B);
    % \draw [-] (A) to (n2);
    \node at (3,.5) (m1) {$1$};
    % \node at (3.86,0) (m0) {$\{1, \ldots, n\} \setminus \{i\}$};
    \node at (3,-.5) (m2) {$2$};
    \draw [-] (B) to (m1);
    \draw [-] (B) to (m2);    
    \end{tikzpicture}\,.\]
Using the projection formula and the induction hypothesis, we get the result.

(b) When $a$ is an odd number, this statement follows from the Grothendieck-Riemann-Roch computation in \cite[Proposition 1]{faberpandharipande00}. Namely,
\begin{equation*}
    0 = \mathrm{ch}_{2a-1}(\pi_*\omega_{\pi}) = \frac{B_{2a}}{(2a)!}\Big( \kappa_{2a-1} + \frac{1}{2}\sum_{\Gamma}\sum_{i=0}^{2a-2}(-1)^i\psi_h^i\psi_{h'}^{2a-a-i}[\Gamma]\Big)
\end{equation*}
where the sum is over $\A$-valued graphs $\Gamma$ with one edge $e = (h,h')$ and degree $\one$. Here $B_{2a}$ is the $2a$-th Bernoulli number.

We prove the statement for $\kappa_{2a}$ by the induction on $a$. Consider the forgetful morphism
\[ \pi_2\colon \M_{0,2,\one}\to \M_{0,0,\one} \,. \]
By the projection formula and \cite[Proposition 3.10]{BS1}, one computes
\[\pi_{2*}(\psi_1^{3}\psi_2^{2a+1}) = \kappa_{2a+2} + \kappa_2\kappa_{2a}\,.\]
On the other hand, 
\begin{align*}
  \pi_{2*}(\psi_1^{3}\psi_2^{2a+1}) &=  \pi_{2*}\Big(-\psi_1^2\psi_2^{2a+2} + 
\begin{tikzpicture}[scale=0.7, baseline=-3pt,label distance=0.3cm,thick,
    virtnode/.style={circle,draw,scale=0.5}, 
    nonvirt node/.style={circle,draw,fill=black,scale=0.5} ]
    % \node [virtnode] (A) {} node[below]{$A_1$}; works
    \node at (0,-.5) [nonvirt node] (A) {};
    \node at (2,-.5) [nonvirt node] (B) {};
    \draw [-] (A) to (B);
    \node at (-.7,.5) (n1) {$\psi_1^2$};
    % \node at (-.86,0) (n0) {$I_1$};
    % \node at (-.7,-.5) (n2) {$n+1$};
    \draw [-] (A) to (n1);
    % \draw [-] (A) to (n2);
    \node at (2.7,.5) (m1) {$\psi_2^{2a+1}$};
    % \node at (3.86,0) (m0) {$\{1, \ldots, n\} \setminus \{i\}$};
    \draw [-] (B) to (m1);
    \end{tikzpicture}
    \Big) \\
    &= -\kappa_{2a+2} - \kappa_1\kappa_{2a+1}+ \pi_{2*}\Big(\begin{tikzpicture}[scale=0.7, baseline=-3pt,label distance=0.3cm,thick,
    virtnode/.style={circle,draw,scale=0.5}, 
    nonvirt node/.style={circle,draw,fill=black,scale=0.5} ]
    % \node [virtnode] (A) {} node[below]{$A_1$}; works
    \node at (0,-.5) [nonvirt node] (A) {};
    \node at (2,-.5) [nonvirt node] (B) {};
    \draw [-] (A) to (B);
    \node at (-.7,.5) (n1) {$\psi_1^2$};
    % \node at (-.86,0) (n0) {$I_1$};
    % \node at (-.7,-.5) (n2) {$n+1$};
    \draw [-] (A) to (n1);
    % \draw [-] (A) to (n2);
    \node at (2.7,.5) (m1) {$\psi_2^{2a+1}$};
    % \node at (3.86,0) (m0) {$\{1, \ldots, n\} \setminus \{i\}$};
    \draw [-] (B) to (m1);
    \end{tikzpicture}
    \Big)\,.
\end{align*}
by Lemma \ref{Lem:psi12}. By the induction hypothesis, comparing the two equalities ends the proof.
\end{proof}
\subsection{Generators of \texorpdfstring{$\CH^*(\M_{0,n})$}{CH*(M0n)}} \label{Sect:genuszerogenerators}
The goal of this subsection is to prove Theorem \ref{Thm:M0ngenerators}.
%\begin{theorem} \label{Thm:M0ngenerators}
%For $n \geq 0$ every Chow class on $\M_{0,n}$ is tautological, so $\CH^*(\M_{0,n}) = \R^*(\M_{0,n})$.
%\end{theorem}
The basic idea is simple: by Lemma \ref{Lem:Chowsmooth} we know that classes on the smooth locus of $\M_{0,n}$ have tautological representatives. By an excision argument, it suffices to show that classes supported on the boundary are tautological. But the boundary is parametrized under the gluing maps by products of $\M_{0,n_i}$. Then we want to conclude using an inductive argument.

The two main technical steps to complete are as follows:
\begin{itemize}
    \item The boundary of $\M_{0,n}$ is covered by a finite union of boundary gluing maps, which are proper and representable. We want to show that the direct sum of pushforwards by the gluing maps is surjective on the Chow group of the boundary.
    \item Knowing that classes on $\M_{0,n_1}$ and $\M_{0,n_2}$ are tautological up to a certain degree $d$, we want to conclude that classes of degree at most $d$ on the product $\M_{0,n_1} \times \M_{0,n_2}$ are tensor products of tautological classes.
\end{itemize}
The first issue is resolved by the fact that the pushforward along a proper surjective morphism of relative Deligne-Mumford type is surjective on the rational Chow group. We prove this statement in \cite[Appendix B.4]{BS1}.
%While by convention all of our Chow groups are with $\mathbb{Q}$-coefficients, we repeat this in the notation below for emphasis.

We now turn to the second issue, understanding the Chow group of products of spaces $\M_{0,n_i}$. We make the following general definition, extended from \cite[Definition 6]{oesinghaus}.
\begin{definition}
An algebraic stack $X$ locally of finite type over $k$ 
%tratified by quotient stacks\jocomment{Why do we need stratification by quotient stacks?}\Ycomment{Yes, locally of finite type and countable cover by finite type substack maybe enough to run the argument below.} 
is said to have the \emph{Chow K\"unneth generation property} (CKgP) if for all algebraic stacks $Y$ of finite type over $k$ the natural morphism
\begin{equation} \label{eqn:tensorsurject} \CH_*(X) \otimes \CH_*(Y) \to \CH_*(X \times Y)\end{equation}
is surjective. It is said to have the \emph{Chow K\"unneth property} (CKP) if this map is an isomorphism.
\end{definition}
It is immediate that if $X$ has the CKgP (or the CKP) and in addition has a good filtration, then the map  \eqref{eqn:tensorsurject} is surjective (or an isomorphism) for all $Y$ locally finite type over $k$ admitting a good filtration. The additional assumption of the good filtration is added since in general tensor products and right exact sequences are not compatible with inverse limits.
%From the fact that filtered projective 
% As mentioned in \cite{oesinghaus}, it is immediate that for a stack $X$ having the CKgP, the map (\ref{eqn:tensorsurject}) is also surjective for stacks $Y$ only having a good filtration by finite-type substacks.

We now turn to showing the following result, resolving the second issue mentioned at the beginning of the section.
\begin{proposition} \label{pro:M0nCKgP}
For all $n \geq 0$, the stacks $\M_{0,n}$ have the CKgP  for finite type stacks $Y$ having a stratification by quotient stacks.
\end{proposition}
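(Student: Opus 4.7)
The plan is to combine the Chow K\"unneth property for quotient stacks from \cite{kreschartin} with the boundary stratification of $\M_{0,n}$. First I would reduce to finite type open substacks using the good filtration: if $\mathcal{U}_k \subset \M_{0,n}$ denotes the locus of curves with at most $k$ nodes, then the codimension of the complement tends to infinity with $k$, so for finite type $Y$ and any fixed cycle degree $d$ both $\CH_d(\M_{0,n} \times Y)$ and the pieces of $\CH_*(\M_{0,n}) \otimes \CH_*(Y)$ relevant in degree $d$ are already computed on $\mathcal{U}_k$ once $k \gg 0$. It therefore suffices to prove the CKgP for each $\mathcal{U}_k$.

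The core of the argument is an induction on $k$, using the excision sequence associated to the closed substack $Z \subseteq \mathcal{U}_k$ of curves with exactly $k$ nodes and its open complement $\mathcal{U}_{k-1}$. The base case $k=0$ is the smooth locus $\M_{0,n}^{\textup{sm}}$, which by Lemma \ref{Lem:Chowsmooth} is one of $B\PGL_2$, $B\UU$, $B\GG_m$ or the scheme $\mathcal{M}_{0,n}$; each is a quotient of an algebraic space by a linear algebraic group, so the full CKP against any quotient stack target is known from \cite{kreschartin}. For the inductive step, each connected component of $Z$ is a stratum $\M^\Gamma$ with $|E(\Gamma)| = k$, and factoring the gluing morphism $\xi_\Gamma$ through its image identifies
\[
\M^\Gamma \;\cong\; \Big[\, \textstyle\prod_{v \in V(\Gamma)} \M_{0,n(v)}^{\textup{sm}} \,\Big/\, \aut(\Gamma) \Big]\,,
\]
again a quotient of an algebraic space by a linear algebraic group, so $Z$ itself satisfies the CKP.

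To assemble the pieces, I would compare the excision sequences for $Z \hookrightarrow \mathcal{U}_k \hookleftarrow \mathcal{U}_{k-1}$ and for their products with $Y$, with the external product as vertical maps. Tensoring the first sequence with $\CH_*(Y)$ over $\QQ$ preserves right exactness, so both rows of the resulting commutative diagram are right exact; the outer vertical maps are surjective (for $Z$ by the CKP and for $\mathcal{U}_{k-1}$ by the induction hypothesis), and a standard diagram chase gives surjectivity of the middle map, which is the CKgP for $\mathcal{U}_k$. The assumption that $Y$ is merely \emph{stratified} by quotient stacks, rather than a single quotient stack, is absorbed by running the analogous excision argument on $Y$. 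The main subtlety I anticipate is checking carefully that the external product of \cite{kreschartin} is compatible with both the excision pushforward and with the quotient by the finite group $\aut(\Gamma)$: rationally the latter poses no problem since quotients by finite groups do not affect $\QQ$-Chow groups, and the former is the standard compatibility of exterior products with proper pushforward and flat pullback established in \emph{loc.\ cit.}
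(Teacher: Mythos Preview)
Your overall architecture---reduce to the finite type opens $\mathcal{U}_k$, induct on $k$ via excision, and handle the deepest stratum $Z$ directly---is sound, and in fact closer to the paper's later proof of the stronger Corollary~\ref{Cor:CKPM0n} than to its proof of the present proposition. But the justification you give at the two key steps is wrong. You write that $\M_{0,n}^{\textup{sm}}$ and each stratum $\M^\Gamma$ are ``a quotient of an algebraic space by a linear algebraic group, so the full CKP against any quotient stack target is known from \cite{kreschartin}.'' There is no such theorem in \cite{kreschartin}: being a global quotient stack does not imply the Chow K\"unneth property (for instance any smooth projective variety is trivially a quotient stack, yet a curve of positive genus already fails CKgP). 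What Kresch provides is the \emph{construction} of Chow groups and the exterior product, not a K\"unneth isomorphism. The CKP for $B\PGL_2$, $B\UU$, $B\GG_m$ and $\mathcal{M}_{0,n}$ is a genuine computation, carried out in the paper as Proposition~\ref{Pro:M0nsmCKP} using projective and affine bundle arguments specific to these stacks; you need to invoke that result (or reprove it), and then argue separately that products and finite-group quotients of spaces with the CKP retain it, before your inductive step goes through.

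For comparison, the paper's own proof of this proposition does not stratify by the number of nodes. It inducts on the \emph{codimension} $d$ and uses only the coarse decomposition $\M_{0,n}=\M_{0,n}^{\textup{sm}}\cup\partial\M_{0,n}$. The boundary is covered, via the gluing maps, by products $\M_{0,n_1}\times\M_{0,n_2}$ of the \emph{full} prestable spaces (not their smooth loci), to which the induction hypothesis applies directly; surjectivity of pushforward along the proper surjective cover then transfers CKgP to $\partial\M_{0,n}$ (Lemma~\ref{Lem:CKgPpropmap}), and excision (Proposition~\ref{Pro:CKgPexcision}) closes the induction. This avoids ever needing to analyze $\M^\Gamma$ for graphs with many edges, at the cost of only obtaining CKgP rather than CKP at this stage.
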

For the proof we start with the smooth part of $\M_{0,n}$.
\begin{proposition} \label{Pro:M0nsmCKP}
For all $n\geq 0$, the stacks $\M_{0,n}^{\textup{sm}}$ have the CKP.
%and the stack $\M_{0,0}$ has the CKgP for finite type stacks $Y$ having a stratification by quotient stacks.
\end{proposition}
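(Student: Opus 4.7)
Following Lemma \ref{Lem:Chowsmooth}, the proof splits into two regimes. For $n \geq 3$ the stack $\M_{0,n}^{\textup{sm}} = \Mc_{0,n}$ is an open subvariety of $\AA^{n-3}$ with trivial rational Chow ring, while for $n \in \{0,1,2\}$ it is the classifying stack of a linear algebraic group. The plan is to climb the tower
\[
\M_{0,2}^{\textup{sm}} \to \M_{0,1}^{\textup{sm}} \to \M_{0,0}^{\textup{sm}}
\]
of forgetful maps, starting from $B\mathbb{G}_m$, and to treat the cases $n \geq 3$ by inducting on $n$ via the fibration $\Mc_{0,n+1} \to \Mc_{0,n}$ with base case $\Mc_{0,3} = \mathrm{Spec}\, k$.

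For $n \geq 3$, the universal curve over $\Mc_{0,n}$ is the trivial $\PP^1$-bundle $\PP^1 \times \Mc_{0,n}$, and $\Mc_{0,n+1}$ is the open substack obtained by removing the $n$ disjoint universal sections $\sigma_1, \ldots, \sigma_n$. Granting CKP for $\Mc_{0,n}$, the projective bundle formula gives $\CH^*(\PP^1 \times \Mc_{0,n} \times Y) = \CH^*(Y) \oplus h \cdot \CH^{*-1}(Y)$, and each section satisfies $[\sigma_i \times Y] = h$, so the excision sequence forces $p^* \colon \CH^*(Y) \to \CH^*(\Mc_{0,n+1} \times Y)$ to be an isomorphism, which is CKP for $\Mc_{0,n+1}$.

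For the classifying-stack cases, I first establish CKP for $\M_{0,2}^{\textup{sm}} = B\mathbb{G}_m$ via the Totaro/Edidin--Graham approximation $B\mathbb{G}_m = \varinjlim \PP^N$: for $N$ larger than a fixed codimension, the classical projective bundle formula gives $\CH^*(\PP^N \times Y) = \CH^*(Y)[h]/(h^{N+1})$, and passing to the direct limit identifies $\CH^*(B\mathbb{G}_m \times Y)$ with $\CH^*(B\mathbb{G}_m) \otimes \CH^*(Y)$ in each fixed degree. To pass from $B\mathbb{G}_m$ to $B\UU$, I would observe that $\M_{0,2}^{\textup{sm}} \to \M_{0,1}^{\textup{sm}}$ is the universal curve with its marked section removed, presented as $[\AA^1/\UU] \to B\UU$ where $\UU$ acts on $\AA^1$ by affine-linear transformations. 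Since $\UU$ is a special (solvable) group, the associated bundle construction makes this a Zariski-locally trivial affine $\AA^1$-bundle, so Kresch's homotopy invariance yields an isomorphism $\CH^*(\M_{0,1}^{\textup{sm}} \times Y) \xrightarrow{\cong} \CH^*(\M_{0,2}^{\textup{sm}} \times Y)$ for every $Y$, transferring CKP.

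The hardest step will be passing to $\M_{0,0}^{\textup{sm}} = B\PGL_2$ via the universal curve $\M_{0,1}^{\textup{sm}} \to \M_{0,0}^{\textup{sm}}$, which is the conic bundle $[\PP^1/\PGL_2] \to B\PGL_2$. Because $\PGL_2$-torsors are not Zariski-locally trivial, this is a genuine Severi--Brauer $\PP^1$-bundle rather than the projectivization of a rank-two vector bundle, and the naive projective bundle formula is obstructed by a $2$-torsion Brauer class. Working rationally should kill this obstruction, and I expect to verify the decomposition
\[
\CH^*(\M_{0,1}^{\textup{sm}} \times Y) = \CH^*(\M_{0,0}^{\textup{sm}} \times Y) \oplus \CH^*(\M_{0,0}^{\textup{sm}} \times Y) \cdot \xi
\]
for some $\xi \in \CH^1(\M_{0,1}^{\textup{sm}})$ by approximating $B\PGL_2$ by smooth finite type quotients $(V \setminus Z)/\PGL_2$ of a $\PGL_2$-representation $V$ and applying the rational projective bundle formula on each approximation. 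Matching $\xi$-components in the CKP identity for $B\UU$ would then isolate $\CH^*(B\PGL_2 \times Y) \cong \CH^*(B\PGL_2) \otimes \CH^*(Y)$ and finish the proof.
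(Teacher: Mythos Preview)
Your overall strategy matches the paper's: handle $n \geq 3$ via the geometry of $\Mc_{0,n}$, establish CKP for $B\mathbb{G}_m$ directly, descend along the affine bundle $B\mathbb{G}_m \to B\UU$, and finally attack $B\PGL_2$ via the universal conic $\pi \colon B\UU \to B\PGL_2$. The cases $n=2$ and $n=1$ are essentially identical to the paper. For $n \geq 3$ the paper bypasses your induction: since $\Mc_{0,n}$ is open in $\AA^{n-3}$, surjectivity of $p^*$ follows from homotopy invariance plus excision, and injectivity from composing with the Gysin pullback along $\{C_0\} \times Y \hookrightarrow \Mc_{0,n} \times Y$ for any point $C_0 \in \Mc_{0,n}$. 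Your inductive approach also works, but you should note that the reason the pushforward of the sections lands purely in the $h$-summand (so that $p^*$ is injective, not just surjective) is that $\CH^1(\Mc_{0,n})=0$.

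The genuine gap is at $n=0$. You correctly diagnose the obstacle (a $2$-torsion Brauer class) and the expected shape of the answer, but ``apply the rational projective bundle formula on each approximation'' is not a step you can execute as stated: there is no off-the-shelf such formula for conic bundles, and you never say what $\xi$ is. The paper's key observation, which you are missing, is that while $\mathcal{O}_{\PP^1}(1)$ is not $\PGL_2$-equivariant, the line bundle $\mathcal{O}_{\PP^1}(2)$ \emph{is} (it is the relative anticanonical bundle), so it descends to a line bundle on $[\PP^1/\PGL_2]$. The paper then avoids proving a full projective-bundle decomposition and argues directly: the identity
\[
(\mathrm{id}\times\pi)_*\Bigl(\tfrac{1}{2}\,c_1(\mathcal{O}_{\PP^1}(2)) \cap (\mathrm{id}\times\pi)^*\alpha\Bigr) = \alpha
\]
shows that $(\mathrm{id}\times\pi)_*$ is surjective, giving surjectivity of the K\"unneth map for $B\PGL_2$; and the same identity with $Y = \mathrm{Spec}\,k$ shows that $\pi^*$ is injective, giving injectivity. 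Your route would work once you set $\xi = \tfrac{1}{2}c_1(\mathcal{O}_{\PP^1}(2))$ and prove the decomposition by hand --- but at that point you are reproducing the paper's computation with an unnecessary layer of finite-dimensional approximation on top.
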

\begin{proof}
Starting with the easy cases, for $n=2$ we have $\mathfrak{M}_{0,2}^{\textup{sm}} \cong B \mathbb{G}_m$ by Lemma \ref{Lem:Chowsmooth} and it was shown in \cite[Lemma 2]{oesinghaus} that this satisfies the CKP. On the other hand, for $n \geq 3$ we have $\M_{0,n}^{\textup{sm}} = \mathcal{M}_{0,n}$, which is an open subset of $\mathbb A^{n-3}$. Then for any finite type stack $Y$ we have $\CH_*(\mathcal{M}_{0,n}) \otimes \CH_*(Y)   \cong \CH_*(Y)$ and the map (\ref{eqn:tensorsurject}) is just the pullback under the projection $\mathcal{M}_{0,n} \times Y \to Y$. Combining \cite[Corollary 2.5.7]{kreschartin} and the excision sequence, we see that this pullback is surjective. On the other hand, composing it with the Gysin pullback by an inclusion 
\[Y \cong \{C_0\} \times Y \subset \mathcal{M}_{0,n} \times Y\]
for some $C_0 \in \mathcal{M}_{0,n}$ we obtain the identity on $\CH_*(Y)$, so it is also injective.

Next we consider the case $n=1$. By Lemma \ref{Lem:Chowsmooth} we have $\mathfrak{M}_{0,1}^{\textup{sm}} \cong B \UU$ for $\UU=\mathbb{G}_a \rtimes \mathbb{G}_m$. The group $\UU$ contains $\mathbb{G}_m$ as a subgroup and we claim that the natural map $B \mathbb{G}_m \to B \UU$ is an affine bundle with fibre $\mathbb{A}^1$. Indeed, the fibres are $\mathbb{U}/\mathbb{G}_m \cong \mathbb{A}^1$ and the structure group is $\UU=\mathrm{Aff}(1)$ acting by affine transformations on $\mathbb{A}^1$. Of course also for any finite type stack $Y$ it is still true that $Y \times B \mathbb{G}_m \to Y \times B \UU$ is an affine bundle. Then by \cite[Corollary 2.5.7]{kreschartin} we have that the two vertical maps in the diagram
\begin{equation*}
\begin{tikzcd}
\CH_*(Y) \otimes \CH_*(B \mathbb{G}_m) \arrow[r] & \CH_*(Y \times B \mathbb{G}_m)\\
\CH_*(Y) \otimes \CH_*(B \UU) \arrow[r] \arrow[u] & \CH_*(Y \times B \UU)\arrow[u]
\end{tikzcd}
\end{equation*}
induced by pullback of the affine bundles are isomorphisms. The top arrow in the diagram is also an isomorphism since, as seen above, $B \mathbb{G}_m$ has the CKP. Thus the bottom arrow is an isomorphism as well.

We are left with the case $n=0$. The forgetful map 
\begin{equation}\label{eqn:univCurve0}
    \pi \colon \M_{0,1}^\textup{sm}\to \M_{0,0}^\textup{sm}
\end{equation}
gives the universal curve over $\M_{0,0}^\textup{sm}$. The map \eqref{eqn:univCurve0} can be thought of as the morphism between quotient stacks
\[\pi : [\PP^1 / \PGL_2] \to [\mathrm{Spec}\, k / \PGL_2]\]
induced by the $\PGL_2$-equivariant map $\PP^1 \to \mathrm{Spec}\, k$. By \cite[Remark B.20]{BS1}, the map $\pi$ is projective and the line bundle $\mathcal{O}_{\PP^1}(2)$ on $\PP^1$ descends to a $\pi$-relatively ample line bundle on $[\PP^1 / \PGL_2]$.

Now for any finite type stack $Y$ consider a commutative diagram 
\begin{equation*}
\begin{tikzcd}
\CH_*(Y) \otimes \CH_*(B \mathrm{PGL}_2) \arrow[r] & \CH_*(Y \times B \mathrm{PGL}_2)\\
\CH_*(Y) \otimes \CH_*(B \UU) \arrow[r, "\cong"] \arrow[u,"\textup{id}\otimes \pi_*"] & \CH_*(Y \times B \UU)\arrow[u,"(\textup{id}\times \pi)_*"]
\end{tikzcd}
\end{equation*}
induced by the projective pushforward $\pi_*$. 
%Here is where we use the additional assumption that $Y$ has a stratification by quotient stacks, since this technical condition is needed to define the pushforward $(\mathrm{id} \times \pi)_*$.\Ycomment{Now we know that $\pi$ is projective, so we might not need to make this assumption on $Y$.}
%\jocomment{I agree, but this would require some extra work (since Proposition \ref{Pro:surproppush} assumes the stratification by quotient stacks). We would like to claim that $(\mathrm{id} \times \pi)_* c_1(\mathcal{O}(1)) \cap (\mathrm{id} \times \pi)^* \alpha = \alpha$ to prove surjectivity of the right arrow. }
%By Proposition \ref{Pro:surproppush} the right arrow is surjective and 
Note that the map $(\textup{id}\times \pi)_*$ is surjective. Indeed, a small computation\footnote{See the proof of Proposition \ref{Pro:M0nsmCKP_higherChow} for a variant of this computation.} shows that for $\alpha \in \CH_*(Y \times B \mathbb{U})$ we have
\[(\mathrm{id} \times \pi)_* \left( \frac{1}{2} c_1(\mathcal{O}_{\PP^1}(2)) \cap (\mathrm{id} \times \pi)^* \alpha \right)= \alpha\,.\]
Then the surjectivity of the top arrow follows. 
%\jocomment{Do you agree with what I write here (also below for the injectivity part)? If so, we should update the CKgP property we quote in the intro.}\Ycomment{I agree with this computation. I removed redundant assumption in Proposition \ref{Prop:M0nCKsummary}.}

To prove injectivity of the top arrow consider the diagram
\begin{equation*}
\begin{tikzcd}
\CH_*(Y) \otimes \CH_*(B \mathrm{PGL}_2) \arrow[r] \arrow[d,"\textup{id}\otimes \pi^*"] & \CH_*(Y \times B \mathrm{PGL}_2)\arrow[d,"(\textup{id}\times \pi)^*"]\\
\CH_*(Y) \otimes \CH_*(B \UU) \arrow[r, "\cong"]  & \CH_*(Y \times B \UU)
\end{tikzcd}
\end{equation*}
induced by the flat pullback $\pi^*$. Similar to above, we see that for $\alpha \in \CH_*(B \mathbb{U})$ we have
\[\pi_* \left( \frac{1}{2} c_1(\mathcal{O}_{\PP^1}(2)) \cap \pi^* \alpha \right)= \alpha\,.\]
Thus the map $\mathrm{id} \otimes \pi^*$ is injective and thus the top arrow must be injective as well, finishing the proof.
% Since $\pi$ is proper and surjective, Proposition \ref{Pro:surproppush} shows that there exists a class $\beta\in \CH_*(\M_{0,1}^{sm})$ such that \[\pi_*(\beta) = [\M_{0,0}^{sm}]\,.\]
% For any $\alpha\in \CH_*(\M_{0,0}^{sm})$ we have
% \[\pi_*(\pi^*\alpha \cup \beta) = \alpha\]
% by \cite[Proposition 2.2]{Krishna13HigherChow} and hence the left arrow of the above diagram is injective. Therefore the top arrow is an isomorphism, finishing the proof.
\end{proof}
For the next results, we say that an equidimensional, locally finite type stack $X$ has the \emph{Chow K\"unneth generation property up to codimension $d$} if (\ref{eqn:tensorsurject}) is surjective in all codimensions up to $d$.

\begin{lemma} \label{Lem:CKgPprod}
Let $X, X'$ be equidimensional algebraic stacks, locally finite type over $k$ and admitting good filtrations. Then for $X,X'$ having the CKgP (up to codimension $d$), also $X \times X'$ has the CKgP (up to codimension $d$).
\end{lemma}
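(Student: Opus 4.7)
The plan is to chain together the decompositions coming from $X$ and $X'$. Fix a finite type stack $Y$ over $k$ and consider a class $\gamma \in \CH^c(X \times X' \times Y)$ with $c \le d$. The key point is that although $X' \times Y$ is not finite type, it inherits a good filtration from the product of the good filtration on $X'$ with $Y$ itself.

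First I would invoke the CKgP of $X$ applied to the pair $(X, X' \times Y)$. Since $X$ has both the CKgP (up to codimension $d$) and a good filtration, and since $X' \times Y$ admits a good filtration, the observation following the definition of CKgP extends the surjectivity from finite type test stacks to such $X' \times Y$. Hence we can write
\[
\gamma = \sum_i \alpha_i \times \gamma_i, \qquad \alpha_i \in \CH^{p_i}(X),\ \gamma_i \in \CH^{c-p_i}(X' \times Y),
\]
with $0 \le p_i \le c$.

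Next I would apply the CKgP of $X'$ (up to codimension $d$) to each $\gamma_i$ with $Y$ as the finite type test stack, using that $c - p_i \le c \le d$. This yields decompositions $\gamma_i = \sum_j \beta_{ij} \times \delta_{ij}$ with $\beta_{ij} \in \CH^*(X')$ and $\delta_{ij} \in \CH^*(Y)$. Substituting back and using associativity of the exterior product of cycles on locally finite type stacks from \cite[Proposition 3.2.1]{kreschartin}, we obtain
\[
\gamma = \sum_{i,j} (\alpha_i \times \beta_{ij}) \times \delta_{ij},
\]
which presents $\gamma$ as an element of the image of $\CH_*(X \times X') \otimes \CH_*(Y) \to \CH_*(X \times X' \times Y)$ in codimension $c$. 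This establishes the CKgP for $X \times X'$ in codimensions up to $d$.

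The only genuinely delicate point is the appeal in the first step to CKgP against a test object which is merely locally of finite type. This is exactly what the good filtration hypothesis on $X$ (together with that on $X'$) is designed to handle, as tensor products commute with the directed colimits computing Chow groups of stacks with good filtration. The remaining ingredients—associativity and the additivity of codimension under exterior product—are routine, so no further obstacle arises.
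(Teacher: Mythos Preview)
Your proof is correct and amounts to a careful unpacking of the paper's one-line proof (``This is immediate from the definition''). You have correctly isolated the only nontrivial step, namely that applying the CKgP of $X$ with test object $X' \times Y$ requires the extension to locally finite type test stacks with good filtration recorded immediately after the definition; the rest is indeed routine. One minor terminological quibble: the Chow groups here are computed as \emph{inverse} limits along the good filtration that stabilize in each fixed codimension, not as directed colimits, so your justification ``tensor products commute with the directed colimits'' is phrased inaccurately---the point is rather that in each fixed codimension the system is eventually constant, which makes the passage harmless.
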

\begin{proof}
This is immediate from the definition.
\end{proof}

\begin{lemma} \label{Lem:CKgPpropmap}
Let $Z$ be an algebraic stack,  locally finite type over $k$, stratified by quotient stacks and with a good filtration by finite-type substacks. Let $\widehat Z \to Z$ be a proper, surjective map representable by Deligne-Mumford stacks such that $\widehat Z$ has the CKgP. Then $Z$ has the CKgP for stacks $Y$ stratified by quotient stacks.

If both $Z$ and $\widehat Z$ are equidimensional with $\dim \widehat Z - \dim Z = e \geq 0$ then if $\widehat Z$ has the CKgP up to codimension $d$, $Z$ has the CKgP (for stacks $Y$ stratified by quotient stacks) up to codimension $d-e$.
\end{lemma}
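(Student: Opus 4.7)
The plan is to exploit the fact (cited from \cite[Appendix B.4]{BS1}) that a proper, surjective morphism representable by Deligne--Mumford stacks induces a surjective pushforward on rational Chow groups. Given a finite type stack $Y$ stratified by quotient stacks, the strategy is to consider the commutative square
\[
\begin{tikzcd}
\CH_*(\widehat Z) \otimes \CH_*(Y) \arrow[r] \arrow[d, "\pi_* \otimes \mathrm{id}"'] & \CH_*(\widehat Z \times Y) \arrow[d, "(\pi \times \mathrm{id})_*"] \\
\CH_*(Z) \otimes \CH_*(Y) \arrow[r] & \CH_*(Z \times Y),
\end{tikzcd}
\]
whose commutativity is the identity $(\pi \times \mathrm{id})_*(\alpha \times \beta) = \pi_*(\alpha) \times \beta$ on exterior products. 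I will argue that the top horizontal and both vertical maps are surjective, which forces the bottom horizontal map to be surjective as well.

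First, $\pi_* : \CH_*(\widehat Z) \to \CH_*(Z)$ is surjective by the cited pushforward result, so the left vertical $\pi_* \otimes \mathrm{id}$ is surjective. Next, the base change $\pi \times \mathrm{id}$ is again proper, surjective, and representable by Deligne--Mumford stacks, while $Z \times Y$ inherits a good filtration by finite-type substacks (from the good filtration of $Z$ together with the finite-typeness of $Y$) and a stratification by quotient stacks (products of quotient stacks being quotient stacks); thus the same cited result applies to give surjectivity of the right vertical. Finally, the top horizontal is surjective by the CKgP hypothesis on $\widehat Z$ applied to the finite type stack $Y$. Chasing the diagram yields the desired conclusion.

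For the codimension-bounded version, the same argument works once one tracks degrees. Since $\pi \times \mathrm{id}$ preserves dimension, the pushforward $(\pi \times \mathrm{id})_*$ decreases codimension by $e = \dim \widehat Z - \dim Z$. Hence any class in $\CH^c(Z \times Y)$ with $c \le d - e$ lifts to a class in $\CH^{c+e}(\widehat Z \times Y)$ with $c + e \le d$, which by the CKgP of $\widehat Z$ up to codimension $d$ is a sum of exterior products $\alpha_i \times \beta_i$; pushing down and using the projection formula expresses the original class as $\sum \pi_*(\alpha_i) \times \beta_i$, as required.

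The main obstacle I anticipate is the verification that the hypotheses required by the surjective-pushforward result transfer to the base change $\pi \times \mathrm{id}$. Concretely, one must check that products of stacks stratified by quotient stacks are again stratified by quotient stacks (using that a product of linear algebraic groups is linear algebraic), that $Z \times Y$ still admits a good filtration by finite-type substacks, and that properness, surjectivity and DM-representability are stable under base change --- all routine, but essential for the cited result to apply in the generality needed.
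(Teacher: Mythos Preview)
Your proposal is correct and follows essentially the same approach as the paper: both arguments use the commutative square relating exterior products with pushforward along $\pi$ and $\pi\times\mathrm{id}$, invoking the CKgP of $\widehat Z$ for one arrow and the surjectivity of proper DM-representable pushforward (from \cite[Proposition B.19, Remark B.21]{BS1}) for the other, then chase to conclude surjectivity for $Z$. Your identification of the required base-change checks (stratification by quotient stacks, good filtration, stability of properness/surjectivity/DM-representability) matches exactly what the paper absorbs into its citation of \cite{BS1}.
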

%\jocomment{This addition seems necessary since we use proper pushforwards for maps to $Z \times Y$ and thus must have that this space is stratified by quotient stacks.}
\begin{proof}
Let $Y$ be an algebraic stack of finite type over $k$, stratified by quotient stacks. 
Then in the diagram
\begin{equation}
\begin{tikzcd}
\CH_*(\widehat Z \times Y) \arrow[r, two heads] & \CH_*(Z \times Y)\\
\CH_*(\widehat Z) \otimes \CH_*(Y) \arrow[r] \arrow[u, two heads] & \CH_*(Z) \otimes \CH_*(Y)\arrow[u]
\end{tikzcd}    
\end{equation}
the top arrow is surjective by \cite[Proposition B.19]{BS1} (and \cite[Remark B.21]{BS1}) applied to $\widehat Z \times Y \to Z \times Y$ and the left arrow is surjective since $\widehat Z$ has the CKgP. It follows that $\CH_*(Z) \otimes \CH_*(Y) \to \CH_*(Z \times Y)$ is surjective, so $Z$ has the CKgP for stacks $Y$ stratified by quotient stacks. The statement with bounds on codimensions follows by looking at the correct graded parts of the above diagram and noting that codimension $d'$ cycles on $\widehat Z$ push forward to codimension $d'-e$ cycles on $Z$.
\end{proof}

\begin{proposition} \label{Pro:CKgPexcision}
Let $X$ be an algebraic stack over $k$ with a good filtration by finite type substacks and let $U \subset X$ be an open substack with complement $Z=X \setminus U$ such that $U$ and $Z$ have the CKgP.
Then $X$ has the CKgP. 

If $X$ is equidimensional and $Z$ has pure codimension $e$, $U$ has the CKgP up to codimension $d$ and $Z$ has the CKgP up to codimension $d-e$, then $X$ has the CKgP up to codimension $d$.
\end{proposition}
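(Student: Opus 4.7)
The plan is a standard excision diagram chase. Fix a test stack $Y$ of finite type over $k$, stratified by quotient stacks so that Kresch's Chow theory applies to the products in sight. I would produce two compatible right exact sequences sitting in a ladder. The bottom row is Kresch's excision sequence
\begin{equation*}
\CH_*(Z \times Y) \to \CH_*(X \times Y) \to \CH_*(U \times Y) \to 0,
\end{equation*}
coming from the closed immersion $Z \times Y \hookrightarrow X \times Y$ with open complement $U \times Y$. The top row is the tensor with $\CH_*(Y)$ of the excision sequence for $Z \hookrightarrow X \hookleftarrow U$; this stays right exact because $\otimes_{\QQ}$ is exact on $\QQ$-vector spaces. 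The vertical maps are given by the exterior product of \cite[Proposition 3.2.1]{kreschartin}.

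Next I would check that the ladder commutes, which reduces to the compatibility of Kresch's exterior product with proper pushforward in the first factor (for the left square) and with flat restriction in the first factor (for the right square); both follow from the construction of the exterior product. Then the surjectivity form of the four-lemma applies: the left vertical map is surjective by the CKgP of $Z$, the right vertical map is surjective by the CKgP of $U$, and both rows are right exact, so the middle vertical map is surjective. Because $X$ admits a good filtration by finite-type substacks, this surjectivity on finite-type $Y$ is exactly what the CKgP for $X$ requires.

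For the graded refinement, I would track codimensions throughout. Since $Z \subset X$ has pure codimension $e$, the excision pushforward along $Z \times Y \hookrightarrow X \times Y$ raises codimension by $e$; in codimension $d$ the ladder therefore reads
\begin{equation*}
\bigoplus_{p+q=d-e}\CH^p(Z) \otimes \CH^q(Y) \to \bigoplus_{p+q=d}\CH^p(X) \otimes \CH^q(Y) \to \bigoplus_{p+q=d}\CH^p(U) \otimes \CH^q(Y) \to 0,
\end{equation*}
mapping to the corresponding codimension-$d$ piece of the bottom excision sequence. The CKgP of $U$ up to codimension $d$ makes the right vertical arrow surjective through codimension $d$, and the CKgP of $Z$ up to codimension $d-e$ makes the left vertical arrow surjective on exactly the summands that feed the middle through codimension $d$. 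The same four-lemma chase then gives the CKgP of $X$ up to codimension $d$.

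The main obstacle I anticipate is bookkeeping rather than substance: checking that the exterior product ladder commutes with Kresch's excision (compatibility of the exterior product with proper pushforward along the closed immersion and with flat restriction to the open in the first factor) and, in the graded case, that the codimension shift by $e$ lines up the two rows degree-by-degree. Once these formalities are in place, the proof reduces to the four-lemma sketched above.
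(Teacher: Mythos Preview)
Your proposal is correct and follows essentially the same approach as the paper: set up the commutative ladder of excision sequences with vertical exterior-product maps and apply the four lemma, then track codimensions for the graded statement. The paper is terser about the commutativity of the ladder and the exactness of the tensored row, but the argument is identical.
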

\begin{proof}
For $Y$ a finite type stack, using excision exact sequences on $X$ and $X \times Y$ we obtain a commutative diagram
\begin{equation*}
\begin{tikzcd}[column sep = small]
 \CH_*(Z \times Y) \arrow[r] & \CH_*(X \times Y) \arrow[r] & \CH_*(U \times Y) \arrow[r] & 0\\
\CH_*(Z) \otimes \CH_*(Y) \arrow[r] \arrow[u, two heads] & \CH_*(X) \otimes \CH_*(Y) \arrow[r]\arrow[u] & \CH_*(U) \otimes \CH_*(Y) \arrow[r]\arrow[u, two heads] & 0
\end{tikzcd}    
\end{equation*}
with exact rows. The vertical arrows for $U, Z$ are surjective since $U,Z$ have the CKgP. By the four lemma, the middle arrow is surjective as well, so $X$ has the CKgP. Again, the variant with bounds on the codimension follows by looking at the correct graded parts of the above diagram, noting that codimension $d'$ cycles on $Z$ push forward to codimension $d'+e$ cycles on $X$.
\end{proof}
Combining these ingredients, we are now ready to prove Proposition \ref{pro:M0nCKgP}.
\begin{proof}[Proof of Proposition \ref{pro:M0nCKgP}]
We will show that for all $d \geq 0$, all spaces $\M_{0,n}$ have the CKgP up to codimension $d$ by induction on $d$. Every stack has the CKgP up to codimension $d=0$, so the induction start is fine. Let now $d\geq 1$, then we want to apply Proposition \ref{Pro:CKgPexcision} for $X = \M_{0,n}$ with $U=\M_{0,n}^{\textup{sm}}$. Then $U$ has the CKgP by Proposition \ref{Pro:M0nsmCKP}. Its complement $Z= \partial\, \M_{0,n}$ admits a proper, surjective, representable cover
\begin{equation} \label{eqn:bdryparametrization} 
\widehat Z = \coprod_{I \subset \{1, \ldots, n\}} \M_{0,I \cup \{p\}} \times \M_{0, I^c \cup \{p'\}} \to Z = \partial\, \M_{0,n} \subset \M_{0,n}
\end{equation}
by gluing maps. Note that $\widehat Z$ and $Z$ are both equidimensional of the same dimension. By induction the spaces $\M_{0,I \cup \{p\}}$ and $\M_{0, I^c \cup \{p'\}}$ have the CKgP up to codimension $d-1$ (note that they both have at least one marking). So by Lemma \ref{Lem:CKgPprod} their product has the CKgP up to codimension $d-1$. By Lemma \ref{Lem:CKgPpropmap} we have that $Z$ has the CKgP up to codimension $d-1$. This is sufficient to apply Proposition \ref{Pro:CKgPexcision} to conclude that $\M_{0,n}$ has the CKgP up to codimension $d$ as desired.
\end{proof}
% \jocomment{There is one issue here: we use Lemma \ref{Lem:CKgPpropmap} and there we only get the CKgP for products with $Y$ having a stratification by quotient stacks. Therefore, we would only get the CKgP of $\M_{0,n}$ with things stratified by quotient stacks. There is a way around this: we can use that the boundary itself is stratified by quotients of products of $\M_{0,n_i}^\textup{sm}$ by finite groups. Then we could argue as we do later and say "From Lemma \ref{Lem:Chowfinitequotient} and Remark~\ref{Rmk:higherChowfinitequotient} it follows that the quotient of a space with the CKP (or hCKgP) under a finite group action still has the CKP (or hCKgP)". Unfortunately, I think this would require some rearranging of the argument (in fact one should just write down the full stratification according to prestable graph at the beginning). Do you agree with my complaints, and do you think we should rearrange accordingly?}\Ycomment{Thanks! That is a very good point. I am a bit concerned that we are citing statements and the construction (stratification by prestable graphs) which appear in the later part of the paper. It may complicate readers. I would not mind to weaken the statement or put generalization in the foot note.}

\begin{proof}[Proof of Theorem \ref{Thm:M0ngenerators}]
We show $\CH^d(\M_{0,n})=\R^d(\M_{0,n})$ (for all $n\geq 0$) by induction on $d \geq 0$. The induction start $d=0$ is trivial. So let $d \geq 1$ and assume the statement holds in codimensions up to $d-1$. By excision we have an exact sequence
\[\CH^{d-1}(\partial\, \M_{0,n}) \to \CH^d(\M_{0,n}) \to \CH^d(\M_{0,n}^\textup{sm}) \to 0.\]
By Lemma \ref{Lem:Chowsmooth} all elements of $\CH^d(\M_{0,n}^\textup{sm})$ have tautological representatives, so it suffices to show that this is also true for elements coming from $\CH^{d-1}(\partial\, \M_{0,n})$. Using the parametrization (\ref{eqn:bdryparametrization}) it suffices to show that codimension $d-1$ classes on products $\M_{0,n_1} \times \M_{0,n_2}$ are tautological (where $n_1, n_2 \geq 1$). By Proposition \ref{pro:M0nCKgP} we have a surjection 
\[\CH^*(\M_{0,n_1}) \otimes \CH^*(\M_{0,n_2}) \to \CH^*(\M_{0,n_1} \times \M_{0,n_2})\]
and by the induction hypothesis, all classes on the left side are (tensor products of) tautological classes up to degree $d-1$. Since tensor products of tautological classes map to tautological classes under gluing maps, this finishes the proof.
\end{proof}

\subsection{Higher Chow-K\"unneth property} \label{Sect:HigherChowK}
The goal of this section is to give a background to compute the higher Chow group of $\M^\textup{sm}_\Gamma$ for prestable graphs $\Gamma$. Computing higher Chow groups of $\M_{0,n}^\textup{sm}$ has two different flavors. When $n=0, 1,2$ or $3$, we use the projective bundle formula and its consequences. When $n\geq 4$, $\M_{0,n}^\textup{sm}$ is a hyperplane complement inside affine space and we use the motivic decomposition from \cite{Chatz}.

Below we study the Chow-K\"unneth property for higher Chow groups. Unlike the Chow-K\"unneth property for Chow groups, formulating the Chow-K\"unneth property for higher Chow groups in general is rather complicated, see \cite[Theorem 7.2]{Totaro16_motive}. Below, we focus on the case of the first higher Chow group $\CH^*(X,1)$ defined in \cite{kreschartin}\footnote{In \cite{kreschartin}, this group is denoted by $\underline{A}_*(X)$.}.
\begin{definition}\label{def:higher_CKP}
A quotient stack $X$ over $k$ is said to have the \emph{higher Chow K\"unneth property} (hCKP) if for all algebraic stacks $Y$ of finite type over $k$ the natural morphism \begin{equation} \label{eqn:HigherCKP} \CH^*(X,\bullet) \otimes_{\CH^*(k,\bullet)} \CH^*(Y,\bullet) \to \CH^*(X \times Y, \bullet)\end{equation} is an isomorphism in degree $\bullet=1$. A quotient stack $X$ over $k$ is said to have the \emph{higher Chow K\"unneth generating property} (hCKgP) if the above morphism is surjective.
\end{definition}
Expanding this definition slightly, the degree $\bullet = 1$ part of the left hand side of \eqref{eqn:HigherCKP} is given by the quotient
\begin{equation} \label{eqn:HigherCKP_explicitLHS}
    \frac{\left(\CH^*(X,1) \otimes_{\mathbb{Q}} \CH^*(Y,0)\right) \oplus \left(\CH^*(X,0) \otimes_{\mathbb{Q}} \CH^*(Y,1)\right)}{  \CH^*(k,1) \otimes_{\mathbb{Q}} \CH^*(X,0) \otimes_{\mathbb{Q}} \CH^*(Y,0)},
\end{equation}
where
\[\alpha \otimes \beta_X \otimes \beta_Y \in \CH^*(k,1) \otimes_{\mathbb{Q}} \CH^*(X,0) \otimes_{\mathbb{Q}} \CH^*(Y,0)\]
maps to
\[\left((\alpha \cdot \beta_X)\otimes \beta_Y, - \beta_X \otimes( \alpha \cdot \beta_Y) \right)\]
in the numerator of \eqref{eqn:HigherCKP_explicitLHS}. The cokernel of the following map \[\CH^1(k,1)\otimes\CH^{*-1}(X)\to \CH^{*}(X,1)\] is called the {\em indecomposable part}  $\RCH^*(X,1)$ of $\CH^*(X,1)$. 
For example $\RCH^1(\operatorname{Spec} k,1)=0$. 

We summarize some properties for higher Chow groups of quotient stacks $X=[U/G]$. In this case, the definition of the first higher Chow group of $X$ from \cite{kreschartin} coincides with the definition using Bloch's cycle complex of the finite approximation of $U_G=U\times_G EG$ from \cite{EdidinGraham98}. For the properties of higher Chow groups presented below, many of the proofs follow from this presentation. 
\begin{lemma}\label{Lem:ProjectiveBundle}
Let $X$ be a quotient stack and $E\to X$ be a vector bundle of rank $r+1$, and let $\pi\colon \PP(E)\to X$ be the projectivization. Let $\mathcal{O}(1)$ be the canonical line bundle on $\PP(E)$. Then the map
\begin{equation*}
    \theta_E(\bullet) \colon \bigoplus_{i=0}^r\CH_{*+i}(X,1)\to \CH_{*+r}(\PP(E),1)
\end{equation*}
given by
\[(\alpha_0,\ldots,\alpha_r)\mapsto \sum_{i=0}^r c_1(\mathcal{O}(1))^i\cap \pi^*\alpha_i\]
is an isomorphism.
\end{lemma}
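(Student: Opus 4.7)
The plan is to reduce to the classical projective bundle formula for the first higher Chow group of algebraic spaces, and then lift this to the stack via the equivariant approximation that underlies Kresch's construction of higher Chow groups of quotient stacks.

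First I would reduce to algebraic spaces. Writing $X = [U/G]$ for $U$ an algebraic space with $G$-action, the higher Chow groups of $X$ in any fixed codimension are, by construction in \cite{kreschartin} (following Edidin--Graham), computed by those of a finite-dimensional approximation $X_V := (U \times V')/G$, where $V$ is a representation of $G$ and $V' \subseteq V$ is a $G$-stable open subset with free $G$-action whose complement has sufficiently large codimension. Choose $V$ large enough to approximate all Chow groups appearing in the statement simultaneously. The vector bundle $E$ descends to a vector bundle $E_V$ on the algebraic space $X_V$, with $\PP(E)_V \cong \PP(E_V)$ and compatible tautological line bundle. Because flat pullback and cap products with Chern classes are defined compatibly with this approximation, the map $\theta_E$ is identified with the analogous map $\theta_{E_V}$ at the level of algebraic spaces.

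Next I would invoke the classical projective bundle formula. For $E_V \to X_V$ a vector bundle on an algebraic space, the statement
\[\theta_{E_V}\colon \bigoplus_{i=0}^r \CH_{*+i}(X_V, 1) \xrightarrow{\cong} \CH_{*+r}(\PP(E_V), 1)\]
is classical for schemes (following Bloch), with the argument proceeding by Mayer--Vietoris for a Zariski cover trivializing $E_V$ to reduce to a trivial bundle, and then using the Künneth formula for higher Chow groups together with the computation of $\CH_*(\PP^r, \bullet)$ from the affine stratification of $\PP^r$ and homotopy invariance of higher Chow groups for affine bundles. Extension from schemes to algebraic spaces proceeds by étale descent, which is available with $\QQ$-coefficients.

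The main technical obstacle is careful bookkeeping: verifying that the cycle operations $\pi^*$ and $c_1(\mathcal{O}(1)) \cap -$ commute with the equivariant approximation, so that the isomorphism $\theta_{E_V}$ on algebraic spaces actually descends to $\theta_E$ on the stack. This is inherent in Kresch's framework but deserves direct verification. A cleaner approach to injectivity that bypasses the approximation is to produce a one-sided inverse on the stack directly: applying $\pi_*(c_1(\mathcal{O}(1))^{r-k} \cap -)$ to $\gamma = \theta_E(\alpha_0, \ldots, \alpha_r)$ and using the Segre class identity $\pi_*(c_1(\mathcal{O}(1))^{r+j}) = s_j(E)$ (with $s_0 = 1$ and vanishing for $-r \leq j < 0$) yields a triangular linear system in the $\alpha_i$ with identity diagonal, hence invertible. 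Surjectivity, however, seems to genuinely require the scheme-level projective bundle formula fed through the equivariant approximation, since it is not a formal consequence of the standard cycle operations alone.
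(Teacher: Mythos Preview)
Your proposal is correct and follows essentially the same approach as the paper: reduce to algebraic spaces via the Edidin--Graham equivariant approximation and then invoke Bloch's projective bundle formula \cite[Theorem 7.1]{bloch_higherChow}. The paper's proof is terser (it simply cites these two ingredients without discussing compatibility of cycle operations or offering your alternative Segre-class argument for injectivity), but the underlying strategy is identical.
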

\begin{proof}
Let $X=[U/G]$ be a quotient stack. Choose a $G$-representation $V$ and an open subspace $W\subset V$ on which $G$ acts freely. We can take a representation $V$ so that the codimension of $V\setminus W$ in $V$ has arbitrary large codimension. By \cite[Section 2.7]{EdidinGraham98}, the group $\CH_*(X,1)$ is isomorphic to  $\CH_*(U\times W/G,1)$ and the similar formula holds for $\CH_*(\PP(E),1)$. Now the property follows from the projective bundle formula \cite[Theorem 7.1]{bloch_higherChow}.\footnote{See also \cite[Theorem 4.2.2]{JoshuaHigher2}.}
\end{proof}
An affine bundle of rank $r$ over $X$ is a morphism $B\to X$ such that locally (in the smooth topology) on $X$, $B$ is a trivial affine $r$ plane over $X$ \cite[Section 2.5]{kreschartin}. 
We assume that the structure group of an affine bundle of rank $r$ is the group of affine transformations $\textup{Aff}(r)$ in $\textup{GL}(r+1)$. Therefore there exists an associated vector bundle $E$ of rank $r+1$ and an exact sequence of vector bundles
\[0\to F\to E\to \CO_X \to 0\,.\]
%\jocomment{Don't we have to add to the definition that the structure group are the affine transformations? At the moment the sentence above is a bit ambiguous.}
The complement of $\PP(F) \hookrightarrow \PP(E)$ is the affine bundle $B$.

We have a homotopy invariance property of higher Chow groups for affine bundles (see also \cite[Proposition 2.3]{KrishnaHomotopy}).
\begin{corollary}\label{Cor:affineBundle}
Let $X$ be a quotient stack and $\varphi\colon B\to X$ be an affine bundle or rank $r$. Then 
\[\varphi^*\colon \CH_*(X,1)\to \CH_{*+r}(B,1)\]
is an isomorphism. 
\end{corollary}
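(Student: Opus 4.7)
The plan is to combine the geometric description $B = \PP(E)\setminus \PP(F)$ (recalled just before the statement) with the projective bundle formula of Lemma \ref{Lem:ProjectiveBundle} and the localization long exact sequence for higher Chow groups. Writing $i\colon \PP(F)\hookrightarrow \PP(E)$, $j\colon B\hookrightarrow \PP(E)$ for the inclusions and $\pi_E, \pi_F$ for the bundle projections to $X$, I would set up the localization sequence in dimension $k=\ast+r$:
\[
\CH_{\ast+r}(\PP(F),1)\xrightarrow{i_*} \CH_{\ast+r}(\PP(E),1)\xrightarrow{j^*} \CH_{\ast+r}(B,1)\xrightarrow{\partial} \CH_{\ast+r}(\PP(F),0)\xrightarrow{i_*} \CH_{\ast+r}(\PP(E),0).
\]
Since $\varphi = \pi_E\circ j$, the strategy is to identify the cokernel of the left $i_*$ with $\CH_\ast(X,1)$ via the composition $j^*\pi_E^* = \varphi^*$, and to show that the right $i_*$ is injective so that $\partial = 0$.

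The key computation is an explicit description of $i_*$ under the projective bundle decompositions. The short exact sequence $0\to F \to E \to \CO_X \to 0$ dualizes to a global section of $E^\vee$ which, via the tautological surjection $\pi_E^*E^\vee \twoheadrightarrow \CO_E(1)$, gives a section of $\CO_E(1)$ vanishing precisely along $\PP(F)$; hence $[\PP(F)] = c_1(\CO_E(1))$ in $\CH^1(\PP(E))$. Combined with $i^*\CO_E(1) = \CO_F(1)$ and the projection formula, this yields
\[
i_*\bigl(c_1(\CO_F(1))^a \cap \pi_F^*\alpha\bigr) = c_1(\CO_E(1))^{a+1}\cap \pi_E^*\alpha
\]
for any $\alpha\in \CH_\ast(X,\bullet)$. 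Reading this through the isomorphism of Lemma \ref{Lem:ProjectiveBundle} in degree $1$ and the classical projective bundle formula in degree $0$, this identifies $i_*$ with the shift map $(\alpha_0,\dots,\alpha_{r-1})\mapsto (0,\alpha_0,\dots,\alpha_{r-1})$ from $\bigoplus_{i=0}^{r-1}\CH_{\ast+1+i}(X,\bullet)$ into $\bigoplus_{i=0}^{r}\CH_{\ast+i}(X,\bullet)$.

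This shift is injective in every degree, so the right $i_*$ is injective, forcing $\partial = 0$ in degree $1$. The cokernel of the left $i_*$ is then exactly the ``missing'' $i=0$ summand, namely $\pi_E^*\CH_\ast(X,1)$, and composing with $j^*$ produces the desired isomorphism $\varphi^* = j^*\pi_E^*\colon \CH_\ast(X,1)\xrightarrow{\sim} \CH_{\ast+r}(B,1)$. The main technical point I expect to require care with is the availability of the standard tools -- the localization long exact sequence, the projection formula, and the identity $[\PP(F)] = c_1(\CO_E(1))$ -- at the level of first higher Chow groups of quotient stacks. As in the proof of Lemma \ref{Lem:ProjectiveBundle}, these reduce via the finite approximation $\CH_\ast(X,\bullet) = \CH_\ast(U\times W/G,\bullet)$ of \cite{EdidinGraham98} to Bloch's theory for schemes, where they are classical.
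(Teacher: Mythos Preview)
Your proposal is correct and follows essentially the same route as the paper: both arguments set up the localization sequence for $\PP(F)\hookrightarrow\PP(E)$, compute $i_*$ as the shift $(\alpha_0,\dots,\alpha_{r-1})\mapsto(0,\alpha_0,\dots,\alpha_{r-1})$ under the projective bundle decomposition, deduce injectivity of $i_*$ (hence $\partial=0$) and identify the cokernel with $\pi_E^*\CH_*(X,1)$. The one point the paper makes explicit that you leave as a remark is the validity of the formula $i_*\bigl(c_1(\CO_F(1))^a\cap\pi_F^*\alpha\bigr)=c_1(\CO_E(1))^{a+1}\cap\pi_E^*\alpha$ for $\alpha\in\CH_*(X,1)$: the paper invokes its Appendix result (Lemma~\ref{lem:intersectDivisor}) for this, rather than appealing directly to finite approximation.
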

\begin{proof}
Let $p$ and $q$ be projections from $\PP(E)$ and $\PP(F)$ to $X$ respectively. There exists an excision sequence
\begin{equation*}
    \CH_*(\PP(F),1)\xrightarrow{i_*} \CH_*(\PP(E),1)\xrightarrow{j^*}\CH_*(B,1)\xrightarrow{\partial} \CH_*(\PP(F))\xrightarrow{i_*}\CH_*(\PP(E)) 
\end{equation*}
because all stacks are quotient stacks \cite{EdidinGraham98}. 
Since $\PP(F)$ is the vanishing locus of the canonical section of $\CO_{\PP(E)}(1)$ we have 
\[i_*q^*\alpha = c_1(\CO_{\PP(E)}(1))\cap p^* \alpha \text{ for }\alpha \in \CH_*(X)\]
by \cite[Lemma 3.3]{Fulton1984Intersection-th}. 
As $\alpha$ runs through a basis of $\CH_*(X)$, the classes
\begin{equation*}
    c_1(\CO_{\PP(F)}(1))^\ell \cap q^* \alpha \text{ for } 0 \leq \ell \leq r-1
\end{equation*}
run through a basis of $\CH_*(\PP(F))$ by Lemma \ref{Lem:ProjectiveBundle}. Pushing them forward via $i$, the classes
\begin{equation}\label{eqn:projectivizationBundle}
    i_*\left( c_1(\CO_{\PP(F)}(1))^\ell \cap q^* \alpha\right) = c_1(\CO_{\PP(E)}(1))^{\ell+1} \cap p^* \alpha  \text{ for }0 \leq \ell \leq r-1, \alpha
\end{equation}
form part of a basis of $\CH_*(\PP(E))$. In particular, the map $i_*\colon \CH_*(\PP(F))\to \CH_*(\PP(E))$ is injective and furthermore, we see that 
\begin{equation} \label{eqn:pstarisom} p^* : \CH_*(X,1)\to\CH_*(\PP(E),1) / \CH_*(\PP(F),1)\end{equation}
 gives an isomorphism. %\jocomment{There is still a gap here, since the above argument and Fulton reference only worked for $\alpha$ being a usual Chow class, not a higher Chow class.}\Ycomment{I changed the reference to much relevant lemma in Fulton's book. The argument for Chow groups follows from the definition of Chern classes but now \cite{kreschartin} is lacking the definition of Chern classes for higher Chow groups. If we want to present `correct' proof we should define the Chern classes for higher Chow groups}
%\jocomment{I would hope that since we work with quotient stacks, we can reduce everything to known results about Higher chow groups of schemes/quasi-projective varieties. I found the paper \url{http://www.numdam.org/article/AST_1994__226__235_0.pdf} by Levine. In his Theorem 5.2 he says that higher Chow groups of smooth quasi-projective varieties have a product structure satisfying nice properties and in his Corollary 5.4 he proves the projective bundle formula. In fact in his Theorem 5.2 he shows also a projection formula. We could maybe just use that $i_* [\PP(F)] = c_1(\CO_{\PP(E)}(1))$ in the usual Chow groups and then argue
%\[c_1(\CO_{\PP(E)}(1))\cap p^* \alpha = i_* [\PP(F)]\cap p^* \alpha = i_* i^* p^* \alpha = i_* q^* \alpha.\]
%This would bring in a smoothness assumption which seems to be not strictly necessary, but otherwise we indeed have to say what we mean by Chern classes operating on higher Chow groups.}

The injectivity of $i_*$ implies (via the excision sequence above) that $j^*$ is surjective. Using Lemma \ref{lem:intersectDivisor}, the formula (\ref{eqn:projectivizationBundle}) holds verbatim for higher Chow classes $\alpha \in \CH^*(X,1)$ so $i\colon \CH_*(\PP(F),1)\to \CH_*(\PP(E),1)$ is injective. Thus the excision sequence implies that $j^*$ induces an isomorphism
\begin{equation} \label{eqn:jstarisom}
    j^* : \CH_*(\PP(E),1) / \CH_*(\PP(F),1) \to \CH_*(B,1).
\end{equation}
But since $\varphi^*=j^*p^*$, we know that $\varphi$ is an isomorphism as the composition of the two isomorphisms \eqref{eqn:jstarisom} and \eqref{eqn:pstarisom}.
\end{proof}
%\jocomment{One question: would it be reasonable to define that $X$ has the hCKP if the natural morphism $\CH^*(X,*) \otimes_{\CH^*(k,*)} \CH^*(Y,*) \to \CH^*(X \times Y, *)$ is an isomorphism? If I see correctly, this specializes to the classical CKP for the standard Chow groups and for $1$st higher Chow groups specializes to \eqref{eqn:higher_CKP}, where for this last part one uses a linear algebra argument as I tried to sketch above (the expression $(\beta \otimes (\alpha \gamma), - (\alpha \beta) \otimes \gamma)$ essentially says that you can shift the scalar factor $\alpha \in \CH^*(k,1)$ between the factors of the tensor product).  }
%\jocomment{Further comment: I think below we actually just use a kind of hCKgP (generation property), namely that $\big(\CH^*(X)\otimes \CH^*(Y,1)\big) \oplus \big(\CH^*(X,1) \otimes \CH^*(Y)\big) \to \CH^*(X \times_k Y,1)$ is surjective. Just to keep in mind if we encounter any technical problems with the proof of the hCKP. }
\begin{proposition}\label{Pro:M0nsmCKP_higherChow}
For $n=0, 1, 2$ or $3$, the stacks $X=\M_{0,n}^\textup{sm}$ satisfy the hCKP for quotient stacks $Y$. Moreover we have $\RCH^*(X,1)=0$ and the natural morphism
\begin{equation} \label{eqn:tensorisom1_higherChow} \CH_*(X) \otimes_\QQ \CH_*(Y,1) \to \CH_*(X \times Y,1)\end{equation}
is an isomorphism. In particular, setting $Y = \mathrm{Spec}\ k$ we find
\[\CH_*(\M_{0,n}^\textup{sm},1) \cong \CH_*(\M_{0,n}^\textup{sm}) \otimes_{\mathbb{Q}} \CH_*(k,1)\,.\]
\end{proposition}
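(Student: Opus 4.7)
The plan is to reduce all three assertions of the proposition to the single isomorphism $\CH_*(X) \otimes_\QQ \CH_*(Y,1) \to \CH_*(X \times Y,1)$. Indeed, specializing this to $Y=\mathrm{Spec}\, k$ produces $\CH_*(X,1) \cong \CH_*(X) \otimes_\QQ \CH_*(k,1)$, so $\CH_*(X,1)$ is entirely decomposable and $\RCH^*(X,1)=0$. With this in hand, in the tensor product over $\CH^*(k,\bullet)$ defining the hCKP, the relation $(u\cdot x)\otimes y \sim x \otimes (u\cdot y)$ for $u\in \CH^*(k,1)$ collapses the summand $\CH^*(X,1)\otimes_\QQ \CH^*(Y,0)$ onto $\CH^*(X,0)\otimes_\QQ \CH^*(Y,1)$, which the isomorphism then identifies with $\CH^*(X\times Y,1)$.

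For the isomorphism itself I would proceed case-by-case, starting from the easiest stack. The case $n=3$ is trivial since $X=\mathcal{M}_{0,3}=\mathrm{Spec}\, k$. For $n=2$ I use the approximation $B\GG_m = \lim_N \PP^N$: each $\PP^N \times Y$ is a honest projective bundle over $Y$, so Lemma~\ref{Lem:ProjectiveBundle} gives
\[
\CH_*(\PP^N \times Y, 1) \cong \bigoplus_{i=0}^N c_1(\CO(1))^i \cdot \CH_*(Y,1),
\]
and passing to the limit yields $\CH_*(B\GG_m \times Y,1) \cong \CH_*(B\GG_m) \otimes_\QQ \CH_*(Y,1)$. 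For $n=1$ I exploit the rank-$1$ affine bundle $B\GG_m \to B\UU$ from the proof of Proposition~\ref{Pro:M0nsmCKP}, which is preserved by base-change along $Y$. Corollary~\ref{Cor:affineBundle} (and the classical homotopy invariance for ordinary Chow) makes both vertical arrows in
\[
\begin{tikzcd}
\CH_*(B\UU) \otimes_\QQ \CH_*(Y,1) \arrow[r] \arrow[d,"\cong"] & \CH_*(B\UU \times Y, 1) \arrow[d,"\cong"] \\
\CH_*(B\GG_m) \otimes_\QQ \CH_*(Y,1) \arrow[r,"\cong"] & \CH_*(B\GG_m \times Y, 1)
\end{tikzcd}
\]
into isomorphisms, and combined with the $n=2$ case for the bottom row the top row is an isomorphism.

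The case $n=0$ requires more care, since $\pi: B\UU \cong [\PP^1/\PGL_2] \to B\PGL_2$ is a Brauer--Severi $\PP^1$-fibration rather than a genuine projectivization. The key tool is the push-pull identity $\tfrac{1}{2}\pi_*(c_1(\CO(2))\cap \pi^*(-)) = \mathrm{id}$, verified for ordinary Chow groups in the proof of Proposition~\ref{Pro:M0nsmCKP}, which extends to higher Chow groups once divisor-capping is taken in the sense of Lemma~\ref{lem:intersectDivisor}. Base-changing by $Y$, this says that $(\mathrm{id}\times\pi)^*$ is split-injective on $\CH_*(-,1)$ and on $\CH_*(-)$. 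In the diagram
\[
\begin{tikzcd}
\CH_*(B\UU) \otimes_\QQ \CH_*(Y, 1) \arrow[r, "\cong"] & \CH_*(B\UU \times Y, 1) \\
\CH_*(B\PGL_2) \otimes_\QQ \CH_*(Y, 1) \arrow[r] \arrow[u, hook, "\pi^* \otimes \mathrm{id}"] & \CH_*(B\PGL_2 \times Y, 1) \arrow[u, hook, "(\mathrm{id}\times \pi)^*"]
\end{tikzcd}
\]
the vertical arrows are injective (using injectivity of $\pi^*$ and flatness of tensoring with a $\QQ$-vector space) and the top row is an isomorphism by the $n=1$ case, forcing the bottom arrow to be injective. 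For surjectivity, given $\beta \in \CH_*(B\PGL_2 \times Y,1)$, I would decompose $(\mathrm{id}\times\pi)^*\beta = \sum_i a_i \times y_i$ via the $n=1$ case and recover
\[
\beta = \tfrac{1}{2}(\mathrm{id}\times\pi)_*\bigl(c_1(\CO(2)) \cap (\mathrm{id}\times\pi)^*\beta\bigr) = \sum_i \tfrac{1}{2}\pi_*(c_1(\CO(2))\cdot a_i) \times y_i,
\]
which lies in the image of $\pi^*\otimes \mathrm{id}$.

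I expect the $n=0$ case to be the principal technical obstacle: because $[\PP^1/\PGL_2]$ is not a projectivization $\PP(E)$ of any rank-$2$ bundle $E$ on $B\PGL_2$, Lemma~\ref{Lem:ProjectiveBundle} cannot be invoked directly, and the full argument must rest on the ample descended line bundle $\CO(2)$, the rational factor $\tfrac{1}{2}$, and a careful verification that the projection formula and the divisor-capping established in the appendix (via Lemma~\ref{lem:intersectDivisor}) interact correctly with the first higher Chow groups.
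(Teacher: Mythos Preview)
Your proof is correct and follows essentially the same route as the paper: the reduction of hCKP to the single isomorphism, the $\PP^N$-approximation for $B\GG_m$, the affine-bundle transfer for $B\UU$, and the push--pull argument with $c_1(\CO(2))$ for $B\PGL_2$ all match the paper's treatment (the paper phrases the $n=0$ surjectivity via the commuting square with $(\mathrm{id}\times\pi)_*$ and routes the computation through the Veronese embedding $B\UU\hookrightarrow\PP(E)$, but this is only a cosmetic difference). One slip: in your final line you write ``lies in the image of $\pi^*\otimes\mathrm{id}$'', but what your computation actually shows is that $\beta$ lies in the image of the exterior-product map $\CH_*(B\PGL_2)\otimes_\QQ\CH_*(Y,1)\to\CH_*(B\PGL_2\times Y,1)$, which is exactly the surjectivity you need.
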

\begin{proof} When $n=3$, $\M_{0,3}^\textup{sm} = \textup{Spec} \,k$, so there is nothing to prove. 

When $n=2$, we use finite dimensional approximation of $B\mathbb{G}_m$ via projective spaces $\PP^N$, similar to the proof of \cite[Lemma 2]{oesinghaus}. Indeed, for the vector bundle $[\mathbb{A}^{N+1}/\mathbb{G}_m] \to B\mathbb{G}_m$, pullback induces an isomorphism of Chow groups and $[\mathbb{A}^{N+1}/\mathbb{G}_m]$  is isomorphic to $\PP^N$ away from codimension $N+1$. This shows the known identity \[\CH^\ell(B\mathbb{G}_m) \cong \CH^\ell(\PP^N)\text{ for }\ell \leq N.\]
Similarly, 
%one can use the homotopy invariance of higher Chow groups proven in \cite[Proposition 4.3.1]{kreschartin} to show that 
for $Y$ a quotient stack, $[\mathbb{A}^{N+1} \times Y / \mathbb{G}_m]$ is a vector bundle over $B\mathbb{G}_m \times Y$. By \cite[Proposition 5]{EdidinGraham98}, the higher Chow group of $[\mathbb{A}^{N+1} \times Y / \mathbb{G}_m]$ and $\PP^N\times Y$ is isomorphic up to degree $\ell \leq N$. One can use the homotopy invariance of higher Chow groups proven in \cite[Proposition 4.3.1]{kreschartin} to show that we have 
\[\CH^\ell(B\mathbb{G}_m \times Y, 1) \cong \CH^\ell(\PP^N \times Y, 1)\text{ for }\ell \leq N.\] 
%\jocomment{ Also, we would need some minimal version of excision for the higher Chow groups, saying that it does not matter that we throw away the zero section of the above vector bundle to go from $[\mathbb{A}^{N+1} \times Y / \mathbb{G}_m]$ to $\PP^N \times Y$. This last one might have a chance if we restrict to $Y$ a global quotient, using \cite[Proposition 5]{edidinintersection}.}
On the other hand, the natural morphism
$$\CH^*(\PP^N) \otimes \CH^*(Y,1) \to \CH^*(\PP^N \times Y ,1)$$ 
is an isomorphism by Lemma \ref{Lem:ProjectiveBundle}. Combining with the equalities above, this shows that the map
\begin{equation} \label{eqn:hKCPBGm} \CH^*(B\mathbb{G}_m) \otimes \CH^*(Y,1)\to \CH^*(B\mathbb{G}_m \times Y ,1)\end{equation}
is an isomorphism. This shows the hCKP of $B \mathbb{G}_m$. 
%\jocomment{I wonder if the vanishing $\RCH^*(B \mathbb{G}_m,1)=0$ is sufficient here: the vanishing says that for $Y=k$ we have that \eqref{eqn:hKCPBGm} is surjective. But in fact we know that it is an isomorphism and this implies that in \eqref{eqn:HigherCKP_explicitLHS} the denominator maps isomorphically to the first summand of the numerator and thus the quotient is isomorphic to the second summand. Without the injectivity of \eqref{eqn:hKCPBGm} there might be a kernel of this map which would be quotiented out still.}

When $n=1$, we have $\mathfrak{M}_{0,1}^{\textup{sm}} \cong B \UU$ for $\UU=\mathbb{G}_a \rtimes \mathbb{G}_m$ by Lemma \ref{Lem:Chowsmooth}. We already saw that for any finite type stack $Y$ the map $B \mathbb{G}_m \times Y  \to B \UU \times Y $ is an affine bundle. By Corollary \ref{Cor:affineBundle} we have the homotopy invariance \[\CH^*(B \UU \times Y, 1) \cong \CH^*(B \mathbb{G}_m \times Y, 1)\]
for all quotient stacks $Y$. Then the hCKP and the vanishing $\RCH^*(B \UU,1)=0$ for $B \UU$ follow from the corresponding properties of $B \mathbb{G}_m$ proven above.

We are left with the case $n=0$. For any quotient stack $Y$ consider a commutative diagram 
\begin{equation*}
\begin{tikzcd}
\CH_*(Y,1) \otimes \CH_*(B \mathrm{PGL}_2) \arrow[r] & \CH_*(Y \times B \mathrm{PGL}_2,1)\\
\CH_*(Y,1) \otimes \CH_*(B \UU) \arrow[r, "\cong"] \arrow[u,"\textup{id}\otimes \pi_*"] & \CH_*(Y \times B \UU,1)\arrow[u,"(\textup{id}\times \pi)_*"]
\end{tikzcd}
\end{equation*}
induced by the projective pushforward $\pi_*$. We start by proving surjectivity of $(\textup{id}\times \pi)_*$. By \cite[Remark B.20]{BS1} the morphism $\text{id}\times\pi$ can be factorized as
\begin{equation*}
    \begin{tikzcd}
    Y\times B\UU \arrow[r, hook, "i"] \arrow[d,"\text{id}\times\pi"]& Y\times \PP(E) \arrow[dl, "p"]\\
    Y\times B\mathrm{PGL}_2 &
    \end{tikzcd}
\end{equation*}
where $E$ is a rank $3$ vector bundle $E$ on $B\mathrm{PGL}_2$ associated to $H^0(\PP^1,\CO_{\PP^1}(2))$. Let $\xi=c_1(\CO_{\PP(E)}(1))$ be the relative hyperplane class. For any class $\alpha$ in $\CH(Y\times B\mathrm{PGL}_2,1)$, we have
\begin{align*}
    (\text{id}\times\pi)_*((i^* \xi) \cdot (\text{id}\times\pi)^*\alpha) &= p_*i_*((i^*\xi) \cdot i^*p^*\alpha)\\
    &= p_*\left(\xi \cdot i_*(i^*(p^*\alpha))\right)\\
    &= 2 p_*(\xi^2 \cdot p^*\alpha)\\
    &=2 \alpha
    % (\text{id}\times\pi)_*((\text{id}\times\pi)^*\alpha\cup \xi) &= p_*i_*(i^*p^*\alpha \cup \xi)\\
    % &= p_*(p^*\alpha\cup \xi^2)\\
    % &=\alpha
\end{align*}
where the first equality comes from the functoriality of pushforward and  Gysin pullback for higher Chow groups and the second equality is the projection formula \eqref{eqn:firstChernhigherprojectionformula} from Appendix \ref{Sect:Gysinpullbackhigher}. The third equality comes from Lemma \ref{lem:intersectDivisor} and the factor of two comes from the fact that the map $B \mathbb{U} \to \PP(E)$ is the second Veronese embedding of fiberwise degree two.  The fourth equality comes from \cite[Proposition 4.6]{Krishna13HigherChow}. Therefore $(\text{id}\times \pi)_*$ is surjective.
%\jocomment{Note: the map $B \mathbb{U} \to \PP(E)$ is the second Veronese embedding so it writes $\PP^1$ as a curve of degree $2$ in $\PP^2$. Thus the additional factor of $2$ which we missed previously. Do you agree (should we write this in the text?)?}

To prove injectivity of the top arrow consider the diagram
\begin{equation*}
\begin{tikzcd}
\CH_*(Y,1) \otimes \CH_*(B \mathrm{PGL}_2) \arrow[r] \arrow[d,"\textup{id}\otimes \pi^*"] & \CH_*(Y \times B \mathrm{PGL}_2,1)\arrow[d,"(\textup{id}\times \pi)^*"]\\
\CH_*(Y,1) \otimes \CH_*(B \UU) \arrow[r, "\cong"]  & \CH_*(Y \times B \UU,1)
\end{tikzcd}
\end{equation*}
induced by the flat pullback $\pi^*$. As seen in the proof of Proposition \ref{Pro:M0nsmCKP}, the map $\pi^* : \CH_*(B \PGL_2) \to \CH_*(B \mathbb{U})$ is injective and thus the left arrow of the above diagram is likewise injective. Hence the top arrow is an isomorphism, finishing the proof.
\end{proof}
The language of motives is a convenient way to state the higher Chow-K\"unneth property for $\mathcal{M}_{0,n}$ in the case $n\geq 4$.  For simplicity, let $k$ be a perfect field\footnote{This assumption can be removed by the work of Cisinski and D\'eglise, see \cite[Theorem 5.1]{Totaro16_motive}}. Let $\DM(k; \QQ)$ be the Voevodsky's triangulated category of motives over $k$ with $\QQ$-coefficients. Let $\mathrm{Sch}/k$ be the category of separated schemes of finite type over $k$. Then there exists a functor
\[\Ms\colon \mathrm{Sch}/k \to \DM(k; \QQ)\]
which sends a scheme to its motive.
The category $\DM(k; \QQ)$ is a tensor triangulated category, with a symmetric monoidal product $\otimes$ and $\Ms$ preserves the monoidal structure, namely $\Ms(X\times_k Y) = \Ms(X)\otimes \Ms(Y)$. See \cite{Voevodsky_book06} for the basic theory of motives. 

There is an invertible object, called the {\em Tate motive} $$\QQ(1)[2] \in \DM(k;\QQ)\,,$$ and by taking its shifting and tensor product we have $\QQ(a)[n]$ for any integers $a$ and $n$. Define the motivic cohomology of a scheme $X$ (in $\QQ$-coefficient) as 
\[H^{i}(X,\QQ(j))= \Hom_{\DM(k; \QQ)}(\Ms(X),\QQ(j)[i])\,.\]
The motivic cohomology is a bi-graded module over the motivic cohomology of the base field $k$. The motivic cohomology of $k$ is related to Milnor's K-theory of fields.

Voevodsky proved that the higher Chow group and the motivic cohomology have the following comparison isomorphism
\begin{equation}\label{eq:comparisonIso}
    H^{i}(X,\QQ(j)) \cong \CH^j(X,2j-i)_{\QQ}
\end{equation}
where the right hand side is Bloch's higher Chow group introduced in \cite{bloch_higherChow}. Bloch's definition of higher Chow groups will be used to compute the connecting homomorphism of the localization sequence. When $X$ is a smooth scheme over $k$, the higher Chow group and the motivic cohomology have product structure \[\CH^a(X,p)\otimes\CH^b(X,q)\to \CH^{a+b}(X,p+q)\] and the comparison isomorphism (\ref{eq:comparisonIso}) is a ring isomorphism (\cite{KondoYasuda11}).

Now we summarize results from \cite{Chatz}. For a hyperplane complement $U\subset \mathbb{A}^N$, there is a finite index set $I$ and $n_i\geq 0$ such that 
\[\Ms(U) \cong \bigoplus_{i\in I}\QQ(n_i)[n_i]\,.\]
As a corollary, $\CH^*(U,\bullet)$ is a finitely generated free module over $\CH^*(k,\bullet)$ and 
\begin{equation}\label{eq:firstHigherChow}
\CH^\ell(U,1)_{\mathbb{Z}} =
    \begin{cases}
    H^0(U,\mathcal{O}_U^\times) & \text{ , if $\ell=1$} \\
    0 & \text{ , otherwise.}
    \end{cases}
\end{equation}
There exists an isomorphism \[\CH^1(U,1)\cong \overline{\CH}^1(U,1)\oplus \CH^1(k,1)\,.\]
\begin{example}
Let $U= \textup{Spec}\,k[x,x^{-1}]$ be the complement of the origin in the affine line. Then \[\CH^1(U,1)_{\mathbb{Z}}\cong \bigsqcup_{a\in \mathbb{Z}} k^\times\langle x^a \rangle \cong \mathbb{Z} \oplus k^\times \]
and the element $m\in \CH^0(k)_{\mathbb{Z}}=\mathbb{Z}$ acts by $x^a\mapsto x^{ma}$ and $\lambda\in \CH^1(k,1)_{\mathbb{Z}}=k^\times$ acts by $x^a\mapsto \lambda x^a$. In fact $\CH^*(U,\bullet)$ is generated by the fundamental class and $\langle x\rangle$ over $\CH^*(k,\bullet)$.
\end{example}
%\jocomment{The Proposition below is very nice, but do we actually need it? I was thinking that for the Chow group of $\prod_{v\in V(\Gamma)}\M_{0,n(v)}^\textup{sm}$ we split the product in those $v$ with $n(v)\geq 4$ and the others, and use that the product $\prod_{v: n(v)\geq 4} \M_{0,n(v)}^\textup{sm}$ is a hyperplane complement in $\prod_{v: n(v)\geq 4} \mathbb{A}^{n(v)-3} = \mathbb{A}^M$.}
%\Ycomment{Yes, you are right. I thought the prop. below helps to visualize what happens for the connecting homomorphism. We can remove this if it is not necessary.}
\begin{proposition}\label{Pro:M0nsmCKP_stable_higherChow}
Let $U\subset \mathbb{A}^N$ be a hyperplane complement as above. Then the hCKP holds for quotient stacks. 
\end{proposition}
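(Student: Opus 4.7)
The plan hinges on two ingredients: the motivic decomposition of $\Ms(U)$ from \cite{Chatz}, and an Edidin-Graham-style Borel approximation of quotient stacks.

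By \cite{Chatz} one has $\Ms(U) \cong \bigoplus_{i \in I} \QQ(n_i)[n_i]$ with $I$ finite and $n_i \geq 0$; equivalently, $\CH^*(U,\bullet)$ is a finite free $\CH^*(k,\bullet)$-module on generators $e_i$ of bidegree $(n_i,n_i)$. Since Voevodsky's motive functor is monoidal, for any smooth $k$-scheme $Z$ one has
\[\Ms(U \times Z) \cong \bigoplus_{i \in I} \Ms(Z)(n_i)[n_i],\]
and applying $\Hom_{\DM(k;\QQ)}(-,\QQ(j)[2j-\bullet])$ together with the comparison isomorphism \eqref{eq:comparisonIso} translates this into a bi-graded decomposition
\[\CH^j(U \times Z, \bullet) \;\cong\; \bigoplus_{i \in I} \CH^{j-n_i}(Z, \bullet - n_i),\]
realized by exterior products with the generators $e_i$. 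Comparing with the case $Z = \mathrm{Spec}\,k$ shows that this decomposition is precisely the tensor-product map of Definition \ref{def:higher_CKP}, so the hCKP holds (in every degree) with any smooth scheme in place of $Y$.

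To pass to a quotient stack $Y = [V/G]$, I would use the Borel approximation: for each integer $m$, pick a $G$-representation $W$ and an open $W_m \subset W$ on which $G$ acts freely and whose complement has codimension $> m$. The quotient $Y_m = (V \times W_m)/G$ is an algebraic space, which after shrinking $W_m$ may be arranged to be a smooth scheme. Following \cite[Section 2.7]{EdidinGraham98}, for fixed codimension $j$ and fixed degree $\bullet \in \{0,1\}$ the natural maps
\[\CH^j(Y,\bullet) \to \CH^j(Y_m,\bullet), \quad \CH^j(U \times Y, \bullet) \to \CH^j(U \times Y_m, \bullet)\]
are isomorphisms once $m$ is large enough, the latter using $U \times Y_m = (U \times V \times W_m)/G$ as the corresponding approximation of $U \times Y$. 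Applying the smooth scheme case to $Z = Y_m$ in the stable range then yields
\[\CH^j(U \times Y,1) \;\cong\; \bigoplus_{\substack{i \in I \\ n_i=0}} \CH^j(Y,1) \;\oplus\; \bigoplus_{\substack{i \in I \\ n_i=1}} \CH^{j-1}(Y,0),\]
which is exactly the degree-$1$ part of the tensor product $\CH^*(U,\bullet) \otimes_{\CH^*(k,\bullet)} \CH^*(Y,\bullet)$ as displayed in \eqref{eqn:HigherCKP_explicitLHS}.

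The main technical obstacle will be verifying that the motivic Künneth decomposition for smooth schemes is compatible with the Borel approximation at the level of higher Chow tensor products over $\CH^*(k,\bullet)$: the decomposition of $\CH^*(U \times Z,\bullet)$ coming from the factors of $\Ms(U)$ must be shown to be natural in $Z$, so that it passes to the approximation limit, and the resulting map from the tensor product must be identified with the natural one of Definition \ref{def:higher_CKP}. Since the $e_i$ and their action are pulled back from $U$ and hence independent of $Z$, this compatibility is formal once the motivic Künneth isomorphism is set up correctly.
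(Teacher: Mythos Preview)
Your proposal is correct and follows essentially the same strategy as the paper: both proofs combine the motivic decomposition $\Ms(U) \cong \bigoplus_i \QQ(n_i)[n_i]$ from \cite{Chatz} with the monoidal structure of $\DM(k;\QQ)$ to compute $\CH^*(U \times Y,1)$, and both use an Edidin--Graham/Borel approximation to reduce from quotient stacks to schemes.

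The only difference is the order of the two steps. The paper first reduces to the case where $Y$ is a scheme (invoking homotopy invariance and the extended localization sequence for quotient stacks from \cite{Krishna13HigherChow}), and then runs the motivic computation once. You instead establish the K\"unneth decomposition for smooth schemes $Z$ and then pass to quotient stacks by approximation. Both orderings work; the paper's is slightly cleaner because it avoids having to argue separately that the motivic decomposition is natural in $Z$ and compatible with the approximation limit (the point you flag as the ``main technical obstacle''). One small caveat: your claim that $Y_m = (V \times W_m)/G$ can be arranged to be a \emph{smooth} scheme is not generally true if $V$ is singular, but this is harmless since the motivic computation does not require smoothness of $Y$---the paper simply assumes $Y$ is a scheme.
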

\begin{proof}
Let $Y$ be a quotient stack and hence $Y\times U$ is also a quotient stack. $Y$ admits a vector bundle $E$ such that the vector bundle is represented by a scheme off a locus of arbitrarily high codimension. Since $U$ is a scheme, the pullback of $E$ to $Y\times U$ also satisfies the same property. For higher Chow groups of quotient stacks, the homotopy invariance for vector bundle and the extended localization sequence is proven in \cite{Krishna13HigherChow}. Therefore we may assume that $Y$ is a scheme. When $Y$ is a scheme, there exists isomorphisms
\begin{align*}
    \CH^l(Y\times U,1) &= \Hom(\Ms(Y\times U), \QQ(l)[2l-1])\\
    &=\Hom(\Ms(Y)\otimes\Ms(U),\QQ(l)[2l-1])\\
    &= \Hom(\bigoplus_{i\in I}\Ms(Y)(n_i)[n_i],\QQ(l)[2l-1])\\
    &=\bigoplus_{i\in I}\Hom(\Ms(Y)(n_i)[n_i],\QQ(l)[2l-1])\\
    &= \bigoplus_{i\in I} \Hom(\Ms(Y), \QQ(l-n_i)[2l-n_i-1])\\
    &= \bigoplus_{i\in I} \CH^{l-n_i}(Y,1-n_i)\\
    &= \bigoplus_{n_i\leq 1} \CH^{l-n_i}(Y,1-n_i)
\end{align*}
where the fifth equality comes from the cancellation theorem. In the proof of \cite[Proposition 1.1]{Chatz}, the index $n_i=0$ corresponds to $\CH^0(U,0)$ and the indices $n_i=1$ corresponds to generators of $\RCH^1(U,1)$ over $\QQ$. Therefore we get the isomorphism.
\end{proof}
%\jocomment{For later purpose, we need to upgrade this statement to $Y$ being quotient stacks.}\Ycomment{I changed the proposition when $Y$ is a quotient stack.}

After identifying
\begin{equation*}
    \mathcal{M}_{0, n} = \{ (x_1,\ldots, x_n)\in \mathbb{A}^{n-3} \colon x_i\neq x_j, \textup{for} \,i\neq j,\, x_i\neq 0, x_j\neq 1 \}\subset \mathbb{A}^{n-3}\,,
\end{equation*} Proposition \ref{Pro:M0nsmCKP_higherChow} and  \ref{Pro:M0nsmCKP_stable_higherChow} compute the higher Chow group of $\prod_{v\in V(\Gamma)}\M_{0,n(v)}^\textup{sm}$ for any prestable graph $\Gamma$.
\medskip

Now we revisit the CKP for the stack $\M_{0,n}$. We recall the definition of Bloch's higher Chow groups \cite{bloch_higherChow}.
Let 
\[\Delta^m=\text{Spec} \left(k[t_0, \ldots, t_m]/(t_0+\ldots + t_m - 1)\right)\]
be the algebraic $m$ simplex. For $0\leq i_1<\ldots<i_a \leq m$, the equation $t_{i_1}= \ldots = t_{i_a}=0$ defines a face $\Delta^{m-a}\subset \Delta^n$. 
Let $X$ be an equidimensional quasi-projective scheme over $k$. Let $z^i(X,m)$ be the free abelian group generated by all codimension $i$ subvarieties of $X\times \Delta^m$ which intersect all faces $X\times \Delta^l$ properly for all $l<m$. Taking the alternating sum of restriction maps to $i+1$ faces of $X\times\Delta^i$, we get a chain complex $(z^*(X,m),\delta)$. The higher Chow group $\CH^i(X,m)$ is the $i$-th cohomology of the complex $z^*(X,m)$.

When $m=1$, the proper intersection is equivalent to saying that cycles are not contained in any of the (strict) faces. Let $R=\Delta^1\setminus \{[0], [1]\}$. Then  the group $z^*(X,1)$ is equal to $z^*\left(X\times R\right)$ and the differential 
\[\ldots \xrightarrow{} z^*(X\times R) \xrightarrow{\delta} z^*(X)\xrightarrow{} 0\]
is given by specialization maps. If $\sum a_i W_i$ is a cycle in $X\times R$,
\begin{equation}\label{eqn:specialization}
    \delta\left(\sum a_i W_i\right) = \sum a_i\overline{W_i}\cap X\times [0] - \sum a_i\overline{W_i}\cap X\times [1]
\end{equation}
where $\overline{W_i}$ is the closure of $W_i$ in $X\times \Delta^1$. %For the first higher Chow group of stacks stratified by quotient stacks, we follow the definition by Kresch in \cite[Section 4]{kreschartin}. 
%\jocomment{The sentence "For the first ..." seems a bit lost here (we used them all the time already). Does it have some purpose or do we just erase it?}
\begin{lemma} \label{Lem:HigherChowProductcompat}
Let $X_1, X_2$ be algebraic stacks stratified by quotient stacks and $Z_1 \subset X_1$ and $Z_2\subset X_2$ be closed substacks with complements \[i_1 \colon U_1 = X_1 \setminus Z_1 \hookrightarrow X_1, \hspace{3mm} i_2 : U_2 = X_2 \setminus Z_2 \hookrightarrow X_2\] where $U_1, U_2$ are quotient stacks. Let $Z_{12}=X_1\times X_2 \setminus U_1\times U_2$. Denote by
\begin{align*}
    \partial_1 &\colon \CH_*(U_1,1) \to \CH_*(Z_1)\,,\\
    \partial_2 &\colon \CH_*(U_2,1) \to \CH_*(Z_2)\,,\\
    \partial &\colon \CH_*(U_1 \times U_2,1) \to \CH_*(Z_{12})
\end{align*}
the boundary maps for the inclusions $U_1 \subset X_1$, $U_2 \subset X_2$, $U_1 \times U_2 \subset X_1 \times X_2$.
\begin{enumerate}[label=(\alph*)] 
    \item For $\alpha\in \CH_*(U_1,1)$ and $\beta \in \CH_*(U_2)$, \[\partial(\alpha\otimes \beta)=\partial_1(\alpha)\otimes \overline{\beta}\, \text{ in }\CH_*(Z_{12})\] where $\overline{\beta}\in \CH_*(X_2)$ is any extension of $\beta$.
    \item The following diagram commutes
    \begin{equation}\label{eqn:chainrule}
\begin{tikzcd}[column sep = huge]
 \begin{array}{r} \CH_*(U_1,1) \otimes \CH_*(X_2) \\ \oplus \CH_*(X_1) \otimes \CH_*(U_2,1)\end{array} \arrow[r,"(\partial_1 \otimes \mathrm{id}) \oplus (\mathrm{id} \otimes \partial_2)"] \arrow[d,"(\mathrm{id} \otimes i_2^*) \oplus (i_1^* \otimes \mathrm{id})"]& \begin{array}{r} \CH_*(Z_1) \otimes \CH_*(X_2) \\ \oplus \CH_*(X_1) \otimes \CH_*(Z_2)\end{array}\arrow[d]\\
 \CH_*(U_1 \times U_2,1) \arrow[r,"\partial"] & \CH_*(Z_{12})\,,
\end{tikzcd}
\end{equation}
where the arrow on the right is induced by the natural map
\[Z_1 \times X_2 \sqcup X_1 \times Z_2 \to Z_{12}.\]
\end{enumerate}
\end{lemma}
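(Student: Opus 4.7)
The plan is to prove (a) by representing higher Chow classes via Bloch's cycle complex and directly computing the specialization \eqref{eqn:specialization}, and then to derive (b) as a straightforward consequence of (a) and its symmetric analogue. Since the statement concerns quotient stacks and stacks stratified by them, I would first reduce to the case of algebraic spaces through the Edidin--Graham/Kresch finite-dimensional approximation, noting that the exterior product and the boundary map $\partial$ of the extended excision sequence are both compatible with such approximations.

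For part (a), I would represent $\alpha \in \CH_*(U_1,1)$ by a cycle $\sum_i a_i W_i \in z^*(U_1 \times R)$ with $R = \Delta^1 \setminus \{[0],[1]\}$, and represent $\beta \in \CH_*(U_2)$ by $V \in z^*(U_2)$. The exterior product $\alpha \times \beta$ is then represented by $\sum_i a_i (W_i \times V) \in z^*((U_1 \times U_2) \times R)$. The closure of $V$ in $X_2$ provides a canonical extension $\overline{\beta}$, and $\overline{W_i \times V} = \overline{W_i} \times \overline{V}$ holds inside $X_1 \times X_2 \times \Delta^1$. Applying \eqref{eqn:specialization} directly yields
\[
\delta\Bigl(\sum_i a_i\, W_i \times V\Bigr) = \Bigl(\sum_i a_i\, \overline{W_i}\cap X_1\times[0] - \sum_i a_i\, \overline{W_i}\cap X_1\times[1]\Bigr) \times \overline{V},
\]
which is precisely the cycle representing $\partial_1(\alpha) \times \overline{\beta}$ after pushforward along $Z_1 \times X_2 \to Z_{12}$. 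Independence of the choice of extension follows because if $\overline{\beta}' - \overline{\beta} = \iota_{2*}(\zeta)$ for some $\zeta \in \CH_*(Z_2)$, then the difference $\partial_1(\alpha) \times \iota_{2*}(\zeta)$ admits an alternative realization as the pushforward of $\iota_{1*}(\partial_1(\alpha)) \times \zeta$ along $X_1 \times Z_2 \to Z_{12}$, which vanishes by exactness of the excision sequence for $X_1$.

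Part (b) then follows by applying (a) to each of the two summands. For $\alpha \otimes \beta_2 \in \CH_*(U_1,1) \otimes \CH_*(X_2)$, part (a) applied to $\alpha \times i_2^*(\beta_2)$ using $\beta_2$ itself as an extension gives $\partial(\alpha \times i_2^*\beta_2) = \partial_1(\alpha) \times \beta_2$, which is exactly the image of $\partial_1(\alpha) \otimes \beta_2$ under the right vertical arrow of \eqref{eqn:chainrule}. The second summand is handled identically by the analogue of (a) with the roles of the two factors reversed.

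The principal technical obstacle is establishing the behaviour of the exterior product on higher Chow groups at the cycle level in the stacky setting: one needs the external product of Bloch cycles (via pulling back under the diagonal on $\Delta^1$ and Fulton's external product of cycles) to compute the group-theoretic exterior product on $\CH_*(\bullet,1)$, and one must verify that this identification is compatible with the finite-approximation model used for quotient stacks. A secondary subtlety is the well-definedness of the right-hand side of (a) under a different choice of extension, which ultimately reduces to the Mayer--Vietoris-type observation that $Z_1 \times Z_2 = (Z_1 \times X_2) \cap (X_1 \times Z_2)$ inside $Z_{12}$; once this is handled, the cycle-level computation above is essentially formal.
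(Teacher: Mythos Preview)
Your proposal is correct and follows essentially the same route as the paper: both arguments reduce to schemes via finite-dimensional approximation, compute $\partial(\alpha\times\beta)$ at the level of Bloch's cycle complex using the specialization formula \eqref{eqn:specialization} together with $\overline{W_i\times V}=\overline{W_i}\times\overline{V}$, verify independence of the extension $\overline{\beta}$ by the same excision argument (the difference is supported on $Z_1\times Z_2$ and factors through $\iota_{1*}\partial_1(\alpha)=0$), and then deduce (b) from (a) by taking $\beta_2$ itself as the extension of $i_2^*\beta_2$. The only place where the paper is more explicit than your sketch is the stacky reduction: rather than invoking approximation abstractly, it spells out how to lift a projective morphism $S_1\to U_1$ and a vector bundle on $S_1$ to $X_1$ using \cite[Corollary 2.3.2, Proposition 2.3.3]{kreschartin}, so that the boundary map on the naive higher Chow groups of the approximating bundles reduces literally to the scheme computation.
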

\begin{proof} (a) We first prove that the right hand side is well-defined. For a different choice of extension $\overline{\beta}'$ of $\beta$, the difference $\overline{\beta}-\overline{\beta}'$ is a class supported on $Z_2$. Therefore, the class $\partial_1(\alpha)\otimes (\overline{\beta}-\overline{\beta}')$ on $Z_{12}$ is supported on $Z_1 \times Z_2$. In particular, it a class pushed forward from $\CH_*(X_1\times Z_2)$. This class vanishes because $\partial_1 \alpha$ vanishes as a class in $\CH_*(X_1)$. 

We first prove the equality when $X_1,X_2$ are schemes. The proof follows from diagram chasing. Recall that the connecting homomorphism $\partial \colon \CH_*(U_1 \times U_2,1) \to \CH_*(Z_{12})$ is defined using the following diagram
\begin{equation*}
    \begin{tikzcd}
     z^*(Z_{12},1) \arrow[r] \arrow[d] & z^*(Z_{12})\arrow[d,"j_*"] \arrow[r] &0\\
     z^*(X_1\times X_2,1) \arrow[r,"\delta"] \arrow[d,"i^*"] &z^*(X_1\times X_2)\arrow[d]\arrow[r] &0\\
     z^*(U_1\times U_2,1) \arrow[r,"\delta"]  &z^*(U_1\times U_2)\arrow[r] &0\,.
    \end{tikzcd}
\end{equation*}
For each class in $\CH_*(U_1\times U_2,1)$ take a representative in $z^*(U_1\times U_2,1)$. By taking a preimage under $i^*$, applying the map $\delta$ and taking a preimage under $j_*$, we get a class in $\CH_*(Z_{12})$ which corresponds to the image of $\partial$. Fix a representative of $\alpha$ in $z^*(U_1\times R)$ and $\beta$ in $z^*(U_2)$. Let $\overline{\alpha}$ be the closure of $\alpha$ in $X_1\times R$ and $\overline{
\beta}$ be the closure of $\beta$ in $X_2$. Let $\widetilde{\alpha}$ be the closure of $\overline{\alpha}$ in $X_1\times\AA^1$. Then to compute $\partial(\alpha \times \beta)$ we observe that $\alpha \times \beta = i^*(\overline{\alpha}\times \overline{\beta})$. Applying $\delta$, we have
\begin{align*}
    \delta(\overline{\alpha}\times \overline{\beta}) &= \widetilde{\alpha}\times \overline{\beta} \cap X_1\times  [0]\times X_2 - \widetilde{\alpha}\times \overline{\beta} \cap X_1\times [1] \times X_2\\
    &= j_*\left(\widetilde{\alpha}\cap X_1\times [0]-\widetilde{\alpha}\cap X_1\times [1] \right)\times \overline{\beta}\\
    &=j_*(\partial(\alpha)\times \overline{\beta})
\end{align*}
and this proves the equality.
% \jocomment{I think this proof could be expressed a bit more clearly. First, I think it's bad that we use $\partial$ both for the maps in the complex $z^*$ and for the connecting homomorphism of the excision sequence. Do you think it would be ok to change the map in $z^*$ to the notation $\delta$?
% Second, we should make the diagram chase more explicit (I read the proof, nodded, then wondered why we even drew the diagram, then thought about it 15 minutes and then realized that we did use the diagram and understood the bars and widetildes etc. We should make things explicit: "Recall that the connecting homomorphism $\partial \colon \CH_*(U_1 \times U_2,1) \to \CH_*(Z_{12})$ is defined using the following diagram: ... take a representative in $z^*(U_1\times U_2,1)$, take a preimage under $i^*$, apply the map $\delta$ and then take a preimage under $j_*$. Then to compute $\partial(\alpha \times \beta)$ we observe that $\alpha \times \beta = i^*(\overline{\alpha}\times \overline{\beta})$. Applying $\delta$ we have
% \[\delta(\overline{\alpha}\times \overline{\beta}) = ... = j_*( \partial(\alpha) \times \overline{\beta}) \]
% and this proves the equality.
% }\Ycomment{Thanks for the suggestion. I changed $\partial$ in the diagram to $\delta$ and add explanation on the connecting homomorphism. }

In general, let $U_1$ be a quotient stack by assumption. For a projective morphism $S_1\to U_1$ from a reduced stack $S_1$, there exists a projective morphism $T_1\to X_1$ such that $S_1\cong U_1\times_{X_1} T_1$ (\cite[Corollary 2.3.2]{kreschartin}). Let $E_1$ be a vector bundle on $S_1$. By \cite[Proposition 2.3.3]{kreschartin}, there exists a projective modification $T_1'\to T_1$ and a vector bundle $E_1'$ which restricts to $E_1$. We perform a similar construction for the quotient stack $U_2$. The image of $\CH_*(U_1,1)\otimes \CH_*(U_2)$ under the boundary map 
\[\partial\colon \CH_*(U_1\times U_2,1)\to \CH_*(Z_{12})\]
is defined by the limit of boundary maps for naive higher Chow groups of $E_1\boxtimes E_2\subset E_1'\boxtimes E_2'$. The corresponding computation is precisely equal to the case above. Therefore the same formula holds for stacks $X_1$ and $X_2$.

%Then $U_1$ admits a vector bundle $\pi_1\colon E_1\to U_1$ such that $E_1$ is represented by a scheme off a locus of arbitrarily high codimension. Moreover, the pullback $\pi^*_1$ induces an isomorphism of higher Chow groups (\cite[Proposition 4.3.1]{kreschartin}). For each vector bundle $E_1$ on $U_1$, there exists a projective modification $X_1'\to X_1$ which is an isomorphism restricted to $U_1$ and there exists a vector bundle $E_1'$ restricts to $E_1$ by \cite[Proposition 2.3.3]{kreschartin}. We construct similarly for $U_2=[T_2/G_2]$.\Ycomment{Ok we can try the above computation for $E_1\times E_2\subset E_1'\times E_2'$ but it is not quite correct... We should further consider the projective morphisms $F_i\to E'_i$  and ...}

(b) Let $\alpha\otimes \beta\in \CH_*(U_1,1)\otimes \CH_*(X_2)$. We take a natural extension $\beta$ of $i^*_2\beta$. Then by (a), we have
\begin{align*}
    \partial\circ(\mathrm{id}\otimes i^*_2)\big(\alpha\otimes\beta\big)&= \partial\big(\alpha\otimes i^*_2\beta\big)\\ &=\partial_1(\alpha)\otimes \beta\\ &=\partial_1\otimes\mathrm{id}\big(\alpha\otimes \beta\big)\,.
\end{align*}
The same computation holds for $\CH_*(X_1)\otimes \CH_*(U_2,1)$ and we get the commutativity of (\ref{eqn:chainrule}).
%Consider the following commutative diagram
%\begin{equation*}
%\begin{tikzcd}
%\CH_*(U_1,1)\otimes\CH_*(X_2)\arrow[r,"\partial_1\otimes\mathrm{id}"]\arrow[d] &\CH_*(Z_1)\otimes\CH_*(X_2) \arrow[d]\\ \CH_*(U_1\times X_2,1)\arrow[r] \arrow[d,"\mathrm{id}\otimes i^*_2"] &\CH_*(Z_1\times X_2) \arrow[d,"i_2^*"]\\ \CH_*(U_1\times U_2,1)\arrow[r,"\overline{\partial}"] &\CH_*(Z_1\times U_2)
%\end{tikzcd}    
%\end{equation*}
%where $\overline{\partial}$ is the boundary map for the inclusion $U_1\times U_2\subset U_1\times X_2$. Then $\overline{\partial}=j^*\circ \partial$ where
%\begin{equation*}
%    \begin{tikzcd}[column sep = small]
%     & & &\CH_*(U_1,1)\otimes \CH_*(U_2)\arrow[d,"\partial"]\arrow[rd,"\overline{\partial}"] & &\\ & &\CH_*(X_1\times Z_2)\arrow[r,"i_*"] & \CH_*(Z_1\times X_2\cup X_1\times Z_2)\arrow[r,"j^*"] &\CH_*(Z_1\times U_2)\arrow[r] & 0
%    \end{tikzcd}
%\end{equation*} and hence (\ref{eqn:chainrule}) commutes after composing $j^*$. 
\end{proof}
\begin{remark} \label{Rmk:partialvanishonCHk1}
Applying Lemma \ref{Lem:HigherChowProductcompat} to $X_1 = U_1 = \mathrm{Spec}\, k$ (so that $Z_1=\emptyset$) and $Z = Z_2 \subseteq X = X_2$ with $U = X \setminus Z$, we find that the composition 
\[\CH_*(k,1) \otimes \CH_*(U) \to  \CH_*(U,1) \xrightarrow{\partial} \CH_*(Z)\]
vanishes since for $\alpha \in \CH_*(k,1)$ and $\beta \in \CH_*(U)$ we have $\partial(\alpha \otimes \beta) = \partial_1(\alpha) \otimes \overline{\beta} = 0$ as $\partial_1(\alpha)$ lives in $\CH_*(Z_1) = \CH_*(\emptyset) = 0$. This implies that $\partial$ factors through the indecomposable part $\RCH_*(U,1)$ of $\CH_*(U,1)$.
\end{remark}

To prove the CKP for $\M_{0,n}$, we want to use that via the boundary gluing morphisms, the space $\M_{0,n}$ is stratified by (finite quotients of) products of spaces $\M_{0,n_i}^\textup{sm}$, for which we know the CKP. The following proposition tells us that indeed the CKP for such a stratified space can be checked on the individual strata.
%. It is reasonable to suspect that $\M_{0,n}$ has the CKP, not just the CKgP, for $n\geq 1$. We prove this statement using higher Chow groups.
\begin{proposition}\label{Pro:CKPstratifiedspace}
Let $X$ be an algebraic stack, locally of finite type over $k$ with a good filtration and stratified by quotient stacks $X=\cup X_i$. Suppose each stratum $X_i$ has the CKP and the hCKgP for quotient stacks. Then $X$ has the CKP for quotient stacks.
\end{proposition}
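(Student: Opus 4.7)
The plan is to reduce to a finite stratification using the good filtration and then induct on the number of strata, concluding via a four/five-lemma argument applied to the extended localization sequence for higher Chow groups.

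For the reduction, observe that the good filtration $X = \bigcup_k \mathcal{U}_k$ together with excision implies that for any codimension $d$ and any finite-type quotient stack $Y$, both $\CH^d(X)$ and $\CH^d(X \times Y)$ are computed by sufficiently large $\mathcal{U}_k$. So I may assume $X$ is finite type and stratified by finitely many quotient stacks whose closures are unions of strata (refining the stratification if necessary). I then induct on the number $N$ of strata: the case $N = 1$ is the hypothesis on $X$. For $N > 1$, I pick a stratum $U$ open in $X$ and set $Z = X \setminus U$, which inherits a stratification by $N-1$ quotient stacks; so $Z$ has the CKP by induction, while $U$ enjoys both the CKP and the hCKgP by hypothesis.

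For any quotient stack $Y$, the extended localization sequence for higher Chow groups fits in the commutative diagram
\begin{equation*}
\begin{tikzcd}[column sep=small]
\CH_*(U, 1)\otimes \CH_*(Y) \arrow[r, "\partial_U\otimes \mathrm{id}"] \arrow[d, "\phi"] & \CH_*(Z)\otimes \CH_*(Y) \arrow[r] \arrow[d, "\cong"] & \CH_*(X) \otimes \CH_*(Y) \arrow[r] \arrow[d, "\psi"] & \CH_*(U)\otimes \CH_*(Y) \arrow[r] \arrow[d, "\cong"] & 0\\
\CH_*(U \times Y, 1) \arrow[r, "\partial"] & \CH_*(Z \times Y) \arrow[r] & \CH_*(X \times Y) \arrow[r] & \CH_*(U \times Y) \arrow[r] & 0
\end{tikzcd}
\end{equation*}
in which the top row is exact because tensoring with the $\QQ$-vector space $\CH_*(Y)$ is exact, and the second and fourth verticals are the CKP isomorphisms for $Z$ and $U$. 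A routine diagram chase using only the CKP on $U$ and $Z$ yields surjectivity of $\psi$. Injectivity reduces to the claim that $\mathrm{Im}(\partial)$ is already contained in $\mathrm{Im}(\partial \circ \phi)$.

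This last point is where the hCKgP of $U$ enters. It presents $\CH_*(U \times Y, 1)$ as generated by elements $\alpha \otimes \beta$ with $\alpha \in \CH_*(U,1)$, $\beta \in \CH_*(Y)$, which lift through $\phi$ by construction, and by elements $\gamma \otimes \delta$ with $\gamma \in \CH_*(U)$, $\delta \in \CH_*(Y,1)$. Applying Lemma \ref{Lem:HigherChowProductcompat}(a) in the symmetric guise where the second factor carries the trivial boundary $\emptyset \subset Y$, the second type of generator satisfies $\partial(\gamma \otimes \delta) = \bar\gamma \otimes \partial_Y(\delta) = 0$. Hence $\mathrm{Im}(\partial) = \mathrm{Im}(\partial \circ \phi)$, and the diagram chase closes. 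The main technical subtlety I expect to work through carefully is verifying that the $\CH_*(k,1)$-relations built into the hCKgP presentation do not obstruct this argument (cf. Remark \ref{Rmk:partialvanishonCHk1}), and confirming that the inductive refinement of the stratification keeps each open stratum and its closed complement within the class of stacks to which the hypotheses and Lemma \ref{Lem:HigherChowProductcompat} apply; once these are in place, the remainder is formal.
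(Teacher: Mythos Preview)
Your proposal is correct and follows essentially the same approach as the paper: reduce to finite type via the good filtration, induct over the stratification by peeling off an open stratum $U$, and use the extended excision sequence together with the hCKgP of $U$ to control the image of $\partial$. The paper phrases the key step slightly differently---it enlarges the source of the leftmost vertical arrow to $\CH_*(U,1)\otimes\CH_*(Y)\oplus\CH_*(U)\otimes\CH_*(Y,1)$ so that the modified map becomes surjective and then applies the five lemma directly---but this is equivalent to your observation that generators of the second type are killed by $\partial$ (via Lemma~\ref{Lem:HigherChowProductcompat} with $Z_2=\emptyset$), so that $\mathrm{Im}(\partial)=\mathrm{Im}(\partial\circ\phi)$.
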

\begin{proof}
Since $X$ has a good filtration, the Chow groups of $X$ and $X \times Y$ of a fixed degree can be computed on a sufficiently large finite-type open substack. This allows us to reduce to the case where $X$ has finite type. 

Now by assumption, there exists a nonempty open substack $U\subset X$ which is a quotient stack and has the CKP. Let $Z=X\setminus U$ be the complement. For a quotient stack $Y$ consider a commutative diagram
\begin{equation}\label{eqn:exsequencestratification}
\begin{tikzcd}[column sep = tiny]
\CH_*(U,1)\otimes\CH_*(Y) \arrow[r] \arrow[d,"\gamma_1"]  & \CH_*(Z)\otimes\CH_*(Y) \arrow[r] \arrow[d,"\gamma_2"] & \CH_*(X)\otimes\CH_*(Y) \arrow[r] \arrow[d,"\gamma_3"] & \CH_*(U)\otimes\CH_*(Y)\arrow[r]\arrow[d,"\gamma_4"] & 0\\
 \CH_*(U\times Y,1) \arrow[r] & \CH_*(Z\times Y) \arrow[r] & \CH_*(X\times Y) \arrow[r] & \CH_*(U\times Y) \arrow[r]&0
\end{tikzcd}
\end{equation}
where the rows are exact by the excision sequence. Since $U$ has the CKP, the arrow $\gamma_4$ is an isomorphism and by Noetherian induction, the same is true for $\gamma_2$. We extend the domain of the map $\gamma_1$ by inserting an extra component $\CH_*(U)\otimes\CH_*(Y,1)$. Then the following diagram
\begin{equation*} 
\begin{tikzcd}[column sep = huge]
 \begin{array}{r} \CH_*(U,1)\otimes\CH_*(Y) \\ \oplus \CH_*(U)\otimes\CH_*(Y,1)\end{array} \arrow[r] \arrow[d,"\gamma'_1"]& \CH_*(Z)\otimes\CH_*(Y)\arrow[d]\\
 \CH_*(U\times Y,1) \arrow[r] & \CH_*(Z\times Y)
\end{tikzcd}
\end{equation*}
commutes by applying Lemma~\ref{Lem:HigherChowProductcompat} to $U \times Y \subset X \times Y$. Note that the new factor $\CH_*(U)\otimes\CH_*(Y,1)$ maps to $\CH_*(X)\otimes\CH_*(Y \setminus Y)=0$ under the top arrow, and so in particular the top row of \eqref{eqn:exsequencestratification} remains exact after the modification. Furthermore, the modified map $\gamma'_1$ is surjective because $U$ has the hCKgP for quotient stacks. Therefore $\gamma_3$ is an isomorphism by applying the five lemma to the modified version of \eqref{eqn:exsequencestratification}.
\end{proof}
To apply this to the stratification of $\M_{0,n}$ by prestable graph, we need a small further technical lemma, due to the fact that the strata of $\M_{0,n}$ are quotients of products of $\M_{0,n_i}$ by finite groups.
\begin{lemma} \label{Lem:Chowfinitequotient}
Let $\M$ be an algebraic stack of finite type over $k$ and stratified by quotient stacks with an action of a finite group $G$. Then the quotient map $\pi : \M \to \M/G$ induces an isomorphism
\begin{equation}
    \pi^* : \CH_*(\M / G) \to \CH_*(\M)^G
\end{equation}
from the Chow group of the quotient $\M/G$ to the $G$-invariant part of the Chow group of $\M$. On the other hand, the map
\begin{equation}
    \pi_* : \CH_*(\M ) \to \CH_*(\M/ G)
\end{equation}
is a surjection.
\end{lemma}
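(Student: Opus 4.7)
The plan is to exploit the standard identities for a finite étale quotient combined with the fact that we work with $\QQ$-coefficients. First, observe that the quotient map $\pi : \M \to \M/G$ is a $G$-torsor, in particular a finite étale morphism of degree $|G|$ that is representable by schemes. Under the assumption that $\M$ is stratified by quotient stacks (so that $\M/G$ is as well), Kresch's framework equips us with a flat pullback $\pi^*$ and a proper pushforward $\pi_*$ between $\CH_*(\M/G)$ and $\CH_*(\M)$.

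The key step is to establish the two composition identities
\begin{equation*}
\pi_* \pi^* = |G| \cdot \mathrm{id}_{\CH_*(\M/G)}, \qquad \pi^* \pi_* = \sum_{g \in G} g_*\ \text{ on }\CH_*(\M).
\end{equation*}
The first comes from the projection formula together with $\pi_*[\M] = |G| \cdot [\M/G]$, which expresses that $\pi$ is finite étale of degree $|G|$. The second is flat base change applied to the Cartesian square
\begin{equation*}
\begin{tikzcd}
\M \times_{\M/G} \M \arrow[r] \arrow[d] & \M \arrow[d, "\pi"]\\
\M \arrow[r, "\pi"] & \M/G
\end{tikzcd}
\end{equation*}
where the top-left corner is $\coprod_{g \in G} \M$ via the $G$-torsor structure, with the two projections being the identity and the action by $g$, respectively.

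From the first identity and the invertibility of $|G|$ in $\QQ$, we immediately deduce that $\pi_*$ is surjective and $\pi^*$ is injective. Since $\pi \circ g = \pi$ for every $g \in G$, we have $g^* \pi^* = \pi^*$, so the image of $\pi^*$ lies in the $G$-invariant subspace $\CH_*(\M)^G$. Conversely, for $\alpha \in \CH_*(\M)^G$, the second identity gives
\begin{equation*}
\pi^* \pi_* \alpha = \sum_{g \in G} g_* \alpha = |G|\, \alpha,
\end{equation*}
so $\alpha = \pi^*\bigl(\tfrac{1}{|G|} \pi_* \alpha\bigr)$, proving surjectivity onto $\CH_*(\M)^G$ and completing the proof.

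The main technical point to verify will be the two composition identities in Kresch's setting for stacks stratified by quotient stacks; the projection formula and flat base change are available in this generality, and once they are invoked the argument is purely formal. No finite-type reduction is needed since both $\M$ and $\M/G$ are of finite type by assumption.
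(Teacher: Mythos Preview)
Your proof is correct and follows essentially the same approach as the paper: both use that $\pi$ is a principal $G$-bundle (hence finite \'etale), derive the identities $\pi_*\pi^* = |G|\cdot\mathrm{id}$ and $\pi^*\pi_* = \sum_{g} g_*$ (the paper writes $\sum_g \sigma_g^*$, which is the same operator since each $g$ is an automorphism), and then conclude exactly as you do using $\QQ$-coefficients. The only cosmetic difference is that you justify the second identity via the Cartesian square and flat base change, whereas the paper simply states it.
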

\begin{proof}
The map $\pi$ is representable and a principal $G$-bundle, hence in particular it is finite and \'etale. Thus we can both pull back cycles and push forward cycles under $\pi$. 
For $g \in G$ let $\sigma_g : \M \to \M$ be the action of $g$ on $\M$. Then the relation $\pi \circ \sigma_g \cong \pi$ shows that $\sigma_g^*$ acts as the identity on the image of $\pi^*$ and thus $\pi^*$ has image in the $G$-invariant part of $\CH_*(\M)$. The equality 
\[\pi_* \circ \pi^* = |G| \cdot \mathrm{id} : \CH_*(\M / G) \to \CH_*(\M / G)\]
shows that $\pi^*$ is injective and that $\pi_*$ is surjective (since we work with $\mathbb{Q}$-coefficients). On the other hand, we have
\[\pi^* \circ \pi_* = \sum_{g \in G} \sigma_g^* : \CH_*(\M) \to \CH_*(\M)\]
thus restricted on the $G$-invariant part, we again have
\[\pi^* \circ \pi_*|_{\CH_*(\M)^G} = |G| \cdot \mathrm{id} : \CH_*(\M)^G \to \CH_*(\M)^G\,,\]
showing $\pi^*$ is surjective.
\end{proof}
\begin{remark} \label{Rmk:higherChowfinitequotient}
The above lemma is also true for the first higher Chow groups with $\QQ$-coefficients.
\end{remark}

\begin{corollary}\label{Cor:CKPM0n}
For all $n\geq 0$, the stacks $\M_{0,n}$ have the CKP for  quotient stacks.
\end{corollary}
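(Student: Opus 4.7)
The plan is to apply Proposition \ref{Pro:CKPstratifiedspace} to the stratification of $\M_{0,n}$ by dual graph. Recall that $\M_{0,n}$ admits a good filtration by finite type substacks and is stratified by locally closed substacks $\M^\Gamma$ indexed by isomorphism classes of prestable graphs $\Gamma$ in genus zero with $n$ markings. Each stratum has the explicit description
\[\M^\Gamma \cong \Big(\prod_{v \in V(\Gamma)} \M_{0,n(v)}^{\textup{sm}}\Big) \big/ \aut(\Gamma),\]
where $\aut(\Gamma)$ is the finite automorphism group of $\Gamma$. Thus it suffices to check that each $\M^\Gamma$ satisfies both the CKP and the hCKgP for quotient stacks, whereupon Proposition \ref{Pro:CKPstratifiedspace} gives the conclusion.

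The verification for $\M^\Gamma$ breaks into three steps. First, each factor $\M_{0,n(v)}^{\textup{sm}}$ has the CKP by Proposition \ref{Pro:M0nsmCKP} and the hCKP for quotient stacks either by Proposition \ref{Pro:M0nsmCKP_higherChow} (for $n(v) \in \{0,1,2,3\}$) or by Proposition \ref{Pro:M0nsmCKP_stable_higherChow} (for $n(v) \geq 3$, using that $\M_{0,n(v)}^{\textup{sm}} = \mathcal{M}_{0,n(v)}$ is then a hyperplane complement in $\mathbb{A}^{n(v)-3}$). Second, both properties pass to finite products: the CKP of $X_1 \times X_2$ follows by iterating the K\"unneth isomorphism with $X_2 \times Y$ serving as the partner for $X_1$, and the hCKP does too after some routine bookkeeping involving the tensor product over $\CH^*(k,\bullet)$ and the degree-$0$ and degree-$1$ summands described in \eqref{eqn:HigherCKP_explicitLHS}.

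Third, one descends along the $\aut(\Gamma)$-quotient. For a finite group $G$ acting on a quotient stack $M$ and any quotient stack $Y$ with trivial $G$-action, one has $(M/G) \times Y \cong (M \times Y)/G$; Lemma \ref{Lem:Chowfinitequotient} identifies $\CH_*(M/G)$ with $\CH_*(M)^G$, and Remark \ref{Rmk:higherChowfinitequotient} supplies the analogous statement for $\CH_*(-,1)$. Taking $G$-invariants of the CKP and hCKP isomorphisms for $M$ then yields the corresponding properties for $M/G$. The main obstacle I expect is precisely this finite-quotient step: one must verify that passage to $G$-invariants is compatible with the relation modding out by $\CH^*(k,1) \otimes \CH_*(M) \otimes \CH_*(Y)$ in the definition of hCKgP, and that the relevant pullback and pushforward maps commute with $G$-averaging. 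Once these compatibilities are established, Proposition \ref{Pro:CKPstratifiedspace} applies and yields the CKP of $\M_{0,n}$ for quotient stacks.
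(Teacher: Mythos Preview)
Your proposal is correct and follows essentially the same route as the paper: apply Proposition~\ref{Pro:CKPstratifiedspace} to the stratification by dual graphs, verify CKP and hCKgP for each stratum $\M^\Gamma$ via Propositions~\ref{Pro:M0nsmCKP}, \ref{Pro:M0nsmCKP_higherChow}, and \ref{Pro:M0nsmCKP_stable_higherChow} on the factors, and then descend along the finite $\aut(\Gamma)$-quotient using Lemma~\ref{Lem:Chowfinitequotient} and Remark~\ref{Rmk:higherChowfinitequotient}. The paper's proof is somewhat terser on the product and quotient steps you spell out, but the structure and the cited inputs are the same.
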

\begin{proof}
Recall that for a prestable graph $\Gamma$ of genus $0$ with $n$ markings, there exists the locally closed substack $\M^\Gamma \subset \M_{0,n}$ of curves with dual graph exactly $\Gamma$. By Proposition~\ref{Pro:CKPstratifiedspace}, it suffices to show that the stacks $\M^\Gamma$ have the CKP and the hCKgP for quotient stacks.
Now from \cite[Proposition 2.4]{BS1} we know that the restriction of the gluing map $\xi_\Gamma$ induces an isomorphism
\[\left(\prod_{v \in V(\Gamma)} \M_{0,n(v)}^\textup{sm} \right)/ \mathrm{Aut}(\Gamma) \xrightarrow{\xi_\Gamma} \M^\Gamma\,.\] 
The product of spaces $\M_{0,n(v)}^\textup{sm}$ has the CKP by Proposition \ref{Pro:M0nsmCKP}
and the hCKgP for quotient stacks by Proposition \ref{Pro:M0nsmCKP_higherChow} and  Proposition~\ref{Pro:M0nsmCKP_stable_higherChow}. From Lemma \ref{Lem:Chowfinitequotient} and Remark~\ref{Rmk:higherChowfinitequotient} it follows that the quotient of a space with the CKP (or hCKgP) under a finite group action still has the CKP (or hCKgP), so by the above isomorphism all $\M^\Gamma$ have the CKP and hCKgP for quotient stacks. This finishes the proof.
%Using  Proposition~\ref{Pro:M0nsmCKP_stable_higherChow} and Remark~\ref{Rmk:higherChowfinitequotient}, one can show that $\M^\Gamma$ has hCKgP. Therefore the proof follows from Proposition~\ref{Pro:CKPstratifiedspace}.
\end{proof}
We proved the Chow-K\"unneth property of $\M_{0,n}$ with respect to quotient stacks. This assumption comes from technical assumptions in \cite{kreschartin}. For example, the extended excision sequence is only proven when the open substack is a quotient stack. Such assumptions are not necessary for a different cycle theory of algebraic stacks constructed in \cite{khan2019virtual}. Therefore, the following remark could remove the technical assumptions in the above Chow-K\"unneth property.
\begin{remark}
Let $X$ be an algebraic stack, locally of finite type over $k$. Let $\mathrm{H}_{*}^{\textup{BM}}$ be the rational motivic Borel--Moore homology theory defined in \cite{khan2019virtual}. There exists a cycle class map
\begin{equation*}
    \mathrm{cl} \colon \CH_*(X)_\QQ \to \mathrm{H}_{*}^{\textup{BM}}(X)
\end{equation*}
which is compatible with projective pushforward, Chern classes and lci pullbacks. In \cite{BaePark}, we will show that the cycle class map $\mathrm{cl}$ is an isomorphism when $X$ is stratified by quotient stacks.
\end{remark}

\subsection{Tautological relations}\label{sec:tautrelations}
In this section, we formulate and prove a precise form of Theorem \ref{Thm:wdvv}, see Theorem \ref{Thm:fullgenus0rels}.
%Namely, all tautological relations on $\M_{0,n}$ are additively generated by the WDVV equation and relations $(\star)$. \jocomment{This needs to be reformulated to include the relations allowing us to remove $\kappa, \psi$ classes at vertices with more than $3$ half-edges.}
%\Ycomment{I agree, but it might be cumbersome to write a relation for each $\psi$ and $\kappa$ monomial. I put necessary relations in Section \ref{sec:genus_zero}, so we should combine those to define $(\star)$. Another option is to remove this sentence, even spelling out this statement correctly takes few pages.}
Recall that for a prestable graph $\Gamma$, a decoration $\alpha$ is an element of $\CH^*(\M_\Gamma)$ given as a product $\alpha = \prod_v \alpha_v$ where $\alpha_v \in \CH^*(\M_{0,n(v)})$ are monomials in $\kappa$ and $\psi$-classes  on the factors $\M_{0,n(v)}$ of $\M_\Gamma$. 

\begin{definition} \label{Def:normalform}
Define the \emph{strata space} $\StratAlg_{g,n}$ to be the free $\mathbb{Q}$-vector space with basis given by isomorphism classes of decorated prestable graphs $[\Gamma, \alpha]$. 
\end{definition}
\noindent
By definition, the image of the map
\[\StratAlg_{g,n} \to \CH^*(\M_{g,n})\,, \,\,\, [\Gamma,\alpha]\mapsto \xi_{\Gamma*}\alpha\] 
is the tautological ring $\R^*(\M_{g,n})$\footnote{From \cite[Corollary 3.7]{BS1} we see that there is a $\mathbb{Q}$-algebra structure on $\StratAlg_{g,n}$ which makes this map into a $\mathbb{Q}$-algebra homomorphism, since products of decorated strata classes are given by explicit combinations of further decorated strata classes.}.%\Ycomment{Here we should fix the notation. Do you want to introduce strata space for $g=0$ or for any $g$?}
%\jocomment{Mmh, what is the problem here? I would just introduce the space for all $g$ and then specialize to $g=0$ below.}

For the proofs below, it is convenient to allow decorations $\alpha_v$ at vertices of $\Gamma$ which are combinations of monomials in $\kappa$ and $\psi$-classes as follows.
\begin{definition}
Given a prestable graph $\Gamma$ in genus $0$ with $n$ markings, an element
\[
\alpha = \prod_{v\in V(\Gamma)} \alpha_v \in \prod_{v\in V(\Gamma)} \CH^*(\M_{0,n(v)})
\]
is said to be in \emph{normal form} if 
\begin{enumerate}[label=\alph*)]
    \item for vertices $v \in V(\Gamma)$ with $n(v)=0$, we have $\alpha_v = \kappa_2^a$ for some $a \geq 0$, 
    \item for vertices $v \in V(\Gamma)$ with $n(v) = 1$, we have $\alpha_v = \psi_h^b$, where $h$ is the unique half-edge at $v$ and $b \geq 0$,
    \item for vertices $v \in V(\Gamma)$ with $n(v) =2$, we have $\alpha_v = \psi_h^c + (-\psi_{h'})^{c}$, where $h,h'$ are two half-edges at $v$ and $c \geq 0$,
    \item for vertices $v \in V(\Gamma)$ with $n(v) \geq 3$, we have that $\alpha_v = 1$ is trivial.
\end{enumerate}
\end{definition}
Note that because of the terms $\psi_h^c + (-\psi_{h'})^{c}$ in case c) above, the element $\alpha$ is not strictly speaking a decoration, since the $\alpha_v$ are not monomials. However, given $\Gamma, \alpha$ as in Definition \ref{Def:normalform}, we write $[\Gamma, \alpha]$ for the element in $\StratAlg_{0,n}$ obtained by expanding $\alpha$ in terms of monomial decorations.
\begin{definition}
For $g=0$ let $\StratAlg_{0,n}^\textup{nf} \subset \StratAlg_{0,n}$ be the subspace additively generated by $[\Gamma ,\alpha]$ for $\alpha$ in normal form\footnote{%Note that the decoration $\alpha_v = \psi_h^c + (-\psi_{h'})^{c}$  means that generators of $\StratAlg_{0,n}^\textup{nf}$ will in general be combinations of basis elements of $\StratAlg_{0,n}$. 
Note that  at vertices $v \in V(\Gamma)$ with $n(v)=2$ we have a choice of ordering of the two half-edges $h,h'$, and the possible decorations $\alpha_v = \psi_h^c + (-\psi_{h'})^{c}$  differ by a sign for $c$ odd. Still they generate the same subspace of $\StratAlg_{0,n}$ and the independence of this subspace from the choice of ordering of half-edges will be important in a proof below.}.
\end{definition}

\begin{definition} \label{Def:relsgeneratedby}
Let $R_0 \in \StratAlg_{g_0, n_0}$ be a tautological relation. Given $g,n$, we say that \emph{the set of relations in $\StratAlg_{g,n}$ generated by $R_0$} is the subspace of the $\QQ$-vector space $\StratAlg_{g,n}$ generated by elements of $\StratAlg_{g,n}$ obtained by
\begin{itemize}
    \item choosing a prestable graph $\Gamma$ in genus $g$ with $n$ markings and a vertex $v \in V(\Gamma)$ with $g(v)=g_0, n(v)=n_0$,
    \item choosing an identification of the $n_0$ half-edges incident to $v$ with the markings $1, \ldots, n_0$ for $\StratAlg_{g_0, n_0}$,
    \item choosing decorations $\alpha_w \in \CH^*(\M_{g(w), n(w)})$ for all vertices $w \in V(\Gamma) \setminus \{v\}$,
    \item gluing the relation $R_0$ into the vertex $v_0$ of $\Gamma$, putting decorations $\alpha_w$ in the other vertices and expanding as an element of $\StratAlg_{g,n}$.
\end{itemize}
More generally, given any family $(R_0^i \in \StratAlg_{g_0^i, n_0^i})_{i \in I}$ of tautological relations, we define the relations in $\StratAlg_{g,n}$ generated by this family to be the sum of the spaces of relations generated by the $R_0^i$.
\end{definition}

\begin{example} \label{exa:gluingrelationinvertex}
Let $n_0=4$ with markings labelled $\{3,4,5,h\}$ and let $R_0\in \StratAlg_{0,n_0}$ be the WDVV relation
\begin{equation*}
    \begin{tikzpicture}[baseline=-3pt,label distance=0.5cm,thick,
    virtnode/.style={circle,draw,scale=0.5}, 
    nonvirt node/.style={circle,draw,fill=black,scale=0.5} ]
    \node [nonvirt node] (A) {};
    \node at (1,0) [nonvirt node] (B) {};
    \draw [-] (A) to (B);
    \node at (-.7,.5) (n1) {$3$};
    \node at (-.7,-.5) (n2) {$h$};
    \draw [-] (A) to (n1);
    \draw [-] (A) to (n2);
    
    \node at (1.7,.5) (m1) {$4$};
    \node at (1.7,-.5) (m2) {$5$};
    \draw [-] (B) to (m1);
    \draw [-] (B) to (m2);    
    \end{tikzpicture}
=
    \begin{tikzpicture}[baseline=-3pt,label distance=0.5cm,thick,
    virtnode/.style={circle,draw,scale=0.5}, 
    nonvirt node/.style={circle,draw,fill=black,scale=0.5} ]
    \node at (0,0) [nonvirt node] (A) {};
    \node at (1,0) [nonvirt node] (B) {};
    \draw [-] (A) to (B);
    \node at (-.7,.5) (n1) {$4$};
    \node at (-.7,-.5) (n2) {$h$};
    \draw [-] (A) to (n1);
    \draw [-] (A) to (n2);
    
    \node at (1.7,.5) (m1) {$3$};
    \node at (1.7,-.5) (m2) {$5$};
    \draw [-] (B) to (m1);
    \draw [-] (B) to (m2);    
    \end{tikzpicture} \,.
\end{equation*}
For a prestable graph 
\begin{equation*}
    \Gamma=
    \begin{tikzpicture}[baseline=-3pt,label distance=0.5cm,thick,
    virtnode/.style={circle,draw,scale=0.5}, 
    nonvirt node/.style={circle,draw,fill=black,scale=0.5} ]
    \node [nonvirt node] (A) {};
    \node at (1,0) [nonvirt node] (B) {};
    \draw [-] (A) to (B);
    \node at (-.7,.5) (n1) {$1$};
    \node at (-.7,-.5) (n2) {$2$};
    \node at (0,-.3) (n3) {$w$};
    \draw [-] (A) to (n1);
    \draw [-] (A) to (n2);
    
    \node at (.7,.3) (x1) {$h$};
    \node at (1,-.3) (x2) {$v$};
    \node at (1.7,.5) (m1) {$3$};
    \node at (1.7,0) (m3) {$4$};
    \node at (1.7,-.5) (m2) {$5$};
    \draw [-] (B) to (m1);
    \draw [-] (B) to (m2);    
    \draw [-] (B) to (m3);
    \end{tikzpicture}
    \hspace{5mm} \text{ in $\StratAlg_{0,5}$}
\end{equation*}
and a decoration $\alpha_w=\kappa_3$, the corresponding relation is 
\begin{equation*}
    \begin{tikzpicture}[baseline=-3pt,label distance=0.5cm,thick,
    virtnode/.style={circle,draw,scale=0.5}, 
    nonvirt node/.style={circle,draw,fill=black,scale=0.5} ]
    \node [nonvirt node] (A) {};
    \node at (1,0) [nonvirt node] (B) {};
    \node at (2,0) [nonvirt node] (C) {};
    \draw [-] (A) to (B);
    \node at (-.7,.5) (n1) {$1$};
    \node at (-.7,-.5) (n2) {$2$};
    \node at (1,.7) (n3) {$3$};
    \node at (0,.3) (x) {$\kappa_3$};
    \draw [-] (A) to (n1);
    \draw [-] (A) to (n2);
    \draw [-] (B) to (n3);
    \draw [-] (B) to (C);
    
    \node at (.7,.3) (x1) {$h$};
    \node at (2.7,.5) (m1) {$4$};
    \node at (2.7,-.5) (m2) {$5$};
    \draw [-] (C) to (m1);
    \draw [-] (C) to (m2);    
    \end{tikzpicture}
=
    \begin{tikzpicture}[baseline=-3pt,label distance=0.5cm,thick,
    virtnode/.style={circle,draw,scale=0.5}, 
    nonvirt node/.style={circle,draw,fill=black,scale=0.5} ]
    \node [nonvirt node] (A) {};
    \node at (1,0) [nonvirt node] (B) {};
    \node at (2,0) [nonvirt node] (C) {};
    \draw [-] (A) to (B);
    \node at (-.7,.5) (n1) {$1$};
    \node at (-.7,-.5) (n2) {$2$};
    \node at (1,.7) (n3) {$4$};
    \node at (0,.3) (x) {$\kappa_3$};
    \draw [-] (A) to (n1);
    \draw [-] (A) to (n2);
    \draw [-] (B) to (n3);
    \draw [-] (B) to (C);
    
    \node at (.7,.3) (x1) {$h$};
    \node at (2.7,.5) (m1) {$3$};
    \node at (2.7,-.5) (m2) {$5$};
    \draw [-] (C) to (m1);
    \draw [-] (C) to (m2);    
    \end{tikzpicture}\,.
\end{equation*}
\end{example}
%\jocomment{Is this clear enough? Maybe include an example below if we have time and motivation. Note: I did not put above that we take the ideal generated by these elements, just the subvectorspace. I think this is what we want below (replacing psi and kappa classes is a purely linear algebra operation, for example).}\Ycomment{I add a simple example.}

\begin{definition}
Consider the family $\Rels_{\kappa, \psi}^0$ of relations obtained by multiplying  the relations of $\kappa$ and $\psi$-classes from Lemmas \ref{Lem:psi12}, \ref{lem:removePsi} and \ref{Lem:remove_kappa} with an arbitrary monomial in $\kappa$ and $\psi$-classes.
Define the space $\Rels_{\kappa, \psi} \subset \StratAlg_{0,n}$ as the space of relations generated by $\Rels_{\kappa, \psi}^0$.
%obtained by expressing $\kappa, \psi$-classes in terms of boundary where possible. Namely, $\Rels_{\kappa,\psi}\subset\StratAlg_{0,n}$ is the ideal generated by relations in Lemma \ref{Lem:psi12} and Lemma \ref{Lem:remove_kappa}.\Ycomment{I also add Lemma \ref{lem:removePsi} this should be also included.}

Define $\Rels_{\WDVV} \subset \StratAlg_{0,n}^\textup{nf}$ as the space of relations   obtained by gluing some WDVV relation  into a decorated prestable graph $[\Gamma, \alpha]$ in normal form at a vertex $v$ with $n(v) \geq 4$. In other words, it is the space of relations generated by the WDVV relation as in Definition \ref{Def:relsgeneratedby} where we restrict to $\Gamma, (\alpha_w)_{w \neq v}$ such that $[\Gamma, \alpha]$ (with $\alpha_v = 1$) is in normal form\footnote{Again we relax the condition of the $\alpha_w$ being monomials and allow $\alpha_w$ of the form $\psi_h^c + (-\psi_{h'})^{c}$ at vertices of valence $2$.}. 
\end{definition}
\begin{remark} \label{Rmk:howtouserelations}
Let us comment on the role of the sets of relations appearing above. The relations $\Rels_{\kappa, \psi}^0$ allow to write any monomial $\alpha$ in $\kappa$ and $\psi$-classes on $\M_{0,n}$ as a sum $\alpha = \alpha_0 + \beta$ of
\begin{itemize}
    \item a (possibly zero) monomial term $\alpha_0$ in $\kappa$ and $\psi$-classes such that the trivial prestable graph with decoration $\alpha_0$ is in normal form (implying that $\alpha_0$ restricts to a basis element of $\CH^*(\M_{0,n}^\textup{sm})$ as computed in Lemma \ref{Lem:Chowsmooth}),
    \item a sum $\beta$ of generators $[\Gamma_i, \alpha_i]$ supported in the boundary (i.e. with $\Gamma_i$ nontrivial).
\end{itemize}
The relations $\Rels_{\kappa, \psi}$ allow to do the above at each of the vertices of a decorated stratum class $[\Gamma, \alpha]$ and by a recursive procedure allow to write $[\Gamma,\alpha]$ as a sum of decorated strata classes in normal form. The relations in $\Rels_{\WDVV}$ then encode the remaining freedom to express relations among these classes in normal form generated by the $\WDVV$ relation. The following theorem and the course of its proof make precise the statement that these processes describe tautological relations on $\M_{0,n}$.
\end{remark}
\begin{theorem} \label{Thm:fullgenus0rels}
The kernel of the surjection $\StratAlg_{0,n} \to \CH^*(\M_{0,n})$ is given by $\Rels_{\kappa, \psi} + \Rels_{\WDVV}$. In particular, we have
\[\CH^*(\M_{0,n}) = \StratAlg_{0,n} / (\Rels_{\kappa, \psi} + \Rels_{\WDVV})\,.\]
\end{theorem}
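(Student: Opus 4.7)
The plan is to argue by induction on $n$, and for fixed $n$ by induction on codimension $d$. The first step is to note that the relations $\Rels_{\kappa,\psi}$ are strong enough to reduce any $[\Gamma,\alpha]\in \StratAlg_{0,n}$ to a sum of decorated strata in normal form. Indeed, applying Lemmas \ref{Lem:psi12}, \ref{lem:removePsi}, and \ref{Lem:remove_kappa} at each vertex $v$ of $\Gamma$ rewrites $\alpha_v$ as a normal-form decoration plus boundary corrections supported on graphs obtained from $\Gamma$ by subdividing at $v$; iterating, every element of $\StratAlg_{0,n}$ is congruent modulo $\Rels_{\kappa,\psi}$ to a normal-form class. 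The theorem therefore reduces to identifying the kernel of the map $\StratAlg_{0,n}^\textup{nf}\to \CH^*(\M_{0,n})$ with $\Rels_{\WDVV}$ modulo what comes from $\Rels_{\kappa,\psi}$.

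Next I would invoke the extended excision sequence
\begin{equation*}
\CH^d(\M_{0,n}^\textup{sm},1) \xrightarrow{\partial} \CH^{d-1}(\partial \M_{0,n}) \xrightarrow{\iota_*} \CH^d(\M_{0,n}) \xrightarrow{j^*} \CH^d(\M_{0,n}^\textup{sm}) \to 0.
\end{equation*}
Lemma \ref{Lem:Chowsmooth} implies that the trivial-graph normal-form classes map isomorphically onto $\CH^d(\M_{0,n}^\textup{sm})$, so any relation among normal-form classes must have vanishing trivial-graph part, and its boundary part lifts to $\CH^{d-1}(\partial \M_{0,n})$ where it represents an element of $\mathrm{im}(\partial)$. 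To analyze relations coming from $\partial \M_{0,n}$ itself, I would use the gluing parametrization $\widehat Z = \bigsqcup_I \M_{0,I\cup\{p\}}\times \M_{0,I^c\cup\{p'\}}\twoheadrightarrow \partial \M_{0,n}$. This map is proper, representable, and surjective, so Chow pushforward is surjective; and by Corollary \ref{Cor:CKPM0n} together with Lemma \ref{Lem:Chowfinitequotient}, the Chow group of each product decomposes into tensor products of the Chow groups of the factors, to which the inductive hypothesis on $n$ applies factor-by-factor. Thus, modulo relations obtained by gluing elements of $\Rels_{\kappa,\psi}+\Rels_{\WDVV}$ into deeper vertices, the only remaining relations on $\M_{0,n}$ come from $\mathrm{im}(\partial)$, and it suffices to show that these lie in $\Rels_{\WDVV}$.

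The main obstacle, and substance of the proof, is the explicit computation of $\mathrm{im}(\partial)$. When $n\leq 3$, Proposition \ref{Pro:M0nsmCKP_higherChow} gives $\RCH^*(\M_{0,n}^\textup{sm},1)=0$, and Remark \ref{Rmk:partialvanishonCHk1} ensures that the decomposable classes $\CH^*(k,1)\otimes \CH^{*-1}(\M_{0,n}^\textup{sm})$ are killed by $\partial$, so $\mathrm{im}(\partial)=0$. When $n\geq 4$, the smooth locus $\M_{0,n}^\textup{sm}=\mathcal{M}_{0,n}$ is a hyperplane complement in $\AA^{n-3}$, and Proposition \ref{Pro:M0nsmCKP_stable_higherChow} together with Chatzistamatiou's motivic decomposition yields an explicit $\CH^*(k,\bullet)$-basis of $\CH^*(\mathcal{M}_{0,n},1)$ in terms of the units $\langle x_i\rangle$, $\langle x_i-1\rangle$, $\langle x_i-x_j\rangle$ and their products with algebraic cycles. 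For the fundamental case $n=4$, the specialization formula \eqref{eqn:specialization} gives $\partial\langle x\rangle=[0]-[\infty]$, which is precisely the WDVV relation on $\M_{0,4}$; for general $n\geq 5$, each such unit generator pulls back from $\M_{0,4}$ under an appropriate forgetful map, yielding the WDVV pullbacks that define $\Rels_{\WDVV}$. The higher-codimension generators arise from the codimension-one case via the product structure of Lemma \ref{Lem:HigherChowProductcompat}, so their $\partial$-images are WDVV relations glued into deeper strata, which by Definition \ref{Def:relsgeneratedby} also lie in $\Rels_{\WDVV}$. The delicate combinatorial point will be verifying that this classification is exhaustive, i.e. that every one-edge stratum class supported on $\partial \M_{0,n}$ and killed by $\iota_*$ arises from one of these boundary images; this is where the full strength of the motivic decomposition and the CKP become essential, and is the step most likely to require care in bookkeeping of signs and pullbacks under the forgetful maps $\M_{0,n}\to\M_{0,4}$.
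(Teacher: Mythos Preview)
Your reduction to normal form via $\Rels_{\kappa,\psi}$ (first paragraph) matches the paper's Proposition \ref{Pro:kappapsirels}, and your identification of the connecting homomorphism $\partial$ as the source of WDVV relations is correct in spirit. However, the inductive scheme you propose has a genuine gap.

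The induction on $n$ does not close. The boundary $\partial\M_{0,n}$ is covered by gluing maps from products $\M_{0,|I|+1}\times\M_{0,|I^c|+1}$ with $|I|+|I^c|=n$, so for $I=\emptyset$ one factor is $\M_{0,n+1}$, which has \emph{more} markings than the space you started with. Hence ``the inductive hypothesis on $n$ applies factor-by-factor'' fails. Switching to an induction purely on the codimension $d$ (uniform in $n$) repairs this, but then you must control not only relations in $\CH^{d-1}(\widehat Z)$ but also the kernel of the pushforward $\CH^{d-1}(\widehat Z)\to\CH^{d-1}(\partial\M_{0,n})$, which your outline does not address. A second, related issue: for $n\geq 4$ the group $\CH^{\ell+1}(\mathcal{M}_{0,n},1)$ vanishes for $\ell>0$, so your single top-level excision only produces codimension-one WDVV relations; all deeper relations must come through the boundary recursion, which is exactly where the induction breaks.

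The paper avoids both problems by stratifying $\M_{0,n}$ by the number of edges $p$ rather than recursing on $n$. The key observation is Lemma \ref{Lem:normalform_isom}: the normal-form classes with exactly $p$ edges map \emph{isomorphically} onto $\CH^*(\M_{0,n}^{=p})$, because each stratum $\M^\Gamma$ is a finite quotient of a product of spaces $\M_{0,n(v)}^{\textup{sm}}$ whose Chow groups are completely known. This yields a canonical splitting of the excision sequence for $\M_{0,n}^{=p}\subset\M_{0,n}^{\geq p}$ at every level, so that $\CH^*(\M_{0,n}^{\geq p})\cong\StratAlg_{0,n}^{\textup{nf},p}\oplus\CH^{*-1}(\M_{0,n}^{\geq p+1})/\mathrm{im}(\partial)$. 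Proposition \ref{Pro:connecting_higherChow} then identifies $\mathrm{im}(\partial)$ at level $p$ with $\Rels_{\WDVV}^{p+1}$ by combining the product formula for $\partial$ (Lemma \ref{Lem:HigherChowProductcompat}) with the base case on $\M_{0,n(v)}$ for the single vertex $v$ with $n(v)\geq 4$. Iterating over $p$ gives the theorem directly, without any recursion on $n$ and without needing to analyze the (non-injective) pushforward from the gluing cover.
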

We split the proof of the above theorem into two parts.
\begin{proposition} \label{Pro:kappapsirels}
The map $\StratAlg_{0,n}^\textup{nf} \hookrightarrow \StratAlg_{0,n} \to \StratAlg_{0,n} / \Rels_{\kappa, \psi}$ is surjective.
\end{proposition}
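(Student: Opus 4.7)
The plan is to proceed by induction on the codimension $c = |E(\Gamma)| + \deg\alpha$, showing that every generator $[\Gamma,\alpha] \in \StratAlg_{0,n}$ equals some element of $\StratAlg_{0,n}^\textup{nf}$ modulo $\Rels_{\kappa,\psi}$. The base case $c=0$ is immediate since the only generator is the trivial graph with trivial decoration. For the inductive step, assume the claim for codimensions strictly smaller than $c$ and for all $n'\geq 0$. If $\alpha_v$ is already in normal form at every vertex $v$ of $\Gamma$, we are done; otherwise pick a vertex $v$ where $\alpha_v$ fails to be in normal form and apply one of the rewriting relations of Lemmas \ref{Lem:psi12}, \ref{lem:removePsi}, and \ref{Lem:remove_kappa}. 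After multiplying by the remaining monomial factors of $\alpha_v$ and the decorations at the other vertices of $\Gamma$, each of these relations becomes an element of $\Rels_{\kappa,\psi}^0$ glued into $v$ in the sense of Definition \ref{Def:relsgeneratedby}, hence an element of $\Rels_{\kappa,\psi}$.

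More concretely, the local move depends on $n(v)$. For $n(v)\geq 3$, I use Lemma \ref{lem:removePsi} to trade every $\psi_h$ in $\alpha_v$ for a sum of boundary divisors $D(I_1\mid I_2)$, and Lemma \ref{Lem:remove_kappa}(a) to eliminate $\kappa$-classes. For $n(v)=2$, I first use Lemma \ref{Lem:remove_kappa}(a) to kill all $\kappa$'s, then use Lemma \ref{Lem:psi12} to rewrite any $\psi_h^a\psi_{h'}^b$ as $(-1)^b\psi_h^{a+b}$ plus a boundary correction, and finally pass from $\psi_h^{a+b}$ to the normal-form generator $\psi_h^{a+b}+(-\psi_{h'})^{a+b}$ using the same relation a second time. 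For $n(v)=1$, Lemma \ref{Lem:remove_kappa}(a) removes $\kappa$-classes and leaves a pure power of $\psi_h$, already in normal form. For $n(v)=0$, Lemma \ref{Lem:remove_kappa}(b) writes every $\kappa_m$ with $m\neq 2$ in terms of $\kappa_2$-monomials and nontrivial boundary classes. Each substitution is codimension-preserving and produces a sum of decorated strata of two types: either (a) $\Gamma'=\Gamma$ with $\alpha_v'$ strictly closer to the normal form, or (b) $\Gamma'$ has strictly more edges than $\Gamma$, so that $\deg\alpha'<\deg\alpha$ at the fixed codimension $c$.

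Termination follows from a secondary induction inside each fixed codimension, working downward on $\deg\alpha$ and, at each fixed degree, on a lexicographic badness measure (for example, summing over vertices the highest index $m$ of any $\kappa_m$ not permitted by the normal form and the exponent of any wrong-direction $\psi$-class). The main technical obstacle is the bookkeeping of the decorations inherited by the newly created vertices after moves of type (b): by the projection formula, multiplying the remaining factors of $\alpha_v$ by a boundary divisor on $\M_{0,n(v)}$ produces decorated strata whose new vertices carry the pullbacks under the gluing morphism of the $\psi$- and $\kappa$-classes in $\alpha_v$, and self-intersections of boundary divisors must be unpacked using the standard excess-intersection formula $D\cdot D = -\xi_*(\psi_h+\psi_{h'})$, which feeds back into lower-degree decorations on graphs with yet more edges. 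In every case the secondary measure strictly decreases, so the procedure terminates and exhibits $[\Gamma,\alpha]$ as an element of $\StratAlg_{0,n}^\textup{nf}$ modulo $\Rels_{\kappa,\psi}$.
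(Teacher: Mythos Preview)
Your proof is correct and follows the same recursive strategy as the paper's proof, which is a one-liner referring to Remark~\ref{Rmk:howtouserelations} and Lemmas~\ref{Lem:psi12}, \ref{lem:removePsi}, \ref{Lem:remove_kappa}; you simply spell out the termination argument that the paper leaves implicit. One organizational remark: the outermost induction on the codimension $c$ is never actually invoked, since every rewriting move you describe is codimension-preserving; the real work is done by your secondary induction on $\deg\alpha$ (type (b) terms) together with the tertiary badness measure (type (a) terms), and these already suffice. Also, the phrase ``using the same relation a second time'' for the $n(v)=2$ case is slightly imprecise---passing from $\psi_h^{a+b}$ to $\tfrac12\bigl(\psi_h^{a+b}+(-\psi_{h'})^{a+b}\bigr)$ plus boundary takes several iterated applications of the $\psi_1+\psi_2=D$ relation multiplied by various monomials, not just one---but this is a cosmetic point and the argument goes through.
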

\begin{proof}
The statement says that we can use $\kappa, \psi$ relations on each vertex to express any decorated stratum class as a linear combination of stratum classes in normal form. This follows from Lemma \ref{Lem:psi12}, \ref{lem:removePsi} and \ref{Lem:remove_kappa} as described in Remark \ref{Rmk:howtouserelations}.
\end{proof}

\begin{theorem} \label{Thm:WDVVnfrels}
The kernel of the surjection $\StratAlg_{0,n}^\textup{nf} \to \CH^*(\M_{0,n})$ is given by $\Rels_{\WDVV}$.
\end{theorem}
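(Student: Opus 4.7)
The inclusion $\Rels_\WDVV \subseteq \ker(\StratAlg_{0,n}^\textup{nf} \to \CH^*(\M_{0,n}))$ is immediate: the basic WDVV relation holds in $\CH^1(\Mbar_{0,4}) \subseteq \CH^1(\M_{0,4})$, and gluing it into a vertex $v$ with $n(v) \geq 4$ of a decorated prestable graph $[\Gamma_0, \alpha_0]$ produces a genuine tautological relation on $\M_{0,n}$ because the gluing morphism is proper and tensoring with decorations $\alpha_w$ at other vertices is compatible with the relation.

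For the converse, the plan is to argue by descending induction on the stratification of $\M_{0,n}$ by number of edges of the dual graph. Setting $\M_{0,n}^{(k)}$ to be the closed substack of curves whose dual graph has at least $k$ edges, one has the extended excision sequence
\begin{equation*}
    \CH^*\bigl(\M_{0,n}^{(k)} \setminus \M_{0,n}^{(k+1)},1\bigr) \xrightarrow{\partial} \CH^{*-1}\bigl(\M_{0,n}^{(k+1)}\bigr) \to \CH^*\bigl(\M_{0,n}^{(k)}\bigr) \to \CH^*\bigl(\M_{0,n}^{(k)} \setminus \M_{0,n}^{(k+1)}\bigr) \to 0.
\end{equation*}
Given a relation $\sum a_i [\Gamma_i,\alpha_i] = 0$ in normal form supported on $\M_{0,n}^{(k)}$, I would first restrict to the open stratum $\M_{0,n}^{(k)} \setminus \M_{0,n}^{(k+1)} = \bigsqcup_{|E(\Gamma)|=k} \M^\Gamma$. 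By Lemma \ref{Lem:Chowsmooth}, the CKgP of Proposition \ref{pro:M0nCKgP}, and the finite-quotient Lemma \ref{Lem:Chowfinitequotient}, the depth-$k$ normal-form generators restrict to a basis of the Chow group of each $\M^\Gamma$, forcing all coefficients $a_i$ with $|E(\Gamma_i)|=k$ to vanish. After discarding those terms, the remaining relation is supported on $\M_{0,n}^{(k+1)}$, and its triviality in $\CH^*(\M_{0,n}^{(k)})$ means it coincides with the image of some class under $\partial$. The inductive hypothesis handles relations already supported on $\M_{0,n}^{(k+1)}$, so the entire argument reduces to showing $\mathrm{im}(\partial) \subseteq \Rels_\WDVV$.

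The technical heart of the proof is this computation of $\mathrm{im}(\partial)$. Using the higher Chow--K\"unneth generation for products of $\M_{0,n(v)}^\textup{sm}$ (Propositions \ref{Pro:M0nsmCKP_higherChow} and \ref{Pro:M0nsmCKP_stable_higherChow}) together with the quotient by $\aut(\Gamma)$ (Remark \ref{Rmk:higherChowfinitequotient}), every class in $\CH^*(\M^\Gamma,1)$ is a sum of elementary tensors having exactly one higher Chow factor, located at some vertex $v$. Decomposable contributions from $\CH^*(k,1) \otimes \CH^*(\M^\Gamma)$ have vanishing boundary by Remark \ref{Rmk:partialvanishonCHk1}, and by Proposition \ref{Pro:M0nsmCKP_higherChow} the indecomposable part $\RCH^1(\M_{0,n(v)}^\textup{sm},1)$ is zero for $n(v) \leq 3$. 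For $n(v) \geq 4$, formula \eqref{eq:firstHigherChow} identifies this indecomposable part with the units $H^0(\mathcal{M}_{0,n(v)},\mathcal{O}^\times)/k^\times$, which are multiplicatively generated by the linear forms $x_i$, $x_i-1$, $x_i-x_j$; each of these units is (up to scalars) pulled back from the coordinate of some $\mathcal{M}_{0,4}$ under a forgetful morphism $\mathcal{M}_{0,n(v)} \to \mathcal{M}_{0,4}$. By Lemma \ref{Lem:HigherChowProductcompat}(a) applied to the local model $\M^\Gamma \subset \overline{\M^\Gamma}$, the boundary of such a unit tensored with Chow classes at the other vertices equals exactly the WDVV relation glued at $v$, decorated at the remaining vertices $w \neq v$.

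The main obstacle will be to ensure that this identification respects the normal-form constraints: the residual decorations $\beta_w$ produced by the K\"unneth decomposition need not a priori be in normal form, so a secondary step must reduce them to normal form using $\Rels_{\kappa,\psi}$-relations glued at the corresponding vertices, while tracking that the resulting boundary terms still land in $\Rels_\WDVV$. Handling this, together with the interaction between the forgetful-pullback expression of units on $\mathcal{M}_{0,n(v)}$ and the parametrization of $\partial \M_{0,n}$ by the gluing maps \eqref{eqn:bdryparametrization}, is the delicate combinatorial part of the proof, but the overall structure is forced by the decomposition of $\CH^*(\M^\Gamma,1)$ described above.
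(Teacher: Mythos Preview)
Your approach is essentially the same as the paper's: stratify by edge number, use Lemma~\ref{Lem:normalform_isom} to split off the open stratum, and identify $\mathrm{im}(\partial)$ with the WDVV relations via the higher Chow--K\"unneth results (this is the paper's Proposition~\ref{Pro:connecting_higherChow}). The paper phrases it as a direct computation $\CH^*(\M_{0,n}^{\geq p}) = \StratAlg_{0,n}^{\textup{nf},p} \oplus \CH^{*-1}(\M_{0,n}^{\geq p+1})/\Rels_\WDVV^{p+1}$ and iterates, whereas you phrase it as an induction on the kernel; these are equivalent.

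One clarification: the ``main obstacle'' you raise about the residual decorations $\beta_w$ not being in normal form is not actually an obstacle. The higher Chow--K\"unneth decomposition of $\CH^*(\M_\Gamma^\textup{sm},1)$ expresses classes as tensors of elements of $\CH^*(\M_{0,n(v')}^\textup{sm})$, and by Lemma~\ref{Lem:Chowsmooth} the normal-form monomials ($\kappa_2^a$, $\psi_h^b$, $\psi_h^c+(-\psi_{h'})^c$, or $1$) already form a basis of $\CH^*(\M_{0,n(v')}^\textup{sm})$. In Lemma~\ref{Lem:HigherChowProductcompat}(a) you are free to choose any extension $\overline{\alpha}_{v'}$, so you simply take the normal-form class itself as the extension; the resulting expression $\partial(\alpha_v)\otimes\bigotimes_{v'\neq v}\overline{\alpha}_{v'}$ is then literally an element of $\Rels_\WDVV^{p+1}$ with no further reduction needed. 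This is exactly how the paper concludes the proof of Proposition~\ref{Pro:connecting_higherChow}.

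A small point of precision: your inductive hypothesis should be stated as ``$\ker\bigl(\StratAlg_{0,n}^{\textup{nf},\geq k}\to\CH^*(\M_{0,n}^{(k)})\bigr)=\bigoplus_{p\geq k}\Rels_\WDVV^p$'' rather than about relations in $\CH^*(\M_{0,n})$, since the pushforward $\CH^*(\M_{0,n}^{(k)})\to\CH^*(\M_{0,n})$ is not injective. Your later phrase ``its triviality in $\CH^*(\M_{0,n}^{(k)})$'' shows you are already thinking of it this way; just make that explicit from the start.
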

\noindent
The proof is separated into several steps. The overall strategy is to stratify $\M_{0,n}$ by the number of edges of the prestable graph $\Gamma$ and use an excision sequence argument. For $p \geq 0$ we denote by $\M_{0,n}^{\geq p}$ the closed substack of $\M_{0,n}$ of curves with at least $p$ nodes. Similarly, we denote by $\M_{0,n}^{=p}$ the open substack of $\M_{0,n}^{\geq p}$ of curves with exactly $p$ nodes. It is clear that 
\[\M_{0,n}^{\geq p} \setminus \M_{0,n}^{= p} = \M_{0,n}^{\geq p+1}\]
and also 
\[\M_{0,n}^{=p} = \coprod_{\Gamma \in \mathcal{G}_p} \M^\Gamma\,,\]
where $\mathcal{G}_p$ is the set of prestable graphs of genus $0$ with $n$ markings having exactly $p$ edges.
For the strata space $\StratAlg_{0,n}$, consider the decomposition
\[\StratAlg_{0,n} = \bigoplus_{p \geq 0} \StratAlg_{0,n}^p\]
according to the number $p$ of edges of graph $\Gamma$.\footnote{This decomposition is not equal to the standard decomposition of $\StratAlg_{0,n}$ via degree of a class.} This descends to decompositions
\[\StratAlg_{0,n}^\textup{nf} = \bigoplus_{p \geq 0} \StratAlg_{0,n}^\textup{nf,$p$},\ \Rels_\WDVV = \bigoplus_{p \geq 0} \Rels_\WDVV^p\]
for $\StratAlg_{0,n}^\textup{nf}$ and $\Rels_\WDVV$. We note that $\Rels_\WDVV^p$ is exactly the space of relations obtained by taking a prestable graph $\Gamma$ with $p-1$ edges, a decoration $\alpha$ on $\Gamma$ in normal form and inserting a $\WDVV$ relation at a vertex $v_0 \in V(\Gamma)$ with $n(v_0) \geq 4$.

From Proposition \cite[Proposition 2.4]{BS1} and Lemma \ref{Lem:Chowfinitequotient} it follows that
 \begin{align} \label{eqn:M0nexcision2}
  \CH^*(\M_{0,n}^{=p}) &= \bigoplus_{\Gamma \in \mathcal{G}_p} \CH^*(\M^\Gamma) = \bigoplus_{\Gamma \in \mathcal{G}_p} \CH^*(\M_\Gamma^\textup{sm})^{\aut(\Gamma)}\,,\\
  \CH^*(\M_{0,n}^{=p},1) &= \bigoplus_{\Gamma \in \mathcal{G}_p} \CH^*(\M^\Gamma,1)=\bigoplus_{\Gamma \in \mathcal{G}_p} \CH^*(\M_\Gamma^\textup{sm},1)^{\aut(\Gamma)}\,.
\end{align}
Note that we have a natural map $\StratAlg_{0,n}^\textup{nf,$p$} \to \CH^*(\M_{0,n}^{\geq p})$.
\begin{lemma}\label{Lem:normalform_isom}
The composition $\StratAlg_{0,n}^\textup{nf,$p$} \to \CH^*(\M_{0,n}^{\geq p}) \to \CH^*(\M_{0,n}^{= p})$ is an isomorphism. 
\end{lemma}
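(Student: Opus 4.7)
The plan is to decompose both sides into direct sums over prestable graphs $\Gamma \in \mathcal{G}_p$ and to verify the isomorphism one graph at a time. On the target, equation \eqref{eqn:M0nexcision2} already gives $\CH^*(\M_{0,n}^{=p}) = \bigoplus_{\Gamma \in \mathcal{G}_p} \CH^*(\M^\Gamma)$; on the source, a corresponding decomposition $\StratAlg_{0,n}^{\textup{nf},p} = \bigoplus_\Gamma \StratAlg_{0,n}^{\textup{nf},p,\Gamma}$ according to the underlying graph is built into the definition. Each generator $[\Gamma,\alpha]$ is a pushforward along $\xi_\Gamma$ whose image meets $\M_{0,n}^{=p}$ only along $\M^\Gamma$, so the map respects the decomposition, and it suffices to show that each component
\[
F_\Gamma \colon \StratAlg_{0,n}^{\textup{nf},p,\Gamma} \to \CH^*(\M^\Gamma)
\]
is an isomorphism.

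For fixed $\Gamma$, I would use \cite[Proposition 2.4]{BS1} to factor the restriction of $\xi_\Gamma$ to $\M_\Gamma^{\textup{sm}}$ as the \'etale quotient $\pi \colon \M_\Gamma^{\textup{sm}} \to \M_\Gamma^{\textup{sm}}/\aut(\Gamma) \cong \M^\Gamma$, so that $F_\Gamma([\Gamma,\alpha]) = \pi_*(\alpha|_{\M_\Gamma^{\textup{sm}}})$. Lemma~\ref{Lem:Chowfinitequotient} identifies the target as $\CH^*(\M^\Gamma) = \CH^*(\M_\Gamma^{\textup{sm}})^{\aut(\Gamma)}$, while the Chow--K\"unneth property from Proposition~\ref{Pro:M0nsmCKP} combined with Lemma~\ref{Lem:Chowsmooth} gives
\[
\CH^*(\M_\Gamma^{\textup{sm}}) = \bigotimes_{v \in V(\Gamma)} \CH^*(\M_{0,n(v)}^{\textup{sm}}),
\]
with each factor $\QQ[\kappa_2]$, $\QQ[\psi_h]$, $\QQ[\psi_h]$ (using $\psi_{h'} = -\psi_h$ on $\M_{0,2}^{\textup{sm}}$), or $\QQ$ according as $n(v)$ is $0$, $1$, $2$, or $\geq 3$. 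This yields a canonical monomial basis of the target indexed by exponent tuples $(e(v))_v$, on which $\aut(\Gamma)$ acts by permuting like vertices and (at a swap of the two half-edges at a bivalent vertex) multiplying the corresponding factor $\psi_h^{e(v)}$ by $(-1)^{e(v)}$.

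It remains to check that $F_\Gamma$ matches the normal-form basis of the source with this monomial basis, up to nonzero scalars and the $\aut(\Gamma)$-equivalence on each side. A direct computation shows that the restriction of a normal-form decoration to $\M_\Gamma^{\textup{sm}}$ sends each vertex factor to $\kappa_2^{e(v)}$, $\psi_h^{e(v)}$, $2\psi_h^{e(v)}$ (via the identity $\psi_h^c + (-\psi_{h'})^c|_{\M_{0,2}^{\textup{sm}}} = 2\psi_h^c$), or $1$, and $\pi_*$ then supplies the $\aut(\Gamma)$-averaging up to the factor $|\aut(\Gamma)|$. The main obstacle is the sign bookkeeping at bivalent vertices: one must verify that the normal-form decoration $\psi_h^c + (-\psi_{h'})^c$ transforms by exactly the same sign $(-1)^c$ under the half-edge swap as does the target monomial $\psi_h^c$; this is a direct but careful computation, and it is precisely the reason the normal form was defined with that specific sign-adjusted sum. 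This matching ensures that the map descends to a bijection of $\aut(\Gamma)$-equivalence classes of bases (with odd-degree classes vanishing on both sides whenever the swap lies in $\aut(\Gamma)$), so $F_\Gamma$ is an isomorphism.
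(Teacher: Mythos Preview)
Your proof is correct and follows essentially the same approach as the paper's: decompose both sides by prestable graph $\Gamma \in \mathcal{G}_p$, identify $\CH^*(\M^\Gamma)$ with the $\aut(\Gamma)$-invariants of $\CH^*(\M_\Gamma^{\textup{sm}})$ via Lemma~\ref{Lem:Chowfinitequotient}, compute the latter as a tensor product using Proposition~\ref{Pro:M0nsmCKP} and Lemma~\ref{Lem:Chowsmooth}, and match the normal-form generators with orbit sums of the resulting monomial basis. Your explicit sign bookkeeping at bivalent vertices is in fact slightly more careful than the paper's version, which phrases the $\aut(\Gamma)$-action simply as ``permuting these basis elements'' without singling out the $(-1)^c$ twist.
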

\begin{proof} 
First we note that $\StratAlg_{0,n}^\textup{nf,$p$}$ decomposes into a direct sum of subspaces $\StratAlg_{0,n}^\textup{nf,$\Gamma$}$ indexed by prestable graphs $\Gamma \in \mathcal{G}_p$, according to the underlying prestable graph of the generators. The analogous decomposition of $\CH^*(\M_{0,n}^{= p})$ is given by \eqref{eqn:M0nexcision2}. Now for two non-isomorphic prestable graphs $\Gamma$ and $\Gamma'$ with the same number $p$ of edges, the induced map $\StratAlg_{0,n}^\textup{nf,$\Gamma$} \to \CH^*(\M^{\Gamma'})$ vanishes. Indeed, the locally closed substack $\M^{\Gamma'}$ is disjoint from the image of the gluing map $\xi_{\Gamma}$ and all generators of $\StratAlg_{0,n}^\textup{nf,$\Gamma$} $ are pushforwards under $\xi_\Gamma$. Thus we are reduced to showing that $\StratAlg_{0,n}^\textup{nf,$\Gamma$} \to \CH^*(\M^{\Gamma})$ is an isomorphism. The image of a generator $[\Gamma, \alpha]$ under this map is obtained by pushing forward $\alpha \in \CH^*(\M_\Gamma)$ to $\M_{0,n}^{=p}$ under $\xi_\Gamma$ and restricting to the open subset $\M^\Gamma$. From the cartesian diagram
\begin{equation*}
\begin{tikzcd}
 \M_\Gamma \arrow[r,"\xi_\Gamma"]& \M_{0,n}^{\geq p}\\
 \M_\Gamma^\textup{sm} \arrow[u,hook] \arrow[r,"\xi_\Gamma^\textup{sm}"] & \M^\Gamma = \M^\textup{sm}_\Gamma / \aut(\Gamma) \arrow[u,hook]
\end{tikzcd}
\end{equation*}
in which the vertical arrows are open embeddings, it follows that this is equivalent to first restricting $\alpha$ to $\M_\Gamma^\textup{sm}$ and then pushing forward to $\M^\Gamma$. As we saw in \eqref{eqn:M0nexcision2}, we can identify $\CH^*(\M^\Gamma)$ with the $\aut(\Gamma)$-invariant part of $\CH^*(\M_\Gamma^\textup{sm})$ via pullback under $\xi_\Gamma^\textup{sm}$. But clearly
\begin{equation} \label{eqn:averaging}
  (\xi_\Gamma^\textup{sm})^* [\Gamma, \alpha] =  (\xi_\Gamma^\textup{sm})^* (\xi_\Gamma^\textup{sm})_* \alpha = \sum_{\sigma \in \aut(\Gamma)} \sigma^* \alpha\,,
\end{equation}
where the automorphisms $\sigma$ act on $\M_\Gamma^\textup{sm}$ by permuting the factors. 

Now by Proposition~\ref{Pro:M0nsmCKP} we have
\[\CH^*(\M_\Gamma^\textup{sm}) = \bigotimes_{v \in V(\Gamma)} \CH^*(\M_{g(v),n(v)}^\textup{sm})\,.\] 
Thus it follows from Lemma \ref{Lem:Chowsmooth} that the set of all possible $\alpha$ such that $[\Gamma, \alpha]$ is in normal form is a basis of $\CH^*(\M_\Gamma^\textup{sm})$ and the action of automorphisms $\sigma$ of $\Gamma$ acts by permuting these basis elements. Hence a basis of the $\aut(\Gamma)$-invariant part of $\CH^*(\M_\Gamma^\textup{sm})$ is given by the sums of orbits of these basis elements (with the dimension of $\CH^*(\M_\Gamma^\textup{sm})^{\aut(\Gamma)}$ being the number of such orbits). 
Then the fact that we chose the basis of $\StratAlg_{0,n}^\textup{nf,$\Gamma$}$ to be the set of $[\Gamma, \alpha]$ \emph{up to isomorphism} implies via \eqref{eqn:averaging} that they exactly map to this basis of $\CH^*(\M_\Gamma^\textup{sm})^{\aut(\Gamma)}$.
%\footnote{There is a subtle combinatorial point here coming from the fact that at vertices of valence $2$ we had a choice of ordering the half-edges, so that decorations there are possibly defined only up to a sign. This implies that ... everything works out in the end. }
%\eqref{eqn:averaging}
% Hence we can take linear basis of $\CH^*(\M_{0,n}^{=p})$ as $[\Gamma,\alpha]$ where $\alpha$ is invariant under the action of $\textup{Aut}(\Gamma)$. Injectivity and surjectivity both follows from Lemma \ref{Lem:Chowsmooth}. 
\end{proof}
Next we realize the WDVV relation as the image of the connecting homomorphism $\partial$ of the excision sequence
 \begin{equation} \label{eqn:M0nkexcision}
  \CH^*(\M_{0,n}^{=p},1) \xrightarrow{\partial} \CH^{*-1}(\M_{0,n}^{\geq p+1}) \to \CH^*(\M_{0,n}^{\geq p}) \to \CH^*(\M_{0,n}^{=p}) \to 0\,.
\end{equation}
By\cite[Proposition 2.4]{BS1}, the stack $\M^{=p}_{0,n}$ is a  quotient stack and hence (\ref{eqn:M0nkexcision}) is exact by \cite[Proposition 4.2.1]{kreschartin}. 

Before we study the map $\partial$ in the sequence (\ref{eqn:M0nkexcision}), we consider an easier situation: we show that in the setting of the moduli spaces $\Mbar_{0,n}$ of stable curves, we can explicitly compute the connecting homomorphism $\partial$, see Proposition \ref{Pro:WDVV_Mbar} below. In the proof, we will need the following technical lemma about the connecting homomorphisms of excision sequences.

\begin{lemma} \label{Lem:connectinghomrestriction} %better version
Let $X$ be an equidimensional scheme and let
\[Z' \xrightarrow{j'} Z \xrightarrow{j} X\]
be two closed immersions. Consider the open embedding
\[U = X \setminus Z \xrightarrow{i} U' = X \setminus Z'.\]
Then we have a commutative diagram
\begin{equation}
\begin{tikzcd}
 \CH_n(U', 1) \arrow[d,"i^*"] \arrow[r,"\partial'"] & \CH_n(Z') \arrow[d,"j'_*"]\\
 \CH_n(U, 1) \arrow[r,"\partial"] & \CH_n(Z)
\end{tikzcd},
\end{equation}
where $\partial$ and $\partial'$ are the connecting homomorphisms for the inclusions of $U$ and $U'$ in $X$.
\end{lemma}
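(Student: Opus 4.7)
The plan is to verify the compatibility of the connecting homomorphisms by naturality of the localization long exact sequence, combined with Bloch's explicit specialization formula \eqref{eqn:specialization}. Abstractly, the inclusions $Z' \hookrightarrow Z \hookrightarrow X$ give rise to a morphism of short exact sequences of cycle complexes from
$0 \to z^*(Z') \to z^*(X) \to z^*(X)/z^*(Z') \to 0$
to
$0 \to z^*(Z) \to z^*(X) \to z^*(X)/z^*(Z) \to 0$,
with vertical arrows $j'_*$, the identity, and the natural projection of quotients. By the excision theorem the two right-hand quotient complexes are quasi-isomorphic to $z^*(U')$ and $z^*(U)$ respectively, and the induced map is the restriction $i^*$. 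Naturality of the connecting morphism in the associated long exact sequences then yields the commutative square of the lemma.

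To verify this concretely (and to check the identification of the right-hand vertical map with $i^*$), I would trace through Bloch's cycle complex. Pick a representative $W = \sum_k a_k W_k$ of $\alpha \in \CH_n(U',1)$ with each $W_k$ irreducible in $U' \times R$, and assume, after a moving argument, that each closure $\overline{W}_k \subset X \times \Delta^1$ meets both faces $X \times \{[0]\}$ and $X \times \{[1]\}$ properly. Then by \eqref{eqn:specialization},
\begin{equation*}
j'_*\partial'(\alpha) \;=\; \delta\bigl(\textstyle\sum_k a_k \overline{W}_k\bigr) \in z_n(Z) \subset z_n(X).
\end{equation*}
Each component $W_k$ is either contained in $(Z \setminus Z') \times R$, in which case it contributes nothing to $i^*W$, or it meets $U \times R$ in a dense open whose closure in $X \times \Delta^1$ is again $\overline{W}_k$. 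Hence
\begin{equation*}
\partial(i^*\alpha) \;=\; \delta\bigl(\textstyle\sum_{W_k \not\subset (Z \setminus Z') \times R} a_k \overline{W}_k\bigr),
\end{equation*}
so that
\begin{equation*}
j'_*\partial'(\alpha) - \partial(i^*\alpha) \;=\; \delta\bigl(\textstyle\sum_{W_k \subset (Z \setminus Z') \times R} a_k \overline{W}_k\bigr)
\end{equation*}
is the $\delta$-boundary of a cycle supported on $Z \times \Delta^1$, and therefore vanishes in $\CH_n(Z)$ by definition of the higher Chow group.

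The main technical point is the proper intersection requirement: a priori the closures $\overline{W}_k$ need not meet the faces $X \times \{[0],[1]\}$ in the expected codimension, so one has to invoke a moving argument to replace $W$ by an equivalent representative in $z_n(U',1)$ whose closures do. Apart from this routine transversality issue, the computation is a direct application of the specialization formula, exactly in the spirit of the proof of Lemma \ref{Lem:HigherChowProductcompat}(a).
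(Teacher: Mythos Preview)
Your proof is correct and follows essentially the same route as the paper: both arguments represent a class in $\CH_n(U',1)$ by a cycle $\sum a_k W_k$, compute the two compositions via closures in $X\times\Delta^1$ and the specialization formula \eqref{eqn:specialization}, and observe that the discrepancy comes only from those $W_k$ supported on $(Z\setminus Z')\times\Delta^1$, which contribute a boundary in $z_n(Z,\bullet)$ and hence vanish in $\CH_n(Z)$.

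Two minor remarks. First, your abstract naturality paragraph is a nice conceptual framing that the paper omits; it makes clear why the result should hold before the cycle-level check. Second, the moving argument you flag is in fact unnecessary here: as the paper notes just before \eqref{eqn:specialization}, for $m=1$ the proper-intersection condition is simply that the $W_k$ are not contained in either face, and this is automatically preserved under taking closures in $X\times\Delta^1$. So the ``routine transversality issue'' you worry about does not actually arise in this case.
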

% \begin{lemma}
% Let $X$ be an equidimensional scheme and $(D_i)_{i \in I}$ a family of irreducible divisors on $X$ for some finite index set $I$. Let $I_0 \subseteq I$ be a subset and consider the open embedding
% \[i : U_I = X \setminus \bigcup_{i \in I} D_i \to U_{I_0} = X \setminus \bigcup_{i \in I_0} D_i \]
% then we have a commutative diagram
% \begin{equation}
% \begin{tikzcd}
%  \CH^1(U_{I_0}, 1) \arrow[d,"i^*"] \arrow[r,"\partial_{I_0}"] & \bigoplus_{i \in I_0} \CH^0(D_i) \arrow[d,"\mathrm{id} \oplus 0"]\\
%  \CH^1(U_I, 1) \arrow[r,"\partial_{I}"] & \bigoplus_{i \in I} \CH^0(D_i)
% \end{tikzcd},
% \end{equation}
% where the maps $\partial_{I_0},\partial_{I}$ are the connecting homomorphisms for the open embeddings of $U_{I_0}, U_I$ in $X$ and the map $\mathrm{id} \oplus 0$ is the one given by the identity on the factors $\CH^0(D_i)$ for $i \in I_0$ extended by zero on all other summands.
% \end{lemma}
\begin{proof}
Elements of $\CH_n(U', 1)$ are represented by cycles $\sum a_i W_i$ on $U' \times \Delta^1$ with the $W_i$ of dimension $n+1$ intersecting the faces of $U' \times \partial\Delta^1$ properly. %\Ycomment{Unfortunately $\Delta^0$ is a singleton not two faces of $\Delta^1$. Maybe we should change the notation?}\jocomment{I see, maybe replace $\Delta^0$ by $\partial \Delta^1$?}
%The fact that this defines a cycle means that the alternating intersection with the faces $U' \times \Delta^{m-1}$ is zero. 
On the one hand, to evaluate the connecting homomorphism $\partial'$, we form the closures $\overline{W_i}$ in $X \times \Delta^1$ and take alternating intersections with faces. This is a sum of cycles of dimension $n$ supported on $Z' \times \partial\Delta^1$ and via $j'_*$ we regard it as a sum of cycles on $Z \times\partial\Delta^1$.

On the other hand, to evaluate $\partial \circ i^*$ we first restrict all $W_i$ to $U \times \Delta^1$, take the closure $\overline{W_i \cap U \times \Delta^1}$ and take alternating intersection with faces. But the only way that this closure can be different from $\overline{W_i}$ is when $W_i$ has generic point in $Z \times \Delta^1$. But then it defines an element of $z_n(Z, 1)$ and thus it maps to zero in $\CH_*(Z)$.
% Let again $R=\Delta^1\setminus \{[0], [1]\}$, then an element of $\CH^1(U_{I_0}, 1)$ is represented by a cycle $\sum a_i W_i$ on $U_{I_0} \times R$ with the $W_i$ having codimension $1$. 
% When $m=1$, the proper intersection is equivalent to saying that cycles are not contained in any of the (strict) faces. Let $R=\Delta^1\setminus \{[0], [1]\}$. Then  the group $z^*(X,1)$ is equal to $z^*\left(X\times R\right)$ and the differential 
% \[\ldots \xrightarrow{} z^*(X\times R) \xrightarrow{\partial} z^*(X)\xrightarrow{} 0\]
% is given by specialization maps. If $\sum a_i W_i$ is a cycle in $X\times R$,
% \begin{equation}\label{eqn:specialization}
%     \partial\left(\sum a_i W_i\right) = \sum a_i\overline{W_i}\cap X\times [0] - \sum a_i\overline{W_i}\cap X\times [1]
% \end{equation}
% where $\overline{W_i}$ is the closure of $W_i$ in $X\times \Delta^1$. For the first higher Chow group of stacks stratified by quotient stacks, we follow the definition by Kresch in \cite[Section 4]{kreschartin}.
\end{proof}
% \jocomment{This proof is not written up very carefully at the moment, but I think it might actually work in this generality (and the statement looks like it might actually appear in some reference). Maybe we should ask our motivic cohomology friends about this?}\Ycomment{I am worrying about the proof in this level of generality. The definition of connecting homomorphism of higher Chow groupa(here I mean $m>1$) is not as simple as you wrote. When $m>1$, $\overline{W}$ may not intersect faces properly so we have to use `moving lemma' to move around the cycle so that above idea works. If I understand correctly this process is highly nontrivial and always problematic in the theory of higher Chow groups-even Bloch made mistakes in \cite{bloch_higherChow}} 
% \jocomment{Ah ok, then let's not get involved with these complications. But do you believe the stuff I wrote for $m=1$?}\Ycomment{Yes when $m=1$, I agree with your proof.}

%\jocomment{Do we actually use the proposition below? I think we actually just need the classical theory in the proofs that follow.}\Ycomment{Which classical theory are you referring to?}
\begin{proposition} \label{Pro:WDVV_Mbar}
For $n\geq 4$, the image of the connecting homomorphism $\partial$ of
\begin{equation*}
  \CH^1(\mathcal{M}_{0,n},1) \xrightarrow{\partial} \CH^0(\Mbar_{0,n}^{\geq 1}) \xrightarrow{\iota_*} \CH^1(\Mbar_{0,n}) \to 0 = \CH^1(\mathcal{M}_{0,n}) \,.    
\end{equation*}
is the set of WDVV relations, where we identify $\CH^0(\Mbar_{0,n}^{\geq 1})$ as the $\QQ$-vector space with basis given by boundary divisors of $\Mbar_{0,n}$.
\end{proposition}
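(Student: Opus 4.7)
The strategy is to exploit that $\mathcal{M}_{0,n}$ is a hyperplane complement in $\mathbb{A}^{n-3}$. By Proposition~\ref{Pro:M0nsmCKP_stable_higherChow} and the identification \eqref{eq:firstHigherChow}, together with the splitting $\CH^1(\mathcal{M}_{0,n},1) = \RCH^1(\mathcal{M}_{0,n},1)\oplus \CH^1(k,1)$, the indecomposable part $\RCH^1(\mathcal{M}_{0,n},1)_\QQ$ is identified with $\bigl(H^0(\mathcal{M}_{0,n},\mathcal{O}^\times)/k^\times\bigr)\otimes \QQ$, while the summand $\CH^1(k,1)$ is killed by $\partial$ by Remark~\ref{Rmk:partialvanishonCHk1}. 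In the standard coordinates $p_1=0,\,p_2=1,\,p_3=\infty,\,p_{i+3}=x_i$, invertible functions on $\mathcal{M}_{0,n}$ modulo constants are freely generated by the elementary classes $[x_i]$, $[x_i-1]$ and $[x_i-x_j]$.

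Next I would observe that each elementary generator is, modulo constants, a product of \emph{cross-ratios} of four marked points. Concretely $x_i = \lambda(p_{i+3},p_2;p_1,p_3)$ and $1-x_i = \lambda(p_{i+3},p_1;p_2,p_3)$ are themselves cross-ratios, while
\[x_i - x_j \;=\; x_j \cdot \lambda(p_{i+3},p_1;p_{j+3},p_3)\]
expresses $x_i-x_j$ as a product of two cross-ratios. Each cross-ratio of four of the $n$ markings is, by its very definition, the pullback of the coordinate function $y$ on $\mathcal{M}_{0,4}$ along the forgetful morphism $F:\Mbar_{0,n}\to \Mbar_{0,4}$ which remembers the four corresponding markings. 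Passing to additive notation in the higher Chow group, $\RCH^1(\mathcal{M}_{0,n},1)$ is thus spanned over $\QQ$ by the pullbacks $F^*[y]$ and $F^*[y-1]$ as $F$ ranges over the $\binom{n}{4}$ forgetful maps to $\Mbar_{0,4}$.

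The central computation is $\partial(F^*[y]) = F^*(\partial[y])$ in $\CH^0(\Mbar_{0,n}^{\geq 1})$. Since $F$ is flat (a composition of universal curves) and $V := F^{-1}(\mathcal{M}_{0,4})$ fits into a Cartesian diagram with $\mathcal{M}_{0,4}\subset \Mbar_{0,4}$, flat base change for higher Chow groups gives $\partial^V(F^*[y]) = F^*(\partial[y])$, where $\partial^V$ is the connecting homomorphism for the open substack $V\supset \mathcal{M}_{0,n}$. The inclusion $F^{-1}(\Mbar_{0,4}^{\geq 1}) \subset \Mbar_{0,n}^{\geq 1}$ together with Lemma~\ref{Lem:connectinghomrestriction} applied to these two closed substacks of $\Mbar_{0,n}$ shows that $\partial$ is the composition of $\partial^V$ followed by pushforward along this inclusion, yielding the claimed formula. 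A direct computation on $\Mbar_{0,4}\cong \PP^1$ gives
\[\partial[y] = [D(\{1,4\}|\{2,3\})] - [D(\{3,4\}|\{1,2\})], \quad \partial[y-1] = [D(\{2,4\}|\{1,3\})] - [D(\{3,4\}|\{1,2\})],\]
which is a $\QQ$-basis of the $2$-dimensional kernel of $\iota_*:\CH^0(\Mbar_{0,4}^{\geq 1})\to \CH^1(\Mbar_{0,4})$, i.e.\ precisely the space of WDVV relations on $\Mbar_{0,4}$. Pulling back under the various $F$ then produces exactly the pullbacks of the WDVV relation on $\Mbar_{0,n}$, proving both inclusions.

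The main technical obstacle will be justifying the passage ``every invertible function on $\mathcal{M}_{0,n}$ is a product of cross-ratios'' rigorously via the hyperplane-complement description, and carefully handling the compatibility of $\partial$ with flat pullback through Lemma~\ref{Lem:connectinghomrestriction}, since $F^{-1}(\mathcal{M}_{0,4})$ is genuinely larger than $\mathcal{M}_{0,n}$ and one must transport information between the two different connecting homomorphisms. Once these compatibilities are in place, the computation on $\Mbar_{0,4}$ is elementary.
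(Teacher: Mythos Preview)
Your proposal is correct and follows essentially the same approach as the paper: reduce to $n=4$ via forgetful maps, use flat pullback compatibility together with Lemma~\ref{Lem:connectinghomrestriction} to transport the connecting homomorphism, and compute $\partial$ directly on $\Mbar_{0,4}\cong\PP^1$. The only real difference is in how you argue that the pullbacks $F^*[y]$, $F^*[y-1]$ span $\RCH^1(\mathcal{M}_{0,n},1)$. You do this by an explicit cross-ratio computation, writing each linear form $x_i$, $x_i-1$, $x_i-x_j$ as a product of cross-ratios; the paper instead invokes the transitivity of the $S_n$-action on the hyperplanes (pairs of colliding markings) to deduce that every generator is, after a permutation of markings, obtained from a single forgetful map. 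Your explicit factorization is arguably more transparent, since the $S_n$-action does not preserve the chosen affine embedding $\mathcal{M}_{0,n}\subset\mathbb{A}^{n-3}$, so the paper's one-sentence symmetry argument implicitly relies on exactly the kind of identity you spell out.
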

\begin{proof}
First, we prove this proposition when $n=4$.  Identify $\Mbar_{0,4}\cong \PP^1$ and $\mathcal{M}_{0,4}\cong \mathbb{A}^1-\{0,1\}$. Then $\CH^*(\mathcal{M}_{0,4},\bullet)$ is a $\CH^*(k,\bullet)$-algebra generated by two elements $f_0$ and $f_1$ corresponding to two points in $\mathbb{A}^1$ (\cite{Chatz}). Fix a (non-canonical) isomorphism $\Delta^1\cong \mathbb{A}^1$ and set the two faces as $0$ and $1$. Consider a line $L_0$ through $(0,0)$ and $(1,1)$ in $\PP^1\times \Delta^1$ restricted to $(\PP^1-\{0,1,\infty\})\times \Delta^1$ as illustrated in Figure \ref{fig:L0picture}.

\begin{figure}[ht]
    \centering
    \begin{tikzpicture}
        \draw (0,0) rectangle (4,4);
        \draw[dashed] (0,0.7) -- (4,0.7);
        \draw[dashed] (0,1.7) -- (4,1.7) node[right]{$(\PP^1-\{0,1,\infty\})\times \Delta^1$};
        \draw[dashed] (0,3.4) -- (4,3.4);
        \draw[dashed] (3.4,0) -- (3.4,4);
        \draw (0,0) -- (4,4);
        \draw (2,2.8) node {$L_0$};
        \draw[fill, white] (0.7,0.7) circle (2 pt); \draw[black] (0.7,0.7) circle (2 pt);
        \draw[fill, white] (1.7,1.7) circle (2 pt); \draw[black] (1.7,1.7) circle (2 pt);
        \draw[fill, white] (3.4,3.4) circle (2 pt); \draw[black] (3.4,3.4) circle (2 pt);
        \draw (0,-1) -- (4,-1) node[right] {$\Delta^1$};
        \draw[fill, white] (3.4,-1) circle (2 pt); \draw[black] (3.4,-1) circle (2 pt);
        \draw (0.7,-0.9) -- (0.7, -1.1) node[below]{$0$};
        \draw (1.7,-0.9) -- (1.7, -1.1) node[below]{$1$};
    \end{tikzpicture}
    \caption{The line $L_0$ in $(\PP^1-\{0,1,\infty\})\times \Delta^1$}
    \label{fig:L0picture}
\end{figure}
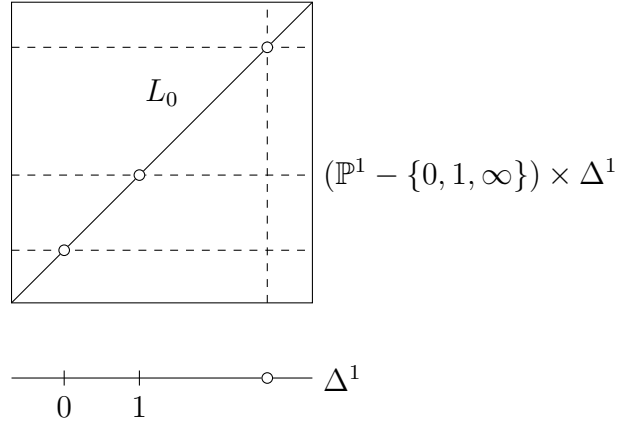\noindent
Then $f_0=[L_{0}]$ and $$\partial(L_0)= [0]-[1]\in \CH^0(\Mbar_{0,4}\setminus\mathcal{M}_{0,4})\,.$$
This is one of the WDVV relations on $\Mbar_{0,4}$ after identifying $[0], [1]$ and $[\infty]$ with three boundary strata in (\ref{eqn:wdvveq}). The second one is obtained from the generator $f_1$ in an analogous way, finishing the proof for $n=4$.

For the case of general $n \geq 4$, the space $\mathcal{M}_{0,n}$ is a hyperplane complement with hyperplanes associated to pairs of points that collide and there is a correspondence between generators of $\CH^1(\mathcal{M}_{0,n},1)$ and hyperplanes. On the one hand, the action of the symmetric group $S_n$ on $\mathcal{M}_{0,n}$ is transitive on the hyperplanes (and thus on the generators). On the other hand, we can obtain one of the hyperplanes as the pullback of a boundary point in $\mathcal{M}_{0.4}$ under the forgetful morphism $\pi: \Mbar_{0,n} \to \Mbar_{0,4}$ and thus, via the action of $S_n$, any hyperplane can be obtained under a suitable forgetful morphism to $\Mbar_{0,4}$ (varying the subset of four points to remember).

Note that the morphism $\pi$ is flat and that we have an open embedding $i: \mathcal{M}_{0,n} \to \pi^{-1}(\mathcal{M}_{0,4})$ and a closed embedding $j' : \pi^{-1}(\partial \Mbar_{0,4}) \to \partial \Mbar_{0,n}$. Combining the compatibility of the connecting homomorphism $\partial$ with flat pullback and Lemma \ref{Lem:connectinghomrestriction} above, we obtain a commutative diagram
\begin{equation*}
\begin{tikzcd}
 \CH^1(\mathcal{M}_{0,n},1) \arrow[r,"\partial"] & \CH^0(\partial \Mbar_{0,n})\\
 \CH^1(\pi^{-1}(\mathcal{M}_{0,4}),1) \arrow[r,"\partial"]  \arrow[u,"i^*"]& \CH^0(\pi^{-1}(\partial \Mbar_{0,4}))\arrow[u,"j'_*"]\\
 \CH^1(\mathcal{M}_{0,4},1) \arrow[r,"\partial"] \arrow[u,"\pi^*"]& \CH^0(\partial \Mbar_{0,4}) \arrow[u,"\pi^*"]
\end{tikzcd}
\end{equation*}
Therefore, since (under a suitable permutation of the markings) every generator of $\CH^1(\mathcal{M}_{0,n},1)$ can be obtained as the image of one of the generators of $\CH^1(\mathcal{M}_{0,4},1)$, the image of 
\[\CH^1(\mathcal{M}_{0,n},1) \to \CH^0(\partial \Mbar_{0,n})\]
is generated by WDVV relations on $\CH^0(\partial \Mbar_{0,4})$ pulled back via $\pi$.
% For arbitrary $n$, consider the morphism $\mathcal{M}_{0,n}\to\mathcal{M}_{0,4}$ which forgets markings except for four given markings. This morphism is flat. By the flat pullback, we can realize any generator of $\CH^1(\mathcal{M}_{0,n},1)$ as a pullback of a class in $\CH^1(\mathcal{M}_{0,4},1)$. Since $\partial$ is compatible with flat pullbacks, we can pullback above computation to $\CH^1(\mathcal{M}_{0,n},1)$. \jocomment{This should be formulated a bit more carefully, I'll have a look tomorrow.}
\end{proof}
We extend the above computation to $\M_{0,n}$.
\begin{corollary}\label{Cor:WDVV_connecting_homomorphism}
For $n\geq 4$, the image of the connecting homomorphism $\partial$ of
\begin{equation*}
  \CH^{\ell+1}(\mathcal{M}_{0,n},1) \xrightarrow{\partial} \CH^\ell(\M_{0,n}^{\geq 1}) \xrightarrow{\iota_*} \CH^{\ell+1}(\M_{0,n}) \to 0 \,    
\end{equation*}
is the set of WDVV relations for $\ell=0$ and is zero for $\ell>0$.
\end{corollary}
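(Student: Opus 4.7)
The claim splits naturally by the value of $\ell$. For $\ell > 0$ the statement is immediate: since $\mathcal{M}_{0,n}$ is identified with a hyperplane complement in $\AA^{n-3}$ (cf.\ Section \ref{Sect:HigherChowK}), the formula \eqref{eq:firstHigherChow} yields $\CH^{\ell+1}(\mathcal{M}_{0,n}, 1) = 0$ whenever $\ell + 1 \neq 1$, so the domain of $\partial$ already vanishes and there is nothing to check.

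For $\ell = 0$ my plan is to reduce to Proposition \ref{Pro:WDVV_Mbar} via the open inclusion $\Mbar_{0,n} \hookrightarrow \M_{0,n}$. This restricts to an open inclusion $j \colon \partial \Mbar_{0,n} \hookrightarrow \M_{0,n}^{\geq 1}$, and naturality of the connecting homomorphism under flat pullback supplies the commutative square
\[
\begin{tikzcd}
\CH^1(\mathcal{M}_{0,n},1) \arrow[r,"\partial_{\M}"] \arrow[d,equal] & \CH^0(\M_{0,n}^{\geq 1}) \arrow[d,"j^*"] \\
\CH^1(\mathcal{M}_{0,n},1) \arrow[r,"\partial_{\Mbar}"] & \CH^0(\partial \Mbar_{0,n})\,.
\end{tikzcd}
\]
On the natural bases by irreducible boundary divisors, $j^*$ sends each stable $2$-vertex class to itself and kills each unstable one. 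Proposition \ref{Pro:WDVV_Mbar} identifies the bottom image as the span of WDVV relations on $\Mbar_{0,n}$, so the remaining task is to verify that $\mathrm{im}(\partial_{\M}) \subseteq \CH^0(\M_{0,n}^{\geq 1})$ is supported on the stable boundary divisors. Once this is done, the square forces $\mathrm{im}(\partial_{\M})$ to coincide with the span of WDVV relations on $\M_{0,n}$, since the latter are by definition pullbacks from $\Mbar_{0,4}$ under forgetful maps and are hence automatically supported on the stable part.

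The main obstacle is this support statement. By \eqref{eq:firstHigherChow}, $\CH^1(\mathcal{M}_{0,n},1)$ is generated by classes of units $u \in \mathcal{O}(\mathcal{M}_{0,n})^\times$ together with the subspace $\CH^1(k,1) \cdot [\mathcal{M}_{0,n}]$, and $\partial_{\M}$ vanishes on the latter by Remark \ref{Rmk:partialvanishonCHk1}. For a unit $u$, I would identify $\partial_{\M} u$ with the Weil divisor $\mathrm{div}(u)$ of $u$ regarded as a rational function on the smooth stack $\M_{0,n}$, and show that $v_D(u) = 0$ for every unstable $1$-edge prestable divisor $D \subset \M_{0,n}^{\geq 1}$. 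The key observation is that an unstable $1$-edge graph $\Gamma$ in genus zero has a vertex with at most two half-edges, so the stabilization morphism $\mathrm{st}\colon\M_{0,n}\to \Mbar_{0,n}$ contracts this vertex and sends $\M^{\Gamma}$ into $\mathcal{M}_{0,n}\subseteq \Mbar_{0,n}$. Consequently, the generic point of $D = \overline{\M^{\Gamma}}$ lies in the open substack $\mathrm{st}^{-1}(\mathcal{M}_{0,n}) \subseteq \M_{0,n}$, on which $u = \mathrm{st}^*u$ is regular and nowhere vanishing; this forces $v_D(u) = 0$. The diagram chase then completes the proof.
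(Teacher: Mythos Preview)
Your proof is correct, and for $\ell>0$ it matches the paper exactly. For $\ell=0$, however, you take a genuinely different route. The paper deliberately avoids computing $\partial$: it uses exactness to identify $\mathrm{im}(\partial)$ with $\ker(\iota_*)$, restricts any such relation to $\Mbar_{0,n}$ (killing the unstable divisors) to see it is WDVV modulo an unstable combination, and then shows by explicit test curves $\sigma_i\colon\PP^1\to\M_{0,n}$ that the unstable boundary divisors are linearly independent in $\CH^1(\M_{0,n})$, so the residual unstable combination must vanish. You instead compute $\partial$ directly as the divisor of a unit and use the stabilization morphism $\mathrm{st}\colon\M_{0,n}\to\Mbar_{0,n}$ to see that any such unit extends without zeros or poles across the unstable divisors, hence $\partial$ lands in the stable part; the rest follows from Proposition~\ref{Pro:WDVV_Mbar} via your commuting square. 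Your argument is more conceptual and bypasses the test-curve computation entirely; the paper's argument has the virtue of being agnostic about how $\partial$ is defined on stacks (it never needs the identification $\partial u=\mathrm{div}(u)$, which for Kresch's theory requires a word of justification, though it is standard since $\mathcal{M}_{0,n}$ is a scheme and $\M_{0,n}$ is smooth). As a side remark, the linear-independence statement the paper proves via test curves is essentially dual to your support claim, so the two arguments are in some sense reformulations of each other.
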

\begin{proof} By (\ref{eq:firstHigherChow}), we have $\CH^{\ell+1}(\mathcal{M}_{0,n},1)=0$ when $\ell>0$ and hence $\partial$ is trivial in this range.

For the statement in degree $\ell=0$ we in fact ignore the definition of $\partial$ and the machinery of higher Chow groups and simply use that here the image of $\partial$ is given by the kernel of the map
\begin{equation}
    \CH^0(\M_{0,n}^{\geq 1}) \xrightarrow{\iota_*} \CH^{1}(\M_{0,n}),
\end{equation}
in other words by linear combinations of boundary divisors adding to zero in $\CH^{1}(\M_{0,n})$. Given such a relation, restricting to the open substack $\Mbar_{0,n}$ simply kills all unstable boundary divisors and by Proposition \ref{Pro:WDVV_Mbar} (or classical theory) the result is a combination of WDVV relations. After subtracting those from the original relation, we obtain a combination of unstable boundary divisors forming a relation. The proof is finished if we can show that this must be the trivial linear combination, i.e. that the unstable boundary divisors are linearly independent.

There are exactly $n+1$ strictly prestable graphs with one edge. Let $\Gamma_0$ be the prestable graph with a vertex of valence $1$ and $\Gamma_i$ be the semistable graph with the $i$-th leg on the semistable vertex. Suppose there is a linear relation
\[R= a_0 [\Gamma_0] + a_1 [\Gamma_1]+\ldots + a_n [\Gamma_n] = 0, \,\, a_i\in \mathbb{Q}\]
in $\CH^1(\M_{0,n})$, then we want to show that all $a_i=0$.

To see this we can simply construct test curves $\sigma_i : \mathbb{P}^1 \to \M_{0,n}$ intersecting precisely the divisor $[\Gamma_i]$ and none of the others. To obtain $\sigma_i$, start with the trivial family $\PP^1 \times \PP^1 \to \PP^1$ and a tuple of $n$ disjoint constant sections $p_1, \ldots, p_n$. Let $\sigma_0$ be defined by the family of prestable curves obtained by blowing up a point on $\PP^1 \times \PP^1$ away from any of the sections. For $1 \leq i \leq n$ we similarly obtain $\sigma_i$ by blowing up a point on the image of $p_i$ and taking the strict transform of the old section $p_i$. Then we have $0 = \sigma_i^* R = a_i$ for all $i$, finishing the proof.
\end{proof}

\begin{remark} \label{Rmk:ChowisomboundaryMbar0}
As a consequence of the above result, the map
\begin{equation*}
   \iota_*:  \CH^\ell(\M_{0,n}^{\geq 1}) \to \CH^{\ell+1}(\M_{0,n})
\end{equation*}
is an isomorphism in degree $\ell \geq 1$. Restricting to the locus of stable curves, the same proof implies that
\begin{equation*}
   \iota_*:  \CH^\ell(\partial \Mbar_{0,n}) \to \CH^{\ell+1}(\Mbar_{0,n})
\end{equation*}
is an isomorphism for $\ell \geq 1$. 

The surjectivity of $\iota_*$ comes from the excision sequence that we discussed. The injectivity can be explained from the results of Kontsevich and Manin (\cite{KontsevichManinGW}). Indeed, by Proposition \cite[Proposition B.19]{BS1}, the vector space $\CH^\ell(\partial \Mbar_{0,n})$ is generated by boundary strata of $\Mbar_{0,n}$ with at least one edge. To show injectivity of $\iota_*$, it is enough to show that any relation among boundary strata in $\CH^{\ell+1}(\Mbar_{0,n})$ is a pushforward of a relation holding already in the Chow group of $\partial\Mbar_{0,n}$. By \cite[Theorem 7.3]{KontsevichManinGW} the set of relations between boundary strata in $\CH^{\ell+1}(\Mbar_{0,n})$ is spanned by the relations obtained from gluing the WDVV relation into a vertex $v_0$ of a stable graph $\Gamma$ with at least $\ell$ edges.
When $\ell\geq 1$, this relation is a pushforward of a class 
\[\prod_{v\neq v_0}\left[\Mbar_{0,n(v)}\right] \times \textup{WDVV} \in \CH^1(\Mbar_{\Gamma})\]
where $\textup{WDVV}\in\CH^1(\Mbar_{0,n(v_0)})$ is the WDVV relation corresponding to the choice of four half edges at $v_0$. Under the gluing map, this class is a relation on $\partial\Mbar_{0,n}$. Therefore we get the injectivity of $\iota_*$.
\end{remark}

%\begin{proof}[Old proof, to delete]
%Restricting to $\overline{\mathcal{M}}_{0,n}\subset \M_{0,n}$, it is proven in the above lemma. In general, we use the result of \cite{Chatz} so we assume $k$ is a perfect field. The factor $\CH^*(k,1)\cong k^{\times}\subset \CH^*(\mathcal{M}_{0,n})$ maps to zero because the connecting homomorphism is  $\CH^*(k,*)$-linear. 

%It suffices to prove that there is no linear relations strictly prestable codimension one strata. There are exactly $n+1$ strictly prestable graphs with one edge. Let $\Gamma_0$ be the prestable graph with a vertex of valence $1$ and $\Gamma_i$ be a semistable graph with $i$-th leg on the semistable vertex. Suppose there is a linear relation\[a_0 \Gamma_0 + a_1 \Gamma_1+\ldots + a_n \Gamma_n = 0, \,\, a_i\in \mathbb{Q}\]in $\CH^1(\M_{0,n})$. First, we prove that there is no linear relation among $\Gamma_i$, $1\leq i\leq n$. By the $S_n$-action on $\CH^1(\M_{0,n})$, either $\Gamma_i = \Gamma_j \,\,\forall i,j$ or $a_i = a_j \,\,\forall i,j$. We pullback this relation to $\Mbar_{0,n'}$ and check $a_i=0$. By Lemma~\ref{lem:removePsi}, we can get the degree of $\psi$-monomials\[\int_{\Mbar_{0,n}}\psi_1^{\alpha}\ldots\psi_{n}^{\alpha_n}=\binom{n-3}{\alpha_1,\ldots,\alpha_n}\] for any $\alpha_1,\ldots,\alpha_n\geq 0$.
%After pulling back the above relation to $\Mbar_{0,n+1}$ under the map $$\Mbar_{0,n+1}\to \M_{0,n}\,,$$ it is easy to see that $a_i=0$, for all $i$ by integration $\psi$ monomials. Similarly $\Gamma_0$ is nonzero by pulling back this class to $\Mbar_{0,n+2}$. \end{proof}

This corollary is enough to compute the connecting homomorphism in arbitrary degree.
\begin{proposition}\label{Pro:connecting_higherChow}
The image of $\partial\colon \CH^*(\M_{0,n}^{=p},1) \to \CH^{*-1}(\M_{0,n}^{\geq p+1})$ in \eqref{eqn:M0nkexcision} is equal to the image of the composition \[\Rels_\WDVV^{p+1} \to \StratAlg_{0,n}^\textup{nf,$p+1$} \to \CH^*(\M_{0,n}^{\geq p+1})\,.\]
Thus we can write
    \begin{equation} \label{eqn:Chow1quotient}
        \CH^*(\M_{0,n}^{\geq p+1})/\CH^*(\M_{0,n}^{=p},1) = \CH^*(\M_{0,n}^{\geq p+1})/\Rels_\WDVV^{p+1}\,.
    \end{equation}
\end{proposition}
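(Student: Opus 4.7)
The plan is to stratify $\M_{0,n}^{=p} = \coprod_{\Gamma \in \mathcal{G}_p} \M^\Gamma$, use the higher Chow-K\"unneth property to decompose the first higher Chow group of each $\M^\Gamma = \M_\Gamma^\textup{sm}/\aut(\Gamma)$ into a tensor product over the vertices of $\Gamma$, and apply Lemma \ref{Lem:HigherChowProductcompat} to localize the connecting map $\partial$ to a single distinguished vertex $v_0 \in V(\Gamma)$. At that vertex, Corollary \ref{Cor:WDVV_connecting_homomorphism} will identify the image of the local connecting homomorphism with the WDVV relations, and a careful choice of extensions will then match everything with $\Rels_\WDVV^{p+1}$ after pushforward along the gluing map $\xi_\Gamma$.

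By Remark \ref{Rmk:partialvanishonCHk1}, the map $\partial$ factors through the indecomposable quotient, so I only need to understand its restriction to indecomposable classes. Combining the hCKP (Propositions \ref{Pro:M0nsmCKP_higherChow} and \ref{Pro:M0nsmCKP_stable_higherChow}) with the vanishing $\RCH^*(\M_{0,n(v)}^\textup{sm}, 1) = 0$ for $n(v) \leq 3$ from Proposition \ref{Pro:M0nsmCKP_higherChow}, every indecomposable element of $\CH^*(\M_\Gamma^\textup{sm}, 1)$ is of the form $\gamma \otimes \prod_{v \neq v_0} \alpha_v$ with $\gamma \in \RCH^1(\mathcal{M}_{0,n(v_0)}, 1)$ concentrated at a single vertex $v_0$ with $n(v_0) \geq 4$ and with $\alpha_v \in \CH^*(\M_{0,n(v)}^\textup{sm})$ ordinary Chow classes elsewhere. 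Applying Lemma \ref{Lem:HigherChowProductcompat}(a) with $X_1 = \M_{0,n(v_0)}$ and $X_2 = \prod_{v \neq v_0} \M_{0,n(v)}$ then yields
\[\partial\Bigl(\gamma \otimes \prod_{v \neq v_0} \alpha_v\Bigr) = \partial_{v_0}(\gamma) \otimes \prod_{v \neq v_0} \bar\alpha_v,\]
where $\partial_{v_0}$ is the local connecting map for $\mathcal{M}_{0,n(v_0)} \subset \M_{0,n(v_0)}$ and each $\bar\alpha_v$ is an arbitrary extension of $\alpha_v$. By Corollary \ref{Cor:WDVV_connecting_homomorphism}, the image of $\partial_{v_0}$ in $\CH^{*-1}(\M_{0,n(v_0)}^{\geq 1})$ is exactly the span of WDVV relations for $*=1$ and vanishes for $*>1$.

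The key step is to exploit the freedom in the choice of $\bar\alpha_v$ --- guaranteed by Lemma \ref{Lem:HigherChowProductcompat}(a) --- to land directly in the image of $\Rels_\WDVV^{p+1}$. For vertices with $n(v) \in \{0,1\}$ or $n(v) \geq 3$, the natural monomial extension (a power of $\kappa_2$, of $\psi_h$, or the fundamental class) is already a normal-form decoration on $\M_{0,n(v)}$. For a vertex with $n(v) = 2$, whose smooth-locus class is a power $\psi_h^c$, I would instead take the extension $\tfrac{1}{2}(\psi_h^c + (-\psi_{h'})^c)$, which is in normal form and restricts to $\psi_h^c$ on $\M_{0,2}^\textup{sm}$ since $\psi_h + \psi_{h'} = 0$ there by Lemma \ref{Lem:psi12}. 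Pushing forward under the gluing map $\xi_\Gamma$ (using the naturality of $\partial$ under proper pushforward) and averaging over $\aut(\Gamma)$, the resulting image coincides with that of $\Rels_\WDVV^{p+1}$: a WDVV relation inserted at $v_0$ into a normal-form decorated graph with $p$ edges. The reverse inclusion is immediate by lifting each WDVV generator at $v_0$ to the explicit higher Chow cycle $L_0 \subset \PP^1 \times \Delta^1$ from the proof of Proposition \ref{Pro:WDVV_Mbar}, tensored with the normal-form decorations at the other vertices. The main technical obstacle will be the careful bookkeeping of the $\aut(\Gamma)$-action --- making sure the symmetrization on the higher Chow side matches the identification of decorated graphs up to isomorphism in the definition of $\Rels_\WDVV^{p+1}$ --- but this is resolved by the averaging identity \eqref{eqn:averaging} from the proof of Lemma \ref{Lem:normalform_isom}.
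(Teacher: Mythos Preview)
Your proposal is correct and follows essentially the same approach as the paper: reduce to individual strata $\M^\Gamma$ via the surjective pushforward $(\xi_\Gamma^\textup{sm})_*$, use the hCKP together with $\RCH^*(\M_{0,n(v)}^\textup{sm},1)=0$ for $n(v)\leq 3$ to localize the indecomposable higher Chow classes at a single vertex $v_0$ with $n(v_0)\geq 4$, apply Lemma~\ref{Lem:HigherChowProductcompat}(a) and Corollary~\ref{Cor:WDVV_connecting_homomorphism}, and push forward via $\xi_\Gamma$. Your treatment is slightly more explicit than the paper's in spelling out the normal-form extensions (especially the $\tfrac12(\psi_h^c+(-\psi_{h'})^c)$ trick at valence-$2$ vertices) and in separating the two inclusions, but this is elaboration rather than a different argument.
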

\begin{proof} 
From the functoriality of higher Chow groups, it follows that we have a commutative diagram
\begin{equation}
\begin{tikzcd}
\bigoplus_\Gamma \CH^*(\M_{\Gamma}^\textup{sm},1) \arrow[r,"\bigoplus (\xi_\Gamma^\textup{sm})_*"] \arrow[d,"\bigoplus \partial_\Gamma"] & \bigoplus_\Gamma \CH^*(\M^{\Gamma},1) \arrow[r, equal] & \CH^*(\M_{0,n}^{=p},1) \arrow[d,"\partial"] \\
\bigoplus_\Gamma  \CH^{*-1}(\M_{\Gamma} \setminus\M_{\Gamma}^\textup{sm} ) \arrow[rr,"\sum (\xi_\Gamma)_*"] &   & \CH^{*-1}(\M_{0,n}^{\geq p+1})
\end{tikzcd}
\end{equation}
where the sums run over prestable graphs with exactly $p$ edges. By Remark \ref{Rmk:higherChowfinitequotient}, the maps $(\xi_\Gamma^\textup{sm})_*$ are surjective. Thus the image of $\partial$ is given by the sum of the images of the maps $(\xi_\Gamma)_* \circ \partial_\Gamma$.

From Remark \ref{Rmk:partialvanishonCHk1} we know that $\partial_\Gamma$ vanishes on the image of the map
\begin{equation} \label{eqn:trivparthigherchowMgammasmooth}
\CH_*(k,1) \otimes \CH_*(\M_{\Gamma}^\textup{sm}) \to \CH_*(\M_{\Gamma}^\textup{sm},1),
\end{equation}
and thus factors through its cokernel. 
On the other hand, it follows from Propositions \ref{Pro:M0nsmCKP_higherChow} and \ref{Pro:M0nsmCKP_stable_higherChow} that the cokernel of \eqref{eqn:trivparthigherchowMgammasmooth} is generated 
% \jocomment{There is a gap here: for $n=0$ and $\Gamma$ the trivial graph, we still need to compute $\CH^*(\M_{0}^\textup{sm},1)$ and its image under $\partial$.}
%, $\CH^*(\M_{\Gamma}^\textup{sm},1)$ is generated 
by classes coming from the direct sum
\begin{equation*}
   \bigoplus_{v\in V(\Gamma), n(v)\geq 4}\Big(\CH^1(\M^\textup{sm}_{0,n(v)},1)\otimes \bigotimes_{v'\in V(\Gamma),v\neq v'} \CH^*(\M^\textup{sm}_{0,n(v')})\Big)\,.
\end{equation*}
For an element $\alpha_v \otimes \bigotimes_{v\neq v'}\alpha_{v'}$ in $\CH^*(\M_{\Gamma}^\textup{sm},1)$, choose an extension $\overline{\alpha}_{v'}$  of each $\alpha_{v'}$ from $\M_{0,n(v')}^\textup{sm}$ to $\M_{0,n(v')}$.  By Lemma~\ref{Lem:HigherChowProductcompat} (a), the boundary map $\partial_\Gamma$ has the form
\[\partial_\Gamma\Big(\alpha_v \otimes \bigotimes_{v\neq v'}\alpha_{v'}\Big) = \partial(\alpha_v)\otimes \bigotimes_{v\neq v'}\overline{\alpha}_{v'}\,.\]
By Corollary \ref{Cor:WDVV_connecting_homomorphism}, the elements $\partial(\alpha_v)$ at vertices $v$ with $n(v)\geq 4$ are precisely the $\WDVV$ relations on $\M_{0,n(v)}$, whereas the classes $\overline{\alpha}_{v'}$ at other vertices $v'$ are exactly the types of decorations allowed in decorated strata classes in normal form. After pushing forward via $\xi_\Gamma$ this is precisely our definition of the relations $\Rels_\WDVV^{p+1}$.
\end{proof}

\begin{proof}[Proof of Theorem \ref{Thm:WDVVnfrels}]
Recall that by Lemma \ref{Lem:normalform_isom}, the composition $$\StratAlg_{0,n}^\textup{nf,$p$} \to \CH^*(\M_{0,n}^{\geq p}) \to \CH^*(\M_{0,n}^{= p})$$ is an isomorphism. Thus in the diagram
\begin{equation*}
\begin{tikzcd}[column sep = small]
 & & & \StratAlg_{0,n}^\textup{nf,$p$} \arrow[d,"\cong"] \arrow[dl] &\\
   \CH^*(\M_{0,n}^{=p},1) \arrow[r,"\partial"] & \CH^{*-1}(\M_{0,n}^{\geq p+1}) \arrow[r] & \CH^*(\M_{0,n}^{\geq p}) \arrow[r] & \CH^*(\M_{0,n}^{=p}) \arrow[r] & 0
\end{tikzcd}
\end{equation*}
we obtain a canonical splitting of the excision exact sequence \eqref{eqn:M0nkexcision} and thus we have
\begin{equation} \label{eqn:excisionsplitting}
        \CH^*(\M_{0,n}^{\geq p}) = \StratAlg_{0,n}^\textup{nf,$p$} \oplus \CH^{*-1}(\M_{0,n}^{\geq p+1})/\CH^*(\M_{0,n}^{=p},1)\,.
\end{equation}
Combining \eqref{eqn:excisionsplitting} and equation \eqref{eqn:Chow1quotient} from Proposition \ref{Pro:connecting_higherChow}, we see
\begin{equation} \label{eqn:excisionmaster}
    \CH^*(\M_{0,n}^{\geq p}) = \StratAlg_{0,n}^\textup{nf,$p$} \oplus \CH^{*-1}(\M_{0,n}^{\geq p+1})/\Rels_\WDVV^{p+1}\,.
\end{equation}
Applying \eqref{eqn:excisionmaster} for $p=0,1,2, \ldots$ we obtain
% By Lemma \ref{Lem:normalform_isom} and Proposition \ref{Pro:connecting_higherChow}, the excision sequence \eqref{eqn:excisionmaster} for $p=0,1,2, \ldots$ implies
\begin{align*}
\CH^*(\M_{0,n}) &=\CH^*(\M_{0,n}^{\geq 0})\\
&= \StratAlg_{0,n}^\textup{nf,$0$} \oplus \CH^{*-1}(\M_{0,n}^{\geq 1})/\Rels_\WDVV^{1}\\
&= \StratAlg_{0,n}^\textup{nf,$0$} \oplus \left(\StratAlg_{0,n}^\textup{nf,$1$}/\Rels_\WDVV^{1} \right) \oplus \CH^{*-2}(\M_{0,n}^{\geq 2})/\Rels_\WDVV^{2} \\ &= \ldots \\
&=\bigoplus_{p \geq 0}\StratAlg_{0,n}^\textup{nf,$p$}/\Rels_\WDVV^{p}\\
&= \StratAlg_{0,n}^\textup{nf} / \Rels_\WDVV\,,
\end{align*}
finishing the proof.    
\end{proof}
Finally we end the proof of the main theorem.
\begin{proof}[Proof of Theorem \ref{Thm:fullgenus0rels}]
We know that the kernel of $\StratAlg_{0,n}\to \CH^*(\M_{0,n})$ contains $\Rels_{\kappa, \psi}$. We define \[\Rels_{\text{res}} = \ker\big( \StratAlg_{0,n}/\Rels_{\kappa, \psi} \to \CH^*(\M_{0,n}) \big)\,.\]
Likewise, by Theorem \ref{Thm:WDVVnfrels} we know that the kernel of $\StratAlg_{0,n}^\textup{nf} \to \CH^*(\M_{0,n})$ is equal to $\Rels_{\WDVV}$. We then obtain a diagram of morphisms with exact rows
\begin{equation}
\begin{tikzcd}
 0 \arrow[r] & \Rels_{\WDVV} \arrow[r] \arrow[d,dashed] & \StratAlg_{0,n}^\textup{nf} \arrow[r] \arrow[d,two heads] & \CH^*(\M_{0,n}) \arrow[d, equal] \arrow[r] & 0\\
 0 \arrow[r]& \Rels_{\text{res}}\arrow[r] & \StratAlg_{0,n}/\Rels_{\kappa, \psi}\arrow[r] & \CH^*(\M_{0,n}) \arrow[r] & 0
\end{tikzcd}
\end{equation}
where the arrow $\StratAlg_{0,n}^\textup{nf} \to \StratAlg_{0,n}/\Rels_{\kappa, \psi}$ is surjective by Proposition \ref{Pro:kappapsirels}.
A short diagram chase shows that $\Rels_{\WDVV}$ factors through $\Rels_{\text{res}}$ via the dashed arrow. By the four-lemma, the map $\Rels_{\WDVV} \to \Rels_{\text{res}}$ is surjective. This clearly implies the statement of the theorem.
\end{proof}
\subsection{Relation to previous works} \label{Sect:Relpreviouswork}
Let us start by pointing out several results in Gromov--Witten theory, studying intersection numbers on moduli spaces of stable maps, which can be seen as coming from results about the tautological ring of $\M_{g,n}$.
% \begin{example}
% In \cite[Lemma 1]{gathmann}, Gathmann used the pullback formula of $\psi$-classes along the stabilization morphism $\st\colon \M_{g,1}\to \Mbar_{g,1}$ to prove certain properties of the Gromov--Witten potential. It coincides with our calculation in \cite[Proposition 3.13]{BS1}.
% \end{example}

\begin{example} \label{Exa:LeePandharipande}
In $\cite{LeePandharipande04}$, degree one relations on the {\em moduli space of stable maps to a projective space} $\Mbar_{0,n}(\PP^N,d)$ are used to reduce two pointed genus $0$ potentials to one pointed genus $0$ potentials. Theorem 1.(2) from \cite{LeePandharipande04} can be obtained by the pullback of the relation in Lemma \ref{Lem:psi12} along the forgetful morphism \[\Mbar_{0,n}(\PP^N,d)\to \M_{0,2}\,.\] 
Similarly, Theorem 1.(1) of \cite{LeePandharipande04} can be obtained from Lemma \ref{lem:removePsi} on $\M_{0,3}$. The relevant computations are given explicitly in  \cite{bae2019tautological}.
\end{example}
From Theorem~\ref{Thm:fullgenus0rels} we see that any universal relation in  the Gromov--Witten theory of genus $0$ obtained from tautological relations on $\M_{0,n}$ must follow either from the WDVV relation (\ref{eqn:wdvveq}) or the relation (\ref{eqn:psi12}) between $\psi$ and boundary classes on $\M_{0,2}$. 

\vspace{5mm}

Apart from applications to Gromov--Witten theory, there are several results in the literature which compute the Chow groups of some strict open subloci of $\M_{0,n}$. %However non of previous results consider Eq. (\ref{eqn:wdvveq}) and Eq. (\ref{eqn:psi12}) simultaneously. 
\begin{example}
Restricting to the locus $\Mbar_{0,n}\subset\M_{0,n}$ of stable curves,  Theorem \ref{Thm:fullgenus0rels} specializes to the classical result in \cite{keel,KontsevichManinGW} that all relations between undecorated strata of $\Mbar_{0,n}$ are additively generated by the WDVV relations. 
\end{example}

% \begin{example}
% In \cite{Pixtonboundary}, Pixton introduces classes $[\Gamma] \in \R^*(\Mbar_{g,n})$  indexed by \emph{prestable} graphs of genus $g$ with $n$ legs. In his construction, chains of unstable vertices encode insertions of $\kappa$ and $\psi$-classes in such a way that the formula for products $[\Gamma] \cdot [\Gamma']$ takes a particularly simple shape. While it is not a priori obvious how to relate his classes to the corresponding boundary strata classes $[\Gamma] \in \R^*(\M_{g,n})$ in the moduli stack of prestable curves, this is a question which we plan to investigate in future work.
% \end{example}

\begin{example} \label{Exa:Oesinghaus}
In \cite{oesinghaus}, Oesinghaus computes the Chow ring (with integer coefficients) of the open substack $\TT$ of $\M_{0,3}$ of curves with prestable graph of the form
\begin{equation*}
\begin{tikzpicture}[scale=1.2, baseline=-3pt,label distance=0.3cm,thick,
    virtnode/.style={circle,draw,scale=0.5}, 
    nonvirt node/.style={circle,draw,fill=black,scale=0.5} ]
    \node at (-6,0) [nonvirt node] (F) {};
    \node at (-4,0) [nonvirt node] (E) {};
    \node at (-3,0) [nonvirt node] (D) {};
    \node at (-1,0) [nonvirt node] (C) {};
    \node [nonvirt node] (A) {};
    \node at (2,0) [nonvirt node] (B) {};
    \draw [-] (A) to (B);
    \draw [-, dotted] (C) to (A);
    \draw [-] (D) to (C);
    \draw [-, dotted] (E) to (D);
    \draw [-] (F) to (E);
    % \node at (-5.7,.3) {$\psi_{h_1}^{a_1}$};
    % \node at (-4.3,.3) {$\psi_{h'_1}^{a'_1}$};
    % \node at (-2.7,.2) {$\scriptstyle h_{2i}$};
    % \node at (-1.3,.2) {$\scriptstyle h'_{2i}$};
    % \node at (.3,.3) {$\psi_{h_{2l-1}}^{a_{2l-1}}$};
    % \node at (1.6,.3) {$\psi_{h'_{2l-1}}^{a'_{2l-1}}$};
    %\node at (-.7,.5) (n1) {$i$};
    %\draw [-] (A) to (n1);
    %\node at (0,.7) (n2) {$\downarrow$};
    \node at (2.7,.5) (m1) {$1$};
    \draw [-] (B) to (m1);
    \node at (-6.7,.5) (n1) {$2$};
    \draw [-] (F) to (n1);
    \node at (-6.7,-.5) (n2) {$3$};
    \draw [-] (F) to (n2);    
    \end{tikzpicture}
\end{equation*}
where we denote by $\Gamma_k$ the graph of the shape above with $k$ edges (for $k \geq 0$). 
% The stack $\TT$ has an atlas given by 
% $$\pi_n : \mathcal{A}^n = [\AA^n/\mathbb{G}_m^n] \to \TT\text{ for }n \geq 1.$$
% Since $\mathcal{A}^n$ is a vector bundle over $B \mathbb{G}_m^n$, its Chow group\footnote{For the comparison with Oesinghaus' results we formulate everything in terms of $\mathbb{Q}$-coefficients since this is the convention of the present paper.} is given by
% \[\CH^*(\mathcal{A}^n) = \mathbb{Q}[\alpha_1, \ldots, \alpha_n],\]
% where $\alpha_\ell$ is the class of the $\ell$-th coordinate hyperplane 
% \[\iota_\ell : [V(x_i) / \mathbb{G}_m^n] \hookrightarrow \mathcal{A}^n.\]
% From a computation in \cite[Lemma 1]{oesinghaus}  it follows that the first Chern class of the normal bundle of $\iota_\ell$ is given by the restriction
% \[c_1( \mathcal{N}_{\iota_\ell}) = \iota_\ell^* \alpha_\ell \]
% of $\alpha_\ell$ to this hyperplane. Using the charts $\pi_n$, 
Oesinghaus shows that the Chow ring $\CH^*(\TT)$ is given by the ring QSym of {\em quasi-symmetric functions} on the index set $\mathbb{Z}_{>0}$. QSym can be seen as the subring of $\mathbb{Q}[\alpha_1, \alpha_2, \ldots]$ with additive basis given by
\begin{equation}
    M_{J} = \sum_{i_1 < \ldots < i_k} \alpha_{i_1}^{j_1} \cdots \alpha_{i_k}^{j_k} \text{ for }k \geq 1, J = (j_1, \ldots, j_k) \in \mathbb{Z}_{\geq 1}^{k}.
\end{equation}
Under the isomorphism $\CH^*(\TT) \cong \text{QSym}$, the element $M_J$ is a basis element of degree $\sum_\ell j_\ell$ in the Chow group of $\TT$. 
As we explain in \cite[Example 4.3]{BS1}, the cycle $M_J$
corresponds to the tautological class supported on the stratum $\M^{\Gamma_k}$ given by
\begin{equation} \label{eqn:oesinghauscomparisonpicture}
\begin{tikzpicture}[scale=1.2, baseline=-3pt,label distance=0.3cm,thick,
    virtnode/.style={circle,draw,scale=0.5}, 
    nonvirt node/.style={circle,draw,fill=black,scale=0.5} ]
    \node at (-6,0) [nonvirt node] (F) {};
    \node at (-4,0) [nonvirt node] (E) {};
    \node at (-3,0) [nonvirt node] (D) {};
    \node at (-1,0) [nonvirt node] (C) {};
    \node [nonvirt node] (A) {};
    \node at (2,0) [nonvirt node] (B) {};
    \draw [-] (A) to (B);
    \draw [-, dotted] (C) to (A);
    \draw [-] (D) to (C);
    \draw [-, dotted] (E) to (D);
    \draw [-] (F) to (E);
    \node at (-5,.3) {$(-\psi - \psi')^{j_1-1}$};
    %\node at (-4.3,.3) {$\psi_{h'_1}^{a'_1}$};
    \node at (-2,.3) {$(-\psi - \psi')^{j_\ell-1}$};
    %\node at (-1.3,.2) {$\scriptstyle h'_{2i}$};
    \node at (1,.3) {$(-\psi - \psi')^{j_k-1}$};
    %\node at (1.6,.3) {$\psi_{h'_{2l-1}}^{a'_{2l-1}}$};
    %\node at (-.7,.5) (n1) {$i$};
    %\draw [-] (A) to (n1);
    %\node at (0,.7) (n2) {$\downarrow$};
    \node at (2.7,.5) (m1) {$1$};
    \draw [-] (B) to (m1);
    \node at (-6.7,.5) (n1) {$2$};
    \draw [-] (F) to (n1);
    \node at (-6.7,-.5) (n2) {$3$};
    \draw [-] (F) to (n2);    
    \end{tikzpicture}
\end{equation}
Using the correspondence, we can verify several of the results of our paper in this particular example. Indeed, one can use Theorem \ref{Thm:WDVVnfrels} to verify that the classes \eqref{eqn:oesinghauscomparisonpicture} form a basis of $\CH^*(\TT)$. For this, one observes that decorated strata in normal form generically supported on $\TT$ must have underlying graph $\Gamma_k$ for some $k$, with trivial decoration on the valence $3$ vertex and decorations $(-\psi_h - \psi_{h'})^{c_\ell}$ on the valence $2$ vertices. Since every term appearing in a WDVV relation has at least two vertices of valence at least $3$, all these relations restrict to zero on $\TT$ and thus the above generators form a basis by Theorem \ref{Thm:WDVVnfrels}. Note that the form of these generators in normal form is not quite the same as the one shown in \eqref{eqn:oesinghauscomparisonpicture}, but a small combinatorial argument shows that the two bases can be converted to each other by using the relation \eqref{eqn:psi12} between $\psi$-classes and the boundary divisor on $\M_{0,2}$.

Note that  \cite{oesinghaus} also computes the Chow group of the semistable loci $\M_{0,2}^{ss}$ and $\M_{0,3}^{ss}$. By a straightforward generalization of the discussion above, a correspondence of the generators in \cite{oesinghaus} to the tautological generators on these spaces, as well as a comparison of relations can be established.

\end{example}

\begin{example} \label{Exa:fulghesu}
In a series of paper \cite{fulghesurational, fulghesutaut, fulghesu3nodes}, Fulghesu presented a computation of the Chow ring of the open substack $\M_{0}^{\leq 3} \subset \M_{0}$ of rational curves with at most $3$ nodes, as an explicit algebra with $10$ generators and $11$ relations. Some of the generators are given by $\kappa$-classes, some are classes of strata and others are decorated classes supported on strata. 

Establishing a precise correspondence to the generators and relations discussed in our paper is challenging due to the complexity of the involved combinatorics. However, as a nontrivial check of our results we can compare the dimensions $\dim \CH^d(\M_{0}^{\leq 3})$ of the graded pieces of the Chow ring. Given any open substack $U \subseteq \M_{0}$, we package the ranks of the Chow groups of $U$ in the generating function 
\[H_U = \sum_{d \geq 0} \dim \CH^d(U) t^d, \]
which is the \emph{Hilbert series} of the graded ring $\CH^*(U)$.

In \cite{fulghesu3nodes}, Fulghesu computes the Chow rings of the open substacks $U=\M_0^{\leq e}$ for $e=0,1,2,3$ in terms of generators and relations. 
    Using the software Macaulay2 \cite{M2} we can compute\footnote{The output of the relevant computation can be found \href{https://cocalc.com/share/f765c8c72a7372905a4d4d2d0c8606ad2864fecd/FulghesuComputation.txt?viewer=share}{here}.} the Hilbert functions $H_U^F$ of the graded algebras given by Fulghesu. We list them in Figure \ref{fig:fulghesunumbers}.
\begin{figure}[htb]
    \begin{center}
    \begin{tabular}{c|cl}
       $U$  & $H_U^F$ \\
       \hline$\M_0^{\leq 0}$  & $\frac{1}{1-t^2}$ &$= 1 + t^2 + t^4 + \ldots$\\
       $\M_0^{\leq 1}$  & $\frac{1}{(1-t^2)(1-t)}$ &$= 1 + t + 2 t^2 + 2 t^3  + 3 t^4 + \ldots$\\
       $\M_0^{\leq 2}$  & $\frac{t^4+1}{(1-t^2)^2(1-t)}$ &$= 1 + t + 3 t^2 + 3 t^3  + 7 t^4 + 7 t^5 + 13 t^6  + 13 t^7 + 21 t^8 + \ldots$\\
       $\M_0^{\leq 3}$ & $H'$ &$= 1 + t + 3 t^2 + 5 t^3  + 10 t^4 + 15 t^5 + 26 t^6  + 36 t^7 + \mathbf{55} t^8 + \ldots$
    \end{tabular}
    \end{center}
    \caption{The Hilbert series of the Chow rings of open substacks $U$ of $\M_0$, as computed by Fulghesu; for space reasons we don't write the full formula for the rational function $H'$, only giving the expansion}
    \label{fig:fulghesunumbers}
\end{figure}    

On the other hand, since for stable graphs with at most three edges no WDVV relations can appear, Theorem \ref{Thm:WDVVnfrels} implies that the Chow group $\CH^*(\M_0^{\leq e})$ is equal to the subspace of the strata algebra $\StratAlg_{0,0}$ spanned by decorated strata in normal form with at most $e$ nodes (for $e \leq 3$).
By some small combinatorial arguments, this allows us to compute the Hilbert functions $H_U$ of the spaces $U=\M_0^{\leq e}$:

$\mathbf{e=0}$
For $U=\M_0^{\leq 0}$ the only generators in normal form are the classes $\kappa_2^a$, existing in every even degree $d=2a$, so that the generating function is given by
\[H_{\M_0^{\leq 0}} = 1+t^2 + t^4 + \ldots = \frac{1}{1-t^2},\]
recovering the formula from Figure \ref{fig:fulghesunumbers}.

$\mathbf{e=1}$
On $U=\M_0^{\leq 0}$ we get additional generators 
\[[\Gamma_1, \psi_{h_1}^a \psi_{h_2}^b] \text{ for }\Gamma_1 = 
\begin{tikzpicture}[scale=0.7, baseline=-3pt,label distance=0.3cm,thick,
    virtnode/.style={circle,draw,scale=0.5}, 
    nonvirt node/.style={circle,draw,fill=black,scale=0.5} ]
    % \node [virtnode] (A) {} node[below]{$A_1$}; works
    \node at (0,0) [nonvirt node] (A) {};
    \node at (2,0) [nonvirt node] (B) {};
    \draw [-] (A) to (B);
    \node at (0.5,0.4) {$h_1$};
    \node at (1.5,0.4) {$h_2$};
    \end{tikzpicture}\,.\]
Since the automorphism group of $\Gamma_1$ exchanges $h_1, h_2$, the numbers $a,b$ above are only unique up to ordering. We get a canonical representative by requiring $a \leq b$. Overall, we obtain the generating series
\begin{align*}
H_{\M_0^{\leq 1}} = H_{\M_0^{\leq 0}} + \sum_{0 \leq a \leq b} t^{a+b+1} &= \frac{1}{1-t^2} + t \sum_{a \geq 0} \sum_{c \geq 0} t^{a+(a+c)}\\
&= \frac{1}{1-t^2} + t \left(\sum_{a \geq 0} t^{2a} \right) \left(\sum_{c \geq 0} t^{c} \right)\\
&= \frac{1}{1-t^2} + t \frac{1}{1-t^2} \frac{1}{1-t} = \frac{1}{(1-t^2)(1-t)},
\end{align*}
where we used the substitution $b=a+c$. Again we recover the formula from Figure \ref{fig:fulghesunumbers}. 

$\mathbf{e=2}$
The additional generators for $U = \M_0^{\leq 2}$ are given by
\[[\Gamma_2, \psi_{h_1}^a (\psi_{h_2}^b + (-\psi_{h_3})^b) \psi_{h_4}^c] \text{ for }\Gamma_2 = 
\begin{tikzpicture}[scale=0.7, baseline=-3pt,label distance=0.3cm,thick,
    virtnode/.style={circle,draw,scale=0.5}, 
    nonvirt node/.style={circle,draw,fill=black,scale=0.5} ]
    % \node [virtnode] (A) {} node[below]{$A_1$}; works
    \node at (0,0) [nonvirt node] (A) {};
    \node at (2,0) [nonvirt node] (B) {};
    \node at (4,0) [nonvirt node] (C) {};
    \draw [-] (A) to (B);
    \draw [-] (B) to (C);
    \node at (0.5,0.4) {$h_1$};
    \node at (1.5,0.4) {$h_2$};
    \node at (2.5,0.4) {$h_3$};
    \node at (3.5,0.4) {$h_4$};    
    \end{tikzpicture}\,.
\]
The automorphism group of $\Gamma_2$ exchanges $h_1, h_4$ and $h_2, h_3$, so $a,c$ are only well-defined up to ordering. Moreover, for $a=c$ and $b=2\ell+1$ odd, this symmetry implies
\[[\Gamma_2, \psi_{h_1}^a (\psi_{h_2}^b + (-\psi_{h_3})^b) \psi_{h_4}^a] = - [\Gamma_2, \psi_{h_1}^a (\psi_{h_2}^b + (-\psi_{h_3})^b) \psi_{h_4}^a],\]
so the corresponding generator vanishes. Overall, the numbers of basis elements supported on $\Gamma_2$ have generating series
\[t^2 \cdot \big(\underbrace{\sum_{0 \leq a \leq c} \sum_{b \geq 0} t^{a+b+c}}_{=\frac{1}{(1-t^2) (1-t)^2}} - \underbrace{\sum_{a \geq 0} \sum_{\ell \geq 0} t^{2a+2\ell+1}}_{=\frac{t}{(1-t^2)^2}} \big) = \frac{t^2(t^2+1)}{(1-t)(1-t^2)^2}\,. \]
Adding this to the generating series for $\M_0^{\leq 1}$ we obtain the formula
\[
H_{\M_0^{\leq 2}} = H_{\M_0^{\leq 1}} + \frac{t^2(t^2+1)}{(1-t)(1-t^2)^2} = \frac{t^4+1}{(1-t^2)^2(1-t)},
\]
again obtaining the same formula as in Figure \ref{fig:fulghesunumbers}.

$\mathbf{e=3}$
For the full locus $U = \M_0^{\leq 3}$ a discrepancy between our results and Fulghesu's computations appears. 
There are two new types of generators appearing: firstly we have
\[[\Gamma_3', \psi_{h_1}^a \psi_{h_2}^b \psi_{h_3}^c ] \text{ for }\Gamma_3' = 
\begin{tikzpicture}[scale=0.7, baseline=-3pt,label distance=0.3cm,thick,
    virtnode/.style={circle,draw,scale=0.5}, 
    nonvirt node/.style={circle,draw,fill=black,scale=0.5} ]
    % \node [virtnode] (A) {} node[below]{$A_1$}; works
    \node at (0,0) [nonvirt node] (A) {};
    \node at (2,0) [nonvirt node] (B) {};
    \node at (-1,{sqrt(3)}) [nonvirt node] (C) {};
    \node at (-1,{-sqrt(3)}) [nonvirt node] (D) {};
    \draw [-] (A) to (B);
    \draw [-] (A) to (C);
    \draw [-] (A) to (D);
    \node at (1.5,0.4) {$h_1$};
    \node at (-0.3,1.4) {$h_2$};
    \node at (-0.3,-1.4) {$h_3$};
    \end{tikzpicture}
\]
giving a contribution of
\[
t^3\sum_{0 \leq a \leq b \leq c} t^{a+b+c} = \frac{t^3}{(1-t^3)(1-t^2)(1-t)}
\]
to the generating series. The second type of generator is
\[[\Gamma_3'', \psi_{h_1}^a (\psi_{h_2}^b+ (-\psi_{h_3})^b) ( (-\psi_{h_4})^c + \psi_{h_5}^c) \psi_{h_6}^d ] \text{ for }\Gamma_3'' = 
\begin{tikzpicture}[scale=0.7, baseline=-3pt,label distance=0.3cm,thick,
    virtnode/.style={circle,draw,scale=0.5}, 
    nonvirt node/.style={circle,draw,fill=black,scale=0.5} ]
    % \node [virtnode] (A) {} node[below]{$A_1$}; works
    \node at (0,0) [nonvirt node] (A) {};
    \node at (2,0) [nonvirt node] (B) {};
    \node at (4,0) [nonvirt node] (C) {};
    \node at (6,0) [nonvirt node] (D) {};
    \draw [-] (A) to (B);
    \draw [-] (B) to (C);
    \draw [-] (C) to (D);
    \node at (0.5,0.4) {$h_1$};
    \node at (1.5,0.4) {$h_2$};
    \node at (2.5,0.4) {$h_3$};
    \node at (3.5,0.4) {$h_4$};      
    \node at (4.5,0.4) {$h_5$};
    \node at (5.5,0.4) {$h_6$};         
    \end{tikzpicture}
\]
Since $\Gamma_3''$ again has an automorphism of order $2$, we count such generators using a trick: if the vertices of $\Gamma_3''$ were ordered, the generating series would be
\[t^3 \sum_{a,b,c,d \geq 0} t^{a+b+c+d} = \frac{t^3}{(1-t)^4}.\]
Due to the automorphism, we counted almost all the generators twice, \emph{except} those fixed by the automorphism, for which $(a,b,c,d)=(a,b,b,a)$ and whose generating series is
\[t^3 \sum_{a,b \geq 0} t^{2a + 2b} = \frac{t^3}{(1-t^2)^2}.\]
Adding these two series, we count every generator twice, so we obtain the correct count after dividing by two. Overall we get
\begin{align*}
H_{\M_0^{\leq 3}} &= H_{\M_0^{\leq 2}} + \frac{t^3}{(1-t^3)(1-t^2)(1-t)} + \frac{1}{2} \left(\frac{t^3}{(1-t)^4} + \frac{t^3}{(1-t^2)^2} \right) \\  &= \frac{t^6 + t^5 + 2 t^4 + t^3 +1}{(1-t^2)^2(1-t)(1-t^3)}.
\end{align*}
However, expanding this series we obtain
\begin{equation}
\frac{t^6 + t^5 + 2 t^4 + t^3 +1}{(1-t^2)^2(1-t)(1-t^3)} = 1 + t + 3 t^2 + 5 t^3  + 10 t^4 + 15 t^5 + 26 t^6  + 36 t^7 + \mathbf{54} t^8 + \ldots
\end{equation}
Comparing with the expansion of the corresponding function $H'$ in Figure \ref{fig:fulghesunumbers} we see that the coefficient of $t^8$ is $55$ for Fulghesu and $54$ for us. We used a  modified version of the software package admcycles \cite{admcycles} for the open-source software SageMath \cite{sage} to verify the number $54$ above.

After revisiting Fulghesu's proof, we think we can explain this discrepancy from a relation that was missed in \cite{fulghesu3nodes}. In the notation of this paper, we claim that there is a relation
\begin{equation} \label{eqn:fulghesumissingrel}
r \cdot q - \gamma_3'' \cdot s + 2 u \cdot \gamma_2 - \gamma_3'' \cdot q \cdot \kappa_2 - s \cdot \gamma_2 \cdot \kappa_1 = 0 \in \CH^8(\M_0^{\leq 3}).
\end{equation}
Here the classes $r,s,u,\gamma_3''$ are supported on the closed stratum $\M^{\Gamma_3''} \subset \M_0^{\leq 3}$.
% for the stable graph
% \begin{equation} \label{eqn:Gamma3pp}
%   \Gamma_3'' = \begin{tikzpicture}[scale=0.7, baseline=-3pt,label distance=0.3cm,thick,
%     virtnode/.style={circle,draw,scale=0.5}, 
%     nonvirt node/.style={circle,draw,fill=black,scale=0.5} ]
%     % \node [virtnode] (A) {} node[below]{$A_1$}; works
%     \node at (0,0) [nonvirt node] (A) {};
%     \node at (2,0) [nonvirt node] (B) {};
%     \node at (4,0) [nonvirt node] (C) {};
%     \node at (6,0) [nonvirt node] (D) {};
%     \draw [-] (A) to (B);
%     \draw [-] (B) to (C);
%     \draw [-] (C) to (D);
%     \end{tikzpicture}
% \end{equation}
The relation \eqref{eqn:fulghesumissingrel} follows from the description of the Chow ring $\CH^*(\M^{\Gamma_3''})$ and the formulas for restrictions of classes $q,\gamma_3'',\gamma_2, \kappa_2, \kappa_1$ to $\M^{\Gamma_3''}$ computed in \cite[Section 6.2]{fulghesu3nodes}. On the other hand, using Macaulay2 we verified that the relation \eqref{eqn:fulghesumissingrel} is \emph{not} contained in the ideal of relations given in \cite[Theorem 6.3]{fulghesu3nodes}. Adding this missing relation, we obtain the correct rank $54$ for $\CH^8(\M_0^{\leq 3})$. 

Our numerical experiments indicated that there are further relations missing in degrees $d>9$. So while the general proof strategy of \cite{fulghesu3nodes} seems sound, more care needed is needed in the final step of the computation. 

\subsection{Chow rings of open substacks of \texorpdfstring{$\M_{0,n}$}{M0n} - finite generation and Hilbert series} \label{Sect:opensubstack}
In the previous section, we saw some explicit computations for Chow groups $\CH^*(U)$ of of open substacks $U \subset \M_{0,n}$ and their Hilbert series
\[H_U = \sum_{d \geq 0} \dim_{\mathbb{Q}} \CH^d(U) t^d.\]
For $U= \M_0^{\leq e}$ and $e=0,1,2$ we have that $\CH^*(U)$ is a finitely generated graded algebra by the results of \cite{fulghesu3nodes}. But recall that any such algebra, having generators in degrees $d_1, \ldots, d_r$ has a Hilbert series which is the expansion (at $t=0$) of a rational function $H(t)$ of the form
\[H(t) = \frac{Q(t)}{\prod_{i=1}^r (1-t^{d_i} )} \text{ for some }Q(t) \in \mathbb{Z}[t]\]
(see \cite[Theorem 13.2]{matsumura}).
This explains the shape of the Hilbert functions of $\M_0^{\leq e}$ from Figure \ref{fig:fulghesunumbers}. We remark here, that all functions $H(t)$ of the above form have poles only at roots of unity.

On the other hand, for the open substack $\mathcal T \subset \M_{0,3}$ studied by Oesinghaus, we saw 
$\CH^*(\TT) \cong \text{QSym}$, where the algebra $\text{QSym}$ had an additive basis element $M_J$ in degree $d$ for each composition $J$ of $d$. Since for $d \geq 1$ the number of compositions of $d$ is $2^{d-1}$, the Hilbert series of the Chow ring of $\TT$ is given by
\[H_{\mathcal T} = 1 + \sum_{d \geq 0} 2^{d-1} t^d  = 1+ \frac{t}{1-2t} = \frac{1-t}{1-2t}.\]
From this we can see two things:
\begin{itemize}
    \item The Chow ring $\CH^*(\TT)$ cannot be a finitely generated algebra, since the function $H_{\TT}$ has a pole at $1/2$, which is not a root of unity.
    \item On the other hand, we still have that $H_\TT$ is the expansion of a rational function, even though $\TT$ is not even of finite type.
\end{itemize}
The above observations lead to the following two questions.
\begin{question} \label{quest:finitelygeneratedAlgebra}
Is it true that for $U \subset \M_{0,n}$ an open substack of finite type, the Chow ring $\CH^*(U)$ is a finitely generated algebra?
\end{question}
\begin{question} \label{quest:rationalHilbertSeries}
Is it true that for $U \subset \M_{0,n}$ any open substack which is a union of strata $\M^\Gamma$, the Hilbert series $H_U$ is the expansion of a rational function at $t=0$?
\end{question}
%\jocomment{Question: should we add for the second question that $U$ is a union of strata?}
For the first question, we note that by Theorem \ref{Thm:M0ngenerators} we know that $\CH^*(U)$ is additively generated by possibly infinitely many decorated strata $[\Gamma, \alpha]$, supported on \emph{finitely many} prestable graphs $\Gamma$. It is far from obvious whether we can obtain all of them multiplicatively from a finite collection of $[\Gamma_i, \alpha_i]$.
%\jocomment{The first question does not seem impossible : you can add one stratum at a time like Fulghesu and assuming you can write any class on the open complement you need to be able to create every class on the new stratum by adding finitely many new generators. Essentially the new classes on the stratum are just the decorations allowed by normal form, so they are described by one number for each vertex with $n(v)=1,2$. And if you found a candidate for the new generators to add to your ring, you can prove you get everything by induction on degree. So assume you have all normal-form-decorations in degree $\leq d$, can you use all these classes together with the finitely many new generators to create all classes in degree $d-1$. Using this idea, I showed by hand that $\CH^*(\M_0^{\leq 1})$ is the free commutative algebra generated by $\kappa_2$ and the boundary divisor. }

For the second question, we observe that it would be implied for all finite type open substacks $U$ of $\M_{0,n}$ assuming a positive answer to the first question. Further evidence is provided by the results from \cite{oesinghaus}: as we saw above, the open substack $\TT \subset \M_{0,3}$ has a rational generating series $H_\TT$. In fact, as mentioned above Oesinghaus computes the Chow ring for the the entire semistable locus in $\M_{0,2}$ and $\M_{0,3}$ (see \cite[Corollary 2,3]{oesinghaus}) and obtains
\[\CH^*(\M_{0,2}^\textup{ss}) = \mathrm{QSym} \otimes_\mathbb{Q} \mathbb{Q}[\beta],\  \CH^*(\M_{0,3}^\textup{ss}) = \mathrm{QSym} \otimes_\mathbb{Q}\mathrm{QSym} \otimes_\mathbb{Q}\mathrm{QSym}.\]
Since we know that the Hilbert series of $\mathrm{QSym}$ is $(1-t)/(1-2t)$ and the Hilbert series of $\mathbb{Q}[t]$ is $1/(1-t)$ and that Hilbert series are multiplicative under tensor products, we easily see that 
\[H_{\M_{0,2}^\textup{ss}} = \frac{1}{1-2t},\  H_{\M_{0,3}^\textup{ss}} = \frac{(1-t)^3}{(1-2t)^3}.\]
So Question \ref{quest:rationalHilbertSeries} has a positive answer for the non-finite type substacks of semistable points in $\M_{0,2}$ and $\M_{0,3}$.

To finish this section, we want to record some numerical data about the Chow groups of the full stacks $\M_{0,n}$. Using Theorems \ref{Thm:M0ngenerators} and \ref{Thm:fullgenus0rels} these groups have a completely combinatorial description. This has been implemented in a modified version of the software package \texttt{admcycles} \cite{admcycles}, which can enumerate prestable graphs, decorated strata in normal form and the relations $\Rels_{\kappa, \psi},  \Rels_{\WDVV}$ between them. Thus, from linear algebra we can compute the ranks of Chow groups of $\M_{0,n}$ in many cases. We record the results in Figure \ref{fig:Chowranks}.

\begin{figure}[htb]
    \centering
\begin{tabular}{c|ccccccccc}
d & n=0 & n=1 & n=2 & n=3 & n=4 & n=5 & n=6 & n=7 & n=8 \\
\hline 0 & 1 & 1 & 1 & 1 & 1 & 1 & 1 & 1 & 1\\
1 & 1 & 2 & 3 & 4 & 6 & 11 & 23 & 50 & 108 \\
2 & 3 & 5 & 9 & 16 & 33 & 80 & 215 & 621 & 1900\\
3 & 5 & 12 & 27 & 62 & 162 & 481 & 1572 & &\\
4 & 13 & 32 & 84 & 235 & 739 & 2594 &  & &\\
5 & 27 & 84 & 263 & 875 & 3219 &  &  & &\\
6 & 70 & 234 & 837 & 3219 &  &  &  & &\\
7 & 166 & 656 & 2683 &  &  &  &  & &\\
8 & 438 & 1892 &  &  &  &  &  & &\\
9 & 1135 &  &  &  &  &  &  & & \\
10 & 3081 &  &  &  &  &  &  & &
\end{tabular}
    \caption{The rank of the Chow groups $\CH^d(\M_{0,n})$}
    \label{fig:Chowranks}
\end{figure}

%We first used a modified version of the software package admcycles \cite{admcycles} for computations in the tautological ring to compute the ranks of these strata spaces up to a certain codimension $d$

% It would be interesting to establish a precise correspondence to the generators and relations discussed in our paper, but due to the complexity of the involved combinatorics, this will likely require some computer assistance and will be discussed elsewhere. 
\end{example}

\section{Comparison with the tautological ring of the moduli of stable curves}\label{sec:comparison}
\subsection{Injectivity of pullback by forgetful charts} \label{Sec:forgetfulpullback}
Assume we are in the stable range $2g-2+n>0$ so that the moduli space $\Mbar_{g,n}$ is nonempty. 
Since $\Mbar_{g,n} \subset \M_{g,n}$ is an open substack, the Chow groups of $\M_{g,n}$ determine those of $\Mbar_{g,n}$: we have that $\CH^*(\Mbar_{g,n})$ is the quotient of $\CH^*(\M_{g,n})$ by the span of classes supported on the strictly unstable locus.

Restricting to the subrings of tautological classes, we note that the tautological ring $\R^*(\Mbar_{g,n})$ of $\Mbar_{g,n}$ is the subring of $\CH^*(\Mbar_{g,n})$ given by the restriction of $\R^*(\M_{g,n})\subseteq \CH^*(\M_{g,n})$ under the open embedding $i\colon \Mbar_{g,n}\hookrightarrow \M_{g,n}$. 
%In the past few years, there has been an extensive study on the tautological ring of $\Mbar_{g,n}$ via {\em Cohomological Field Theory}, see \cite{3spin,jandaEquivP1}.
Thus the tautological ring $\R^*(\M_{g,n})$ determines $\R^*(\Mbar_{g,n})$  since the composition
\[\R^*(\Mbar_{g,n}) \xrightarrow{\st^*} \R^*(\M_{g,n}) \xrightarrow{i^*} \R^*(\Mbar_{g,n})\] 
is the identity and thus $\R^*(\Mbar_{g,n}) \xrightarrow{\st^*} \R^*(\M_{g,n})$ is injective.

It is an interesting question whether the converse is true: do the Chow (or tautological) rings of the moduli spaces of stable curves determine the Chow (or tautological) ring of $\M_{g,n}$? The following conjecture gives a precise way in which this could be true. %\jocomment{Unify our notation here: Chow ring or tautological ring?}\Ycomment{I used `tautological ring' because it sounds much reasonable. But the argument below mostly deal with Chow rings. }
\begin{conjecture} \label{conj:pullbackinjective}
Let $(g,n) \neq (1,0)$, then for a fixed $d \geq 0$ there exists $m_0 \geq 0$ such that for any $m \geq m_0$, the forgetful morphism
\[F_m : \Mbar_{g,n+m} \to \M_{g,n}\]
satisfies that the pullback
\[F_m^* : \CH^d(\M_{g,n}) \to \CH^d(\Mbar_{g,n+m})\]
is injective.
\end{conjecture}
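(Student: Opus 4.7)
The plan is to reduce the statement via excision on $\M_{g,n}$ and then analyze the kernel of pullback along the composition of a flat forgetful morphism with an open immersion. Let $U_m := F_m(\Mbar_{g,n+m}) \subset \M_{g,n}$ be the image of $F_m$: the open substack of prestable curves admitting stabilization after adding $m$ further markings. Since a genus-zero component with $k$ special points requires $\max(0, 3-k)$ extra markings to stabilize (with analogous bounds for other unstable components), a prestable curve fails to lie in $U_m$ only when the total marking deficit summed over its unstable components exceeds $m$; consequently the codimension of $\M_{g,n} \setminus U_m$ tends to infinity with $m$. For $m$ large relative to $d$, excision yields an isomorphism $\CH^d(\M_{g,n}) \xrightarrow{\sim} \CH^d(U_m)$, and it suffices to prove the restricted $F_m^* : \CH^d(U_m) \to \CH^d(\Mbar_{g,n+m})$ is injective.

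Next I would factor this restriction as $\Mbar_{g,n+m} \xrightarrow{i} \pi^{-1}(U_m) \xrightarrow{\pi} U_m$, where $\pi : \M_{g,n+m} \to \M_{g,n}$ is the $m$-fold iterated universal curve and $i$ is the open immersion of the stable locus. The map $\pi$ is flat, and its pullback $\pi^*$ is injective on Chow groups: for $(g,n) \neq (1,0)$ a projection-formula argument using a suitable power of the relatively ample bundle $\omega_\pi(\sum_i \sigma_i)$ provides a left inverse. The remaining content of the conjecture is therefore the vanishing $\ker(i^*) \cap \pi^*\CH^d(U_m) = 0$, equivalently: no nonzero class of the form $\pi^*\alpha$ is supported on the strictly unstable locus $\pi^{-1}(U_m) \setminus \Mbar_{g,n+m}$.

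For genus zero, I would deduce the remaining injectivity from the explicit presentation proved in the main body of the paper. By Theorems \ref{Thm:M0ngenerators} and \ref{Thm:fullgenus0rels}, $\CH^d(\M_{0,n})$ is generated by decorated strata in normal form modulo $\Rels_\WDVV$, while $\CH^d(\Mbar_{0,n+m})$ has Keel's presentation. Using the pullback formulas for $\psi$, $\kappa$ and for decorated strata classes collected in \cite[Section 3]{BS1}, one can expand $F_m^*$ applied to each normal-form generator as an explicit combination of Keel generators on $\Mbar_{0,n+m}$. The conjecture in a fixed degree $d$ then reduces to a finite-dimensional linear algebra question, which can be checked directly, and by a dimension count of strata in normal form should yield explicit bounds on $m_0$ in terms of $d$.

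The hardest step, and the one where the conjecture remains genuinely open in positive genus, is the nonvanishing claim $\ker(i^*) \cap \pi^*\CH^d(U_m) = 0$. Since $F_m$ itself is flat but not proper, the direct projection-formula argument for injectivity fails once one restricts to the open substack $\Mbar_{g,n+m}$: pushforwards of $F_m^*\alpha$ capped with relatively ample classes acquire corrections from the strictly unstable divisors that cannot be controlled without detailed information about $\R^*(\M_{g,n})$. A natural substitute is to construct operational Chow classes on $\Mbar_{g,n+m}$, in the sense of \cite[Appendix C]{BS1} and \cite{khan2019virtual}, whose $F_m$-action realizes a one-sided inverse to $F_m^*$ in degree $d$. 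Producing such classes explicitly in positive genus appears to require a description of $\R^*(\M_{g,n})$ comparable to what Theorem \ref{Thm:fullgenus0rels} provides in genus zero, which is why the conjecture is stated rather than proved in full generality.
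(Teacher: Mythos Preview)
The statement is a conjecture and the paper does not prove it. What the paper actually establishes is (i) the excision reduction you describe, namely that for $m \geq 2d$ the complement of $U_m = F_m(\Mbar_{g,n+m})$ has codimension exceeding $d$ so $\CH^d(\M_{g,n}) \cong \CH^d(U_m)$; (ii) Proposition~\ref{pro:pullbackkerneldecreasing}, showing the kernels $\ker F_m^*$ form a non-increasing sequence in $m$; and (iii) numerical verification in genus zero for small $(n,d)$. Your proposal recovers (i) and, in its final paragraph, correctly acknowledges that the conjecture remains open; but you do not mention (ii), which is the only general partial result the paper proves.

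Your intermediate factorization through $\pi : \M_{g,n+m} \to \M_{g,n}$ contains a genuine gap. This map is \emph{not} the $m$-fold iterated universal curve: the universal curve $\mathfrak{C}_{g,n} \to \M_{g,n}$ is proper, but $\M_{g,n+1}$ is only the open complement of the sections and the nodal locus inside $\mathfrak{C}_{g,n}$. Hence $\pi$ is smooth but not proper, there is no pushforward $\pi_*$, and the projection-formula left inverse you invoke does not exist. Even passing to the genuine iterated universal curve does not help: the line bundle $\omega(\sum_i \sigma_i)$ has nonpositive degree on strictly unstable components, so it is not relatively ample over $\M_{g,n}$ and cannot furnish the needed left inverse. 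The paper's Proposition~\ref{pro:pullbackkerneldecreasing} sidesteps this by working instead with the \emph{proper} stabilizing map $\pi : \Mbar_{g,n+m+1} \to \Mbar_{g,n+m}$ between moduli of stable curves, using that $\psi_{n+m+1}$ vanishes on the locus where stabilization contracts a component to obtain $\pi_*(\psi_{n+m+1} \cdot F_{m+1}^*\alpha) = (2g-2+n+m)\,F_m^*\alpha$ and hence $\ker F_{m+1}^* \subseteq \ker F_m^*$. Your genus-zero sketch via the normal-form presentation is reasonable in principle, but the paper does not carry it out either, offering only the computational evidence of Figure~\ref{fig:Chowpullbackranks}.
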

We have seen in \cite[Lemma 2.1]{BS1} that (for $m$ sufficiently large) the image of $F_m$ is open with complement of codimension $\lfloor \frac{m}{2} \rfloor +1$. So for $m \geq 2d$, we have $\CH^d(F_m(\Mbar_{g,n+m})) \cong \CH^d(\M_{g,n})$, so certainly the image of $F_m$ is sufficiently large to capture the Chow group of codimension $d$ cycles. Still, it is not true that a surjective, smooth morphism has injective pullback in Chow (if the fibres are not proper, as is the case for $F_m$), so this does not suffice to prove the conjecture. 

One aspect of the conjecture we can prove so far is the statement that if $F_{m_0}^*$ is injective, it is true that for any $m \geq m_0$ the map $F_m^*$ remains injective.
\begin{proposition} \label{pro:pullbackkerneldecreasing}
For $(g,n) \neq (1,0)$ and $0 \leq m \leq m'$ with $2g-2+n+m > 0$ we have $\ker F_{m'}^* \subseteq \ker F_m^*$. In other words, the subspaces $(\ker F_\ell^*)_{\ell}$ form a \emph{non-increasing} sequence of subspaces of $\CH^d(\M_{g,n})$.
\end{proposition}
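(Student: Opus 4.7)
The plan is to induct on $m'-m$, which reduces the statement to the case $m'=m+1$. The core idea is to relate $F_m$ and $F_{m+1}$ through the forgetful morphism on prestable curves, and then to recover $F_m^*\alpha$ from $F_{m+1}^*\alpha$ by a projection-formula argument involving the universal curve and $\psi_{n+m+1}$.

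First, I would factor both morphisms through $\M_{g,n+m}$. Writing $F^{\M}_m \colon \M_{g,n+m}\to \M_{g,n}$ for the prestable forgetful map, $i\colon \Mbar_{g,n+m}\hookrightarrow \M_{g,n+m}$ for the open inclusion, and $\tilde\pi\colon \Mbar_{g,n+m+1}\to \M_{g,n+m}$ for the restriction to $\Mbar_{g,n+m+1}$ of the prestable forgetful $\M_{g,n+m+1}\to \M_{g,n+m}$, one gets $F_m = F^{\M}_m \circ i$ and $F_{m+1} = F^{\M}_m \circ \tilde\pi$. Setting $\beta := (F^{\M}_m)^*\alpha$, it is enough to prove $\tilde\pi^*\beta = 0 \Rightarrow i^*\beta = 0$.

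Next, I would use the cartesian square obtained by base-changing $i$ along $\tilde\pi$; its fibre product is the open locus $V := \tilde\pi^{-1}(\Mbar_{g,n+m})\subset \Mbar_{g,n+m+1}$ of stable $(n+m+1)$-pointed curves for which forgetting the last marking preserves stability. The complement $D = \Mbar_{g,n+m+1}\setminus V$ is a finite union of boundary divisors $\Delta_i$ whose generic curve has $p_i$ and $p_{n+m+1}$ on a rational bridge. Crucially, on $V$ the map $\tilde\pi$ coincides with the universal curve $\overline\pi\colon \Mbar_{g,n+m+1}\to \Mbar_{g,n+m}$. Functoriality of pullback along the cartesian square converts $\tilde\pi^*\beta = 0$ into the statement that $\overline\pi^*(i^*\beta)$ lies in the image of pushforward from $D$, i.e.\ is supported on $D$.

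The finishing step is the projection formula for $\overline\pi$ applied with $\psi := \psi_{n+m+1}\in \CH^1(\Mbar_{g,n+m+1})$. The key geometric input is the vanishing $\psi|_{\Delta_i}=0$ for every $i$, which follows from the rigidity of the three-pointed rational bridge (equivalently, from $\Mbar_{0,3}$ being a point). Granted this, one obtains $\overline\pi^*(i^*\beta)\cdot \psi = 0$ by the projection formula for the closed embeddings $\Delta_i \hookrightarrow \Mbar_{g,n+m+1}$. Pushing forward by $\overline\pi$ and invoking the standard identity $\overline\pi_*\psi = \kappa_0 = 2g-2+n+m$ then yields $(2g-2+n+m)\cdot i^*\beta = 0$, and the hypothesis $2g-2+n+m>0$ delivers $F_m^*\alpha = i^*\beta = 0$, as required. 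I expect the main technical point to be the careful verification of $\psi|_{\Delta_i}=0$ and the comparison $\overline\pi_*\psi = \kappa_0$ under the paper's conventions for $\psi$- and $\kappa$-classes; these are standard facts in the tautological calculus but deserve explicit justification since they are the geometric content that drives the whole argument.
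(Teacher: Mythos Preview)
Your argument is correct and follows essentially the same route as the paper: reduce to $m'=m+1$, observe that $F_{m+1}^*\alpha$ and $\overline\pi^*F_m^*\alpha$ agree away from the locus where $p_{n+m+1}$ sits on a rational component with three special points, multiply by $\psi_{n+m+1}$ (which vanishes there), and push forward using $\overline\pi_*\psi_{n+m+1}=2g-2+n+m$. Your factorization through $\M_{g,n+m}$ is a cosmetic repackaging of the paper's non-commutative triangle.

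One small imprecision: your description of $D$ as the union of divisors $\Delta_i$ where $p_i$ and $p_{n+m+1}$ share a rational tail is incomplete. The complement $D$ also contains the divisors where $p_{n+m+1}$ is alone on a genuine rational \emph{bridge} (a $\PP^1$ attached at two nodes, carrying no other marking). The paper's description---$p_{n+m+1}$ on a rational component with exactly three special points---captures both cases. This does not affect your argument, since $\psi_{n+m+1}$ restricts to zero on every such component for the same reason (the cotangent line is pulled back from $\Mbar_{0,3}$, a point), but you should state the locus correctly.
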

\begin{proof}
It suffices to show the statement for $m' = m + 1$. Consider the following \emph{non-commutative} diagram.
\begin{equation} \label{eqn:noncommdiagram}
\begin{tikzcd}
 \Mbar_{g,n+m+1} \arrow[d,"\pi"] \arrow[dr,"F_{m+1}"] & \\
 \Mbar_{g,n+m} \arrow[r,"F_m"] & \M_{g,n}
\end{tikzcd}
\end{equation}
Here $\pi$ is the usual map forgetting the marking $p_{n+m+1}$ and stabilizing the curve. For this reason, the diagram is only commutative on the complement of the locus 
$$Z = \left\{(C,p_1, \ldots, p_{n+m+1}) : \begin{array}{c} p_{n+m+1} \text{ contained in}\\\text{rational component of $C$}\\\text{with $3$ special points}\end{array}\right\}\subseteq \Mbar_{g,n+m+1}\,.$$
Let $i : Z \to \Mbar_{g,n+m+1}$ be the inclusion of $Z$ and let $\alpha \in \CH^*(\M_{g,n})$ be any class. By the commutativity of the diagram \eqref{eqn:noncommdiagram} away from $Z$, we know that the class $F_{m+1}^* \alpha - \pi^* F_m^* \alpha$ restricts to zero on the complement of $Z$ and thus, by the usual excision sequence, there exists a class $\beta \in \CH^*(Z)$ such that
\[F_{m+1}^* \alpha - \pi^* F_m^* \alpha = i_* \beta\,.\]
We want to transport this to an equality of classes on $\Mbar_{g,n+m}$ by intersecting with $\psi_{n+m+1}$ and pushing forward via $\pi$. But notice that $\psi_{n+m+1}|_{Z} = 0$ since on $Z$ the component of $C$ containing $p_{n+m+1}$ is parametrized by $\Mbar_{0,3}$ and thus the psi-class of $p_{n+m+1}$ vanishes here. Thus 
\begin{equation}
\psi_{n+m+1} \cdot F_{m+1}^* \alpha - \psi_{n+m+1} \cdot \pi^* F_m^* \alpha = (i_* \beta) \psi_{n+m+1} = i_*( i^* \psi_{n+m+1} \cdot \beta) = 0\,.
\end{equation}
Pushing forward by $\pi$ and using $\pi_* \psi_{n+m+1} = (2g-2+n+m) \cdot [\Mbar_{g,n+m}]$ we obtain
\[
\pi_* \left(\psi_{n+m+1} \cdot F_{m+1}^* \alpha \right) = (2g-2+n+m) \cdot  F_m^* \alpha\,.
\]
Thus, since $2g-2+n+m \geq 2g-2+n>0$, any class $\alpha$ with $F_{m+1}^* \alpha = 0$ also satisfies $F_{m}^* \alpha=0$, finishing the proof.
\end{proof}

Again, for $g=0$ we can give some numerical evidence for the above conjecture. In Figure \ref{fig:Chowpullbackranks} we compare ranks of the Chow groups $\CH^d(\M_{0,n})$ to (lower bounds on) the ranks of $F_m^*(\CH^d(\M_{0,n}))$. We see that the bounds for $F_m^*(\CH^d(\M_{0,n}))$ increase monotonically in $m$ (as predicted by Proposition \ref{pro:pullbackkerneldecreasing}) and, in all cases which we could handle computationally, stabilize at the rank of $\CH^d(\M_{0,n})$, implying that the corresponding pullbacks $F_m^*$ are indeed injective. 
\begin{figure}[htb]
    \centering
\begin{tabular}{c|c||rccccccccc}
$(n,d)$ & $\CH^d(\M_{0,n})$ & \multicolumn{10}{c}{$F_m^*(\CH^d(\M_{0,n}))$} \\
 &  & m=0 & 1 & 2 & 3 & 4 & 5 & 6 & 7 & 8 & 9 \\
\hline 
(0,0) & 1 & &  &  & 1 & 1 & 1 & 1 & 1 & 1 & 1\\ 
(0,1) & 1 & &  &  &  & 1 & 1 & 1 & 1 & 1 & 1\\ 
(0,2) & 3 & &  &  &  &  & 1 & 2 & 3 & 3 & 3 \\ 
(0,3) & 5 & &  &  &  &  &  & 1 & 2 & {4} & 5\\ 
(0,4) & 13 & &  &  &  &  &  &  & {1} & {2} & {7}\\ 
(1,0) & 1 & &  & 1 & 1 & 1 & 1 & 1 & 1 & 1 & 1\\ 
(1,1) & 2 & &  &  & 1 & 2 & 2 & 2 & 2 & 2 & 2\\ 
(1,2) & 5 & &  &  &  & 1 & 3 & 5 & 5 & 5 & 5 \\ 
(1,3) & 12 & &  &  &  &  & 1 & 4 & 7 & 12 & 12\\ 
(2,0) & 1 & & 1 & 1 & 1 & 1 & 1 & 1 & 1 &  & \\ 
(2,1) & 3 & &  & 1 & 3 & 3 & 3 & 3 & 3 &  & \\ 
(2,2) & 9 & &  &  & 1 & 5 & 9 & 9 & 9 &  & \\ 
(2,3) & 27 &  &  &  &  & 1 & 7 & 11 & 27 &  & \\ 
(3,0) & 1 & 1 & 1 & 1 & 1 & 1 & 1 &1  &  &  & \\ 
(3,1) & 4 & & 1 & 4 & 4 & 4 & 4 &  4  &  &  & \\ 
(3,2) & 16 & &  & 1 & 5 & 15 & 16 & 16 &  &  & \\ 
(3,3) & 62 & &  &  & 1 & 5 & 16 & 62 &  &  & \\ 
\end{tabular}
    \caption{The ranks of the Chow groups $\CH^d(\M_{0,n})$ compared to (lower bounds on) the ranks of $F_m^*(\CH^d(\M_{0,n}))$; in many cases it was not feasible to obtain  the precise rank of $F_m^*(\CH^d(\M_{0,n}))$, but a lower bound could be achieved by computing the rank of the intersection pairing of $F_m^*(\CH^d(\M_{0,n}))$ with a selection of tautological classes on $\Mbar_{0,n+m}$}
    \label{fig:Chowpullbackranks}
\end{figure}

\subsection{The divisor group of \texorpdfstring{$\M_{g,n}$}{Mgn}} \label{Sect:divisors}
As for the moduli space of stable curves, the group of divisor classes on $\M_{g,n}$ can be fully understood in terms of tautological classes and relations. For $g=0$ we already saw that all divisor classes are tautological and we explicitly described the relations, so below we can restrict to $g \geq 1$. As before, we also want to exclude the case $g=1, n=0$ since $\M_{1,0}$ does not have a stratification by quotient stacks.

Thus we can restrict to the range $2g-2+n>0$, where the space $\Mbar_{g,n}$ is nonempty. Then we have an exact sequence
%. We check Speculation~\ref{spec:comparison} for $\CH^1(\M_{g,n})$.
\begin{equation}\label{eqn:unstable}
\begin{tikzcd}
\CH_*(\Mbar_{g,n},1) \arrow[r] &\CH_*(\M_{g,n}^\textup{us}) \arrow[r] & \CH_*(\M_{g,n})\arrow[r] & \CH_*(\Mbar_{g,n})\arrow[r] \arrow[l, bend right, dotted, "\st^*", swap] & 0
\end{tikzcd}
\end{equation}
where $\M_{g,n}^\textup{us}$ is the \emph{unstable locus} of $\M_{g,n}$, i.e. the complement of the open substack $\Mbar_{g,n} \subset \M_{g,n}$.
Using this sequence, we can completely understand $\CH^1(\M_{g,n})$ from the explicit description of $\CH^1(\Mbar_{g,n})$ in  \cite[Theorem 2.2]{arbarellocornalba}. 
\begin{proposition} 
For $(g,n) \neq (1,0)$ we have $\R^1(\M_{g,n})=\CH^1(\M_{g,n})$. Furthermore, for $2g-2+n>0$, all tautological relations in $\R^1(\M_{g,n})$ are pulled backed from relations in $\R^1(\Mbar_{g,n})$ via the stabilization morphism.
\end{proposition}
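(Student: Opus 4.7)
The plan is to analyze the excision sequence \eqref{eqn:unstable} in codimension one and extract from it a split short exact sequence
\[
0 \to \CH^0(\M_{g,n}^{\textup{us}}) \to \CH^1(\M_{g,n}) \to \CH^1(\Mbar_{g,n}) \to 0,
\]
with splitting $\st^*$, in which the left-hand factor is freely generated by the unstable one-edge boundary divisors. I would first classify these: for $(g,n)\neq (1,0)$ and $2g-2+n>0$, any self-loop graph has vertex $(g-1,n+2)$ which is stable in this range, so an unstable one-edge graph must be a splitting graph with an unstable vertex of type $(0,0)$ or $(0,1)$. These give exactly the divisor $D_0$ (a rational tail with no markings attached at a node to a vertex of type $(g,n+1)$) and the divisors $D_{\mathrm{rb},i}$, $i=1,\ldots,n$ (a rational bridge carrying only the marking $i$). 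Since $\M_{g,n}^{\textup{us}}$ has pure codimension one with precisely these as its irreducible components, $\CH^0(\M_{g,n}^{\textup{us}})$ is freely generated by $D_0$ and the $D_{\mathrm{rb},i}$.

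The main obstacle is to show that the connecting homomorphism $\partial\colon \CH^1(\Mbar_{g,n},1)\to \CH^0(\M_{g,n}^{\textup{us}})$ vanishes. The cleanest argument invokes Remark \ref{Rmk:partialvanishonCHk1}: $\partial$ factors through the indecomposable part $\RCH^1(\Mbar_{g,n},1)$, and since $\Mbar_{g,n}$ is a smooth, proper, connected Deligne--Mumford stack over $k$, Bloch's formula yields $\CH^1(\Mbar_{g,n},1) = H^0(\Mbar_{g,n},\mathcal{O}^\times)\otimes \QQ = \CH^1(k,1)$, so that $\RCH^1(\Mbar_{g,n},1)=0$. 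Alternatively, one can argue concretely in the spirit of the proof of Corollary \ref{Cor:WDVV_connecting_homomorphism} via test curves: fix a smooth stable curve $C$ with markings $p_1,\ldots,p_n$, form the trivial family $C\times \PP^1\to \PP^1$, and blow up a closed point on a single fibre---at a generic point of $C$ for $D_0$ and at $(p_i,t_0)$ for $D_{\mathrm{rb},i}$. The resulting test families each meet only the desired unstable divisor, and with multiplicity one, while their stabilizations are constant maps, so they pair trivially with every class of the form $\st^*\alpha$; this proves directly that the divisors $D_0, D_{\mathrm{rb},i}$ are linearly independent modulo $\st^*\CH^1(\Mbar_{g,n})$, equivalently that $\partial=0$.

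Once the split short exact sequence is established, the first statement of the proposition follows immediately: $\st^*\CH^1(\Mbar_{g,n})$ is tautological by \cite[Theorem 2.2]{arbarellocornalba}, the unstable boundary divisors are tautological by construction, and together they generate $\CH^1(\M_{g,n})$; the remaining range $g=0$, $n\leq 2$ not covered by $2g-2+n>0$ is already handled by Theorem \ref{Thm:M0ngenerators}. For the relations statement, let $V$ denote the free $\QQ$-vector space on the generators $\psi_i,\kappa_1$, and one-edge boundary classes $[\Gamma]$ of $\R^1(\M_{g,n})$, decompose $V = V_{\mathrm{st}}\oplus V_{\mathrm{uns}}$ according to stability of the underlying graph, and let $\rho_V\colon V\to V_{\mathrm{st}}$ be the projection killing the unstable part. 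Identifying $V_{\mathrm{st}}$ with the corresponding generator space for $\Mbar_{g,n}$, the pullback formulas of \cite[Propositions 3.10, 3.12]{BS1} (together with the observation that $\st^*[\bar\Gamma]-[\Gamma]$ restricts to zero on $\Mbar_{g,n}$ for every stable one-edge $\bar\Gamma$, hence is supported on the unstable boundary) give $\rho_V\circ \st^* = \mathrm{id}$. Consequently, given any tautological relation $R\in V$, the difference $R-\st^*(\rho_V(R))$ lies in $V_{\mathrm{uns}}$ and represents zero in $\CH^1(\M_{g,n})$; by the freeness established above it vanishes formally, so $R = \st^*(\rho_V(R))$ is the pullback of the relation $\rho_V(R)$ on $\Mbar_{g,n}$, as claimed.
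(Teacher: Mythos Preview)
Your proof is correct and follows the same approach as the paper: both use the excision sequence \eqref{eqn:unstable}, show $\partial=0$ via $\CH^1(\Mbar_{g,n},1)\cong H^0(\Mbar_{g,n},\mathcal{O}^\times)=k^\times$, and conclude from the splitting by $\st^*$. Your treatment is more detailed (explicit classification of the unstable one-edge divisors, the alternative test-curve check of their independence, and the formal handling of relations via the generator space $V$), but the core argument is identical.
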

\begin{proof}
As discussed before, for the statement that all divisor classes are tautological we can restrict to the stable range $2g-2+n>0$, since the case of $g=0$ was treated before. For the moduli spaces of stable curves it holds that $\R^1(\Mbar_{g,n})$ and $\CH^1(\Mbar_{g,n})$ coincide by \cite{arbarellocornalba}. 
Since the locus $\M^\textup{us}_{g,n}$ is a union of boundary divisors, whose fundamental classes are pushforwards of appropriate gluing maps, the image of the pushforward map $\CH^0(\M^\textup{us}_{g,n})\to \CH^1(\M_{g,n})$ is contained in  $\R^1(\M_{g,n})$. Therefore the excision sequence (\ref{eqn:unstable}) gives the conclusion.

To compute the set of relations, we observe that
$$\CH^1(\Mbar_{g,n},1)\cong H^0(\Mbar_{g,n}, \mathcal{O}^\times_{\Mbar_{g,n}})=k^\times$$
because $\Mbar_{g,n}$ is smooth and projective over $k$. Therefore, in the degree $1$ part of the sequence \eqref{eqn:unstable}, the image of the connecting homomorphism is trivial. Thus, since the pullback $\st^*$ by the stabilization morphism defines a splitting of \eqref{eqn:unstable} on the right, we have that 
\[\CH^1(\M_{g,n}) = \bigoplus_{\substack{\Gamma\text{ unstable}\\|E(\Gamma)|=1}} \mathbb{Q} \cdot [\Gamma] \oplus \CH^1(\Mbar_{g,n}).\]
Thus all relations between decorated strata classes in codimension $1$ are pulled back from $\Mbar_{g,n}$.
% and $\Rels^\textup{1,us}_{g,n} = \Rels^{1}_{g,n} \cap \StratAlg^\textup{1,us}_{g,n}$.
\end{proof}
% \begin{proposition}
% All tautological relations in $\R^1(\M_{g,n})$ are pulled backed from $\R^1(\Mbar_{g,n})$ via stabilization morphism. In particular, Speculation~\ref{spec:comparison} holds for $\R^1(\M_{g,n})$. 
% \end{proposition}
% \begin{proof}
% $\CH^1(\Mbar_{g,n},1)\cong \Gamma(\Mbar_{g,n}, \mathcal{O}^\times_{\Mbar_{g,n}})=k^\times$ because $\Mbar_{g,n}$ is smooth and proper over $k$. Therefore the image of the connecting homomorphism is trivial and $\Rels^\textup{1,us}_{g,n} = \Rels^{1}_{g,n} \cap \StratAlg^\textup{1,us}_{g,n}$.
% \end{proof}
%\Ycomment{Certainly $\CH^{>1}(\Mbar_{g,n},1)$ is nontrivial e.g. $\CH^{*-1}(\Mbar_{g,n})\otimes \CH^1(\Mbar_{g,n},1)\to \CH^*(\Mbar_{g,n},1)$. The image of this map is manageable.  What is the cokernel of this map?}
\subsection{Zero cycles on \texorpdfstring{$\M_{g,n}$}{Mgn}} \label{Sect:zerocycles}
After treating the case of  codimension $1$ cycles in the previous section, we want to make some remarks about cycles of \emph{dimension} $0$. For the moduli spaces of stable curves, these exhibit many interesting properties:
\begin{itemize}
    \item In \cite{grabervakiltaut}, Graber and Vakil showed  that the group $\R_0(\Mbar_{g,n})$ of tautological zero cycles on $\Mbar_{g,n}$ is always isomorphic to $\mathbb{Q}$, even though the full Chow group $\CH_0(\Mbar_{g,n})$ can be infinite-dimensional (e.g. for $(g,n)=(1,11)$, see \cite[Remark 1.1]{grabervakiltaut}).
    \item In \cite{zerocycles}, Pandharipande and the second author presented geometric conditions on stable curves $(C,p_1, \ldots, p_n)$ ensuring that the zero cycle $[(C,p_1, \ldots, p_n)]$ in $\Mbar_{g,n}$ is tautological.
\end{itemize}
We want to note here, that for the moduli stacks of prestable curves, the behaviour of tautological zero cycles becomes more complicated:
\begin{itemize}
    \item For $\M_{0,n}$ with $n=0,1,2$, we have $\R_0(\M_{0,n}) = \CH_0(\M_{0,n})=0$ for dimension reasons.
    \item As visible from Figure \ref{fig:Chowranks}, the group $\R_0(\M_{0,n})$ is no longer one-dimensional for $n \geq 4$. Indeed, looking at the example of $n=4$ we note that the boundary divisor of curves with one component having no marked points is a nonvanishing zero cycle (since it pulls back to an effective boundary divisor under the forgetful map $F_2 : \Mbar_{0,6} \to \M_{0,4}$), but it restricts to zero on $\Mbar_{0,4} \subset \M_{0,4}$ and is thus linearly independent of the generator of $\R_0(\Mbar_{0,4})$.
\end{itemize}
This indicates that for the moduli stacks of prestable curves, the group of zero cycles plays less of a special role than for the moduli spaces of stable curves.

\newpage
\appendix

\section{Gysin pullback for higher Chow groups} \label{Sect:Gysinpullbackhigher}
In \cite{DegliseJinKhan2018}, D\'eglise, Jin and Khan generalized Gysin pullback along a regular imbedding to motivic homotopy theories. We summerize the construction in the language of higher Chow groups. For a moment, let $X$ be a quasi-projective scheme over $k$ and we consider higher Chow groups with $\ZZ$-coefficients. For simplicity, we write $\GG_m X = X\times \GG_m$. Let $[t]$ be a generator of
\[\CH_0(\GG_m,1)\cong \left(k[t,t^{-1}]\right)^\times\]
and let 
\[\gamma_t \colon \CH_*(X,m)\to \CH_*(\GG_m X,m+1)\,, \; \alpha \mapsto \alpha\times [t]\]
be the morphism defined by the exterior product. Let $i\colon Z\to X$ be a regular imbedding of codimension $r$ and let $q : N_Z X \to Z$ be the normal bundle. Let $D_Z X$ be the Fulton--MacPherson's deformation space defined by
\[D_Z X = \textup{Bl}_{Z\times 0}(X\times \AA^1) - \textup{Bl}_{Z\times 0}(X\times 0)\]
which fits into the cartesian diagram
\begin{equation*}
    \begin{tikzcd}
     N_Z X \arrow[r]\arrow[d] & D_Z X \arrow[d] & \GG_m  X\arrow[l]\arrow[d]\\
     \{0\} \arrow[r] & \AA^1 &\arrow[l] \GG_m\,.
    \end{tikzcd}
\end{equation*}
By \cite{bloch_higherChow, BlochMoving} we have the localization sequence
\begin{equation}\label{eqn:localizationHigher}
    \ldots \to \CH_d(\GG_m X,m+1)\xrightarrow{\partial} \CH_d(N_Z X,m)\to \CH_d(D_Z X,m) \to \ldots\,.
\end{equation}
\begin{definition}
For a regular imbedding $i\colon Z\to X$, we define 
\[i^*\colon \CH_d(X,m)\to \CH_{d-r}(Z,m)\]
as a composition of following morphisms
\begin{equation}
    \CH_d(X,m)\xrightarrow{\gamma_t} \CH_d(\GG_m X, m+1)\xrightarrow{\partial} \CH_d(N_Z X,m) \xrightarrow[\cong]{(q^*)^{-1}} \CH_{d-r}(Z,m)
\end{equation}
where $\partial$ is the boundary map in \eqref{eqn:localizationHigher} and the flat pullback $q^*$ is an isomorphism by \cite{bloch_higherChow}.
\end{definition}
This definition extends the Gysin pullback for Chow groups in the sense that it coincides with the Gysin pullback defined in \cite{Fulton1984Intersection-th} when $m=0$. This construction extends to all lci morphism and satisfies functoriality, transverse base change and excess intersection formula, see \cite{DegliseJinKhan2018}.

Given a line bundle $q: L \to X$ with the zero section $0: X \to L$, the action of the first Chern class on higher Chow groups can be defined by
\[c_1(L)\cap \colon \CH_d(X,m)\xrightarrow{0_*} \CH_d(L,m)\xrightarrow[\cong]{(q^*)^{-1}} \CH_{d-1}(X,m)\,.\]
We want to note two basic compatibilities of this operation: firstly, given a proper morphism $f : X' \to X$ and the line bundle $L \to X$, a short computation shows the projection formula
\begin{equation} \label{eqn:firstChernhigherprojectionformula}
f_* \left(c_1(f^*L) \cap \alpha \right) =   c_1(L) \cap f_* \alpha \text{ for }\alpha \in \CH_*(X',m)\,.
\end{equation}
Secondly, intersecting higher Chow cycles with a Cartier divisor has the same formula as for ordinary Chow groups.
\begin{lemma}\label{lem:intersectDivisor}
Let $i\colon D\to X$ be an effective divisor and let $q\colon \CO(D)\to X$ be the associated line bundle. Then 
\begin{equation}\label{eqn:intersectDivisor}
    i_*i^*\alpha = c_1(\CO(D))\cap \alpha\,,  \,\alpha\in \CH_d(X,m)\,.
\end{equation}
\end{lemma}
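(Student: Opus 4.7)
The plan is to mimic the classical argument of \cite[Proposition 2.6(c)]{Fulton1984Intersection-th}, working in the total space $L := \CO(D)$. Write $q \colon L \to X$ for the projection, $0 \colon X \to L$ for the zero section, and $s \colon X \to L$ for the tautological section of $\CO(D)$, which is a closed embedding with vanishing locus $s^{-1}(0) = D$. The key geometric input is that $0$ and $s$ are two regular embeddings of codimension $1$ meeting transversally along $D$: the square
\begin{equation*}
\begin{tikzcd}
D \arrow[r, "i"] \arrow[d, "i"'] & X \arrow[d, "s"] \\
X \arrow[r, "0"'] & L
\end{tikzcd}
\end{equation*}
is Cartesian, and the induced isomorphism $N_D X \cong i^* \CO(D)$ identifies the Gysin pullbacks involved on the two sides of the diagram.

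First, I would observe that both $0$ and $s$ are sections of $q$, so $0^* q^* = s^* q^* = \mathrm{id}$ on $\CH_*(X,m)$. Since $q$ is a line bundle, $q^*$ is an isomorphism by homotopy invariance for vector bundles (cf.\ \cite{bloch_higherChow} and the use made of it in the definition of $i^*$ above); hence $0^* = s^* = (q^*)^{-1}$ as maps $\CH_*(L,m) \to \CH_{*-1}(X,m)$. Unwinding the definition of the first Chern class action, this gives
\begin{equation*}
c_1(\CO(D)) \cap \alpha = (q^*)^{-1} 0_* \alpha = s^*\bigl(0_* \alpha\bigr).
\end{equation*}

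Second, I would invoke the transverse base change formula for the Cartesian square above. In the motivic framework of \cite{DegliseJinKhan2018, khan2019virtual}, within which the Gysin map for higher Chow groups of the form $i^*$ is built, transverse base change between a proper pushforward and a Gysin pullback along a regular embedding is available in exactly this generality. It yields
\begin{equation*}
s^* 0_* \alpha = i_* i^* \alpha,
\end{equation*}
where the Gysin pullback on the right-hand side is the one for the closed embedding $i \colon D \to X$, as identified via $N_D X \cong i^* \CO(D)$. Combining the two steps gives \eqref{eqn:intersectDivisor}.

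The main obstacle is therefore confined to checking that the construction of $i^*$ recalled in this appendix --- via the exterior product $\gamma_t$ with $[t]$ and the localization sequence of the deformation space $D_Z X$ --- agrees with the Gysin map of \cite{DegliseJinKhan2018, khan2019virtual} for which transverse base change has been proved. This compatibility is built into the construction: both maps arise from the very same specialization morphism associated to deformation to the normal cone, so the identification is automatic and the result follows.
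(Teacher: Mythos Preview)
Your proof is correct and follows essentially the same approach as the paper: both set up the Cartesian square with the zero section $0$ and the tautological section $s$ of $\CO(D)$, invoke the transverse base change formula from \cite{DegliseJinKhan2018} to obtain $i_*i^*\alpha = s^*0_*\alpha$, and then use that $s^*$ inverts $q^*$. Your write-up is slightly more explicit in justifying $s^* = (q^*)^{-1}$ via $s^*q^* = \mathrm{id}$ and homotopy invariance, and in flagging the compatibility of the appendix's Gysin map with that of \cite{DegliseJinKhan2018}, but the argument is the same.
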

\begin{proof}
Consider the following cartesian diagram
\begin{equation*}
\begin{tikzcd}
D\arrow[r,"i"]\arrow[d,"i"] & X\arrow[d,"s"]\\
X\arrow[r,"0"] & \CO(D)
\end{tikzcd}
\end{equation*}
where $s\colon X\to \CO(D)$ be the regular section defining $D$ and $0$ is the zero section. Recall that the action of the first Chern class can be defined by
\[c_1(\CO(D))\cap - \colon \CH_d(X,m)\xrightarrow{0_*} \CH_d(\CO(D),m)\xrightarrow[\cong]{(q^*)^{-1}} \CH_{d-1}(X,m)\,.\]
By the transverse base change formula \cite[Proposition 2.4.2]{DegliseJinKhan2018},
\[i_*i^*\alpha = s^*0_*\alpha\,.\]
Now we can conclude the result because $s^*$ is an inverse of $q^*$.
\end{proof}
The above construction can be extended to global quotient stacks without any difficulty. Let $X$ be an equidimensional quasi-projective scheme with a linearized $G$-action. Applying the Borel construction developed in \cite{EdidinGraham98} yields the definition of higher Chow groups for $[X/G]$, see \cite{Krishna13HigherChow}. For an arbitrary algebraic stack, the authors do not know whether a direct generalization of \cite{Fulton1984Intersection-th} is possible. Relying on the recent development of motivic homotopy theories, Khan used the six-operator formalism (\cite{KhanThesis}) to construct motivic Borel--Moore homology for derived algebraic stacks in  \cite{khan2019virtual}.

\bibliographystyle{alpha}
\bibliography{main}
\end{document}